\documentclass[a4paper,reqno]{amsart}
\usepackage{amssymb, amsmath, amscd}
\newtheorem{theorem}{Theorem}[section]
\newtheorem{lemma}[theorem]{Lemma}
\newtheorem{remark}[theorem]{Remark}
\newtheorem{definition}[theorem]{Definition}
\newtheorem{corollary}[theorem]{Corollary}

\makeatletter
 \@addtoreset{equation}{section}
\makeatother
\def\ad{\operatorname{ad}}
\begin{document}
\title[Homogeneous affine gradient Ricci solitons]{Homogeneous affine surfaces: affine
Killing vector fields and Gradient Ricci solitons}
\author{M. Brozos-V\'{a}zquez \, E. Garc\'{i}a-R\'{i}o, \, and P. Gilkey}
\address{MBV: Departmento de Matem\'aticas, Escola Polit\'ecnica Superior, Universidade da Coru\~na, Spain}
\email{miguel.brozos.vazquez@udc.gal}
\address{EGR: Faculty of Mathematics,
University of Santiago de Compostela,
15782 Santiago de Compostela, Spain}
\email{eduardo.garcia.rio@usc.es}
\address{PG: Mathematics Department, \; University of Oregon, \;\;
 Eugene \; OR 97403, \; USA}
\email{gilkey@uoregon.edu}
\thanks{ Supported by projects EM2014/009, GRC2013-045 and MTM2013-41335-P with FEDER funds (Spain).}
\keywords{ Homogeneous affine surface, affine Killing vector field, affine gradient Ricci soliton, affine gradient Yamabe soliton, Riemannian extension.}
\subjclass[2010]{53C21}
\begin{abstract}
The homogeneous affine surfaces have been classified
by Opozda. They may be grouped into 3 families, which are not disjoint. The connections which arise as the
 Levi-Civita connection of a surface with a metric of constant Gauss curvature form one family; there are, however,
 two other families. For a surface in one of these other two families, we examine the 
 Lie algebra of affine Killing vector fields and we give
 a complete classification of the homogeneous affine gradient Ricci solitons.
 The rank of the Ricci tensor plays a central role in our analysis.
 \end{abstract}
\maketitle
\section{Introduction}

\subsection{Homogeneity}
The notion of homogeneity is central in geometry. In order to make precise the level of homogeneity 
one usually refers to the underlying structure. In pseudo-Riemannian geometry, local homogeneity 
means that for any two points there is a local isometry sending one point to the other. 
If an additional structure (K\"ahler, contact, etc.) is considered on the manifold, 
then one further assumes that this structure is preserved by the local isometries. 
In the affine setting, homogeneity means that for any two points
there is an affine transformation sending one point into the other.
There is an intermediate level of homogeneity which was explored in \cite{G-RGN,KVOp}.
A pseudo-Riemannian manifold may be locally affine homogeneous but not locally homogeneous, i.e., for any two points there exists a (not necessarily isometric) transformation sending one point to the other which preserves the Levi-Civita connection.

Homogeneous affine surfaces were studied from a local point of view by several authors. 
A complete description was first given in \cite{KVOp2} for the special case when the Ricci tensor is skew-symmetric. 
The general situation was later addressed in \cite{Op04},
where Opozda obtained the local form of the connection of any locally homogeneous affine surface. 
More recently, Opozda's result was generalized in \cite{AMK08} to the more general case of connections with torsion.
The above classification results have been extensively used both in the affine and the pseudo-Riemannian setting, 
where one uses the Riemannian extension to relate affine and pseudo-Riemannian geometry.

\subsection{Notational conventions}
An affine manifold is a pair $\mathcal{M}=(M,\nabla)$ 
where $\nabla$ is a torsion free connection on the tangent bundle
of a smooth manifold $M$ of dimension $m$. Let $\vec x=(x^1,\dots,x^m)$ be a system
of local coordinates on $M$. We adopt the {\it Einstein convention}
and sum over repeated indices to expand:
$$\nabla_{\partial_{x^i}}\partial_{x^j}=\Gamma_{ij}{}^k\partial_{x^k}$$
in terms of the {\it Christoffel symbols} \,$\Gamma=\Gamma^\nabla:=(\Gamma_{ij}{}^k)$; the condition that
$\nabla$ is torsion free is then equivalent to the symmetry $\Gamma_{ij}{}^k=\Gamma_{ji}{}^k$.
The curvature operator $R$,
 the Ricci tensor $\rho$, and the symmetric Ricci tensor $\rho^s$ are given, respectively, by setting 
\begin{eqnarray*}
&&R(\xi_1,\xi_2):=\nabla_{\xi_1}\nabla_{\xi_2}-\nabla_{\xi_2}\nabla_{\xi_1}-\nabla_{[\xi_1,\xi_2]},\\
&&\rho(\xi_1,\xi_2):=\operatorname{Tr}\{\xi_3\rightarrow R(\xi_3,\xi_1)\xi_2\},\text{ and }
 \rho^s(\xi_1,\xi_2):=\textstyle\frac12(\rho(\xi_1,\xi_2)+\rho(\xi_2,\xi_1))\,.
\end{eqnarray*}

\subsection{Locally homogeneous affine surfaces}
Let $\mathcal{M}=(M,\nabla)$ be an affine surface. We say that $\mathcal{M}$ is {\it locally homogeneous} if
given any two points of $M$, there is the germ of a diffeomorphism $\Phi$ taking one point to another with $\Phi^*\nabla=\nabla$.
One has
the following classification result due of Opozda \cite{Op04}:
\begin{theorem}\label{T1.1}
Let $\mathcal{M}=(M,\nabla)$ be a locally homogeneous affine surface. Then at least one of the following
three possibilities holds which describe the local geometry:
\begin{enumerate}
\item There exist local coordinates $(x^1,x^2)$ so that
$\Gamma_{ij}{}^k=\Gamma_{ji}{}^k$ is constant.
\item There exist local coordinates $(x^1,x^2)$ so that
$\Gamma_{ij}{}^k=(x^1)^{-1}C_{ij}{}^k$ where $C_{ij}{}^k=C_{ji}{}^k$ is constant.
\item $\nabla$ is the Levi-Civita connection of a metric of constant sectional curvature.
\end{enumerate}\end{theorem}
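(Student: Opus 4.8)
The strategy is to replace the geometric hypothesis by an algebraic one and then run a case analysis organised by the rank of the Ricci tensor. One first checks that local homogeneity of $\mathcal{M}=(M,\nabla)$ forces the affine Killing vector fields to act transitively near any fixed point $p$ (a Singer-type prolongation argument); denote by $\mathfrak{K}$ their Lie algebra, so that the evaluation map $\mathfrak{K}\to T_pM$ is onto, and let $\mathfrak{h}=\{X\in\mathfrak{K}:X(p)=0\}$ be the isotropy subalgebra. Since an affine Killing field is uniquely determined by its $1$-jet at a point, the linear isotropy representation $\mathfrak{h}\hookrightarrow\mathfrak{gl}(T_pM)\cong\mathfrak{gl}(2,\mathbb{R})$, $X\mapsto(\nabla X)_p$, is injective, so $\dim\mathfrak{K}=2+\dim\mathfrak{h}$ with $\dim\mathfrak{h}\le 4$. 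The curvature operator $R_p$, and hence the Ricci tensors $\rho_p$ and $\rho^s_p$, must be invariant under the image of this representation; this is the one constraint that drives the classification, and since $\operatorname{rank}\rho^s$ is an isotropy invariant it is natural to stratify accordingly.

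I would first treat the \emph{simply transitive} case $\mathfrak{h}=0$. Then a neighbourhood of $p$ is identified with a neighbourhood of the identity in a $2$-dimensional Lie group $G$ on which $\nabla$ is left-invariant, i.e.\ encoded by a bilinear map $\alpha\colon\mathfrak{g}\times\mathfrak{g}\to\mathfrak{g}$ whose skew part is $\tfrac12[\,\cdot\,,\,\cdot\,]$. If $G$ is abelian, linear coordinates make every $\Gamma_{ij}{}^k$ constant: this is case (1). If $G$ is the non-abelian $2$-dimensional group, then in the coordinates $(x^1,x^2)$ in which a left-invariant frame is $\{x^1\partial_{x^1},\,x^1\partial_{x^2}\}$ a short computation gives $\Gamma_{ij}{}^k=(x^1)^{-1}\bigl(\alpha_{ij}{}^k-\delta_{i1}\delta^k_j\bigr)=:(x^1)^{-1}C_{ij}{}^k$ with $C$ constant, and $C_{ij}{}^k=C_{ji}{}^k$ precisely because the structure constants of $\mathfrak{g}$ coincide with $\delta_{i1}\delta^k_j-\delta_{j1}\delta^k_i$; this is case (2).

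It remains to handle $\dim\mathfrak{h}\ge 1$. When $\dim\mathfrak{h}=4$ the isotropy representation is all of $\mathfrak{gl}(2,\mathbb{R})$, and since the identity endomorphism then acts on the fibre $\Lambda^2T_p^\ast M\otimes\operatorname{End}(T_pM)$ by $-2$ while fixing $R_p$, we get $R_p=0$; the connection is flat and case (1) applies. For $1\le\dim\mathfrak{h}\le 3$ one lists the conjugacy classes of subalgebras $\mathfrak{h}\subset\mathfrak{gl}(2,\mathbb{R})$ of the given dimension that admit a nonzero invariant curvature operator, and for each such $\mathfrak{h}$ uses the $\mathfrak{h}$-invariance of $\rho_p$ (and its rank) to normalise the $1$-jet of $\nabla$ along an orbit and then integrates the resulting overdetermined linear system for the $\Gamma_{ij}{}^k$; equivalently, one runs through the low-dimensional transitive Lie group actions. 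Each solution is then identified: when $\rho^s$ is nondegenerate the integrated connection is the Levi-Civita connection of a metric of constant sectional curvature, case (3); in the remaining subcases a further coordinate change brings $\nabla$ into the form (1) or (2).

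The main obstacle is exactly this last step: producing the finite list of admissible isotropy subalgebras and, for each, integrating the structure equations in closed form is a lengthy computation, and it is where the rank of $\rho$ is indispensable for keeping the casework organised. A convenient simplification is that the three families in the statement are not asserted to be disjoint (every flat connection lies in (1), and after a coordinate change many connections lie in more than one family), so it suffices to establish membership in \emph{at least one} of them, with no overlaps to track.
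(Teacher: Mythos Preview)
The paper does not prove Theorem~1.1; it is quoted as Opozda's classification result \cite{Op04} and used as a starting point for the rest of the paper. So there is no in-paper proof to compare your proposal against.

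That said, your sketch is a reasonable outline of how Opozda's argument (and the Arias--Marco--Kowalski extension \cite{AMK08}) is organised, and your treatment of the simply transitive case is essentially the content of Lemma~\ref{L3.2} here: a transitive $2$-dimensional subalgebra is either abelian (giving Type~$\mathcal{A}$) or isomorphic to the $ax+b$ algebra (giving Type~$\mathcal{B}$). Two points deserve flagging. First, your opening step is not really a ``Singer-type prolongation argument''; what you need is the much more elementary fact that locally homogeneous affine structures have Killing algebras acting transitively near each point, which follows because affine Killing fields are determined by their $1$-jets (Lemma~\ref{L3.1} in the paper) and the local affine transformations form a finite-dimensional local Lie group. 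Second, and more seriously, your final paragraph is where the entire content of the theorem lives, and you acknowledge this yourself: the assertion that for nontrivial isotropy ``when $\rho^s$ is nondegenerate the integrated connection is the Levi-Civita connection of a metric of constant sectional curvature'' is exactly what has to be \emph{proved}, not assumed, and the ``remaining subcases'' you wave at are the substance of Opozda's paper. As written, the proposal is a correct roadmap but not a proof; the genuine work---enumerating the admissible isotropy subalgebras of $\mathfrak{gl}(2,\mathbb{R})$ compatible with a nonzero curvature and integrating the structure equations in each case---is precisely what is deferred.
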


\begin{definition}\rm
An affine surface $\mathcal{M}$ is said to be {\it Type~$\mathcal{A}$} (resp. {\it Type~$\mathcal{B}$
or Type~$\mathcal{C}$})
if $\mathcal{M}$ is locally homogeneous, if $\mathcal{M}$
is not flat, and if Assertion~1 (resp. Assertion~2 or Assertion~3) of Theorem~\ref{T1.1} holds.
Let
 \begin{eqnarray*}
&&\mathcal{F}^{\mathcal{A}}:=\{\mathcal{M}=(\mathbb{R}^2,\nabla):\Gamma^\nabla
\text{ constant and }\nabla\text{ not flat}\},\\
&&\mathcal{F}^{\mathcal{B}}:=\{\mathcal{M}=(\mathbb{R}^+\times\mathbb{R},\nabla):
\Gamma^\nabla=(x^1)^{-1}C\text{ for }C\text{ constant and }\nabla\text{ not flat}\}\,.
\end{eqnarray*}
Let $\mathcal{M}\in\mathcal{F}^{\mathcal{A}}$. We will show in Lemma~\ref{L2.2} that
$\rho$ is symmetric. Since $\mathcal{M}$ is not flat, 
$\operatorname{Rank}\{\rho\}\ne0$. We therefore may decompose
$\mathcal{F}^{\mathcal{A}}=\mathcal{F}^{\mathcal{A}}_1\cup\mathcal{F}^{\mathcal{A}}_2$ where
$$\mathcal{F}^{\mathcal{A}}_\nu:=\{\mathcal{M}\in\mathcal{F}^{\mathcal{A}}:\operatorname{Rank}\{\rho\}=\nu\}\,.$$
The affine surfaces in the family $\mathcal{F}^{\mathcal{A}}$ (resp. $\mathcal{F}^{\mathcal{B}}$) form natural
models for the Type~$\mathcal{A}$ (resp. Type~$\mathcal{B}$) surfaces and we will often work in this context.
\end{definition}

Surfaces of Type~$\mathcal{A}$ and Type~$\mathcal{B}$ can have quite different geometric properties. 
The Ricci tensor of any Type~$\mathcal{A}$ surface is symmetric; this can fail for a Type~$\mathcal{B}$ surface.
Thus the geometry of a Type~$\mathcal{B}$ surface is not as rigid as that of a Type~$\mathcal{A}$ surface;
this is closely related to the existence of non-flat affine Osserman structures \cite{D,GKVV99}.
Any Type~$\mathcal{A}$ surface is projectively flat; this can fail for a Type~$\mathcal{B}$ surface.
The local geometry of any Type~$\mathcal{A}$ surface can be realized on a compact torus 
\cite{G-SG,Opozda}; this can also fail for a Type~$\mathcal{B}$ geometry.

\begin{remark}\label{R1.3}\rm
If $M=\mathbb{R}^2$ and if the Christoffel symbols $\Gamma$ of $\nabla$ are constant, then
$\mathbb{R}^2$ acts transitively on $M$ by translations and this group action preserves $\nabla$. Thus every element
of $\mathcal{F}^{\mathcal{A}}$ is affine homogeneous. If $M=\mathbb{R}^+\times\mathbb{R}$
and if the Christoffel symbols of $\nabla$ have the form $\Gamma=(x^1)^{-1}C$ for $C$ constant, then the
$ax+b$ group acts transitively on $M$ by $(a,b):(x^1,x^2)\rightarrow(ax^1,ax^2+b)$ for $ a>0$ 
and this
group action preserves $\nabla$.
Thus every element of $\mathcal{F}^{\mathcal{B}}$ is affine homogeneous. These two structure groups (which
up to isomorphism are the only two simply connected 2-dimensional Lie groups) will play an important
role in our analysis.
\end{remark}

\begin{remark}\rm The three possibilities of Theorem~\ref{T1.1} are not exclusive
as we shall see presently. In Theorem~\ref{T3.11}, we will identify the local 
geometries which are both Type~$\mathcal{A}$
and Type~$\mathcal{B}$ and also the local geometries
which are both type Type~$\mathcal{B}$ and
Type~$\mathcal{C}$. There are no surfaces which are both Type~$\mathcal{A}$
and Type~$\mathcal{C}$.
\end{remark}

\subsection{Outline of the paper} 
In Section~\ref{S2}, we use the action of the natural structure
groups on the families $\mathcal{F}^{\mathcal{A}}$ and $\mathcal{F}^{\mathcal{B}}$ to
partially normalize the Christoffel symbols. Let $\mathcal{M}$ be a Type-$\mathcal{A}$
surface with $\operatorname{Rank}(\rho)=1$.
In Lemma~\ref{L2.5}, 
we will define $\alpha(\mathcal{M})$ and show it is an affine invariant in this setting.
Subsequently, in Theorem~\ref{T3.8}, we will show that $\alpha$
identifies the moduli space of such surfaces with $\rho\ge0$ with $[0,\infty)$ and
with $\rho\le0$ with $(-\infty,0]$.
 
Similarly, we may
partially normalize the Christoffel symbols for Type~$\mathcal{B}$ geometries in Lemma~\ref{L2.8}.
Lemma~\ref{L2.10} provides a complete
characterization of the elements of $\mathcal{F}^{\mathcal{B}}$
where $\rho$ is symmetric, recurrent, and of rank 1, and where $\nabla\rho$ is symmetric.
This will play a central role in our identification of the affine surfaces which are both
Type~$\mathcal{A}$ and Type~$\mathcal{B}$. 

Section~3 is devoted to the study of the Lie algebra $\mathfrak{K}(\mathcal{M})$ of affine Killing vector fields.
Let $\mathcal{M}$ be an affine surface. In Lemma~\ref{L3.1}, we will show if $\mathcal{M}$ is
homogeneous, then
$2\leq\dim\{\mathfrak{K}(\mathcal{M})\}\leq 6$;
the extremal case where $\dim\{\mathfrak{K}(\mathcal{M})\}=6$ occurs only if $\mathcal{M}$ is flat.
We shall exclude the flat setting from consideration henceforth.

Let $\mathcal{M}\in\mathcal{F}^{\mathcal{A}}$. 
To simplify the notation, we set
$\partial_1:=\partial_{x^1}$ and $\partial_2:=\partial_{x^2}\,.$
Let $\mathfrak{K}_0^{\mathcal{A}}:=\operatorname{Span}\{\partial_1,\partial_2\}$ be
the Lie algebra of the translation group $\mathbb{R}^2$. By Remark~\ref{R1.3},
$\mathfrak{K}_0^{\mathcal{A}}\subset\mathfrak{K}(\mathcal{M})$.
In Theorem~\ref{T3.4}, we show $\dim\{\mathfrak{K}(M)\}>2$
if and only if $\rho$ has rank 1 and that $\dim\{\mathfrak{K}(M)\}=4$ in this setting.
In Theorem~\ref{T3.8}, we exhibit invariants which completely detect
the local isomorphism class of a Type~$\mathcal{A}$ affine surface with $\operatorname{Rank}\{\rho\}=1$,
we also determine which Type~$\mathcal{A}$ surfaces are also of Type~$\mathcal{B}$, and we
give the abstract structure of the (local) Lie algebras involved using the classification of 
Patera et. al \cite{PSWZ76}; 
representatives of these classes are given in Lemma~\ref{L3.6}.

Let $\mathcal{M}\in\mathcal{F}^{\mathcal{B}}$.
We will show that $\dim\{\mathfrak{K}(\mathcal{M})\}\in\{2,3,4\}$ in Section~\ref{S3.2};
$\mathcal{M}$ is also of Type~$\mathcal{A}$ if and only if $\dim\{\mathfrak{K}(\mathcal{M})\}=4$. This
characterizes the local geometries which are the intersection of Type~$\mathcal{A}$ and
Type~$\mathcal{B}$. The geometries which are of both Type~$\mathcal{B}$ and of
Type~$\mathcal{C}$ form a proper subset of those surfaces where
$\dim\{\mathfrak{K}(\mathcal{M})\}=3$.	

The {\it Hessian} $H_f^\nabla$ of $f\in C^\infty(M)$ is the symmetric $2$-tensor
$$
H_f^\nabla:=\nabla(df)=f_{;ij}dx^i\circ dx^j\,.
$$
If $g$ is a pseudo-Riemannian metric on
$M$, let $H_f^g:=H_f^{\nabla^g}$ be the Hessian which is defined by the Levi-Civita connection $\nabla^g$
and let $\rho^g$ be the associated Ricci tensor.
\begin{definition}\label{D1.5}\rm
Let $M$ be a smooth manifold, let $\nabla$ be a torsion free connection on $M$, let $g$
be a pseudo-Riemannian metric on $M$,
let $\tau$ be the scalar curvature of $g$, and let
$f\in C^\infty(M)$ be a smooth function on $M$. We say that
 \begin{enumerate}
 \item $(M,\nabla,f)$ is an {\it affine gradient Yamabe soliton} if
 $H_f^\nabla =0$. 
 Let $\mathcal{Y}(\mathcal{M})$ be the space of functions on $M$ so that $(M,\nabla, f)$ is an affine gradient Yamabe soliton;
 $\mathcal{Y}(\mathcal{M})=\ker(H^\nabla)$.
 \item $(M,\nabla,f)$ is an {\it affine gradient Ricci soliton} if
 $H_f^\nabla+\rho_s=0$. Let $\mathfrak{A}(\mathcal{M})$ be the space of functions on $M$ so that $(M,\nabla, f)$ is an affine gradient Ricci soliton.
If
$\mathfrak{A}(\mathcal{M})$ is non-empty, then 
$\mathfrak{A}(\mathcal{M})=f_0+\mathcal{Y}(\mathcal{M})$ for any $f_0\in\mathfrak{A}(\mathcal{M})$.
 \item $(M,g,f)$ is a \emph{gradient Yamabe soliton} if there exists $\lambda\in\mathbb{R}$ so
$H_f^{g} =(\tau-\lambda)g$.
\item $(M,g,f)$ is a {\it gradient Ricci soliton} if there exists $\lambda\in\mathbb{R}$
so $H_f^{g}+\rho^g=\lambda g$.
If $\lambda=0$, then the soliton is said to be {\it steady}.
\item A soliton is said to be {\it trivial} if the potential function $f$ is constant.
 \end{enumerate} \end{definition}

There is a close connection between affine geometry and neutral signature geometry. 
Let $\mathcal{M}=(M,\nabla)$ be an affine manifold and let
$(x^1,\dots,x^m)$ be local coordinates on $M$. Express $\omega=y_idx^i$ to introduce the dual
fiber coordinates $(y_1,\dots,y_m)$ on the cotangent bundle $T^*M$. Let $\phi=\phi_{ij}$ be a symmetric
$2$-tensor on $M$. The {\it deformed Riemannian extension} $g_{\nabla,\phi}$ is the metric of
neutral signature $(m,m)$ on $T^*M$ given by
$$g_{\nabla,\phi}=dx^i\otimes dy_i+dy_i\otimes dx^i+(\phi_{ij}-2y_k\Gamma_{ij}{}^k)dx^i\otimes dx^j\,.$$
It is invariantly defined, i.e. it is independent of the particular coordinate system chosen.
The following result \cite{BCGV16,BG14} provided our initial motivation for examining affine
gradient Ricci solitons in the 2-dimensional setting; we state the results for gradient Ricci solitons and
Yamabe solitons in parallel to simplify the exposition:

\begin{theorem}
Let $(N,g,F)$ be a non-trivial self-dual gradient Ricci (resp. Yamabe) soliton of neutral signature
$(2,2)$.
\begin{enumerate}
\item If $\|dF\|\ne0$ at a point $P\in N$, then $(N,g)$ is
 locally isometric to a warped product
$I\times_\psi N_1$ where $N_1$ is a 3-dimensional pseudo-Riemannian manifold of constant sectional curvature
(resp. scalar curvature).
\item If $\|dF\|=0$ on $N$, then $(N,g)$ is locally isometric to the cotangent bundle $T^*M$
of an affine surface $(M,\nabla)$ equipped with the deformed Riemannian extension $g_{\nabla,\phi}$. Furthermore, the potential function of the soliton is of the form $F=f\circ\pi$, for some function $f$ on $M$ so that $(M,\nabla,\frac{1}{2}f)$ is an affine gradient Ricci (resp. Yamabe) soliton.
\end{enumerate}
\end{theorem}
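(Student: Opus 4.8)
The plan is to split the argument according to the causal character of the gradient $\nabla F$; the two assertions correspond to the complementary cases $\|dF\|\neq0$ somewhere and $\|dF\|\equiv0$, and they are handled by completely different techniques --- a warped-product splitting in the first case and the construction of a Walker structure in the second. Throughout, let $\operatorname{Ric}$ denote the Ricci operator of $g$, so that $g(\operatorname{Ric}X,Y)=\rho^g(X,Y)$, and write the gradient Ricci soliton equation in the form $\nabla_X\nabla F=\lambda X-\operatorname{Ric}(X)$.

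\emph{Assertion~(1): $\|dF\|\neq0$ at $P$.} By continuity $\|dF\|\neq0$ on an open neighbourhood $U$ of $P$, on which the level hypersurfaces of $F$ are non-degenerate; since the conclusion is local we work on $U$. The key use of the hypothesis is that self-duality $W^-=0$ forces the anti-self-dual part of the Cotton tensor of $g$ to vanish, and --- for a gradient Ricci soliton --- the Cotton tensor is algebraically expressible through $dF$ and $\rho^g$; its (half-)vanishing then forces $\nabla F$ to be an eigenvector of $\operatorname{Ric}$ and $\operatorname{Ric}$ to restrict to a multiple of the identity on $(\nabla F)^\perp$. This is the neutral-signature counterpart of the computations of Cao--Chen and Fern\'{a}ndez-L\'{o}pez--Garc\'{i}a-R\'{i}o for half conformally flat gradient Ricci solitons. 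With this spectral information a de~Rham/integrability argument produces coordinates on a (possibly smaller) neighbourhood in which $g=\varepsilon\,dt\otimes dt+\psi(t)^2 g_{N_1}$ with $\varepsilon=\pm1$ and $F=F(t)$; substituting into the soliton equation and separating the $dt\,dt$, mixed, and fibre blocks shows that $N_1$ is Einstein, hence of constant sectional curvature because $\dim N_1=3$. In the Yamabe case the warped-product splitting already follows from the standard analysis of gradient Yamabe solitons (differentiating $\|dF\|^2$ gives $d(\|dF\|^2)=2(\tau-\lambda)\,dF$, so $\|dF\|^2$ is a function of $F$, and a Tashiro-type argument applies), and tracing $H_F^g=(\tau-\lambda)g$ over the fibre only forces $N_1$ to have constant scalar curvature.

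\emph{Assertion~(2): $\|dF\|\equiv0$.} Differentiating this identity and using $H_F^g+\rho^g=\lambda g$ gives $\rho^g(\nabla F,\cdot)=\lambda\,dF$, and hence $g(\nabla_{\nabla F}\nabla F,\cdot)=H_F^g(\nabla F,\cdot)=\lambda\,dF-\rho^g(\nabla F,\cdot)=0$, so the null field $\nabla F$ is geodesic. (For the Yamabe soliton one gets instead $(\tau-\lambda)\,dF=0$; since $F$ is non-trivial, $\tau\equiv\lambda$ on the dense set $\{dF\neq0\}$, whence $H_F^g\equiv0$ and $dF$ is a parallel null one-form.) Using $W^-=0$ together with the soliton equation, one shows that $\operatorname{Ric}$ is a nilpotent rank-one perturbation of $\lambda\operatorname{Id}$ along $\nabla F$, so that $\langle\nabla F\rangle$ is contained in a two-dimensional parallel degenerate plane field: $(N,g)$ is a self-dual Walker four-manifold. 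By the local classification of self-dual Walker metrics (Derdzinski; Calvi\~{n}o-Louzao--Garc\'{i}a-R\'{i}o--Gilkey--V\'{a}zquez-Lorenzo), after a coordinate change $g$ coincides in Walker coordinates $(x^1,x^2,y_1,y_2)$ with the deformed Riemannian extension $g_{\nabla,\phi}$ of a torsion-free connection $\nabla$ on the base $M=\{y=0\}$, the self-duality becoming the differential equations satisfied by $\Gamma_{ij}{}^k$. Since $\nabla F$ lies in the parallel degenerate plane field, which is tangent to the fibres of $\pi\colon T^*M\to M$, the function $F$ does not depend on the fibre coordinates, i.e.\ $F=f\circ\pi$. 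Finally one substitutes $F=f\circ\pi$ into the soliton equation: a direct computation shows $H_F^g=\pi^*H_f^\nabla$ and $\rho^g=\rho^{g_{\nabla,\phi}}=2\,\pi^*\rho_s$ are both supported on the $dx^i\circ dx^j$ block, so the mixed $dx^i\circ dy_j$ block of $H_F^g+\rho^g=\lambda g$ forces $\lambda=0$ (the soliton is steady) and the $dx^i\circ dx^j$ block reads $H_f^\nabla+2\rho_s=0$, that is $H_{f/2}^\nabla+\rho_s=0$, so $(M,\nabla,\tfrac12 f)$ is an affine gradient Ricci soliton; in the Yamabe case $H_F^g\equiv0$ gives $H_f^\nabla=0$, so $(M,\nabla,\tfrac12 f)$ is an affine gradient Yamabe soliton.

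The main obstacle is exploiting self-duality. In Assertion~(1) one must extract from $W^-=0$ the precise spectral structure of $\operatorname{Ric}$ --- two eigenvalues, the distinguished one simple --- a curvature computation resting on the algebraic decomposition of $\Lambda^2$ under the Hodge star in signature $(2,2)$. In Assertion~(2) one must first produce the parallel degenerate plane field from $W^-=0$ and the null gradient, and then verify that the Walker defining functions are \emph{affine} in the fibre coordinates $y$, so that the metric is genuinely a deformed Riemannian extension and not an arbitrary Walker metric. In both cases the neutral signature introduces case distinctions --- spacelike, timelike or degenerate level sets, possibly degenerate induced metrics --- absent in the Riemannian theory, which have to be dealt with separately.
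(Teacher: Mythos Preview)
The paper does not prove this theorem at all: it is quoted from \cite{BCGV16,BG14} as motivation and stated without proof. So there is no ``paper's own proof'' to compare against.

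That said, your sketch is in the right spirit and follows the strategy of those references. A couple of points are worth flagging. In Assertion~(2) the passage from ``$\nabla F$ is null and geodesic'' plus $W^-=0$ to ``there exists a parallel degenerate $2$-plane field'' is the crux, and you have essentially asserted it (``one shows that $\operatorname{Ric}$ is a nilpotent rank-one perturbation \dots\ so that $\langle\nabla F\rangle$ is contained in a two-dimensional parallel degenerate plane field''). This step requires real work: one needs to produce a second null direction spanning, together with $\nabla F$, a totally isotropic distribution, and then check parallelism; self-duality enters here in a delicate way via the algebraic structure of the curvature operator on $\Lambda^2_\pm$. Likewise, once the Walker form is obtained, the statement that the defining functions are \emph{affine} in the fibre variables (so that one lands on a deformed Riemannian extension rather than a general Walker metric) is exactly where self-duality is used again, and you correctly identify this as one of the main obstacles but do not indicate how it is overcome. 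In Assertion~(1), your appeal to the Cao--Chen / Fern\'andez-L\'opez--Garc\'{\i}a-R\'{\i}o machinery is appropriate, but the neutral-signature adaptation is not a formality: the Hodge star squares to $+1$ rather than $-1$, the eigenspace decomposition of $\Lambda^2$ is different, and the possibility of degenerate level sets must be excluded before the warped-product argument runs. Your closing paragraph shows you are aware of these issues; a full proof would need to resolve them explicitly.
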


In Section~\ref{S4}, we examine affine gradient Ricci solitons if $\mathcal{M}$ is Type~$\mathcal{A}$ and/or
Type~$\mathcal{B}$.
Let $f$ be the potential function of an affine gradient Ricci soliton and let $X\in\mathfrak{K}(M)$.
In Lemma \ref{L4.1}, we show that $X(f)$ is the potential function of an affine gradient Yamabe soliton. 
Thus affine gradient Ricci solitons and Yamabe solitons are closely linked concepts.
Using this fact, we analyze the existence of affine gradient Ricci and Yamabe solitons on homogeneous affine surfaces. 
Not unexpectedly, Type~$\mathcal{A}$ and Type~$\mathcal{B}$ affine connections behave differently.

Let $\mathcal{M}\in\mathcal{F}^{\mathcal{A}}$. In Theorem~\ref{T4.3}, we show
$\mathcal{M}$ is a gradient Ricci soliton if and only if $\operatorname{Rank}\{\rho\}=1$ or, equivalently
in view of the results of Section~\ref{S3}, $\dim\{\mathfrak{K}(\mathcal{M})\}>2$.
There are elements of $\mathcal{F}^{\mathcal{B}}$ which have skew-symmetric Ricci tensor or, equivalently,
so that $(T^*M,g_{\nabla,\phi})$ is Ricci flat and hence are trivial Ricci solitons. 
In Theorem~\ref{T4.9} and Theorem~\ref{T4.10}, we give elements of $\mathcal{F}^{\mathcal{B}}$
which are  non-trivial affine gradient Ricci solitons and which are not of Type~$\mathcal{A}$. 
Finally, Theorem~\ref{T4.12} gives a complete classification, up to affine equivalence, of homogeneous affine gradient Ricci solitons. 
The associated deformed Riemannian extensions then form
a large family of non-conformally flat self-dual gradient Ricci and Yamabe solitons.

\subsection{ Local versus global geometry}
There is always a question of the local versus the global geometry of an object in differential geometry. 
Let $\mathcal{M}$ be a locally homogeneous affine surface. The
dimension of the space of germs of affine Killing vector fields (resp. affine gradient Ricci solitons) 
is constant on $\mathcal{M}$. Let $X_i$ (resp. $f_i$) be affine Killing vector fields (resp.
define affine gradient Ricci solitons) which are defined on a connected open subset $\mathcal{O}$ of $\mathcal{M}$.
If there is a non-empty subset $\mathcal{O}_1\subset\mathcal{M}$ with $X_1=X_2$ (resp. $f_1=f_2$) on
$\mathcal{O}_1$, then $X_1=X_2$ (resp. $f_1=f_2$) on $\mathcal{O}$. Thus questions of passing
from the local to the global for either affine Killing vector fields or affine gradient Ricci solitons involve the
holonomy action of the fundamental group; there is no obstruction if $\mathcal{M}$ is assumed
simply connected. We shall not belabor the point and ignore the question of passing from local to global
henceforth.

\subsection{Moduli spaces} The moduli space $\mathcal{Z}_{\mathcal{A}}$ of isomorphism classes of
germs of Type~$\mathcal{A}$ structures is 2-dimensional 
\cite{KV03}. The strata of $\mathcal{Z}_{\mathcal{A}}$ where $\operatorname{Rank}\{\rho\}=1$ is handled 
by Theorem~\ref{T3.8}; it contains two components isomorphic to $[0,\infty)$ and $(-\infty,0]$.
 In a
subsequent paper  \cite{BGG16}, we will discuss the strata of $\mathcal{Z}_{\mathcal{A}}$ where 
$\rho$ is non-degenerate of signature $(p,q)$; these may be identified with closed simply connected
subsets of $\mathbb{R}^2$.  Let $\mathcal{Z}_{\mathcal{B}}$ be the
moduli space of Type-$\mathcal{B}$ structures. The strata of $\mathcal{Z}_{\mathcal{B}}$
where $\dim\{\mathfrak{K}(\mathcal{M})\}=4$ is handled by Theorem~\ref{T3.8}
since all these surfaces are also of Type~$\mathcal{A}$. We will also show in \cite{BGG16} that the
strata of $\mathcal{Z}_{\mathcal{B}}$
where $2\le\dim\{\mathfrak{K}(\mathcal{M})\}\le3$ is a real analytic manifold
with non-trivial topology.

\section{Homogeneous affine surfaces}\label{S2}

In this section, we use the structure groups described above acting on the families $\mathcal{F}^{\mathcal{A}}$ and
$\mathcal{F}^{\mathcal{B}}$ to perform certain normalizations.
Recall that a $k$-tensor $T$ is said to be {\it symmetric} if
$T(v_1,\dots,v_k)=T(v_{\sigma(1)},\dots,v_{\sigma(k)})$ for every
 permutation $\sigma$, and that
$T$ is said to be {\it recurrent} if $\nabla T=\omega\otimes T$ for
some $1$-form $\omega$.

\subsection{Rank 2 symmetric Ricci tensor}
We will show presently that $\rho=\rho^s$ if $\mathcal{M}$ is Type~$\mathcal{A}$.
However, $\rho$ need not be symmetric if $\mathcal{M}$ is Type~$\mathcal{B}$.
\begin{lemma}
\ \begin{enumerate}
\item Let $\mathcal{M}\in\mathcal{F}^{\mathcal{A}}$ satisfy $\operatorname{Rank}\{\rho\}=2$.
Then $\rho$ determines a flat pseudo-Riemannian metric on $\mathcal{M}$.
\item Let $\mathcal{M}\in\mathcal{F}^{\mathcal{B}}$ satisfy $\operatorname{Rank}\{\rho^s\}=2$.
\begin{enumerate}
\item $\rho^s$ defines a 
pseudo-Riemannian metric of constant Gauss curvature~$\kappa$. 
\item $\kappa=0$ if and only if $\rho_{22}=0$. 
\item If $\nabla$ is projectively flat, then the metric defined by $\rho^s$ has $\kappa\neq 0$. 
\end{enumerate}\end{enumerate}
\end{lemma}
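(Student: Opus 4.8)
The plan is to treat the two types separately; in each case the statement reduces to a short computation with the (partially normalized) Christoffel symbols.

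\textbf{Assertion (1).} For $\mathcal{M}\in\mathcal{F}^{\mathcal{A}}$ the Christoffel symbols are constant, so every component of $R$, and hence of $\rho$, is constant. First one checks that $\rho=\rho^s$: contracting the first Bianchi identity in dimension $2$ yields $\rho_{ij}-\rho_{ji}=\partial_j\Gamma_{i\ell}{}^\ell-\partial_i\Gamma_{j\ell}{}^\ell$, which vanishes when $\Gamma$ is constant (this recovers Lemma~\ref{L2.2}). Since $\operatorname{Rank}\{\rho\}=2$, the constant symmetric tensor $g:=\rho$ is a pseudo-Riemannian metric on $\mathbb{R}^2$ with constant coefficients; its Levi-Civita Christoffel symbols $\tfrac12 g^{k\ell}(\partial_i g_{j\ell}+\partial_j g_{i\ell}-\partial_\ell g_{ij})$ therefore vanish identically, so $\nabla^g$ is flat and hence $(M,g)$ is flat.

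\textbf{Assertion (2): reduction to a conformal problem.} For $\mathcal{M}\in\mathcal{F}^{\mathcal{B}}$ write $\Gamma_{ij}{}^k=(x^1)^{-1}C_{ij}{}^k$. Then $\partial_2\Gamma_{ij}{}^k=0$ and $\partial_1\Gamma_{ij}{}^k=-(x^1)^{-2}C_{ij}{}^k$, and substituting into the definition of $\rho$ gives $\rho_{ij}=(x^1)^{-2}\tilde\rho_{ij}$, where $\tilde\rho_{ij}=-C_{ij}{}^1+\delta_{i1}C_{\ell j}{}^\ell+C_{\ell m}{}^\ell C_{ij}{}^m-C_{im}{}^\ell C_{\ell j}{}^m$ is a constant matrix. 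Hence $\rho^s=(x^1)^{-2}\tilde\rho^s$ with $\tilde\rho^s$ constant and symmetric; when $\operatorname{Rank}\{\rho^s\}=2$, $\tilde\rho^s$ is nondegenerate, so $\rho^s$ is a pseudo-Riemannian metric on $\mathbb{R}^+\times\mathbb{R}$ that is globally conformal, with factor $(x^1)^{-2}=e^{-2\log x^1}$, to the flat constant-coefficient metric $g_0:=\tilde\rho^s$.

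\textbf{Assertions (2a) and (2b).} Apply the two-dimensional conformal-change formula $\kappa=e^{-2u}(\kappa_{g_0}-\Delta_{g_0}u)$ with $u=-\log x^1$. As $g_0$ has constant coefficients, $\kappa_{g_0}=0$ and $\Delta_{g_0}u=g_0^{ij}\partial_i\partial_j u=(\tilde\rho^s)^{11}(x^1)^{-2}$, so $\kappa=-(\tilde\rho^s)^{11}$, a constant; this proves (2a). Inverting the $2\times2$ matrix $\tilde\rho^s$ gives $(\tilde\rho^s)^{11}=\tilde\rho^s_{22}/\det\{\tilde\rho^s\}$; moreover $\tilde\rho^s_{22}=\tilde\rho_{22}$ since in dimension $2$ the skew-symmetric part of $\rho$ affects only the off-diagonal entry, and $\det\{\tilde\rho^s\}\neq0$ by hypothesis. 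Hence $\kappa=0$ if and only if $\tilde\rho_{22}=0$, i.e.\ if and only if $\rho_{22}=0$, which is (2b).

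\textbf{Assertion (2c).} I would prove the contrapositive: assuming $\kappa=0$ — equivalently $\rho_{22}=0$ by (2b) — I show that $\nabla$ cannot be projectively flat. First put $C$ in the normal form of Lemma~\ref{L2.8}. In dimension $2$ the curvature is completely determined by the Ricci tensor, $R(X,Y)Z=\rho(Y,Z)X-\rho(X,Z)Y$, so projective flatness of $\nabla$ is equivalent to the vanishing of the projective Cotton tensor, a first-order expression in $\nabla\rho$; since $\nabla_k\rho_{ij}$ equals $(x^1)^{-3}$ times a constant tensor, this becomes a polynomial system in the normalized components of $C$. The claim is then that this system together with $\rho_{22}=0$ forces $\det\{\rho^s\}=0$, contradicting $\operatorname{Rank}\{\rho^s\}=2$. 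Carrying out this finite elimination — which, thanks to Lemma~\ref{L2.8}, involves only a handful of free parameters — is the one genuine computation in the lemma and is the step I expect to be the main obstacle; the rest is bookkeeping.
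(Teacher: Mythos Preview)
Your proof of Assertion~(1) is essentially the same as the paper's: both observe that $\rho$ has constant coefficients (you via constant $\Gamma$, the paper via translation-invariance) and conclude flatness.

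For Assertion~(2a) your route genuinely differs. The paper argues by homogeneity: the $ax+b$ group acts transitively on $\mathbb{R}^+\times\mathbb{R}$ preserving $\rho^s$, hence $\rho^s$ is a homogeneous metric and therefore has constant Gauss curvature. You instead write $\rho^s=(x^1)^{-2}\tilde\rho^s$ as a conformal rescaling of a flat constant-coefficient metric and apply the two-dimensional conformal-change formula to obtain $\kappa=-(\tilde\rho^s)^{11}$ explicitly. The paper's argument is more conceptual and immediately generalizes to any $G$-invariant tensor; your computation is more hands-on but has the advantage of producing the actual value of $\kappa$, which you then use cleanly for (2b). For (2b) and (2c) the paper simply cites \cite{CGV10}, so your explicit treatment of (2b) and your sketch of (2c) are more self-contained. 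Your plan for (2c)---assume $\rho_{22}=0$, impose the vanishing of the projective Cotton tensor, and derive $\det\{\rho^s\}=0$ via the normal form of Lemma~\ref{L2.8}---is the right shape; as you note, the elimination is a finite but nontrivial computation, and you should expect to split into the subcases $C_{22}{}^1\ne0$ and $C_{22}{}^1=0$ when carrying it out.
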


\begin{proof} If $\mathcal{M}$ is Type~$\mathcal{A}$, then $\rho$ is symmetric.
If $\operatorname{Rank}\{\rho^s\}=2$, then $\rho^s$ defines a pseudo-Riemannian metric.
If $\mathcal{M}\in\mathcal{F}^{\mathcal{A}}$, then $\rho^s$ is invariant under the translation group
$(a,b):(x^1,x^2)\rightarrow(x^1+a,x^2+b)$. This group acts transitively on $\mathbb{R}^2$ and hence the
components of $\rho$ are constant. This implies $\rho$ is flat. If $M\in\mathcal{F}^{\mathcal{B}}$,
then $\rho^s$ is invariant under the $ax+b$ group $(a,b):(x^1,x^2)\rightarrow(ax^1,ax^2+b)$. This
non-Abelian 2-dimensional Lie group acts transitively on $\mathbb{R}^+\times\mathbb{R}$ and
hence $\rho^s$ has constant Gauss curvature $\kappa$. This proves Assertion~1 and Assertion~2a. The proof of the remaining assertions follows as in \cite{CGV10}.
\end{proof}

\subsection{Type~$\mathcal{A}$ homogeneous affine surfaces}
We omit the proof of the following result as it is a direct computation (see also \cite{CGV10}):
\begin{lemma}\label{L2.2}
Let $\mathcal{M}\in\mathcal{F}^{\mathcal{A}}$. Then
\begin{enumerate}
\item The Ricci tensor of $\mathcal{M}$ is symmetric ($\rho_{12}=\rho_{21}$) and one has:
\smallbreak $\rho_{11}=(\Gamma_{11}{}^1-\Gamma_{12}{}^2) \Gamma_{12}{}^2
+\Gamma_{11}{}^2 (\Gamma_{22}{}^2-\Gamma_{12}{}^1)$,
\smallbreak $\rho_{12}=\Gamma_{12}{}^1 \Gamma_{12}{}^2-\Gamma_{11}{}^2 \Gamma_{22}{}^1$,
\smallbreak $\rho_{22}=-(\Gamma_{12}{}^1)^2+\Gamma_{22}{}^2 \Gamma_{12}{}^1
+(\Gamma_{11}{}^1-\Gamma_{12}{}^2) \Gamma_{22}{}^1$.
\medbreak\item $\nabla\rho$ is symmetric ($\rho_{12;1}=\rho_{21;1}=\rho_{11;2}$, $\rho_{12;2}=\rho_{21;2}=\rho_{22;1}$) and one has:
\smallbreak
$\rho_{11;1}=2\{-(\Gamma_{11}{}^1)^2 \Gamma_{12}{}^2+\Gamma_{11}{}^1(\Gamma_{11}{}^2
 (\Gamma_{12}{}^1-\Gamma_{22}{}^2)+(\Gamma_{12}{}^2)^2)$
 \smallbreak\qquad\qquad
 $+\Gamma_{11}{}^2(\Gamma_{11}{}^2\Gamma_{22}{}^1-\Gamma_{12}{}^1\Gamma_{12}{}^2)\}$,
 \smallbreak
 $\rho_{12;1}=2 \left(\Gamma_{11}{}^2 \left((\Gamma_{12}{}^1)^2-\Gamma_{12}{}^1 \Gamma_{22}{}^2+\Gamma_{12}{}^2\Gamma_{22}{}^1\right)-\Gamma_{11}{}^1\Gamma_{12}{}^1\Gamma_{12}{}^2\right)$,
 \smallbreak
 $\rho_{12;2}=2 \left(\Gamma_{12}{}^2 (-\Gamma_{11}{}^1\Gamma_{22}{}^1-\Gamma_{12}{}^1\Gamma_{22}{}^2+\Gamma_{12}{}^2\Gamma_{22}{}^1)+\Gamma_{11}{}^2\Gamma_{12}{}^1\Gamma_{22}{}^1\right)$,
 \smallbreak
 $\rho_{22;2}= 2 \{\Gamma_{22}{}^1(\Gamma_{22}{}^2 (\Gamma_{12}{}^2-\Gamma_{11}{}^1)+\Gamma_{11}{}^2\Gamma_{22}{}^1)+(\Gamma_{12}{}^1)^2 \Gamma_{22}{}^2$,
 \smallbreak\qquad\qquad
 $ -\Gamma_{12}{}^1(\Gamma_{12}{}^2\Gamma_{22}{}^1+(\Gamma_{22}{}^2)^2)\}$.
\end{enumerate}\end{lemma}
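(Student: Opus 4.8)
The statement in question is Lemma~\ref{L2.2}, which gives explicit formulas for the components of $\rho$ and $\nabla\rho$ for a Type~$\mathcal{A}$ surface, and asserts that both tensors are symmetric. The authors explicitly say they omit the proof because it is a direct computation, so my proposal is essentially to describe how that computation goes.

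My plan is as follows. For $\mathcal{M}\in\mathcal{F}^{\mathcal{A}}$, the Christoffel symbols $\Gamma_{ij}{}^k$ are constants, so all derivatives $\partial_\ell\Gamma_{ij}{}^k$ vanish and the curvature operator reduces to the purely algebraic expression $R(\partial_i,\partial_j)\partial_k = (\Gamma_{jk}{}^\ell\Gamma_{i\ell}{}^m - \Gamma_{ik}{}^\ell\Gamma_{j\ell}{}^m)\partial_m$. First I would substitute into $\rho_{ij} = \rho(\partial_i,\partial_j) = \operatorname{Tr}\{\partial_\ell \mapsto R(\partial_\ell,\partial_i)\partial_j\}$, i.e. the coefficient of $\partial_\ell$ in $R(\partial_\ell,\partial_i)\partial_j$, summed over $\ell$. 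Writing this out for $(i,j)\in\{(1,1),(1,2),(2,1),(2,2)\}$ gives quadratic polynomials in the six constants $\Gamma_{11}{}^1,\Gamma_{11}{}^2,\Gamma_{12}{}^1,\Gamma_{12}{}^2,\Gamma_{22}{}^1,\Gamma_{22}{}^2$; collecting terms yields the three displayed formulas, and comparing the $(1,2)$ and $(2,1)$ expressions shows they coincide, giving symmetry of $\rho$. (An alternative, more conceptual route to symmetry: in dimension two the antisymmetric part of $\rho$ is controlled by $d$ of the trace-of-torsion one-form and by the trace part of the curvature, which vanishes here because $\Gamma$ is constant; but the brute-force comparison is quicker and self-contained.)

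For Assertion~2, I would compute $\rho_{ij;k} = \partial_k\rho_{ij} - \Gamma_{ki}{}^\ell\rho_{\ell j} - \Gamma_{kj}{}^\ell\rho_{i\ell}$. Since the $\rho_{ij}$ are themselves constants (being polynomials in the constant $\Gamma$'s), the term $\partial_k\rho_{ij}$ drops out and $\nabla\rho$ is again a purely algebraic—now cubic—polynomial expression in the six Christoffel constants, obtained by contracting the already-derived quadratic $\rho$ against $\Gamma$. One then reads off the four independent components $\rho_{11;1}$, $\rho_{12;1}$, $\rho_{12;2}$, $\rho_{22;2}$ as displayed, and checks the symmetry relations $\rho_{12;1}=\rho_{11;2}$ and $\rho_{12;2}=\rho_{22;1}$ directly from the contraction formula together with the already-established symmetry $\rho_{12}=\rho_{21}$; symmetry in the last two slots is automatic since $\rho$ is symmetric and $\nabla$ is torsion free, so the only content is the totally symmetric behaviour, which follows because $\rho_{ij;k}$ is manifestly symmetric in $i,j$ and the second Bianchi-type identity in this flat-Christoffel setting forces the remaining exchange.

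The only ``obstacle'' here is bookkeeping: there is nothing deep, but the expressions for $\nabla\rho$ are genuine cubic polynomials in six variables, so the danger is purely one of sign and index errors when expanding and collecting. A clean way to organize the work—and the way I would actually do it—is to let a symbolic algebra system carry out the substitution into $R$, the trace to get $\rho$, and the covariant-derivative formula to get $\nabla\rho$, then manually verify the symmetry cancellations, which are the only statements of mathematical (as opposed to computational) substance. Since the lemma's formulas are exactly what such a computation outputs, no further argument is needed.
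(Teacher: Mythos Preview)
Your proposal is correct and matches the paper's approach exactly: the authors explicitly omit the proof as a direct computation, and your outline of that computation (curvature from constant $\Gamma$, trace to get $\rho$, then $\rho_{ij;k}=-\Gamma_{ki}{}^\ell\rho_{\ell j}-\Gamma_{kj}{}^\ell\rho_{i\ell}$ since $\partial_k\rho_{ij}=0$) is precisely what is required. One small caveat: your appeal to a ``second Bianchi-type identity'' for the total symmetry of $\nabla\rho$ is misplaced---in dimension two the differential Bianchi identity is vacuous---so the symmetry $\rho_{ij;k}=\rho_{ik;j}$ genuinely needs the direct verification you also propose.
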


If $\mathcal{M}\in\mathcal{F}^{\mathcal{A}}_1$, then
we can always make a linear change of coordinates to replace $\mathcal{M}$ by
an isomorphic surface where $\rho=\rho_{22}dx^2\otimes dx^2$, i.e.
$\rho_{11}=\rho_{12}=\rho_{21}=0$. The following is a useful technical result:

\begin{lemma}\label{L2.3}
Let $\mathcal{M}\in\mathcal{F}^{\mathcal{A}}$.
The following conditions are equivalent:
\begin{enumerate}
\item $\rho(\mathcal{M})=\rho_{22}dx^2\otimes dx^2$.
\item $\Gamma_{11}{}^2=0$ and $\Gamma_{12}{}^2=0$.
\item $\rho=\{
\Gamma_{12}{}^1(\Gamma_{22}{}^2-\Gamma_{12}{}^1)+\Gamma_{11}{}^1\Gamma_{22}{}^1\}
dx^2\otimes dx^2$.
\end{enumerate}\end{lemma}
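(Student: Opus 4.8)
The plan is to prove Lemma~\ref{L2.3} by a direct appeal to the formulas in Lemma~\ref{L2.2} for the components of $\rho$, using a cycle of implications $(1)\Rightarrow(2)\Rightarrow(3)\Rightarrow(1)$, with the only genuine content lying in the step $(1)\Rightarrow(2)$.

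\medbreak\noindent\textbf{Step 1: $(2)\Rightarrow(3)$.} Assuming $\Gamma_{11}{}^2=0$ and $\Gamma_{12}{}^2=0$, I substitute directly into the three formulas of Lemma~\ref{L2.2}(1). Each term of $\rho_{11}$ carries a factor $\Gamma_{11}{}^2$ or $\Gamma_{12}{}^2$, so $\rho_{11}=0$; similarly every term of $\rho_{12}$ carries such a factor, so $\rho_{12}=\rho_{21}=0$; and $\rho_{22}=-(\Gamma_{12}{}^1)^2+\Gamma_{22}{}^2\Gamma_{12}{}^1+\Gamma_{11}{}^1\Gamma_{22}{}^1$ after dropping the $\Gamma_{12}{}^2\Gamma_{22}{}^1$ term, which rearranges to $\Gamma_{12}{}^1(\Gamma_{22}{}^2-\Gamma_{12}{}^1)+\Gamma_{11}{}^1\Gamma_{22}{}^1$. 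This gives (3), and in particular the right-hand side of (3) is $\rho_{22}$.

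\medbreak\noindent\textbf{Step 2: $(3)\Rightarrow(1)$.} This is immediate: (3) asserts that $\rho$ equals a single term $\{\cdots\}dx^2\otimes dx^2$, i.e.\ $\rho_{11}=\rho_{12}=\rho_{21}=0$, which is exactly statement (1) with $\rho_{22}$ equal to the stated expression.

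\medbreak\noindent\textbf{Step 3: $(1)\Rightarrow(2)$.} This is the crux. Assume $\rho_{11}=\rho_{12}=0$ (symmetry of $\rho$ is automatic by Lemma~\ref{L2.2}(1)). From the formula for $\rho_{12}$ we get $\Gamma_{12}{}^1\Gamma_{12}{}^2=\Gamma_{11}{}^2\Gamma_{22}{}^1$. The idea is to feed this and $\rho_{11}=0$ into suitable combinations and rule out the possibility $\Gamma_{11}{}^2\ne0$ or $\Gamma_{12}{}^2\ne0$; one must be careful because the relations are quadratic and there could be spurious branches. The cleanest route I expect to work is to observe that the hypothesis $\mathcal{M}\in\mathcal{F}^{\mathcal{A}}$ gives, by Lemma~\ref{L2.2}(2), the additional first-order relations among the $\Gamma$'s, and in fact $\rho_{11}=\rho_{12}=0$ together with these force $\rho_{11;1}=\rho_{12;1}=0$ as well; examining those covariant-derivative formulas (each of which is again visibly divisible by $\Gamma_{11}{}^2$ or $\Gamma_{12}{}^2$ after using $\rho_{11}=\rho_{12}=0$) lets one extract a linear (rather than merely quadratic) consequence. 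Concretely, I anticipate that after a short elimination one shows that $(\Gamma_{11}{}^2,\Gamma_{12}{}^2)\ne(0,0)$ would force $\rho_{22}$ to vanish as well, hence $\rho\equiv0$, contradicting $\nabla$ not flat (Lemma~\ref{L2.2} together with the non-flat hypothesis built into $\mathcal{F}^{\mathcal{A}}$, recalling that for Type~$\mathcal{A}$ surfaces flatness is equivalent to $\rho=0$). The main obstacle is organizing this elimination so that no algebraic case is overlooked; I would handle it by treating $\Gamma_{11}{}^2=0$, $\Gamma_{11}{}^2\ne0$ as the two branches and, in the second, solving $\rho_{12}=0$ for $\Gamma_{22}{}^1=\Gamma_{12}{}^1\Gamma_{12}{}^2/\Gamma_{11}{}^2$, substituting into $\rho_{11}=0$ to get a relation that (combined with non-flatness) pins down $\Gamma_{12}{}^2=0$, and then returning to $\rho_{12}=0$ to conclude $\Gamma_{11}{}^2\Gamma_{22}{}^1=0$ and hence (since the non-flat case will be seen to need $\Gamma_{22}{}^1$) a contradiction, forcing $\Gamma_{11}{}^2=0$ after all. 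Once both $\Gamma_{11}{}^2=0$ and $\Gamma_{12}{}^2=0$ are established, (2) holds and the cycle is complete.
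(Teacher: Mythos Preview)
Your Steps~1 and~2 are correct and match the paper. The problem is in Step~3, the implication $(1)\Rightarrow(2)$, where your concrete elimination contains two errors. First, substituting $\Gamma_{22}{}^1=\Gamma_{12}{}^1\Gamma_{12}{}^2/\Gamma_{11}{}^2$ into $\rho_{11}=0$ accomplishes nothing: by Lemma~\ref{L2.2}(1), the expression for $\rho_{11}$ does not contain $\Gamma_{22}{}^1$. Second, your claim that ``the non-flat case will be seen to need $\Gamma_{22}{}^1$'' is simply false; the surfaces $\mathcal{M}_1$, $\mathcal{M}_2^c$, $\mathcal{M}_3^c$ of Definition~\ref{D3.5} all have $\Gamma_{22}{}^1=0$ and are non-flat. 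So your branch $\Gamma_{11}{}^2\ne0$ is not closed. (The detour through the $\nabla\rho$ formulas of Lemma~\ref{L2.2}(2) is also not needed and you never actually use it.)

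Your overall strategy is salvageable: in the branch $\Gamma_{11}{}^2\ne0$, substitute your expression for $\Gamma_{22}{}^1$ into $\rho_{22}$ rather than $\rho_{11}$; a one-line computation gives $\rho_{22}=(\Gamma_{12}{}^1/\Gamma_{11}{}^2)\,\rho_{11}=0$, the desired contradiction. In the branch $\Gamma_{11}{}^2=0$, if $\Gamma_{12}{}^2\ne0$ then $\rho_{12}=0$ forces $\Gamma_{12}{}^1=0$, whence $\rho_{11}=0$ gives $\Gamma_{11}{}^1=\Gamma_{12}{}^2$, and then $\rho_{22}=(\Gamma_{11}{}^1-\Gamma_{12}{}^2)\Gamma_{22}{}^1=0$, again a contradiction. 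The paper organizes the same algebra more cleanly by splitting on $\Gamma_{22}{}^1$ instead: if $\Gamma_{22}{}^1\ne0$ (rescaled to $1$), $\rho_{12}=0$ gives $\Gamma_{11}{}^2=\Gamma_{12}{}^1\Gamma_{12}{}^2$ and then one checks $\rho_{11}=\Gamma_{12}{}^2\rho_{22}$, forcing $\Gamma_{12}{}^2=0$ and hence $\Gamma_{11}{}^2=0$; if $\Gamma_{22}{}^1=0$, then $\rho_{22}=\Gamma_{12}{}^1(\Gamma_{22}{}^2-\Gamma_{12}{}^1)\ne0$ gives $\Gamma_{12}{}^1\ne0$ and $\Gamma_{22}{}^2-\Gamma_{12}{}^1\ne0$, so $\rho_{12}=0$ yields $\Gamma_{12}{}^2=0$ and then $\rho_{11}=\Gamma_{11}{}^2(\Gamma_{22}{}^2-\Gamma_{12}{}^1)=0$ yields $\Gamma_{11}{}^2=0$.
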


\begin{proof} We assume Assertion~1 holds and apply Lemma~\ref{L2.2}.
\begin{enumerate}
\item Suppose first $\Gamma_{22}{}^1$ is non-zero. 
By rescaling, we may suppose $\Gamma_{22}{}^1=1$.
To ensure $\rho_{12}=0$, we set 
 $\Gamma_{11}{}^2=\Gamma_{12}{}^1\Gamma_{12}{}^2$ and obtain
$\rho_{11}=\Gamma_{12}{}^2\rho_{22}$. 
Since $\rho_{22}\ne0$, $\Gamma_{12}{}^2=0$ and
hence $\Gamma_{11}{}^2=0$ as well.
\item Suppose next that $\Gamma_{22}{}^1=0$.
Setting $\rho_{12}=0$ yields $\Gamma_{12}{}^1\Gamma_{12}{}^2=0$. 
Since $\rho_{22}=\Gamma_{12}{}^1(\Gamma_{22}{}^2-\Gamma_{12}{}^1)$, $\Gamma_{12}{}^1\ne0$. 
Thus $\Gamma_{12}{}^2=0$. We now compute that 
$\rho_{11}=\Gamma_{11}{}^2(\Gamma_{22}{}^2-\Gamma_{12}{}^1)$
and $\rho_{22}=\Gamma_{12}{}^1(\Gamma_{22}{}^2-\Gamma_{12}{}^1)$. Consequently,
 $\Gamma_{11}{}^2=0$.
 \end{enumerate}
Thus in either eventuality
we obtain $\Gamma_{11}{}^2=0$ and $\Gamma_{12}{}^2=0$ so
Assertion~1 implies Assertion~2. The proof that Assertion~2 implies
Assertion~3 is a direct computation. The proof that Assertion~3 implies Assertion~1
is immediate. 
\end{proof}

\begin{definition}\label{D2.4}
\rm
Let $\mathcal{M}\in\mathcal{F}^{\mathcal{A}}_1$.
Choose $X\in T_PM$ so $\rho(X,X)\ne0$ and set 
$$
\alpha_X(\mathcal{M}):=\nabla\rho(X,X;X)^2\cdot\rho(X,X)^{-3}
\text{ and }\epsilon_X(\mathcal{M}):=\operatorname{Sign}\{\rho(X,X)\}=\pm1\,.
$$
\end{definition}

\begin{lemma}\label{L2.5}
Let $\mathcal{M}\in\mathcal{F}^{\mathcal{A}}_1$.
\begin{enumerate}\item There exists a $1$-form $\omega$ so $\nabla^k\rho=(k+1)!\omega^k\otimes\rho$ for any $k$. 
\item $\rho$ is recurrent.
\item $\operatorname{Ker}\{\rho\}$ is a parallel distribution.
\item $\alpha_X(\mathcal{M})$ and $\epsilon_X(\mathcal{M})$ are independent of the choice of $X$
and determine invariants we will denote by $\alpha(\mathcal{M})$ and $\epsilon(\mathcal{M})$.\end{enumerate}
\end{lemma}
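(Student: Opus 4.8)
The plan is to pass at once to a normal form. By the observation preceding Lemma~\ref{L2.3} we may, after a linear change of coordinates (which leaves the Christoffel symbols constant and the domain $\mathbb{R}^2$ unchanged), assume that $\mathcal{M}\in\mathcal{F}^{\mathcal{A}}_1$ is presented so that $\rho=\rho_{22}\,dx^2\otimes dx^2$; Lemma~\ref{L2.3} then gives $\Gamma_{11}{}^2=\Gamma_{12}{}^2=0$ together with $\rho_{22}=\Gamma_{12}{}^1(\Gamma_{22}{}^2-\Gamma_{12}{}^1)+\Gamma_{11}{}^1\Gamma_{22}{}^1\neq0$. Since all objects in the statement are defined tensorially (Definition~\ref{D2.4} is coordinate-free), it is enough to verify the four assertions in this frame; and because $\Gamma$ is constant, every component of $\rho$ and of its iterated covariant derivatives is a constant function, which will also supply the independence from the base point needed in Assertion~4.

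First I would feed $\Gamma_{11}{}^2=\Gamma_{12}{}^2=0$ into the formulas of Lemma~\ref{L2.2}(2); an inspection shows that every component of $\nabla\rho$ vanishes except $\rho_{22;2}=-2\Gamma_{22}{}^2\rho_{22}$. Setting $\omega:=-\Gamma_{22}{}^2\,dx^2$, this reads $\nabla\rho=2\,\omega\otimes\rho$, which is Assertion~2 (recurrence) and the case $k=1$ of Assertion~1. For Assertion~1 in general I would argue by induction on $k$: using $\Gamma_{11}{}^2=\Gamma_{12}{}^2=0$ one checks that the only component of $\nabla^{k}\rho$ which can be nonzero is the ``all $2$'s'' component $c_k:=(\nabla^{k}\rho)_{22;2\cdots2}$, and that passing from $\nabla^{k}\rho$ (a $(k{+}2)$-covariant tensor) to $\nabla^{k+1}\rho$ produces exactly $k+2$ Christoffel corrections, each equal to $-\Gamma_{22}{}^2 c_k$, so that $c_{k+1}=-(k+2)\,\Gamma_{22}{}^2\,c_k$ with $c_0=\rho_{22}$. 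Hence $c_k=(k+1)!\,(-\Gamma_{22}{}^2)^{k}\rho_{22}$, i.e. $\nabla^{k}\rho=(k+1)!\,\omega^{k}\otimes\rho$. (Equivalently: $\omega$ has constant coefficients, so $\omega_{i;j}=\Gamma_{ji}{}^2\Gamma_{22}{}^2$ has $\omega_{2;2}=(\Gamma_{22}{}^2)^2=\omega_2\omega_2$ as its only nonzero entry, that is $\nabla\omega=\omega\otimes\omega$; the identity then propagates through $\nabla$ by the Leibniz rule, the factor $k+2$ arising as $k$ copies of $\omega$ from differentiating the $\omega$-factors plus the factor $2$ from differentiating $\rho$.)

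Assertion~3 is then immediate: in the normal form $\operatorname{Ker}\{\rho\}=\operatorname{Span}\{\partial_{x^1}\}$, and $\nabla_{\partial_{x^i}}\partial_{x^1}=\Gamma_{i1}{}^1\partial_{x^1}$ because $\Gamma_{11}{}^2=\Gamma_{21}{}^2=0$, so $\operatorname{Ker}\{\rho\}$ is parallel (this is also the general fact that the kernel of a symmetric recurrent $2$-tensor is parallel). Finally, for Assertion~4 take $X=a\partial_{x^1}+b\partial_{x^2}\in T_PM$ with $\rho(X,X)=\rho_{22}b^2\neq0$, so $b\neq0$; since $\rho$ and $\nabla\rho$ are both concentrated in the $dx^2$ direction we get $\nabla\rho(X,X;X)=\rho_{22;2}\,b^3$, and therefore $\alpha_X(\mathcal{M})=(\rho_{22;2})^2b^6(\rho_{22})^{-3}b^{-6}=(\rho_{22;2})^2(\rho_{22})^{-3}=4(\Gamma_{22}{}^2)^2(\rho_{22})^{-1}$, while $\epsilon_X(\mathcal{M})=\operatorname{Sign}\{\rho_{22}b^2\}=\operatorname{Sign}\{\rho_{22}\}$; both are visibly independent of $a,b$, hence of $X$, and being built from the constants $\Gamma_{22}{}^2$ and $\rho_{22}$ they are independent of $P$ as well (and are preserved by affine isomorphisms, being built functorially from $\rho$ and $\nabla\rho$). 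I expect the only point requiring care to be the bookkeeping in the induction for Assertion~1 — tracking which components of $\nabla^{k}\rho$ survive and recovering the precise constant $(k+1)!$; once Lemma~\ref{L2.3} is in hand everything else is routine.
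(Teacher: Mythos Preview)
Your proof is correct and follows essentially the same approach as the paper: normalize coordinates so that $\rho=\rho_{22}\,dx^2\otimes dx^2$, invoke Lemma~\ref{L2.3} to obtain $\Gamma_{11}{}^2=\Gamma_{12}{}^2=0$, read off Assertions~1--3 from this normal form, and verify Assertion~4 by plugging in a general $X$. The paper's proof is terser (it simply asserts that Assertion~1 ``follows from Lemma~\ref{L2.2} and Lemma~\ref{L2.3}'' and for Assertion~4 writes $\rho=c_0\,\omega\otimes\omega$, $\nabla\rho=c_1\,\omega\otimes\omega\otimes\omega$ and observes that the powers of $\omega(X)$ cancel), whereas you spell out the induction for $\nabla^k\rho$ and the explicit value $\alpha=4(\Gamma_{22}{}^2)^2\rho_{22}^{-1}$; but the underlying argument is the same.
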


\begin{proof} Choose coordinates on $\mathbb{R}^2$ so that $\rho=\rho_{22}dx^2\otimes dx^2$.
Assertion~1 then follows from Lemma~\ref{L2.2} and Lemma~\ref{L2.3}, and Assertion~2 then follows
from Assertion~1. We have $\ker(\rho)=\operatorname{Span}\{\partial_1\}$.
Lemma~\ref{L2.3} then shows $\operatorname{Span}\{\partial_1\}$ is a parallel distribution
as desired. Use Assertion~1 to express $\rho=c_0\omega\otimes\omega$ and
$\nabla\rho=c_1\omega\otimes\omega\otimes\omega$. One verifies 
$\alpha_X(\mathcal{M})=(\omega(X)^3c_1)^2(\omega(X)^2c_0)^{-3}$ and
$\epsilon_X(\mathcal{M})=\operatorname{Sign}\{\omega(X)^2c_0\}$
are independent of $X$.
\end{proof}

\begin{remark}\rm
Clearly $\alpha(\mathcal{M})=0$ if and only if $\mathcal{M}$ is symmetric. 
Furthermore, if $\alpha(\mathcal{M})\ne0$, then 
$\epsilon(\mathcal{M})=\operatorname{Sign}(\alpha(\mathcal{M}))$ so
$\epsilon$ is determined by $\alpha$ except in the symmetric setting. We will show
subsequently in Theorem~\ref{T3.8} that $\alpha$ and $\epsilon$ determine
the local isomorphism class of a Type~$\mathcal{A}$ surface with 
$\operatorname{Rank}\{\rho\}=1$.
\end{remark}

\subsection{Type~$\mathcal{B}$ homogeneous affine surfaces}
We begin by extending Lemma~\ref{L2.2} to this setting. We omit
the proof of the following result as it is a direct computation~(see \cite{CGV10}).

\begin{lemma}\label{L2.7}
Let $\mathcal{M}\in\mathcal{F}^{\mathcal{B}}$ so $\Gamma=(x^1)^{-1}C$.
\begin{enumerate}
\item $\rho_{11}= 
(x^1)^{-2}\{{C_{12}{}^2 (C_{11}{}^1}-C_{12}{}^2+1)+C_{11}{}^2 (C_{22}{}^2-C_{12}{}^1)\}$.
\item $\rho_{12}=(x^1)^{-2}\{-C_{11}{}^2 C_{22}{}^1+C_{12}{}^1
 C_{12}{}^2+C_{22}{}^2\}$.
\item $\rho_{21}= 
(x^1)^{-2}\{-C_{11}{}^2 C_{22}{}^1+C_{12}{}^1
 C_{12}{}^2-C_{12}{}^1\}$.
\item $\rho_{22}=(x^1)^{-2}\{C_{11}{}^1 C_{22}{}^1-(C_{12}{}^1)^2+C_{12}{}^1 C_{22}{}^2-C_{12}{}^2
 C_{22}{}^1-C_{22}{}^1\}$.
\end{enumerate}
\end{lemma}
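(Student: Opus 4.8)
The plan is to reduce everything to the standard coordinate expression for the Ricci tensor of an affine connection and then to exploit the very rigid form $\Gamma_{ij}{}^k=(x^1)^{-1}C_{ij}{}^k$ with $C$ constant. Starting from the definitions given in the introduction, since $[\partial_{x^i},\partial_{x^j}]=0$ one has
$R(\partial_{x^a},\partial_{x^b})\partial_{x^c}=\{\partial_{x^a}\Gamma_{bc}{}^e-\partial_{x^b}\Gamma_{ac}{}^e+\Gamma_{bc}{}^d\Gamma_{ad}{}^e-\Gamma_{ac}{}^d\Gamma_{bd}{}^e\}\partial_{x^e}$,
and tracing over the first slot as prescribed by the definition of $\rho$ gives $\rho_{bc}=\partial_{x^a}\Gamma_{bc}{}^a-\partial_{x^b}\Gamma_{ac}{}^a+\Gamma_{bc}{}^d\Gamma_{ad}{}^a-\Gamma_{ac}{}^d\Gamma_{bd}{}^a$, with the Einstein convention in force over $a$ and $d$. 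This is the only input needed; the rest is substitution and bookkeeping.

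Next I would substitute $\Gamma_{ij}{}^k=(x^1)^{-1}C_{ij}{}^k$. Since $C$ is constant, the only partial derivative that contributes is $\partial_{x^1}(x^1)^{-1}=-(x^1)^{-2}$, so $\partial_{x^a}\Gamma_{bc}{}^a=-(x^1)^{-2}C_{bc}{}^1$ and $\partial_{x^b}\Gamma_{ac}{}^a=-\delta_{b1}(x^1)^{-2}(C_{1c}{}^1+C_{2c}{}^2)$, while the two quadratic terms contribute $(x^1)^{-2}C_{bc}{}^d(C_{1d}{}^1+C_{2d}{}^2)$ and $-(x^1)^{-2}C_{ac}{}^dC_{bd}{}^a$ respectively. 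In particular every term carries the common factor $(x^1)^{-2}$, which already explains the shape asserted in the statement. It then remains to expand the four scalars $\rho_{11},\rho_{12},\rho_{21},\rho_{22}$ by letting $a$ and $d$ range over $\{1,2\}$, to use the symmetry $C_{ij}{}^k=C_{ji}{}^k$, and to collect terms; each of the four identities follows after cancellation (for instance the $(C_{11}{}^1)^2$ terms cancel in $\rho_{11}$). I would also point out that there is no reason for $\rho$ to be symmetric here, since the $\delta_{b1}$ factor treats the indices $1$ and $2$ asymmetrically; indeed one reads off $\rho_{12}-\rho_{21}=(x^1)^{-2}(C_{12}{}^1+C_{22}{}^2)$, which is consistent with the earlier remark that the Ricci tensor of a Type~$\mathcal{B}$ surface need not be symmetric.

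The computation presents no conceptual obstacle; it is entirely mechanical once the coordinate formula for $\rho$ is in place, and the only genuine danger is a sign or index error in the double contraction $C_{ac}{}^dC_{bd}{}^a$ and in the divergence term $\partial_{x^b}\Gamma_{ac}{}^a$. For this reason I would organize the index sums carefully — or simply cross-check the four formulas against \cite{CGV10} or with a computer algebra system — which is exactly why the authors are content to omit the details.
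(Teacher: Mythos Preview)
Your proposal is correct and takes exactly the approach the paper intends: the paper explicitly says the proof is omitted as a direct computation (referring to \cite{CGV10}), and what you outline is precisely that computation—writing $\rho_{bc}=\partial_{a}\Gamma_{bc}{}^a-\partial_{b}\Gamma_{ac}{}^a+\Gamma_{bc}{}^d\Gamma_{ad}{}^a-\Gamma_{ac}{}^d\Gamma_{bd}{}^a$, substituting $\Gamma=(x^1)^{-1}C$, and collecting terms. Your observation that $\rho_{12}-\rho_{21}=(x^1)^{-2}(C_{12}{}^1+C_{22}{}^2)$ also matches the condition used later in the paper for $\rho$ to be symmetric.
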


We use the coordinate transformation $(x^1,x^2)\rightarrow (x^1,\varepsilon x^1+x^2)$ to partially
normalize the Christoffel symbols. The following result will be used in the 
proof of Lemma~\ref{L3.15} subsequently.
\begin{lemma}\label{L2.8}
Let $\mathcal{M}\in\mathcal{F}^{\mathcal{B}}$.
\begin{enumerate}
\item If $C_{22}{}^1\ne0$, then by replacing $x^2$ by $x^2-\varepsilon x^1$,
we may assume that $C_{12}{}^1=0$.
\item If $C_{12}{}^1=0$, $C_{22}{}^1=0$, $C_{22}{}^2=0$,
and $C_{11}{}^1-2C_{12}{}^2\ne0$,
then by replacing $x^2$ by $x^2-\varepsilon x^1$,
we may assume that $C_{11}{}^2=0$ without changing the other Christoffel symbols.
\end{enumerate}
\end{lemma}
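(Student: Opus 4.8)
\textbf{Proof proposal for Lemma~\ref{L2.8}.}

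The plan is to carry out both normalizations by directly tracking how the Christoffel symbols transform under the coordinate change $\psi_\varepsilon\colon(x^1,x^2)\mapsto(x^1,\varepsilon x^1+x^2)$, which is an affine map of $\mathbb{R}^+\times\mathbb{R}$ preserving the product structure, and which therefore sends $\mathcal{F}^{\mathcal{B}}$ to itself. First I would record the effect of $\psi_\varepsilon$ on the coefficients $C_{ij}{}^k$: since $\Gamma=(x^1)^{-1}C$ and the transformation has constant Jacobian $\left(\begin{smallmatrix}1&0\\\varepsilon&1\end{smallmatrix}\right)$ with inverse $\left(\begin{smallmatrix}1&0\\-\varepsilon&1\end{smallmatrix}\right)$, the factor $(x^1)^{-1}$ is preserved (because $\tilde x^1=x^1$), so the whole computation reduces to the linear action of a constant matrix on the constant tensor $C$, augmented by the inhomogeneous term coming from $\partial_{\tilde x^i}\partial_{\tilde x^j}$ being re-expressed; the only source of a nontrivial affine (non-linear in $C$) contribution is the $(x^1)^{-1}$ itself via $\partial_{x^1}\big((x^1)^{-1}\big)$ feeding into the change-of-variables formula for connections.

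For Assertion~1, after computing the transformed coefficient $\tilde C_{12}{}^1$ as an explicit function of $\varepsilon$ and the original $C_{ij}{}^k$, I expect it to be an affine (degree $\le 1$) polynomial in $\varepsilon$ whose leading coefficient is a nonzero multiple of $C_{22}{}^1$. Since $C_{22}{}^1\ne0$ by hypothesis, one can solve $\tilde C_{12}{}^1=0$ for $\varepsilon$, which is the claim. For Assertion~2, the hypotheses $C_{12}{}^1=C_{22}{}^1=C_{22}{}^2=0$ are exactly what is needed so that the coordinate change does \emph{not} reintroduce these vanishing symbols nor disturb the others: with those three coefficients zero, the transformation law should show that $\tilde C_{ij}{}^k=C_{ij}{}^k$ for all $(i,j,k)\ne(1,1,2)$, while $\tilde C_{11}{}^2$ becomes an affine polynomial in $\varepsilon$ with leading coefficient a nonzero multiple of $C_{11}{}^1-2C_{12}{}^2$; the hypothesis $C_{11}{}^1-2C_{12}{}^2\ne0$ then lets us choose $\varepsilon$ so that $\tilde C_{11}{}^2=0$. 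The verification that the other symbols are genuinely unchanged — in particular that $C_{12}{}^1$, $C_{22}{}^1$, $C_{22}{}^2$ stay zero — is the step one has to check carefully, but it is a routine substitution.

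The main obstacle is purely bookkeeping: one must apply the transformation rule for Christoffel symbols,
$$
\tilde\Gamma_{ij}{}^k=\frac{\partial \tilde x^k}{\partial x^c}\,\frac{\partial x^a}{\partial \tilde x^i}\,\frac{\partial x^b}{\partial \tilde x^j}\,\Gamma_{ab}{}^c+\frac{\partial \tilde x^k}{\partial x^c}\,\frac{\partial^2 x^c}{\partial \tilde x^i\,\partial \tilde x^j},
$$
and since $\psi_\varepsilon$ is affine the second term vanishes, so the computation is linear in $C$ once one also re-expresses $(x^1)^{-1}$ in the new coordinates — and here $\tilde x^1=x^1$, so nothing happens. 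Thus the whole lemma comes down to computing the $3\times 3$ (or $6\times 6$ on the space of symmetric $C$) matrix of the induced linear action and reading off two entries; I expect no conceptual difficulty, only the need to be accurate with indices. One can alternatively verify the claims a posteriori by plugging the normalized $C$ back into Lemma~\ref{L2.7} and checking consistency.
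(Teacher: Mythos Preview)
Your proposal is correct and follows essentially the same approach as the paper: both use the shear $(x^1,x^2)\mapsto(x^1,\varepsilon x^1+x^2)$ and track how the $C_{ij}{}^k$ transform, finding that ${}^uC_{12}{}^1={}^xC_{12}{}^1-\varepsilon\,{}^xC_{22}{}^1$ for Assertion~1 and ${}^uC_{11}{}^2={}^xC_{11}{}^2+\varepsilon({}^xC_{11}{}^1-2\,{}^xC_{12}{}^2)$ (with the other five unchanged) for Assertion~2. The only cosmetic difference is that the paper computes $\nabla_{\partial_i^u}\partial_j^u$ directly from $\partial_1^u=\partial_1^x-\varepsilon\partial_2^x$, $\partial_2^u=\partial_2^x$, whereas you invoke the Jacobian form of the transformation law; these are the same calculation.
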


\begin{proof}
Let $(u^1,u^2):=(x^1,\varepsilon x^1+x^2)$. We then have:
\begin{eqnarray*}
&&du^1=dx^1,\quad du^2=\varepsilon dx^1+dx^2,\quad 
 \partial_1^u=\partial_1^x-\varepsilon\partial_2^x,\quad
 \partial_2^u=\partial_2^x,\\
&&\nabla_{\partial_1^u}\partial_2^u
=\nabla_{\partial_1^x-\varepsilon\partial_2^x}\partial_2^x
=({}^x\Gamma_{12}{}^1-{}^x\Gamma_{22}{}^1\varepsilon)\partial_1^x+\star\partial_2^x\\
&&\quad=({}^x\Gamma_{12}{}^1-{}^x\Gamma_{22}{}^1\varepsilon)\partial_1^u
+\star\partial_2^u,\\
&&{}^uC_{12}{}^1={}^xC_{12}{}^1-\varepsilon\cdot{}^xC_{22}{}^1\,.
\end{eqnarray*}
We prove Assertion~1 by taking $\varepsilon={}^xC_{12}{}^1({}^xC_{22}{}^1)^{-1}$.

Assume $C_{12}{}^1=0$, $C_{22}{}^1=0$, $C_{22}{}^2=0$,
and $C_{11}{}^1-2C_{12}{}^2\ne0$. We compute
\medbreak\qquad
$\nabla_{\partial_1^u}\partial_1^u=
{}^x\Gamma_{11}{}^1\partial_1^x+{}^x\Gamma_{11}{}^2\partial_2
-2\epsilon\cdot{}^x\Gamma_{12}{}^1\partial_1^x-2\epsilon\cdot{}^x\Gamma_{12}{}^2\partial_2^x$
\smallbreak\qquad\qquad\qquad\quad
$+\epsilon^2\cdot{}^x\Gamma_{22}{}^1\partial_1^x
+\epsilon^2\cdot{}^x\Gamma_{22}{}^2\partial_2^x$
\smallbreak\qquad\qquad\qquad
$={}^x\Gamma_{11}{}^1\partial_1^x+{}^x\Gamma_{11}{}^2\partial_2^x
-2\epsilon\cdot{}^x\Gamma_{12}{}^2\partial_2^x$
\smallbreak\qquad\qquad\qquad
$={}^x\Gamma_{11}{}^1(\partial_1^x-\epsilon\partial_2^x)+
({}^x\Gamma_{11}{}^2+\epsilon\{{}^x\Gamma_{11}{}^1-2\cdot{}^x\Gamma_{12}{}^2\})\partial_2^x$,
\medbreak\qquad
$\nabla_{\partial_1^u}\partial_2^u=
{}^x\Gamma_{12}{}^1\partial_1^x+{}^x\Gamma_{12}{}^2\partial_2^x-\epsilon\cdot
{}^x\Gamma_{22}{}^1\partial_1^x-\epsilon\cdot{}^x\Gamma_{22}{}^2\partial_2^x$
\smallbreak\qquad\qquad\qquad
$={}^x\Gamma_{12}{}^2\partial_2^x$,
\medbreak\qquad
$\nabla_{\partial_2^u}\partial_2^u=
{}^x\Gamma_{22}{}^1\partial_1^x+{}^x\Gamma_{22}{}^2\partial_2^x=0$,
\medbreak\qquad
${}^u\Gamma_{11}{}^1={}^x\Gamma_{11}{}^1,\quad
{}^u\Gamma_{11}{}^2={}^x\Gamma_{11}{}^2+\epsilon({}^x\Gamma_{11}{}^1
-2\cdot{}^x\Gamma_{12}{}^2)$,\quad
${}^u\Gamma_{12}{}^1=0$,
\medbreak\qquad${}^u\Gamma_{12}{}^2={}^x\Gamma_{12}{}^2$,\quad
${}^u\Gamma_{22}{}^1=0,\qquad\qquad\qquad\qquad
\qquad\qquad\quad{}^u\Gamma_{22}{}^2=0$.
\medbreak\noindent
We set $\epsilon=-({}^x\Gamma_{11}{}^1-2{}^x\Gamma_{12}{}^2)^{-1}\cdot
{}^x\Gamma_{11}{}^2$ to establish Assertion~2.
\end{proof}

\begin{remark}\label{R2.9}\rm
We apply Lemma~\ref{L2.8} to simplify the expressions of the Ricci tensor:
\begin{enumerate}
\item If $C_{22}{}^1\ne0$ we may assume that $C_{12}{}^1=0$ and express
\smallbreak $\rho_{11}=
(x^1)^{-2}\{{C_{12}{}^2 (C_{11}{}^1}-C_{12}{}^2+1)+C_{11}{}^2 C_{22}{}^2\}$,
\smallbreak $\rho_{12}=(x^1)^{-2}\{-C_{11}{}^2 C_{22}{}^1+C_{22}{}^2\}$,\qquad
$\rho_{21}= 
(x^1)^{-2}\{-C_{11}{}^2 C_{22}{}^1\}$,
\smallbreak $ \rho_{22}=(x^1)^{-2}\{(C_{11}{}^1-C_{12}{}^2-1)C_{22}{}^1\}$.
\medbreak\item
If $C_{12}{}^1=0$, $C_{22}{}^1=0$, $C_{22}{}^2=0$,
and $C_{11}{}^1-2C_{12}{}^2\ne0$,
we may assume that $C_{11}{}^2=0$ and express:
\smallbreak $\rho_{11}=(x^1)^{-2}\{C_{12}{}^2 (C_{11}{}^1-C_{12}{}^2+1)\},\quad
\rho_{12}=0,\quad \rho_{21}= 0,\quad \rho_{22}=0$.
\end{enumerate}\end{remark}

The Ricci tensor of a Type~$\mathcal{B}$ surface is not symmetric in general. 
Indeed, it is symmetric if and only if $\Gamma_{22}{}^2=-\Gamma_{12}{}^1$. 
Consequently, this family of surfaces is not projectively flat in general, in contrast to 
Type~$\mathcal{A}$ surfaces. We decompose the Ricci tensor into its symmetric and its 
alternating parts in the form
 $\rho=\rho^s+\rho^a$.
If $\operatorname{Rank}\{\rho^s\}=0$, then $\rho=\rho^a$
(i.e., $\rho_{ij}=-\rho_{ji}$ for all $1\le i,j\le 2$).
That case will be examined in detail in Lemma~\ref{L4.6}; we postpone the analysis until Section~\ref{S4}
since it will be crucial to our discussion of affine gradient Ricci solitons on Type~$\mathcal{B}$
surfaces and it is appropriate to introduce the necessary notation then.
We now examine the case that $\rho$ is symmetric, $\nabla\rho$ is symmetric, and
 $\operatorname{Rank}\{\rho^s\}=1$. 

\begin{lemma}\label{L2.10}
Let $\mathcal{M}\in\mathcal{F}^{\mathcal{B}}$.
The following conditions are equivalent:
 \begin{enumerate}
 \item We have that $C_{12}{}^1=0$, $C_{22}{}^1=0$, and $C_{22}{}^2=0$.
 \item We have that
 \begin{enumerate}
\item $\rho=(x^1)^{-2}(1+C_{11}{}^1-C_{12}{}^2)C_{12}{}^2dx^1\otimes dx^1$.
\item $\nabla\rho=(x^1)^{-3}(-2(1+C_{11}{}^1)(1+C_{11}{}^1-C_{12}{}^2)C_{12}{}^2)dx^1\otimes dx^1\otimes dx^1$.
\item $\alpha(\mathcal{M}):=\rho_{11;1}^2/\rho_{11}^3
=4(1+C_{11}{}^1)^2/\{(1+C_{11}{}^1-C_{12}{}^2)C_{12}{}^2\}$.
 \end{enumerate}
 \item $\rho$ is symmetric, recurrent, and of rank $1$ and $\nabla\rho$ is symmetric.
 \end{enumerate}
\end{lemma}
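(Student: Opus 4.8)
The plan is to establish the cycle of implications $(1)\Rightarrow(2)\Rightarrow(3)\Rightarrow(1)$. For $(1)\Rightarrow(2)$ I substitute $C_{12}{}^1=C_{22}{}^1=C_{22}{}^2=0$ into the four formulas of Lemma~\ref{L2.7}: three components vanish and the fourth collapses (the $C_{11}{}^2$-term drops, being multiplied by $C_{22}{}^2-C_{12}{}^1=0$), which is Assertion~(2a). Since here $\rho=\rho^s$ and $\mathcal{M}$ is not flat, the coefficient $(1+C_{11}{}^1-C_{12}{}^2)C_{12}{}^2$ is nonzero, so $\operatorname{Rank}\{\rho\}=1$. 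On differentiating, only $\rho_{11;1}=\partial_1\rho_{11}-2\Gamma_{11}{}^k\rho_{k1}$ can survive, and with $\Gamma=(x^1)^{-1}C$ this yields Assertion~(2b); Assertion~(2c) is then the quotient $\rho_{11;1}^2\,\rho_{11}^{-3}$, well defined by the rank statement. For $(2)\Rightarrow(3)$: by (2a) $\rho$ is a multiple of $dx^1\otimes dx^1$, hence symmetric, and the nonvanishing of the denominator in (2c) gives $\operatorname{Rank}\{\rho\}=1$; by (2b) $\nabla\rho$ is a multiple of $dx^1\otimes dx^1\otimes dx^1$, hence totally symmetric; and comparing scalar coefficients in (2a)--(2b) gives $\nabla\rho=\omega\otimes\rho$ with $\omega=-2(1+C_{11}{}^1)(x^1)^{-1}dx^1$, so $\rho$ is recurrent.

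For $(3)\Rightarrow(1)$, Lemma~\ref{L2.7} gives $\rho_{12}-\rho_{21}=(x^1)^{-2}(C_{22}{}^2+C_{12}{}^1)$, so symmetry of $\rho$ is equivalent to $C_{22}{}^2=-C_{12}{}^1$; I assume this throughout. The first step is to show $C_{22}{}^1=0$. If not, Lemma~\ref{L2.8}(1) normalizes $C_{12}{}^1=0$ — condition~(3) is coordinate free, so this costs nothing — and then symmetry forces $C_{22}{}^2=-C_{12}{}^1=0$. Working with the Ricci tensor of Remark~\ref{R2.9}(1), the two independent components of ``$\nabla\rho$ totally symmetric'', namely $\rho_{11;2}=\rho_{12;1}$ and $\rho_{12;2}=\rho_{22;1}$, force $C_{11}{}^2=0$ and $C_{11}{}^1=1+2C_{12}{}^2$ respectively; imposing $\operatorname{Rank}\{\rho\}=1$ then leaves only $C_{12}{}^2\in\{0,-2\}$. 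The value $C_{12}{}^2=0$ makes $\rho\equiv0$, contradicting non-flatness, while $C_{12}{}^2=-2$ makes $\rho_{12}=0$ but $\rho_{12;2}\ne0$, so $\nabla\rho$ cannot be of the form $\omega\otimes\rho$, contradicting recurrence. Hence $C_{22}{}^1=0$.

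With $C_{22}{}^1=0$ and $C_{22}{}^2=-C_{12}{}^1$, Lemma~\ref{L2.7} gives $\rho_{22}=-2(C_{12}{}^1)^2(x^1)^{-2}$ and $\rho_{12}=C_{12}{}^1(C_{12}{}^2-1)(x^1)^{-2}$. Computing $\rho_{22;1}$ and $\rho_{12;2}$ and imposing the symmetry $\rho_{22;1}=\rho_{12;2}$, the factor $(C_{12}{}^1)^2$ cancels and the identity reduces to $6(C_{12}{}^1)^2=0$; thus $C_{12}{}^1=0$ and hence $C_{22}{}^2=0$, which is Assertion~(1). Then $(1)\Rightarrow(2)$ recovers the displayed formulas, closing the cycle.

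The only substantive direction is $(3)\Rightarrow(1)$, and even there the computations are light once $\rho$ has been put into normalized form and $C_{22}{}^2=-C_{12}{}^1$ has been imposed: the single symmetry identity $\rho_{22;1}=\rho_{12;2}$ carries most of the weight, the rank hypothesis is needed only to isolate the degenerate values $C_{12}{}^2\in\{0,-2\}$ that arise when $C_{22}{}^1\ne0$, recurrence is used only to dismiss $C_{12}{}^2=-2$, and non-flatness only to dismiss $\rho\equiv0$. Everything else is substitution into Lemma~\ref{L2.7} followed by one covariant differentiation, exactly the ``direct computation'' pattern of the lemmas preceding this one; the main practical obstacle is simply keeping the Christoffel-symbol bookkeeping straight and being careful that the coordinate normalization of Lemma~\ref{L2.8} is invoked only for the coordinate-free hypotheses in~(3).
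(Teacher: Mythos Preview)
Your proof is correct. The implications $(1)\Rightarrow(2)\Rightarrow(3)$ match the paper's treatment, and your computations for $(3)\Rightarrow(1)$ check out: the symmetry identity $\rho_{11;2}=\rho_{12;1}$ does force $C_{11}{}^2=0$ (after the normalization $C_{12}{}^1=0$), the second identity $\rho_{12;2}=\rho_{22;1}$ does yield $C_{11}{}^1=1+2C_{12}{}^2$, the rank condition then gives $C_{12}{}^2\in\{0,-2\}$, and the final identity $\rho_{22;1}=\rho_{12;2}$ in the case $C_{22}{}^1=0$ indeed collapses to $6(C_{12}{}^1)^2=0$. Your use of the coordinate change from Lemma~\ref{L2.8} is also legitimate, since it is invoked only inside the contradiction branch $C_{22}{}^1\ne0$, while the surviving branch $C_{22}{}^1=0$ is handled in the original coordinates.

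However, your route for $(3)\Rightarrow(1)$ differs from the paper's. The paper organizes the case split around the \emph{form of $\rho$} rather than the value of $C_{22}{}^1$: writing $\rho=(x^1)^{-2}\varepsilon(a_1dx^1+a_2dx^2)^{\otimes2}$ from the rank-$1$ hypothesis, it first disposes of the case $a_2\ne0$ by normalizing $\rho=\varepsilon(x^1)^{-2}dx^2\otimes dx^2$ and reading off from $\nabla_{\partial_1}\rho$, $\nabla_{\partial_2}\rho$ the incompatible pair $C_{12}{}^2=-1$ and $C_{12}{}^2=0$; then in the case $a_2=0$ it applies recurrence directly to conclude $C_{12}{}^1=C_{22}{}^1=0$. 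So the paper front-loads the rank-$1$ hypothesis and leans on recurrence, whereas you front-load the Christoffel normalization and lean on total symmetry of $\nabla\rho$, bringing in rank and recurrence only to kill the two residual values $C_{12}{}^2\in\{0,-2\}$. The paper's argument is a bit shorter because the normalized form of $\rho$ makes $\nabla\rho$ nearly transparent; your approach is more systematic variable-elimination and arguably makes clearer exactly which hypothesis is doing which work.
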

\begin{proof}
A direct computation shows that Assertion~1 implies Assertion~2. It
is immediate that
Assertion~2 implies Assertion~3. Assume Assertion~3 holds
so $\rho$ is symmetric. This implies
$C_{12}{}^1+C_{22}{}^2=0$. Since $\rho$ is symmetric and has rank $1$,
we may express 
$$\rho=(x^1)^{-2}\varepsilon(a_1dx^1+a_2dx^2)\otimes(a_1dx^1+a_2dx^2)$$
where $\varepsilon=\pm1$.
There are two possibilities.
\smallbreak\noindent{\bf Case 1:} $a_2\ne0$. By making the linear change of coordinates
$\tilde x^2=a_1x^1+a_2x^2$, we obtain a new Type~$\mathcal{B}$ surface with 
$\rho=(x^1)^{-2}\varepsilon dx^2\otimes dx^2$. Since $\nabla\rho$ is recurrent, we have
$\nabla\rho=(x^1)^{-2}\omega\otimes dx^2\otimes dx^2$. Since $\nabla\rho$
is symmetric, we have $\omega=c\otimes dx^2$ for some constant $c$.
Thus the only non-zero component of $\nabla\rho$ is $\rho_{22;2}$.
We compute
\medbreak
$\nabla_{\partial_1}\rho=-(x^1)^{-3}\varepsilon\{
2(1+C_{12}{}^2)dx^2\otimes dx^2+C_{11}{}^2dx^1\otimes dx^2
+C_{11}{}^2dx^2\otimes dx^1\}$,
\medbreak
$\nabla_{\partial_2}\rho=-(x^1)^{-3}\varepsilon\{
2C_{22}{}^2dx^2\otimes dx^2+C_{12}{}^2dx^1\otimes dx^2
+C_{12}{}^2dx^2\otimes dx^1\}$.
\medbreak\noindent
This implies $C_{12}{}^2=-1$ and $C_{12}{}^2=0$ which is not possible.
\medbreak\noindent{\bf Case 2: $a_2=0$.} We have 
$\rho=(x^1)^{-2}\varrho dx^1\otimes dx^1$ for $\varrho\ne0$. Then
\begin{eqnarray*}
&&\nabla_{\partial_1}\rho=\star dx^1\otimes dx^1
-\varrho(x^1)^{-3}C_{12}{}^1(dx^2\otimes dx^1+dx^1\otimes dx^2)\},\\
&&\nabla_{\partial_2}\rho=-\varrho(x^1)^{-3}\{2C_{12}{}^1dx^1\otimes dx^1+
C_{22}{}^1(dx^1\otimes dx^2+dx^2\otimes dx^1)\}\,.
\end{eqnarray*}
Since $\rho$ is recurrent, $C_{12}{}^1=0$ and $C_{22}{}^1=0$.
As $\rho$ is symmetric, $C_{22}{}^2=-C_{12}{}^1=0$.
Thus we obtain the relations of Assertion~1. 
\end{proof}

If $\rho=\rho_{11} dx^1\otimes dx^1$, then 
$\ker(\rho)=\operatorname{Span}\{\partial_2\}$. 
If, moreover, $C_{12}{}^1=0$, $C_{22}{}^1=0$, and $C_{22}{}^2=0$, then $\ker(\rho)$ is a parallel distribution which is totally geodesic.

\section{Affine Killing vector fields}\label{S3}

If $X$ is a smooth vector field on $M$, let $\Phi_t^X$ be the local flow
defined by $X$. We refer to Kobayashi-Nomizu \cite[Chapter VI]{KN63} for the proof of the following
result.

\begin{lemma}\label{L3.1} Let $\mathcal{M}=(M,\nabla)$ be an affine surface. 
\begin{enumerate}
\item The following 3 conditions are equivalent and if any is satisfied, $X$ is said to be
an {\rm affine Killing vector field}:
 \begin{enumerate}
\item $(\Phi_t^X)_*\circ\nabla=\nabla\circ(\Phi_t^X)_*$ on the appropriate domain.
\item The Lie derivative $\mathcal{L}_X(\nabla)$ of $\nabla$ vanishes.
\item $[X,\nabla_YZ]-\nabla_Y[X,Z]-\nabla_{[X,Y]}Z=0$ for all $Y,Z\in C^\infty(TM)$.
\end{enumerate}
\item  Let $\mathfrak{K}(\mathcal{M})$ be the set of affine Killing vector fields. The
Lie bracket gives $\mathfrak{K}(\mathcal{M})$ the structure of a real Lie algebra. Furthermore,
if $X\in\mathfrak{K}(\mathcal{M})$, if $X(P)=0$, and if $\nabla X(P)=0$, then $X\equiv0$.
\item If $\mathcal{M}$ is an affine surface, then $\dim\{\mathfrak{K}(\mathcal{M})\}\le 6$; equality holds
if and only if $\mathcal{M}$ is flat.
\end{enumerate}\end{lemma}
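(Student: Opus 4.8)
The statement to prove is Lemma~\ref{L3.1}, which is really three standard facts about affine Killing vector fields. Since the excerpt itself refers to Kobayashi--Nomizu for the proof, my plan is to indicate how each piece follows from the general machinery rather than to reprove everything from scratch; but I will sketch the essential arguments.

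\textbf{Part (1): equivalence of the three conditions.} The plan is to recall that $\mathcal{L}_X$ is a derivation commuting with contractions, so that for the $(1,2)$-tensor field $\nabla$ one has the classical identity
$$(\mathcal{L}_X\nabla)(Y,Z) = [X,\nabla_YZ] - \nabla_{[X,Y]}Z - \nabla_Y[X,Z],$$
which is exactly the expression in (c); this gives (b)$\iff$(c) immediately. For (a)$\iff$(b), I would differentiate the one-parameter family $t\mapsto (\Phi^X_{-t})_*\circ\nabla\circ(\Phi^X_t)_*$ at $t=0$: its derivative is by definition $\mathcal{L}_X\nabla$, and since $\Phi^X_t$ is a flow the family satisfies an autonomous ODE, so it is constant (equal to $\nabla$) for all $t$ iff its derivative vanishes. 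This is the standard ``flow preserves a tensor iff Lie derivative vanishes'' argument applied to a connection.

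\textbf{Part (2): $\mathfrak{K}(\mathcal{M})$ is a Lie algebra, and the unique-continuation/rigidity statement.} That $\mathcal{L}$ is linear in $X$ and that $\mathcal{L}_{[X,Y]} = [\mathcal{L}_X,\mathcal{L}_Y]$ shows $\mathfrak{K}(\mathcal{M})$ is a Lie subalgebra of vector fields. For the rigidity claim, the key step is that an affine Killing field $X$ satisfies a prolonged first-order linear ODE system along any curve: writing things in normal coordinates at $P$, or more invariantly, one shows $\nabla_Y\nabla_Z X$ is expressible linearly in $X$ and $\nabla X$ via the curvature (schematically $\nabla^2 X = R(\cdot,X)\cdot + (\nabla R)$-terms acting on $X$). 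Hence the pair $(X,\nabla X)$ satisfies a linear first-order ODE along curves issuing from $P$; if it vanishes at $P$ it vanishes identically on a neighborhood, and then by connectedness on all of $M$. I would cite Kobayashi--Nomizu, Ch.~VI for this prolongation.

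\textbf{Part (3): the bound $\dim\mathfrak{K}(\mathcal{M})\le 6$ with equality iff flat.} The plan here is to use the evaluation map $X\mapsto (X(P),\nabla X(P))\in T_PM\oplus \operatorname{End}(T_PM)$, which by Part (2) is \emph{injective}. In dimension $m=2$ this target has dimension $2 + 4 = 6$, giving the bound. For the equality case: if $\mathcal{M}$ is flat then in flat coordinates the affine group $\operatorname{Aff}(\mathbb{R}^2)$ acts, giving a $6$-dimensional Lie algebra of affine Killing fields, so equality holds. Conversely, if $\dim\mathfrak{K}=6$ the evaluation map is an isomorphism, so the ``linear part'' map $X\mapsto \nabla X(P)$ is onto $\operatorname{End}(T_PM)$; the main obstacle — and the only real computation — is to show that the integrability conditions for the prolonged system (which involve $R$ and $\nabla R$ at $P$ contracted against the linear parts) force $R(P)=0$, and since $P$ is arbitrary, $R\equiv0$. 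I would again invoke the analysis in Kobayashi--Nomizu rather than carrying out the curvature bookkeeping in detail. The bulk of the later paper will be devoted to the intermediate values $2\le\dim\mathfrak{K}\le 4$, so here it suffices to pin down the endpoints and record that $\dim\ge 2$ always holds for the homogeneous surfaces of interest (from Remark~\ref{R1.3}, each of $\mathcal{F}^{\mathcal{A}}$, $\mathcal{F}^{\mathcal{B}}$ carries a transitive $2$-dimensional group).
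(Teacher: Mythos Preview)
Your proposal is correct and aligns with the paper's approach: the paper itself does not prove Lemma~\ref{L3.1} at all, merely citing Kobayashi--Nomizu \cite[Chapter VI]{KN63}, whereas you have gone further and sketched the standard arguments (the flow/Lie-derivative equivalence, the prolongation of the Killing system, and the jet-evaluation bound) that underlie that reference. Your sketch is sound; the only minor quibble is that the second-order prolongation for affine Killing fields is exactly $\nabla_Y\nabla_Z X - \nabla_{\nabla_YZ}X = -R(X,Y)Z$ (no $\nabla R$ terms appear), which makes the ODE argument in Part~(2) cleaner than you suggest.
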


The relations of Assertion 1c will be called {\it Killing equations}.
The following result characterizes Types~$\mathcal{A}$ and $\mathcal{B}$ 
homogeneous affine surfaces by means of the Lie Algebra structure of their affine Killing vector fields. It is a restatement of Arias-Marco and Kowalski \cite[Lemma~1 and Lemma~2]{AMK08}.
\begin{lemma}\label{L3.2}
Let $\mathcal{M}=(M,\nabla)$ be a homogeneous affine surface.
\begin{enumerate}
\item $\mathcal{M}$ is of Type~$\mathcal{A}$ if and only if there exists an Abelian sub-algebra $\mathfrak{g}$
of $\mathfrak{K}(\mathcal{M})$ of rank 2, i.e. there exist $X,Y\in\mathfrak{K}(\mathcal{M})$ which
are linearly independent at some point $P$ of $M$ so that $[X,Y]=0$.
\item $\mathcal{M}$ is of Type~$\mathcal{B}$ if and only if there exists a non-Abelian sub-algebra $\mathfrak{g}$
of $\mathfrak{K}(\mathcal{M})$ of rank 2, i.e there exist $X,Y\in\mathfrak{K}(\mathcal{M})$ which
are linearly independent at some point $P$ of $M$ so that $[X,Y]=Y$.
\end{enumerate}\end{lemma}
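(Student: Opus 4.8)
\textbf{Proof proposal for Lemma~\ref{L3.2}.}

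The plan is to prove both directions of each assertion by translating the condition ``$\mathcal{M}$ is of Type~$\mathcal{A}$ (resp. Type~$\mathcal{B}$)'' into the existence of a suitable rank-2 subalgebra of $\mathfrak{K}(\mathcal{M})$ via the coordinate normalizations of Theorem~\ref{T1.1}. For the forward direction of Assertion~1, if $\mathcal{M}$ is of Type~$\mathcal{A}$, then by definition there are local coordinates $(x^1,x^2)$ in which the $\Gamma_{ij}{}^k$ are constant; the coordinate vector fields $\partial_1,\partial_2$ are then affine Killing vector fields (this is exactly Remark~\ref{R1.3}, where $\mathbb{R}^2$ acts by translations preserving $\nabla$), they are linearly independent everywhere, and $[\partial_1,\partial_2]=0$, so $\mathfrak{g}=\operatorname{Span}\{\partial_1,\partial_2\}$ is the desired Abelian subalgebra. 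Symmetrically, for the forward direction of Assertion~2, if $\mathcal{M}$ is of Type~$\mathcal{B}$ there are coordinates in which $\Gamma_{ij}{}^k=(x^1)^{-1}C_{ij}{}^k$, and by Remark~\ref{R1.3} the $ax+b$ group acts transitively preserving $\nabla$; its Lie algebra is spanned by the generators of the one-parameter subgroups $(a,0)$ and $(1,b)$, namely $X=x^1\partial_1+x^2\partial_2$ and $Y=\partial_2$, which satisfy $[X,Y]=-Y$ (rescale $X$ or $Y$ by a sign to get $[X,Y]=Y$), and these are linearly independent at any point. So the ``only if'' halves are essentially bookkeeping with the group actions already recorded.

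The substantive content is the two converse directions, and here I would invoke the Arias-Marco--Kowalski analysis directly, since the lemma is stated as a restatement of \cite[Lemma~1 and Lemma~2]{AMK08}. For Assertion~1: suppose $X,Y\in\mathfrak{K}(\mathcal{M})$ are linearly independent at $P$ with $[X,Y]=0$. Since $\mathcal{M}$ is homogeneous, the orbit of the (local) action generated by $X$ and $Y$ is open, and because $X,Y$ commute we may choose local coordinates $(x^1,x^2)$ near $P$ (the ``flow box'' coordinates for a commuting pair) in which $X=\partial_1$ and $Y=\partial_2$. The Killing equations of Lemma~\ref{L3.1}(1c) applied to $X=\partial_1$ and $Y=\partial_2$ force $\partial_1\Gamma_{ij}{}^k=0$ and $\partial_2\Gamma_{ij}{}^k=0$, so all Christoffel symbols are constant and $\mathcal{M}$ is of Type~$\mathcal{A}$ (or flat, which by convention is excluded from Type~$\mathcal{A}$ but still satisfies Assertion~1 of Theorem~\ref{T1.1}; one should be slightly careful about the flat case in the statement). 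For Assertion~2: suppose $X,Y\in\mathfrak{K}(\mathcal{M})$ are linearly independent at $P$ with $[X,Y]=Y$. This bracket relation identifies $\operatorname{Span}\{X,Y\}$ with the Lie algebra of the $ax+b$ group, and one can choose coordinates realizing the standard action: locally $Y=\partial_2$ and $X=x^1\partial_1+x^2\partial_2$ (the normal form for a non-Abelian rank-2 transformation group on a surface). Feeding these two Killing vector fields into the Killing equations yields a first-order linear PDE system for the $\Gamma_{ij}{}^k$ whose general solution is $\Gamma_{ij}{}^k=(x^1)^{-1}C_{ij}{}^k$ with $C$ constant, i.e. $\mathcal{M}$ is of Type~$\mathcal{B}$.

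The main obstacle is the converse of Assertion~2: unlike the Abelian case, where a commuting pair of vector fields can always be straightened simultaneously to $\partial_1,\partial_2$ by the classical simultaneous-rectification theorem, in the non-Abelian case one must verify that a pair $X,Y$ with $[X,Y]=Y$ and linearly independent at $P$ can be put into the specific normal form $Y=\partial_2$, $X=x^1\partial_1+x^2\partial_2$ on a neighborhood where moreover $x^1$ does not vanish (so that the connection components $(x^1)^{-1}C_{ij}{}^k$ make sense). The key point is that $Y=\partial_2$ can be arranged by rectifying $Y$ alone; then $[X,\partial_2]=\partial_2$ forces $X=(a(x^1)x^1+c)\partial_1+(x^2+b(x^1))\partial_2$ after solving the resulting ODEs in $x^1$, and a further change of the $x^1$-coordinate absorbs the function $a$ and the constant $c$, while a shift in $x^2$ kills $b$; one must check that $X$ and $Y$ remaining linearly independent at $P$ guarantees the $\partial_1$-component of $X$ is nonvanishing there, which after the coordinate change is precisely the statement $x^1\ne0$. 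Once the normal forms are in place, the remaining verification that the Killing equations force the $(x^1)^{-1}$-homogeneity of the Christoffel symbols is a routine, if somewhat lengthy, linear computation that I would not reproduce in detail, citing \cite{AMK08} for it.
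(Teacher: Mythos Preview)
Your proposal is correct and in fact does more than the paper itself: the paper does not prove Lemma~\ref{L3.2} at all, but simply states it as a restatement of \cite[Lemma~1 and Lemma~2]{AMK08}. Your forward directions are exactly the content of Remark~\ref{R1.3}, and your outline of the converses (simultaneous rectification in the Abelian case; rectify $Y$ then solve the bracket relation for $X$ in the non-Abelian case, followed by a further coordinate change) is the standard argument and is what one finds in \cite{AMK08}. One minor slip: with $Y=\partial_2$ and $[X,Y]=Y$, the equation $[a\partial_1+b\partial_2,\partial_2]=\partial_2$ gives $\partial_2a=0$ and $\partial_2b=-1$, so $b=-x^2+c(x^1)$ rather than $+x^2+b(x^1)$; this is harmless (it corresponds to the sign ambiguity you already flagged between $[X,Y]=Y$ and $[X,Y]=-Y$) and the subsequent coordinate normalization goes through unchanged.
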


 If $P$ is a point of a locally homogeneous surface $\mathcal{M}$,
let $\mathfrak{K}_P(\mathcal{M})$ be the Lie algebra of germs of affine Killing
vector fields at $P$. If $\mathcal{M}$ is both Type~$\mathcal{A}$ and Type~$\mathcal{B}$,
then there is a 2-dimensional Abelian Lie sub-algebra of $\mathfrak{K}_P(\mathcal{M})$ and
there is also a 2-dimensional non-Abelian Lie sub-algebra of $\mathfrak{K}_P(\mathcal{M})$.
Consequently $\dim\{\mathfrak{K}_P(\mathcal{M})\}>2$ in this instance.

\subsection{Affine Killing vector fields on Type~$\mathcal{A}$ surfaces}

We begin by establishing some technical results. Let $\Re(\cdot)$ and $\Im(\cdot)$ denote the real
and imaginary parts of a complex valued function. Let
$$\mathfrak{K}_0^{\mathcal{A}}:=\operatorname{Span}\{\partial_1,\partial_2\}\,.$$
If $\mathcal{M}\in\mathcal{F}^{\mathcal{A}}$, then Remark~\ref{R1.3}
shows $\mathfrak{K}_0^{\mathcal{A}}\subset\mathfrak{K}(\mathcal{M})$.
The adjoint action of $\mathfrak{K}_0^{\mathcal{A}}$ makes $\mathfrak{K}(\mathcal{M})$ into a
$\mathfrak{K}_0^{\mathcal{A}}$ module. This module action will play an important role in the proof
of the following result.

\begin{lemma}\label{L3.3}
Let $\mathcal{M}\in\mathcal{F}^{\mathcal{A}}$. Suppose that $\dim\{\mathfrak{K}(\mathcal{M})\}>2$. 
There exists a linear change of coordinates so that $\mathcal{M}$ has the following properties:
\begin{enumerate}
\item There exists $X\in\mathfrak{K}(\mathcal{M})$ so that one of the following possibilities holds:
\begin{enumerate}
\item $X=\Re\{e^{a_1x^1+a_2x^2}\}\partial_1$ for $0\ne(a_1,a_2)\in\mathbb{C}^2$.
\item $X=(a_1x^1+a_2x^2)\partial_1$ for $0\ne(a_1,a_2)\in\mathbb{R}^2$.
\end{enumerate}
\smallbreak\item $\rho=\rho_{22}dx^2\otimes dx^2$.
\smallbreak\item If $X\in\mathfrak{K}(\mathcal{M})$, then $X=\zeta(x^1,x^2)\partial_1+ c_2\partial_2$
for $c_2 \in\mathbb{R}$.
\end{enumerate}
\end{lemma}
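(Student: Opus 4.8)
The plan is to exploit the $\mathfrak{K}_0^{\mathcal{A}}$-module structure on $\mathfrak{K}(\mathcal{M})$ together with the explicit form of the Killing equations for constant Christoffel symbols. First I would write out the Killing equations from Lemma~\ref{L3.1}(1c) in coordinates: for $X=\xi^1\partial_1+\xi^2\partial_2$ on a surface with constant $\Gamma$, the condition $\mathcal{L}_X\nabla=0$ becomes a system of second-order linear PDEs with constant coefficients, namely $\partial_i\partial_j\xi^k+\Gamma_{ij}{}^\ell(\partial_\ell\xi^k)-\Gamma_{\ell j}{}^k(\partial_i\xi^\ell)-\Gamma_{i\ell}{}^k(\partial_j\xi^\ell)=0$ for all $i,j,k$. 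Because the coefficients are constant, $\partial_1$ and $\partial_2$ act on the solution space $\mathfrak{K}(\mathcal{M})$, and since $\dim\mathfrak{K}(\mathcal{M})$ is finite, the operators $\operatorname{ad}_{\partial_1}$ and $\operatorname{ad}_{\partial_2}$ are commuting endomorphisms of a finite-dimensional real vector space. This gives a simultaneous (complexified) generalized eigenspace decomposition.

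Next I would argue that, since $\dim\{\mathfrak{K}(\mathcal{M})\}>2$, there is an element $X\in\mathfrak{K}(\mathcal{M})$ not in $\mathfrak{K}_0^{\mathcal{A}}$ lying in a joint generalized eigenspace for $(\operatorname{ad}_{\partial_1},\operatorname{ad}_{\partial_2})$ with some eigenvalue $(-a_1,-a_2)$; a standard argument then shows $X$ can be taken so that, up to lower-order terms which can be absorbed using $\partial_1,\partial_2\in\mathfrak{K}(\mathcal{M})$, its components are either of exponential type $e^{a_1x^1+a_2x^2}$ (when $(a_1,a_2)\ne0$, taking real parts to stay real, which gives (1a)) or polynomial of degree one (the nilpotent/zero-eigenvalue case, giving (1b)). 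A short examination of the Killing equations then forces the leading term to point in a single coordinate direction after a linear change of coordinates; one shows the $\partial_2$-component can be removed and the $\partial_1$-component normalized, which simultaneously yields that $X$ has the stated form and that $\rho$ becomes degenerate with kernel $\operatorname{Span}\{\partial_1\}$. Invoking Lemma~\ref{L2.3} (equivalence of $\rho=\rho_{22}dx^2\otimes dx^2$ with $\Gamma_{11}{}^2=\Gamma_{12}{}^2=0$) then gives Assertion~2, possibly after a further linear change of coordinates preserving the form of $X$.

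For Assertion~3, with the normalization $\rho=\rho_{22}dx^2\otimes dx^2$ in force, I would substitute $\Gamma_{11}{}^2=\Gamma_{12}{}^2=0$ back into the Killing equations and read off directly that any $X=\xi^1\partial_1+\xi^2\partial_2\in\mathfrak{K}(\mathcal{M})$ must have $\xi^2$ constant: the $(i,j,k)=(2,2,2)$ and $(1,2,2)$ components of the Killing equations reduce (using the vanishing of those two Christoffel symbols and Lemma~\ref{L2.3}(3) for $\rho_{22}\ne0$) to $\partial_1\xi^2=0$ and $\partial_2\xi^2=0$, so $\xi^2=c_2$ is constant while $\xi^1=\zeta(x^1,x^2)$ is unconstrained by these particular equations. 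This is the cleanest part once the normalization is achieved.

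The main obstacle is Assertion~1 — specifically, passing from the abstract joint-eigenspace decomposition to the concrete claim that, after a \emph{linear} change of coordinates, a generator exists of \emph{exactly} the form (1a) or (1b) with the leading term in the $\partial_1$-direction only. One must handle the case distinction (semisimple versus nilpotent action, real versus complex eigenvalues), rule out the possibility that every eigenvector outside $\mathfrak{K}_0^{\mathcal{A}}$ has a genuinely two-dimensional leading part (which would obstruct the normalization), and verify that the linear coordinate change used to align the leading term does not destroy the constancy of the Christoffel symbols — which it cannot, since linear changes preserve membership in $\mathcal{F}^{\mathcal{A}}$. I expect this to require a careful but elementary case analysis of the constant-coefficient Killing system; the exponential-versus-polynomial dichotomy is exactly what one gets from solving linear ODEs with constant coefficients in each eigendirection.
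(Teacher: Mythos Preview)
Your overall strategy for Assertion~1 --- complexify, use the commuting $\operatorname{ad}_{\partial_i}$ to put an extra Killing field into a joint generalized eigenspace, then distinguish exponential from polynomial behaviour --- is exactly what the paper does. But you correctly flag, and then do not resolve, the real obstruction: the case where the eigenvector $X_0\in\mathbb{C}^2$ has $\Re(X_0)$ and $\Im(X_0)$ linearly independent (exponential case), or where the linear vector field $a_i^jx^i\partial_j$ has rank-$2$ coefficient matrix (polynomial case). A linear change of coordinates cannot collapse either of these to the $\partial_1$-direction. The paper's mechanism here is not a clever normalization; it is to plug each such configuration into the Killing equations, solve for $\Gamma$, and observe that the resulting connection is \emph{flat}, contradicting $\mathcal{M}\in\mathcal{F}^{\mathcal{A}}$. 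This is a genuine case analysis (five subcases in the paper) and there is no way to avoid it --- your ``short examination of the Killing equations'' is doing heavy lifting you have not yet done.

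Your proposed proof of Assertion~3 has a concrete error. With $\Gamma_{11}{}^2=\Gamma_{12}{}^2=0$, the $(i,j,k)=(2,2,2)$ and $(1,2,2)$ Killing equations are
\[
\partial_2^2\xi^2+\Gamma_{22}{}^2\partial_2\xi^2-\Gamma_{22}{}^1\partial_1\xi^2=0,\qquad
\partial_1\partial_2\xi^2+\Gamma_{22}{}^2\partial_1\xi^2-\Gamma_{12}{}^1\partial_1\xi^2=0,
\]
which are second-order and do \emph{not} reduce to $\partial_1\xi^2=\partial_2\xi^2=0$. The paper instead uses that any affine Killing field preserves $\rho$: since $\rho(Y,Y)$ is constant for $Y=c_1\partial_1+c_2\partial_2$, the identity $0=(\mathcal{L}_X\rho)(Y,Y)=-2\rho([X,Y],Y)$ together with $\rho=\rho_{22}\,dx^2\otimes dx^2$ forces $c_2(c_1\partial_1\xi^2+c_2\partial_2\xi^2)=0$ for all $c_1,c_2$, hence $\xi^2$ is constant. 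The same $\mathcal{L}_X\rho=0$ trick, applied to the special $X=f(x^1,x^2)\partial_1$ from Assertion~1, is what gives Assertion~2 directly --- you do not need Lemma~\ref{L2.3} at that step, and trying to fold Assertion~2 into the Assertion~1 normalization, as you suggest, would be circular.
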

\begin{proof} We proceed seriatim.
\smallbreak\noindent{\bf Step 1: the proof of Assertion~1.}
We complexify to set 
$$
\mathfrak{L}:=\mathfrak{K}(\mathcal{M})\otimes_{\mathbb{R}}\mathbb{C}\text{ and }
\mathfrak{L}_0^{\mathcal{A}}:=\mathfrak{K}_0^{\mathcal{A}}\otimes_{\mathbb{R}}\mathbb{C}\,.
$$
Since $\dim\{\mathfrak{K}(\mathcal{M})\}>2$, we may choose
$X\in\mathfrak{L}-\mathfrak{L}_0$. 
By Lemma~\ref{L3.1}, $\dim\{\mathfrak{L}\}\le5$. 
Since $\partial_iX=[\partial_i,X]\in\mathfrak{L}$ for $i=1,2$,
there must be minimal non-trivial dependence relations:
\begin{eqnarray*}
&&\partial_1^{r}X+c_{r-1}\partial_1^{r-1}X+\dots+c_0X=0\text{ with }r>0,\\
&&\partial_2^{s}X+\tilde c_{s-1}\partial_2^{s-1}X+\dots+\tilde c_0X=0\text{ with }s>0
\end{eqnarray*}
for some suitably chosen constants $c_i$ and $\tilde c_i$.
We factor the associated characteristic polynomials to express these dependence relation in the form:
\begin{eqnarray}
&&\prod_{t=1}^u(\partial_1-\lambda_t)^{\mu_t}X=0\text{ for }\lambda_t\in\mathbb{C}\text{ distinct}
\text{ and }\mu_t\ge1\,,\label{Gx}\\
&&\prod_{v=1}^w(\partial_2-\eta_v)^{\nu_v}X=0\text{ for }\eta_v\in\mathbb{C}\text{ distinct}
\text{ and }\nu_v\ge1\,.\label{Gy}
\end{eqnarray}
\smallbreak\noindent{\bf Case 1.} Suppose that some $\lambda_t$ is non-zero (if all $\lambda_t$ are zero and some $\eta_v$ is non-zero the analysis is analogous). By
reordering the roots, we may assume $\lambda_1\ne0$.
Since we have chosen a minimal dependence relation, we have
$$
0\ne Y:=(\partial_1-\lambda_1)^{\mu_1-1}\dots(\partial_1-\lambda_u)^{\mu_u}X\in\mathfrak{L}\,.
$$
By replacing $X$ by $Y$, we may assume the dependence relation of Equation~(\ref{Gx})
is $(\partial_1-\lambda_1)X=0$. This implies
$$X=e^{\lambda_1x^1}(\xi^1(x^2)\partial_1+\xi^2(x^2)\partial_2)
\in\mathfrak{L}\text{ where }\lambda_1\ne0\,.
$$
A similar argument shows we may assume that Equation~(\ref{Gy}) 
takes the form $(\partial_2-\eta_1)X=0$ for
some $\eta_1$ (possibly 0). We then conclude
$$
X=e^{\lambda_1x^1+\eta_1x^2}X_0\in\mathfrak{L}\text{ for }
0\ne X_0\in\mathfrak{L}_0^{\mathcal{A}}\text{ and }0\ne\lambda_1\,.
$$
If $\Re(X_0)$ and $\Im(X_0)$ are linearly dependent over $\mathbb{R}$, then
we can multiply $X_0$ by an appropriate non-zero complex number to assume
$0\ne X_0\in\mathfrak{K}_0^{\mathcal{A}}$ is real. This implies 
$\Re(X)=\Re\{e^{\lambda_1x^1+\eta_1x^2}\}X_0$ has the form
given in Assertion~1a. 
We therefore suppose that $\Re(X_0)$ and $\Im(X_0)$ are linearly independent.
We can make a linear change of coordinates to assume $X_0=\frac12(\partial_1-\sqrt{-1}\partial_2)=\partial_z$ where
$z=x^1+\sqrt{-1}x^2$. Thus $Z=e^{\phi}\partial_z\in\mathfrak{L}$ 
for some suitably chosen non-trivial linear function $\phi$. 
\smallbreak\noindent{\bf Case 1a.} Suppose $\phi$ is purely imaginary. This implies
$\phi=\sqrt{-1} (a^1x^1+a^2x^2)$
for $0\ne(a^1,a^2)\in\mathbb{R}^2$. 
We can rotate $\mathbb{R}^2$ and then rescale to suppose that
$\phi=\sqrt{-1}x^1$.
We then have $X=\{\cos(x^1)+\sqrt{-1}\sin(x^1)\}(\partial_{1}-\sqrt{-1}\partial_{2})/2$ and thus
$$
\Re\{X\}= {\textstyle\frac12}(\cos(x^1)\partial_{1}+\sin(x^1)\partial_{2})\in\mathfrak{K}(\mathcal{M})\,.
$$
We have Killing equations:
\begin{eqnarray*}
&&(1-2\Gamma_{12}{}^1)\cos(x^1)+\Gamma_{11}{}^1\sin(x^1)=0,\\
&&(\Gamma_{11}{}^1-2\Gamma_{12}{}^2)\cos(x^1)+(1+2\Gamma_{11}{}^2)\sin(x^1)=0,\\
&&\Gamma_{22}{}^1\cos(x^1)=0,\\
&&(\Gamma_{12}{}^1-\Gamma_{22}{}^2)\cos(x^1)+\Gamma_{12}{}^2\sin(x^1)=0.
\end{eqnarray*}
We solve these relations to see
$$\begin{array}{llllll}
\Gamma_{11}{}^1=0,&\Gamma_{11}{}^2=-\frac12,&\Gamma_{12}{}^1=\frac12,&
\Gamma_{12}{}^2=0,&\Gamma_{22}{}^1=0,&\Gamma_{22}{}^2=\frac12.
\end{array}$$
The Ricci tensor of this structure is zero; this is false as $\mathcal{M}$ is assumed non-flat.
\smallbreak\noindent{\bf Case 1b.} Suppose $\phi$ is holomorphic. We can then rotate and rescale
 to ensure that $\phi(z)=z$ so
$$
\Re(X)= \Re\{e^{x^1+\sqrt{-1}x^2}(\partial_{1}-\sqrt{-1}\partial_2)/2\}=e^{x^1}(\cos(x^2)\partial_1+\sin(x^2)\partial_2)/2\,.
$$
We have Killing equations:
\begin{eqnarray*}
&&(1+\Gamma_{11}{}^1)\cos(x^2)+(\Gamma_{11}{}^2+2\Gamma_{12}{}^1)\sin(x^2)=0,\\
&&\Gamma_{11}{}^2\cos(x^2)+(1-\Gamma_{11}{}^1+2\Gamma_{12}{}^2)\sin(x^2)=0,\\
&&\Gamma_{22}{}^2\cos(x^2)-(1+2\Gamma_{12}{}^2+\Gamma_{22}{}^1)\sin(x^2)=0.
\end{eqnarray*}
We solve these equations to see
$$\begin{array}{llllll}
\Gamma_{11}{}^1=-1,&\Gamma_{11}{}^2=0,&\Gamma_{12}{}^1=0,&
\Gamma_{12}{}^2=-1,&\Gamma_{22}{}^1=1,&\Gamma_{22}{}^2=0.
\end{array}$$
The Ricci tensor of this structure is zero; this is false as $\mathcal{M}$ is assumed non-flat.

\smallbreak\noindent{\bf Case 1c.} Assume that $\phi$ is not purely imaginary and that $\phi$ is not
holomorphic. Since $\bar X\in\mathfrak{L}$, 
$$
[X,\bar X]=e^{\phi+\bar\phi}\{\partial_z\bar\phi\cdot\partial_{\bar z}- \partial_{\bar z}\phi\cdot\partial_z\}
= 2\sqrt{-1} e^{\phi+\bar\phi}\Im\{\partial_z\bar\phi\cdot\partial_{\bar z}\}\in\mathfrak{L}\,.
$$
Since $\phi$ is not purely imaginary, the exponent $\phi+\bar\phi=a_1x^1+a_2x^2$ is non-trivial and
real. Since $\phi$ is not holomorphic, $0\ne\xi:=\Im\{\partial_z\bar\phi\cdot\partial_{\bar z}\}\in\mathfrak{K}_0^{\mathcal{A}}$. We can change coordinates to assume $\xi=\partial_1$.
We then have $-\sqrt{-1}[Z,\bar Z]=e^{\tilde a_1x^1+\tilde a_2x^2}\partial_{1}$ satisfies the hypotheses
of Assertion~1a. This completes the analysis of Case~1.
\medbreak\noindent{\bf Case 2.} Neither dependence relation involves a complex root of the
characteristic polynomials, i.e. we have
$\partial_1^rX=\partial_2^sX=0$. Since $X\notin\mathfrak{K}_0^{\mathcal{A}}$, $(r,s)\ne(1,1)$. If $r>2$,
we replace $X$ by $\partial_1^{r-2}X$ to ensure
$r\le2$. We then argue similarly to choose $X$ so $s\le2$ as well. This implies
$$
X=\sum_{i=0}^1\sum_{j=0}^1(x^1)^i(x^2)^jX_{ij}\text{ for }X_{ij}\in\mathfrak{K}_0^{\mathcal{A}}\,.
$$
If $X_{11}$ is non-zero, we can apply $\partial_1$ to reduce the order and after subtracting the constant
term obtain an element with the form given in Assertion~1b. Otherwise, we may simply subtract $X_{00}$ to
see that there exists $X\in\mathfrak{K}(\mathcal{M})$ so that
$$X=a_i^jx^i\partial_j\in\mathfrak{K}(\mathcal{M})\text{ for }(a_i^j)\ne0\,.$$
If $\operatorname{Rank}\{(a_i^j)\}=1$, then we can change coordinates to assume $X$ has the form given
in Assertion~1b. We therefore assume $\operatorname{Rank}\{(a_i^j)\}=2$
and argue, at length, for a contradiction.
Only the Jordan normal form of the coefficient matrix $(a_i^j)$ is relevant since we are working
modulo linear changes of coordinates. Furthermore, we can always rescale $X$
as needed.
\smallbreak\noindent{\bf Case 2a.} $A$ is diagonalizable. We may suppose $X=x^1\partial_1+ax^2\partial_2$
for $a\ne0$.
We obtain the equations:
$$\begin{array}{lll}
\Gamma_{11}{}^1=0,&(a-2)\Gamma_{11}{}^2=0,&a\Gamma_{12}{}^1=0,\\
\Gamma_{12}{}^2=0,&
(-1+2a)\Gamma_{22}{}^1=0,&a\Gamma_{22}{}^2=0.
\end{array}$$
Thus the only possibly non-zero Christoffel symbols are $\Gamma_{11}{}^2$ and $\Gamma_{22}{}^1$. Since it
is not possible that $(a-2)=0$ and $(-1+2a)=0$ simultaneously, 
we also have $\Gamma_{11}{}^2\Gamma_{22}{}^2=0$. This implies $\rho=0$ so this case is ruled out.
\smallbreak\noindent{\bf Case 2b.} $A$ has two equal non-zero eigenvalues and non trivial Jordan normal form.
We may suppose that $X=(x^1+x^2)\partial_1+x^2\partial_2$ and obtain Killing equations:
$$\begin{array}{lll}
\Gamma_{11}{}^1-\Gamma_{11}{}^2=0,&
\Gamma_{11}{}^2=0,&
\Gamma_{11}{}^1+\Gamma_{12}{}^1-\Gamma_{12}{}^2=0,\\
\Gamma_{11}{}^2+\Gamma_{12}{}^2=0,&
2\Gamma_{12}{}^1+\Gamma_{22}{}^1-\Gamma_{22}{}^2=0,&
2\Gamma_{12}{}^2+\Gamma_{22}{}^2=0.
\end{array}$$
We solve these equations to see $\Gamma=0$ and hence $\rho=0$ so this case is ruled out.
\smallbreak\noindent{\bf Case 2c.} The matrix $A$ has two complex eigenvalues with non-zero imaginary part.
We may assume $X=(ax^1+x^2)\partial_1+(-x^1+ax^2)\partial_2$ is an affine Killing vector
field for some $a\in\mathbb{R}$. We use the Killing equations to eliminate variables recursively.
We set $\Gamma_{12}{}^2=2s$ and $\Gamma_{22}{}^2=2t$. At each stage we simplify the resulting
Killing equations based on the previous computations:
\begin{enumerate}
\item The Killing equation $4s+2at+\Gamma_{22}{}^1=0$ yields $\Gamma_{22}{}^1=-4s-2at$.
\item The Killing equation $2as+t+a^2t-\Gamma_{12}{}^1=0$ yields $\Gamma_{12}{}^1=2as+t+a^2t$.
\item The Killing equation $4as-t+a^2t+\Gamma_{11}{}^2=0$ yields $\Gamma_{11}{}^2=-4as+t-a^2t$.
\item The Killing equation $2(1+a^2)s+3at+a^3t+\Gamma_{11}{}^1=0$ yields 
\newline$\Gamma_{11}{}^1=-2(1+a^2)s-3at-a^3t$.
\end{enumerate}
We now obtain Killing equations in the parameters $(s,t)$ which imply
$3s+at=0$ and $2as+t(3+a^2)=0$. We set $s=-at/3$ to obtain the equation $3t+a^2t/3=0$. This implies $t=0$
so $s=0$ and $\Gamma=0$. Thus this case is ruled out. This completes the proof of Assertion~1.

\medbreak\noindent{\bf Step 2: the proof of Assertion~2.} By Assertion~1, 
$X=f(x^1,x^2)\partial_1\in\mathfrak{K}(M)$ for some non-constant function $f$.
Choose $P\in\mathbb{R}^2$ so $df(P)\ne0$. Let
$Y=c_1\partial_1+c_2\partial_2$ for $(c_1,c_2)\ne(0,0)$. Then
$$0=\{\mathcal{L}_X(\rho)\}(Y,Y)=X(\rho(Y,Y))-2\rho([X,Y],Y)\,.$$
Because $\rho(Y,Y)$ is
constant, $X\{\rho(Y,Y)\}=0$. For generic $(c_1,c_2)$, 
$$
[X,Y](P)=\{Y(f)(P)\}\partial_1\ne0\text{ so }\rho(\partial_1,Y)=0\,.
$$
This implies 
$\rho_{11}=\rho_{12}=0$ so $\rho=\rho_{22}dx^2\otimes dx^2$.

\medbreak\noindent{\bf Step 3: the proof of Assertion~3.}
Let $X=\xi^1(x^1,x^2)\partial_1+\xi^2(x^1,x^2)\partial_2\in\mathfrak{K}(\mathcal{M})$. 
Let $Y=c_1\partial_1+c_2\partial_2$. We argue as above to see that $\rho([X,Y],Y)=0$.
Since $\rho=\rho_{22}dx^2\otimes dx^2$, this implies
$$c_2\rho_{22}(c_1\partial_1\xi^2+c_2\partial_2\xi^2)=0\text{ for all }(c_1,c_2)\in\mathbb{R}^2\,.$$
This implies $\xi^2$ is constant which establishes Assertion~3 and completes the proof of the Lemma.
\end{proof}

Lemma~\ref{L3.3} focuses attention on the case that $\operatorname{Rank}\{\rho\}=1$. The following result relates the rank of the Ricci tensor with the dimension of the space of affine Killing vector fields.

\begin{theorem}\label{T3.4}
Let $\mathcal{M}\in\mathcal{F}^{\mathcal{A}}$. 
\begin{enumerate}
\item Suppose $\rho=\rho_{22}dx^2\otimes dx^2$.
\begin{enumerate}
\item If $\Gamma_{11}{}^1\ne0$, then $X\in\mathfrak{K}(\mathcal{M})$ if and only
if $X=e^{-\Gamma_{11}{}^1x^1}\xi(x^2)\partial_1+X_0$ for $X_0\in\mathfrak{K}_0^{\mathcal{A}}$ where $\xi$ satisfies
$\xi^{\prime\prime}+(2\Gamma_{12}{}^1-\Gamma_{22}{}^2)\xi^\prime+\Gamma_{11}{}^1\Gamma_{22}{}^1\xi=0$.
\item If $\Gamma_{11}{}^1=0$, then $X\in\mathfrak{K}(\mathcal{M})$ if and
only if $X=(\xi(x^2)+c_1x^1)\partial_1+X_0$ for $X_0\in\mathfrak{K}_0^{\mathcal{A}}$ where $\xi$ satisfies
$\xi^{\prime\prime}+(2\Gamma_{12}{}^1-\Gamma_{22}{}^2)\xi^\prime-c_1\Gamma_{22}{}^1=0$.
\end{enumerate}
\item The following assertions are equivalent.
 \begin{enumerate}
\item $\dim\{\mathfrak{K}(\mathcal{M})\}=4$.
\item $\dim\{\mathfrak{K}(\mathcal{M})\}>2$.
\item $\operatorname{Rank}\{\rho\}=1$.
\end{enumerate}
\item The following assertions are equivalent:
\begin{enumerate}
\item $\dim\{\mathfrak{K}(\mathcal{M})\}=2$.
\item $\operatorname{Rank}\{\rho\}=2$.
\end{enumerate}\end{enumerate}
\end{theorem}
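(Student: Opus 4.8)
The plan is to reduce everything to the analysis of a single linear second-order ODE for the coefficient function of $\partial_1$. First I would invoke Lemma~\ref{L2.3}: whenever $\rho=\rho_{22}dx^2\otimes dx^2$ we have $\Gamma_{11}{}^2=\Gamma_{12}{}^2=0$, so only the four Christoffel symbols $\Gamma_{11}{}^1,\Gamma_{12}{}^1,\Gamma_{22}{}^1,\Gamma_{22}{}^2$ survive. Then I would write a general vector field $X=\xi^1(x^1,x^2)\partial_1+\xi^2(x^1,x^2)\partial_2$ and expand the Killing equations $\mathcal{L}_X(\nabla)=0$ (Lemma~\ref{L3.1}(1c)) component by component. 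The equations coming from $\nabla_{\partial_i}\partial_j$ in the $\partial_2$-direction will force $\xi^2$ to be constant (this is essentially the argument already given in Step~3 of the proof of Lemma~\ref{L3.3}, and I can cite it). With $\xi^2=c_2$ constant, the remaining equations become linear PDEs for $\xi^1$; the equation from $\nabla_{\partial_1}\partial_1$ gives $\partial_1\xi^1=-\Gamma_{11}{}^1\xi^1+(\text{a function of }x^2)$, which integrates to the exponential or polynomial form in $x^1$ stated in Assertion~1a/1b (depending on whether $\Gamma_{11}{}^1\ne0$ or $=0$), and the equation from $\nabla_{\partial_2}\partial_2$ supplies the second-order ODE in $x^2$ for the remaining free function $\xi(x^2)$. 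This proves Assertion~1.

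For Assertion~2, I would count dimensions using the ODE from Assertion~1. In case 1a ($\Gamma_{11}{}^1\ne0$) the ODE $\xi''+(2\Gamma_{12}{}^1-\Gamma_{22}{}^2)\xi'+\Gamma_{11}{}^1\Gamma_{22}{}^1\xi=0$ has a 2-dimensional solution space, and together with the 2-dimensional $\mathfrak{K}_0^{\mathcal A}$ this yields $\dim\{\mathfrak{K}(\mathcal{M})\}=4$ — but one must check the two families intersect only in $0$, i.e. that the exponential-times-$\xi(x^2)$ vector field is never itself in $\mathfrak{K}_0^{\mathcal A}$, which holds because $e^{-\Gamma_{11}{}^1 x^1}$ is non-constant. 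In case 1b ($\Gamma_{11}{}^1=0$) the affine term $c_1x^1\partial_1$ contributes one more parameter but the inhomogeneous ODE constrains it (the constant $c_1\Gamma_{22}{}^1$ must be hit), so again the total is $4$; here one should be slightly careful when $\Gamma_{22}{}^1=0$, in which case $c_1$ is free and $\xi$ ranges over a 2-dimensional affine space, still giving $4$. So Rank$\{\rho\}=1$ implies $\dim=4$; the implications (a)$\Rightarrow$(b) is trivial, and (b)$\Rightarrow$(c) is exactly the contrapositive of Lemma~\ref{L3.3} together with the fact (Lemma~\ref{L2.5}, and non-flatness) that Rank$\{\rho\}\in\{1,2\}$ — if Rank$\{\rho\}=2$ then the normalization of Lemma~\ref{L3.3} cannot be achieved, hence $\dim\le2$, hence (by Remark~\ref{R1.3}) $\dim=2$.

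Assertion~3 is then formal: by Lemma~\ref{L2.2} and non-flatness the Ricci tensor of a Type~$\mathcal{A}$ surface is symmetric and of rank $1$ or $2$, so Rank$\{\rho\}=2$ is the exact complement of Rank$\{\rho\}=1$; combining this with Assertion~2 and the lower bound $\dim\{\mathfrak{K}(\mathcal{M})\}\ge2$ from Remark~\ref{R1.3} gives the equivalence of $\dim=2$ with Rank$\{\rho\}=2$. The main obstacle I anticipate is purely bookkeeping: correctly isolating which Killing equations survive after the Lemma~\ref{L2.3} normalization and verifying in the degenerate subcases (especially $\Gamma_{22}{}^1=0$ in case 1b, and the various vanishing patterns among the surviving Christoffel symbols) that the dimension count lands on exactly $4$ rather than jumping to $5$ or $6$ — the latter being excluded anyway by Lemma~\ref{L3.1}(3) since $\mathcal{M}$ is non-flat, which provides a useful sanity ceiling throughout.
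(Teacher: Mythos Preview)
Your proposal is correct and follows essentially the same route as the paper: invoke Lemma~\ref{L2.3} to kill $\Gamma_{11}{}^2$ and $\Gamma_{12}{}^2$, cite Lemma~\ref{L3.3} to force $\xi^2$ constant, reduce the remaining Killing equations to the displayed second-order ODE for $\xi(x^2)$, and count dimensions. One small correction: the constancy of $\xi^2$ in Step~3 of Lemma~\ref{L3.3} is obtained from $\mathcal{L}_X\rho=0$ (invariance of the Ricci tensor), not from the $\partial_2$-component of the Killing equations themselves, so your parenthetical mischaracterizes that argument even though the citation is exactly right.
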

\begin{proof}
We use Lemma~\ref{L2.3} to impose the conditions $\Gamma_{11}{}^2=0$ and $\Gamma_{12}{}^2=0$ and Lemma~\ref{L3.3} to write $X=\zeta(x^1,x^2)\partial_1+c_2\partial_2$.
The Killing equations now become
$$
\begin{array}{l}
\zeta^{(2,0)}+\Gamma_{11}{}^1 \zeta^{(1,0)} =0,\\
\zeta^{(1,1)}+\Gamma_{11}{}^1 \zeta^{(0,1)} =0,\\
\zeta^{(0,2)}-\Gamma_{22}{}^1 \zeta^{(1,0)}+(2\Gamma_{12}{}^1-\Gamma_{22}{}^2)\zeta^{(0,1)}=0.
\end{array}
$$
We establish Assertion~1
by examiningg cases.
\begin{enumerate}
\item Suppose that $\Gamma_{11}{}^1\ne0$. We have 
$\zeta(x^1,x^2)=u_0(x^2)+u_1(x^2)e^{-\Gamma_{11}{}^1x^1}$. A Killing equation is
$\Gamma_{11}{}^1u_0^\prime=0$. Thus we may take $u_0$ constant and delete it from
further consideration. The remaining Killing equation is the condition of Assertion~1a.
\item Suppose $\Gamma_{11}{}^1=0$. 
We have $\zeta(x^1,x^2)=u_0(x^2)+u_1(x^2)x^1$. A Killing
equation is $u_1^\prime=0$ and hence $u_1(x^2)=c_1$ is constant. 
The remaining Killing equation is the condition of Assertion~1b.
\end{enumerate}

Clearly Assertion~2a implies Assertion~2b.
We use Lemma~\ref{L3.3} to see that Assertion~2b implies Assertion~2c.
We will apply Assertion~1 to see Assertion~2c implies Assertion~2a. We argue as follows.
Suppose first $\Gamma_{11}{}^1\ne0$. Let $\{\xi_1,\xi_2\}$ be a basis for
the space of solutions to the Equation of Assertion 1a. Then
$$
\mathfrak{K}(\mathcal{M})=\operatorname{Span}_{\mathbb{R}}
\{\xi_1(x^2)e^{-\Gamma_{11}{}^1x^1}\partial_1,
\xi_2(x^2)e^{-\Gamma_{11}{}^1x^1}\partial_1,
\partial_1,\partial_2\}
$$
and hence $\dim\{\mathfrak{K}(\mathcal{M})\}=4$.
Suppose on the other hand that $\Gamma_{11}{}^1=0$. 
Choose a solution $\xi_0(x^2)$ to the Equation of Assertion 1b with $c_1=1$, i.e. we have
$$
\zeta(x^1,x^2)=x^1+\xi_0(x^2)\text{ where }
\xi_0^{\prime\prime}+(2\Gamma_{12}{}^1-\Gamma_{22}{}^2)\xi_0^\prime-\Gamma_{22}{}^1=0\,.
$$
Let $\{\xi_1,\xi_2\}$ be a basis for the space of solutions to the homogeneous equation
$\xi^{\prime\prime}+(2\Gamma_{12}{}^1-\Gamma_{22}{}^1)\xi^\prime=0$. Then
$$
\mathfrak{K}(\mathcal{M})=\operatorname{Span}_{\mathbb{R}}
\{\xi_0\partial_1,\xi_1\partial_1,\xi_2\partial_1,\partial_1,\partial_2\}\,.
$$
Since we may take $\xi_1=1$, $\xi_1\partial_1=\partial_1$ and we see that
$\dim\{\mathfrak{K}(\mathcal{M})\}=4$. This completes the proof of Assertion~2; 
the final Assertion is now immediate. 
\end{proof}

There are several Lie algebras which will play an important role in our analysis.
Let $A_2:=\operatorname{Span}_{\mathbb{R}}\{e_1,e_2\}$ with Lie bracket $[e_1,e_2]=e_2$;
up to isomorphism, $A_2$ is 
 the only non-trivial real Lie algebra of dimension two; it is the Lie algebra
of the ``$ax+b$" group. We adopt the notation of Patera et. al \cite{PSWZ76} to define
several other Lie algebras. Let $\{e_1,e_2,e_3,e_4\}$ be a basis of $\mathbb{R}^4$.
We define the following solvable Lie algebras by specifying their bracket relations.
\begin{itemize}
\item $A_{2}\oplus A_{2}$: the relations of the bracket are given by
\[
[e_1,e_2]=e_2, \, [e_3,e_4]=~e_4\,.
\]
\item $A_{4,9}^b$: the relations of the bracket for $-1\leq b\leq 1$ are given by
\[
[e_2,e_3]=e_1,\, [e_1,e_4]=(1+b)e_1,\, [e_2,e_4]=e_2,\, [e_3,e_4]=b e_3\,.
\]
\item $A_{4,12}$: the relations of the bracket are given by
\[
[e_1,e_3]=e_1,\, [e_2,e_3]=e_2,\, [e_1,e_4]=-e_2,\, [e_2,e_4]=e_1\,.
\]
\end{itemize} 
\begin{definition}\label{D3.5}
\rm Let $\mathcal{M}_\star^\star$ be the affine surface defined by the structures:
\medbreak\quad $\mathcal{M}_1$:
 $\Gamma_{11}{}^1=-1$, $\Gamma_{11}{}^2=0$, $\Gamma_{12}{}^1=1$, $\Gamma_{12}{}^2=0$, 
 $\Gamma_{22}{}^1=\phantom{-}0$, $\Gamma_{22}{}^2=2$.
\smallbreak\quad $\mathcal{M}_2^c$:
 $\Gamma_{11}{}^1=-1$, $\Gamma_{11}{}^2=0$, 
 $\Gamma_{12}{}^1=c$, $\Gamma_{12}{}^2=0$, $\Gamma_{22}{}^1=\phantom{-}0$,
$\Gamma_{22}{}^2=1+2c$,\smallbreak\qquad\qquad\qquad\qquad\quad where $c^2+c\ne0$.
\smallbreak\quad $\mathcal{M}_3^c$: $\Gamma_{11}{}^1=\phantom{-}0$, $\Gamma_{11}{}^2=0$,
$\Gamma_{12}{}^1=c$, $\Gamma_{12}{}^2=0$, 
 $\Gamma_{22}{}^1=\phantom{-}0$,
$\Gamma_{22}{}^2=1+2c$,\smallbreak\qquad\qquad\qquad\qquad\quad where $c^2+c\ne0$.
\smallbreak\quad $\mathcal{M}_4^c$: $\Gamma_{11}{}^1=\phantom{-}0$, 
$\Gamma_{11}{}^2=0$, $\Gamma_{12}{}^1=1$,
$\Gamma_{12}{}^2=0$, $\Gamma_{22}{}^1=\phantom{-}c$, $\Gamma_{22}{}^2=2$.
\smallbreak\quad $\mathcal{M}_5^c:$ $\Gamma_{11}{}^1=-1$, 
$\Gamma_{11}{}^2=0$, $\Gamma_{12}{}^1=c$,
$\Gamma_{12}{}^2=0$,
$\Gamma_{22}{}^1=-1$, $\Gamma_{22}{}^2=2c$.
\end{definition}

We use Lemma ~\ref{L2.3} to see $\rho(\mathcal{M}_\star^\star)=\rho_{22}dx^2\otimes dx^2\ne0$
for $\rho_{22}\ne0$ so
none of these examples is flat. We compute $\rho_{22}$ and $\alpha$:
$$\begin{array}{llll}
\rho_{22}(\mathcal{M}_1)=1,&\alpha(\mathcal{M}_1)=16,\\[0.05in]
\rho_{22}(\mathcal{M}_2^c)=c^2+c,&
\alpha(\mathcal{M}_2^c)=\frac{4(1+2c)^2}{c^2+c}\in(-\infty,0]\cup(16,\infty),\\[0.05in]
\rho_{22}(\mathcal{M}_3^c)=c^2+c,&
{ \alpha(\mathcal{M}_3^c)}=\frac{4(1+2c)^2}{c^2+c}\in(-\infty,0]\cup(16,\infty),\\[0.05in]
\rho_{22}(\mathcal{M}_4^c)=1,&\alpha(\mathcal{M}_4^c)=16,\\[0.05in]
\rho_{22}(\mathcal{M}_5^c)=1+c^2,&\alpha(\mathcal{M}_5^c)=\frac{16c^2}{1+c^2}\in [0,16).
\end{array}$$
The general linear group $\operatorname{GL}(2,\mathbb{R})$ acts on the space
of Christoffel symbols by pull-back; we say that $\Gamma_1$ and $\Gamma_2$
are {\it linearly equivalent} if there exists $T\in\operatorname{GL}(2,\mathbb{R})$
so that $T^*(\nabla^{\Gamma_1})=\nabla^{\Gamma_2}$.
We have the following classification result.

\begin{lemma}\label{L3.6}
\ \begin{enumerate}
 \item If $\mathcal{M}\in\mathcal{F}^{\mathcal{A}}$ and $\operatorname{Rank}(\rho)=1$, then
$\mathcal{M}$ is linearly equivalent to $\mathcal{M}_1$, $\mathcal{M}_2^c$, $\mathcal{M}_3^c$, 
$\mathcal{M}_4^c$, or $\mathcal{M}_5^c$.
\smallbreak
\item $\mathfrak{K}(\mathcal{M}_1)=\operatorname{Span}_{\mathbb{R}}\{e^{x^1}\partial_1,
x^2e^{x^1}\partial_1\}\oplus\mathfrak{K}_0^{\mathcal{A}}\approx A_{4,9}^0$.
\smallbreak\item 
$\mathfrak{K}(\mathcal{M}_2^c)=\operatorname{Span}_{\mathbb{R}}
\{e^{x^1}\partial_1,e^{x^1+x^2}\partial_1\}
\oplus\mathfrak{K}_0^{\mathcal{A}}\approx A_2\oplus A_2$.
\smallbreak\item 
$\mathfrak{K}(\mathcal{M}_3^c)=\operatorname{Span}\{
e^{x^2}\partial_1,x^1\partial_1\}\oplus\mathfrak{K}_0^{\mathcal{A}}
\approx A_2\oplus A_2$.
\smallbreak\item  
$\mathfrak{K}(\mathcal{M}_4^c)=\operatorname{Span}\{
x^2\partial_1,(c\cdot(x^2)^2+2x^1)\partial_1\}
\oplus\mathfrak{K}_0^{\mathcal{A}}\approx A_{4,9}^0$.
\smallbreak\item 
$\mathfrak{K}(\mathcal{M}_5^c)=\operatorname{Span}_{\mathbb{R}}
\{e^{x^1}\cos(x^2)\partial_1,e^{x^1}\sin(x^2)\partial_1\}\oplus \mathfrak{K}_0^\mathcal{A} 
\approx A_{4,12}$.
\smallbreak\item $\mathcal{M}_1$, $\mathcal{M}_2^c$, $\mathcal{M}_3^c$, and
$\mathcal{M}_4^c$ are also Type~$\mathcal{B}$; $\mathcal{M}_5^c$ is not
Type~$\mathcal{B}$.
\end{enumerate}\end{lemma}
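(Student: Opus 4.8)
The strategy is to treat the seven assertions in three groups. First I would establish the classification in Assertion~1 by reducing an arbitrary $\mathcal{M}\in\mathcal{F}^{\mathcal{A}}$ with $\operatorname{Rank}(\rho)=1$ to one of the normal forms. Using Lemma~\ref{L2.3} we may assume $\Gamma_{11}{}^2=\Gamma_{12}{}^2=0$ and $\rho=\rho_{22}\,dx^2\otimes dx^2$ with $\rho_{22}=\Gamma_{12}{}^1(\Gamma_{22}{}^2-\Gamma_{12}{}^1)+\Gamma_{11}{}^1\Gamma_{22}{}^1\ne0$, so the remaining free parameters are $\Gamma_{11}{}^1,\Gamma_{12}{}^1,\Gamma_{22}{}^1,\Gamma_{22}{}^2$. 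The residual linear freedom that preserves the form $\rho=\rho_{22}\,dx^2\otimes dx^2$ consists of coordinate changes $x^1\mapsto a x^1+b x^2$, $x^2\mapsto c x^2$ (possibly with a shear in the kernel direction). I would split according to whether $\Gamma_{11}{}^1=0$ or not, and within each case use the allowed rescalings to normalize $\Gamma_{11}{}^1\in\{-1,0\}$, then normalize $\Gamma_{22}{}^1$ (to $0$, $c$, or $-1$ depending on its sign and on $\Gamma_{11}{}^1$) via a rescaling of $x^2$, which forces the relation between $\Gamma_{12}{}^1$ and $\Gamma_{22}{}^2$ seen in the list (e.g. $\Gamma_{22}{}^2=1+2c$ with $\Gamma_{12}{}^1=c$, or $\Gamma_{22}{}^2=2$ with $\Gamma_{12}{}^1=1$). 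Matching the resulting finite list of normal forms to $\mathcal{M}_1,\dots,\mathcal{M}_5^c$ and checking no cases are missed is the bookkeeping core of Assertion~1.

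\textbf{The affine Killing algebras (Assertions 2--6).} For each model I would solve the Killing equations explicitly using Theorem~\ref{T3.4}(1), which already gives the ODE governing $\xi(x^2)$. For $\mathcal{M}_1$ and $\mathcal{M}_4^c$ the relevant ODE ($\xi''+(2\Gamma_{12}{}^1-\Gamma_{22}{}^2)\xi'+\Gamma_{11}{}^1\Gamma_{22}{}^1\xi=0$, or its $\Gamma_{11}{}^1=0$ analog) has a double root $0$ of the characteristic polynomial, giving solutions $\{1,x^2\}$ (times $e^{-\Gamma_{11}{}^1x^1}$), hence the stated spans; for $\mathcal{M}_2^c$, $\mathcal{M}_3^c$ the characteristic polynomial has two distinct real roots giving exponentials $e^{x^1},e^{x^1+x^2}$ (resp. $e^{x^2},x^1$), and for $\mathcal{M}_5^c$ complex roots $\pm\sqrt{-1}$ giving $e^{x^1}\cos x^2,\ e^{x^1}\sin x^2$. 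In every case $\dim\{\mathfrak{K}(\mathcal{M})\}=4$ as predicted by Theorem~\ref{T3.4}(2). To identify the abstract Lie algebra I would compute the remaining brackets: $[\partial_1,\partial_2]=0$, and the brackets of $\partial_1,\partial_2$ with the two non-translational generators, then compare against the Patera et al.\ relations for $A_2\oplus A_2$, $A_{4,9}^0$, $A_{4,12}$. For instance in $\mathcal{M}_1$ one finds $[\partial_1, e^{x^1}\partial_1]=e^{x^1}\partial_1$, $[\partial_2,x^2e^{x^1}\partial_1]=e^{x^1}\partial_1$, $[\partial_1,x^2e^{x^1}\partial_1]=x^2e^{x^1}\partial_1$, $[e^{x^1}\partial_1,x^2e^{x^1}\partial_1]=0$, which after relabeling matches $A_{4,9}^0$; the nilpotent-versus-rotational distinction between $A_{4,9}^0$ and $A_{4,12}$ is detected by whether $\operatorname{ad}$ of the ``dilation'' generator has real or complex eigenvalues, which is exactly the dichotomy between $\mathcal{M}_1,\mathcal{M}_4^c$ and $\mathcal{M}_5^c$.

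\textbf{Type~$\mathcal{B}$ membership (Assertion 7).} By Lemma~\ref{L3.2}(2), $\mathcal{M}$ is Type~$\mathcal{B}$ iff $\mathfrak{K}(\mathcal{M})$ contains a two-dimensional non-Abelian subalgebra whose generators are linearly independent at some point. For $\mathcal{M}_1,\mathcal{M}_2^c,\mathcal{M}_3^c,\mathcal{M}_4^c$ I would exhibit such a pair directly from the explicit generators above: e.g.\ in $\mathcal{M}_1$ the pair $\{\partial_1,e^{x^1}\partial_1\}$ satisfies $[\partial_1,e^{x^1}\partial_1]=e^{x^1}\partial_1$ and is linearly independent wherever $e^{x^1}$ times a suitable combination is nonzero---wait, these are both multiples of $\partial_1$, so instead one must pick a pair not both proportional to $\partial_1$; the correct choice is $\{\partial_2 + (\text{something}),\ \dots\}$ or, more simply, one checks directly that $\operatorname{span}\{x^2 e^{x^1}\partial_1,\ e^{x^1}\partial_1\}$ together with $\partial_2$ furnishes, after a change of basis, generators $X,Y$ with $[X,Y]=Y$ and $X,Y$ independent at a generic point. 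Concretely, for $\mathcal{M}_3^c$ the pair $\{x^1\partial_1,\ \partial_1\}$ gives $[x^1\partial_1,\partial_1]=-\partial_1$, and these are independent away from $x^1=0$ when combined appropriately; one shows existence of the required non-Abelian rank-$2$ subalgebra case by case. For $\mathcal{M}_5^c$ I must show \emph{no} such subalgebra exists: since $\mathfrak{K}(\mathcal{M}_5^c)\approx A_{4,12}$ and its two-dimensional subalgebras are (up to conjugacy) determined by the structure of $A_{4,12}$---whose only two-dimensional subalgebras turn out to be Abelian, because the non-nilpotent element acts with purely complex eigenvalues so no invariant line with real eigenvalue exists---there is no $2$-dimensional non-Abelian subalgebra at all, hence $\mathcal{M}_5^c$ is not Type~$\mathcal{B}$.

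\textbf{Main obstacle.} The genuinely delicate point is Assertion~7 for $\mathcal{M}_5^c$: ruling out Type~$\mathcal{B}$ requires knowing that $A_{4,12}$ has \emph{no} two-dimensional non-Abelian subalgebra with the pointwise-independence property, which is an algebraic fact about the subalgebra lattice of $A_{4,12}$ rather than a computation in coordinates. I would handle it by classifying the $2$-dimensional subalgebras of $A_{4,12}$ (using its bracket relations $[e_1,e_3]=e_1$, $[e_2,e_3]=e_2$, $[e_1,e_4]=-e_2$, $[e_2,e_4]=e_1$): a non-Abelian $2$-dimensional subalgebra $\operatorname{span}\{X,Y\}$ with $[X,Y]=Y$ forces $Y$ to be a common eigenvector of $\operatorname{ad}_X$ with eigenvalue $1$; writing $X=\alpha e_3+\beta e_4+(\text{span of }e_1,e_2)$, the action of $\alpha\operatorname{ad}_{e_3}+\beta\operatorname{ad}_{e_4}$ on $\operatorname{span}\{e_1,e_2\}$ is $\alpha I + \beta J$ where $J$ is a $90^\circ$ rotation, whose eigenvalues are $\alpha\pm\beta\sqrt{-1}$, never equal to $1$ when $\beta\ne0$ and forcing degeneracy when $\beta=0$; this contradiction shows no such subalgebra exists. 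The remaining assertions are then routine verifications once the normal forms and their Killing fields are in hand.
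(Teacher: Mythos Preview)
Your plan for Assertions~2--6 via Theorem~\ref{T3.4}(1) is sound and produces exactly the Killing fields the paper finds. Two points deserve comment, and one of them is a genuine error.

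\textbf{Assertion 1.} The paper does not normalize the Christoffel symbols directly. Instead, after arranging $\rho=\rho_{22}\,dx^2\otimes dx^2$, it invokes Lemma~\ref{L3.3} to guarantee an extra Killing field of one of the shapes $\Re\{e^{a_1x^1+a_2x^2}\}\partial_1$ or $(a_1x^1+a_2x^2)\partial_1$, and then runs through these shapes case by case; in each case the Killing equations for that specific vector field pin down the Christoffel symbols and simultaneously deliver the full Killing algebra, its isomorphism type, and (for Cases~1--3) an explicit rank-$2$ pair $X_1,X_2$ with $[X_1,X_2]=X_2$ witnessing Type~$\mathcal{B}$. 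Your direct-normalization route is in principle workable, but it decouples the classification from the Killing computation and requires its own case analysis that you only sketch; the paper's approach is more unified and gives Assertions~1--7 in one pass.

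\textbf{Assertion 7 for $\mathcal{M}_5^c$.} Here your argument has a real gap. You claim $A_{4,12}$ has no two-dimensional non-Abelian subalgebra, but this is false: $\operatorname{span}\{e_1,e_3\}$ is one, since $[e_3,e_1]=-e_1$. Your eigenvalue computation is correct as far as it goes---with $X=\alpha e_3+\beta e_4+(\text{terms in }e_1,e_2)$, the restriction of $\operatorname{ad}_X$ to $\operatorname{span}\{e_1,e_2\}$ has eigenvalues $-\alpha\pm i\beta$---but the phrase ``forcing degeneracy when $\beta=0$'' is where it breaks: when $\beta=0$ and $\alpha\ne0$ there is no degeneracy at all, and one gets a perfectly good copy of $A_2$. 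What is missing is the \emph{geometric} half of Lemma~\ref{L3.2}(2): one needs a non-Abelian subalgebra of \emph{rank~$2$}, i.e.\ whose generators are linearly independent as tangent vectors at some point. In the realization $e_1=e^{x^1}\cos(x^2)\partial_1$, $e_2=e^{x^1}\sin(x^2)\partial_1$, $e_3=-\partial_1$, $e_4=-\partial_2$, the three elements $e_1,e_2,e_3$ are all scalar multiples of $\partial_1$. Since $Y=[X,Y]$ lies in the derived algebra $\operatorname{span}\{e_1,e_2\}$, it is $X$ that must carry a nonzero $e_4$-component for the pair to be pointwise independent; but $\beta\ne0$ forces the eigenvalues to be non-real, so no real eigenvector $Y$ exists. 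It is this combination---your eigenvalue computation \emph{together with} the observation that $\operatorname{span}\{e_1,e_2,e_3\}\subset C^\infty(M)\cdot\partial_1$---that rules out Type~$\mathcal{B}$, not a purely abstract Lie-algebra statement.
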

\begin{proof}Assume $\rho$ has rank $1$ and make a linear change of coordinates to assume
$\rho=\rho_{22}dx^2\otimes dx^2$. By Theorem~\ref{T3.4},
there exists $X\in\mathfrak{K}(\mathcal{M})-\mathfrak{K}_0^{\mathcal{A}}$
of the form $\zeta(x^1,x^2)\partial_1$ where $\zeta$ is non-constant. Lemma~\ref{L3.3}
then shows either $\zeta=e^{a_1x^1+a_2x^2}$ for $(0,0)\ne(a_1,a_2)\in\mathbb{R}^2$
(Case 1 and Case 2 below), or $\zeta=a_1x^1+a_2x^2$ for $(0,0)\ne(a_1,a_2)\in\mathbb{R}^2$
(Case 3 below), or $\zeta=\Re\{e^{a_1x^1+a_2x^2}\}$ for $(a_1,a_2)\in\mathbb{C}^2-\mathbb{R}^2$
(Case 4 below).
We examine these possibilities seriatim.
\smallbreak\noindent{\bf Case 1.} 
Assume $e^{a_1x^1+a_2x^2}\partial_1\in\mathfrak{K}(\mathcal{M})$ for
$a_1\ne0$. Let 
$$
(u^1,u^2):=(a_1x^1+a_2x^2,x^2)\quad\text{ so }
e^{u^1}\partial_{1}^u=(a_1)^{-1}e^{a_1x^1+a_2x^2}\partial_1\in\mathfrak{K}(\mathcal{M})\,.
$$
Thus we may assume that $e^{x^1}\partial_1\in\mathfrak{K}(\mathcal{M})$.
Let $X_1:=\partial_1+\partial_2$
and $X_2:=e^{x^1}\partial_1$, then $\{X_1(P),X_2(P)\}$ are linearly independent
for any
point $P\in\mathbb{R}^2$. By Lemma~\ref{L3.2},
$\mathcal{M}$ is also Type~$\mathcal{B}$ since $[X_1,X_2]=X_2$.
A direct computation shows $X=e^{x^1}\partial_1$ is an affine Killing vector field if and only if
$$
\Gamma_{11}{}^1=-1,\quad\Gamma_{11}{}^2=0,\quad\Gamma_{12}{}^2=0,\quad\Gamma_{22}{}^1=0\,.$$
We impose these relations and obtain 
$\rho_{22}=\Gamma_{12}{}^1(\Gamma_{22}{}^2-\Gamma_{12}{}^1)\ne0$. 
Two sub-cases present themselves when we search for another affine Killing vector field:
\smallbreak\noindent{\bf Case 1a.} Assume $\Gamma_{22}{}^2=2\Gamma_{12}{}^1$.
We set $Y=x^2e^{x^1}\partial_1$ and verify $Y$ is an affine Killing vector field. By 
Theorem~\ref{T3.4}, $\dim\{\mathfrak{K}(\mathcal{M})\}=4$. Thus
$$
\mathfrak{K}(\mathcal{M})=\operatorname{Span}_{\mathbb{R}}
\{e^{x^1}\partial_1,x^2e^{x^1}\partial_1\}\oplus\mathfrak{K}_0^{\mathcal{A}}\,.
$$
Since $\rho_{22}\ne0$, $\Gamma_{12}{}^1\ne0$. By rescaling $x^2$, we
may assume that $\Gamma_{12}{}^1=1$;
 this yields the surface $\mathcal{M}_1$. Set $e_1:=e^{x^1}\partial_1$,
$e_2:=x^2e^{x^1}\partial_1$, $e_3:=-\partial_2$, $e_4:=-\partial_1$.
We then have
 the bracket relations of the Lie algebra $A_{4,9}^0$:
$$
[e_2,e_3]=e_1,\quad[e_1,e_4]=e_1,\quad[e_2,e_4]=e_2\,.
$$

\smallbreak\noindent{\bf Case 1b.} Assume $\Gamma_{22}{}^2-2\Gamma_{12}{}^1\ne0$.
Then $Y=e^{x^1+(\Gamma_{22}{}^2-2\Gamma_{12}{}^1)x^2} \partial_1$ is an affine
 Killing vector field
distinct from $e^{x^1}\partial_1$.
By replacing $x^2$ by $(\Gamma_{22}{}^2-2\Gamma_{12}{}^1)^{-1}x^2$, we may assume
$Y=e^{x^1+x^2}\partial_1$; the Killing equations then yield 
$\Gamma_{22}{}^2=2\Gamma_{12}{}^1+1$
and thus
$$
\mathfrak{K}(\mathcal{M})=\operatorname{Span}_{\mathbb{R}}
\{e^{x^1}\partial_1,e^{x^1+x^2}\partial_1\}\oplus\mathfrak{K}_0^{\mathcal{A}}\,.
$$
We set $e_1:=\partial_2$, $e_2:=e^{x^1+x^2}\partial_1$, $e_3:=\partial_1-\partial_2$, 
$e_4:=e^{x^1}\partial_1$. This yields the surface $\mathcal{M}_2^c$.
We then
have the bracket relations of the Lie algebra $A_2\oplus A_2$:
$$ [e_1,e_2]=e_2,\quad[e_3,e_4]=e_4\,.$$

\medbreak\noindent{\bf Case 2.}
Assume $e^{a_1x^1+a_2x^2}\partial_1\in\mathfrak{K}(\mathcal{M})$ for
$a_1=0$. Hence 
 $e^{a_2x^2}\partial_1\in\mathfrak{K}(\mathcal{M})$ for $a_2\ne0$.
We may rescale $x^2$ to assume $a_2=1$.
A direct computation shows $e^{x^2}\partial_1$ is an affine Killing vector field if and only if
$$
\Gamma_{11}{}^1=0,\quad\Gamma_{11}{}^2=0,\quad
\Gamma_{12}{}^2=0,\quad\Gamma_{22}{}^2=1+2\Gamma_{12}{}^1\,.
$$
If we set $Y=(x^1-\Gamma_{22}{}^1x^2)\partial_1$, then this is an affine Killing vector field.
We may make a linear change of variables to replace $x^1$ by $x^1-\Gamma_{22}{}^1x^2$
to obtain $x^1\partial_1$ is an affine Killing vector field; this implies $\Gamma_{22}{}^1=0$.
This yields the surface $\mathcal{M}_3^c$.
We then have
$$
\mathfrak{K}(\mathcal{M})
=\operatorname{Span}\{e^{x^2}\partial_1,x^1\partial_1\}\oplus\mathfrak{K}_0^{\mathcal{A}}\,.
$$
We set $X_1=\partial_2$ and $X_2=e^{x^2}\partial_1$. Since $[X_1,X_2]=X_2$
and $\{X_1(P),X_2(P)\}$ are linearly independent for any $P\in\mathbb{R}^2$, Lemma~\ref{L3.2}
implies $\mathcal{M}$ is Type~$\mathcal{B}$ as well. We set $e_1:=-x^1\partial_1-\partial_2$,\quad 
$e_2:=-\partial_1$, 
$e_3:=\partial_2$, and
$e_4:=e^{x^2}\partial_1$. 
We then have the bracket relations of the Lie algebra
$A_2\oplus A_2$:
$$
[e_1,e_2]=e_2,\quad [e_3,e_4]=e_4\,.
$$
\smallbreak\noindent{\bf Case 3.} Assume $(a_1x^1+a_2x^2)\partial_1\in\mathfrak{K}(\mathcal{M})$
for $(a_1,a_2)\ne(0,0)$. This gives rise to several cases; we can rescale $x^1$ and $x^2$
to replace $a_i$ by $\lambda_ia_i$. Thus we need only consider $(a_1,a_2)\in\{(1,0),(1,1),(0,1)\}$.
\smallbreak\noindent{\bf Case 3a.}
 Suppose $(a_1,a_2)=(0,1)$ so $x^2\partial_1\in\mathfrak{K}(\mathcal{M})$.
A direct computation shows $x^2\partial_1$ is an  affine Killing vector field if and only if
$$
\Gamma_{11}{}^1=0,\quad\Gamma_{11}{}^2=0,
\quad\Gamma_{12}{}^2=0,\quad\Gamma_{22}{}^2=2\Gamma_{12}{}^1\,.
$$
We then have $\rho_{22}=(\Gamma_{12}{}^1)^2$. By rescaling $x^2$, we may assume $\Gamma_{12}{}^1=1$
and hence $\Gamma_{22}{}^2=2$. 
 We obtain the surface $\mathcal{M}_4^c$.
We set $Y=(2x^1+c\cdot(x^2)^2)\partial_1$ and
verify that $Y$ is an affine Killing vector field. Thus
$$
\mathfrak{K}(\mathcal{M})=\operatorname{Span}_{\mathbb{R}}
\{x^2\partial_1,(2x^1+c\cdot(x^2)^2)\partial_1\}\oplus\mathfrak{K}_0^{\mathcal{A}}\,.
$$
We set $X_1=\partial_1$ and $X_2=\partial_2+ (x^1+c\cdot(x^2)^2/2)\partial_1$.
Then $\{X_1(P),X_2(P)\}$ are linearly independent for any point $P\in\mathbb{R}^2$. Set
$e_1:=\partial_1$, $e_2:=x^2\partial_1$,  $e_3:=-\partial_2+cx^2\partial_1$,
$e_4:=( \frac12c\cdot(x^2)^2+x^1)\partial_1$.  We 
obtain the bracket relations of the Lie algebra $A_{4,9}^0$:
$$
 [e_2,e_3]=e_1,\quad[e_1,e_4]=e_1,\quad[e_2,e_4]=e_2\,.
$$

\smallbreak\noindent{\bf Case 3b.}
 Suppose $(a_1,a_2)=(1,0)$ so $x^1\partial_1\in\mathfrak{K}(\mathcal{M})$.
The Killing equations yield the relations:
$$\Gamma_{11}{}^1=0,\quad\Gamma_{11}{}^2=0,\quad\Gamma_{12}{}^2=0,\quad\Gamma_{22}{}^1=0\,.$$
Suppose first $2\Gamma_{12}{}^1\ne\Gamma_{22}{}^2$. Set $Y=e^{(\Gamma_{22}{}^2-2\Gamma_{12}{}^1)x^2}\partial_1$. We then verify that $Y$ is an affine Killing vector field. Thus this is subsumed in Case~2. We
may therefore suppose $2\Gamma_{12}{}^1=\Gamma_{22}{}^2$ and we obtain that $x^2\partial_1$
also is an affine Killing vector field. This is subsumed in Case 3a.
\smallbreak\noindent{\bf Case 3c.} Suppose $(a_1,a_2)=(1,1)$ so 
$(x^1+x^2)\partial_1\in\mathfrak{K}(\mathcal{M})$.
By replacing $x^1$ by $x^1+x^2$, we obtain $x^1\partial_1\in\mathfrak{K}(\mathcal{M})$. 
This is subsumed in Case 3b.

\medbreak\noindent{\bf Case 4.}
Assume $(e^{\alpha_1x^1+\alpha_2x^2})\partial_1$ is a complex
 affine Killing vector field where we have $(\alpha_1,\alpha_2)\in\mathbb{C}^2-\mathbb{R}^2$.
Set $\alpha_1=a_1+\sqrt{-1}a_2$ and $\alpha_2=b_1+\sqrt{-1}b_2$. Then the following
two vector fields are affine Killing vector fields:
\begin{eqnarray*}
&&X:=e^{a_1x^1+b_1x^2}\cos(a_2x^1+b_2x^2)\partial_1,\\
&&Y:=e^{a_1x^1+b_1x^2} \sin(a_2x^1+b_2x^2)\partial_1\,.
\end{eqnarray*}
Consequently
$\mathfrak{K}(\mathcal{M})=\operatorname{Span}_{\mathbb{R}}\{X,Y\}\oplus\mathfrak{K}_0^{\mathcal{A}}$.

\medbreak\noindent{\bf Case 4a.} Suppose $a_2\ne0$.
We can then make a linear change of coordinates to
assume $X=e^{a_1x^1+b_1x^2}\cos(x^1)\partial_1$. The Killing equations yield:
\begin{eqnarray*}
0&=&(-1+a_1^2+a_1\Gamma_{11}{}^1-b_1\Gamma_{11}{}^2)\cos(x^1)
-(2a_1+\Gamma_{11}{}^1)\sin(x^1),\\
0&=&2\Gamma_{11}{}^2(a_1\cos(x^1)-\sin(x^1))\,.
\end{eqnarray*}
This implies:
$$
\Gamma_{11}{}^2=0,\quad
(a_1)^2+a_1 \Gamma_{11}{}^1-1=0,\quad
2 a_1+\Gamma_{11}{}^1=0\,.
$$
Thus $\Gamma_{11}{}^1=-2a_1$. We show this case does not occur by deriving the contradiction:
$$0=(a_1)^2+a_1 \Gamma_{11}{}^1-1=-1-a_1^2\,.$$

\smallbreak\noindent{\bf Case 4b.}
Suppose $a_2=0$ and normalize $x^2$ so that $b_2=1$ and
$$X=e^{a_1x^1+b_1x^2}\cos(x^2)\partial_1\,.$$
Suppose $a_1=0$ so $X=e^{b_1x^2}\cos(x^2)\partial_1$.
We obtain two relations:
\smallbreak\qquad\hfill
$b_1^2+2 b_1 \Gamma_{12}{}^1-b_1 \Gamma_{22}{}^2-1=0$ and 
$-2 b_1-2 \Gamma_{12}{}^1+\Gamma_{22}{}^2=0$.\hfill\quad
\smallbreak\noindent
This implies $b_1=(\Gamma_{22}{}^2-2\Gamma_{12}{}^1)/2$. We derive a contradiction
and show this case can not occur by computing:
\begin{eqnarray*}
&&b_1^2+2 b_1 \Gamma_{12}{}^1-b_1 \Gamma_{22}{}^2-1\\
&=&-(4+4 (\Gamma_{12}{}^1)^2-4 \Gamma_{12}{}^1 \Gamma_{22}{}^2+(\Gamma_{22}{}^2)^2)/4\\
&=&-(4+(2\Gamma_{12}{}^1-\Gamma_{22}{}^2)^2)/4=0\,.
\end{eqnarray*}
Thus $a_1\ne0$ so we can renormalize the coordinates to ensure
$X=e^{x^1}\cos(x^2)\partial_1$.
 The bracket with $\partial_2$ then yields $\tilde X=e^{x^1}\sin(x^2)\partial_1$
also is an  affine Killing vector field. This generates the 4-dimensional Lie algebra 
$\mathfrak{K}(\mathcal{M})$.  Let
$$
e_1:=e^{x^1}\cos(x^2)\partial_1,\quad
e_2:=e^{x^1}\sin(x^2)\partial_1,\quad
e_3:=-\partial_1,\quad 
e_4:=-\partial_2.
$$
We then have the bracket relations of $A_{4,12}$:
$$
[e_1,e_3]=e_1,\quad [e_2,e_3]=e_2,\quad [e_1,e_4]=-e_2,\quad [e_2,e_4]=e_1\,.
$$

This establishes Assertions~1-6;
Assertion~7 follows from Lemma~\ref{L3.2}. 
\end{proof}

\begin{remark}\rm No surface in one family of
Definition~\ref{D3.5} is linearly isomorphic
to a surface in another family. We argue as follows to see this.
The Lie algebra $\mathfrak{K}(\mathcal{M}_5^c)$ is
$A_{4,12}$; this is different from the Lie algebras of the other 4 families so this
family is distinct. Similarly, the Lie algebra of $\mathcal{M}_1$
or $\mathcal{M}_4^c$ is $A_{4,9}^0$ while the Lie algebra of $\mathcal{M}_2^c$ or
$\mathcal{M}_3^c$ is $A_2\oplus A_2$. So we must construct a linear invariant
distinguishing $\mathcal{M}_1$ from $\mathcal{M}_4^c$ or distinguishing
$\mathcal{M}_2^c$ from $\mathcal{M}_3^c$. The Ricci tensor of any surface
in Definition~\ref{D3.5} has rank $1$ so $\ker(\rho)$ is a $1$-dimensional distribution;
we have normalized the coordinate system so 
$\ker(\rho)=\partial_1\cdot\mathbb{R}$. Let $\rho_0:=\Gamma_{ij}{}^jdx^i$. Since
contraction of an upper against a lower index is invariant under the action of
$\operatorname{GL}(2,\mathbb{R})$, $\rho_0$ and hence
$\dim\{\ker(\rho)\cap\ker(\rho_0)\}$ is a
linear invariant.
We compute 
$$
\rho_0^{\mathcal{M}_1}(\partial_1)=-1,\quad
\rho_0^{\mathcal{M}_2^c}(\partial_1)=-1,\quad
\rho_0^{\mathcal{M}_3^c}(\partial_1)=0,\quad
\rho_0^{\mathcal{M}_4^c}(\partial_1)=0\,.
$$
Thus $\ker(\rho)\cap\ker(\rho_0)=\{0\}$ if $\mathcal{M}=\mathcal{M}_1$ or $\mathcal{M}=\mathcal{M}_2^c$ while $\ker(\rho)\cap\ker(\rho_0)\ne\{0\}$ 
if $\mathcal{M}=\mathcal{M}_3^c$ or $\mathcal{M}=\mathcal{M}_4^c$. Thus in fact the 5 families of Definition~\ref{D3.5} are distinct under linear equivalence 
and Lemma~\ref{L3.6} is minimal in this respect.
\end{remark}

 Although the 5 basic families of Definition~\ref{D3.5} are distinct under linear equivalence,
there are non-linear changes of coordinates that can be used to relate members of
different families. We use such changes to establish the following result that shows
that the invariants $\alpha$ and $\epsilon$
form a complete system of invariants for Type~$\mathcal{A}$ surfaces where the Ricci tensor has rank 1.

\begin{theorem}\label{T3.8}
Let $\mathcal{M}$ and $\tilde{\mathcal{M}}$ be Type~$\mathcal{A}$ affine surfaces with
$\operatorname{Rank}\{\rho\}=1$.
Assume that $\alpha(\mathcal{M})=\alpha(\tilde{\mathcal{M}})=\alpha$ and 
that $\epsilon(\mathcal{M})=\epsilon(\tilde{\mathcal{M}})=\epsilon$.
\begin{enumerate}
\item If $\alpha=16$, then $\mathcal{M}\approx\tilde{\mathcal{M}}$,
$\mathfrak{K}(\mathcal{M})\approx A_{4,9}^0$, and $\mathcal{M}$ is
also of Type~$\mathcal{B}$. 
\item If $\alpha\in(0,16)$, then $\mathcal{M}\approx\tilde{\mathcal{M}}$,
$\mathfrak{K}(\mathcal{M})\approx A_{4,12}$ and $\mathcal{M}$
is not of Type~$\mathcal{B}$.
\item If $\alpha\notin[0,16]$, then $\mathcal{M}\approx\tilde{\mathcal{M}}$,
$\mathfrak{K}(\mathcal{M})\approx A_2\oplus A_2$, and
$\mathcal{M}$ is also of Type~$\mathcal{B}$.
\item Assume $\alpha=0$.
\begin{enumerate}
\item If $\epsilon<0$, then $\mathcal{M}\approx\tilde{\mathcal{M}}$,
 $\mathfrak{K}(\mathcal{M})\approx A_2\oplus A_2$, and
$\mathcal{M}$ is also of Type~$\mathcal{B}$.
\item If $\epsilon>0$, then $\mathcal{M}\approx\tilde{\mathcal{M}}$,
$\mathfrak{K}(\mathcal{M})\approx A_{4,12}$ and $\mathcal{M}$
is not of Type~$\mathcal{B}$.
\end{enumerate}
\end{enumerate}
\end{theorem}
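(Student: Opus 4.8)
The plan is to work from the explicit models of Lemma~\ref{L3.6}, which already tell us that any Type~$\mathcal{A}$ surface with $\operatorname{Rank}\{\rho\}=1$ is linearly equivalent to one of $\mathcal{M}_1$, $\mathcal{M}_2^c$, $\mathcal{M}_3^c$, $\mathcal{M}_4^c$, or $\mathcal{M}_5^c$, together with the invariant $\alpha$ (and its sign $\epsilon$) computed there. First I would observe that $\alpha$ and $\epsilon$ are affine (not just linear) invariants by Lemma~\ref{L2.5}, so it suffices to show that within each $\alpha$-range the listed models fall into a single affine-isomorphism class, possibly realized in more than one of the five linear families. The ranges recorded after Definition~\ref{D3.5} give the bookkeeping: $\alpha=16$ is hit by $\mathcal{M}_1$ and $\mathcal{M}_4^c$; $\alpha\in(0,16)$ only by $\mathcal{M}_5^c$ (with $\epsilon>0$); $\alpha\notin[0,16]$ by $\mathcal{M}_2^c$ and $\mathcal{M}_3^c$; and $\alpha=0$ by $\mathcal{M}_5^{0}$ when $\epsilon>0$ (this is flat-looking but $\rho_{22}=1\ne0$) and by $\mathcal{M}_2^c,\mathcal{M}_3^c$ at the boundary parameter when $\epsilon<0$. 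So the skeleton of the argument is: (i) show $\mathcal{M}_1\approx\mathcal{M}_4^c$ for every admissible $c$; (ii) show $\mathcal{M}_2^c\approx\mathcal{M}_3^{c'}$ whenever $\alpha(\mathcal{M}_2^c)=\alpha(\mathcal{M}_3^{c'})$ and show that the map $c\mapsto\alpha$ is onto each of the two boundary rays $(-\infty,0]$ and $(16,\infty)$, so that $\alpha$ alone pins down the class; (iii) identify the Lie algebras using Lemma~\ref{L3.6} and the Type~$\mathcal{B}$ assertions there.

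The key computational step is (i) and (ii): exhibiting explicit (necessarily nonlinear) coordinate changes intertwining the relevant connections. For (i), since $\mathcal{M}_1$ has $\Gamma_{11}{}^1=-1$ and affine Killing field $e^{x^1}\partial_1$, whereas $\mathcal{M}_4^c$ has $\Gamma_{11}{}^1=0$ and affine Killing field $x^2\partial_1$, I expect a change of the form $(x^1,x^2)\mapsto(\text{something involving }e^{\pm x^1},\text{something})$ that turns the exponential Killing field of one into the polynomial Killing field of the other; the computation in Lemma~\ref{L3.6}, Case~3a versus Case~1a, essentially tells you the target normal form, so matching $\alpha=16$ is consistent and a direct verification that the connection pulls back correctly should close it. For (ii), note that $\alpha(\mathcal{M}_2^c)=\alpha(\mathcal{M}_3^c)=4(1+2c)^2/(c^2+c)$ are the \emph{same} function of $c$, so $\mathcal{M}_2^c$ and $\mathcal{M}_3^c$ have equal invariants for the same parameter; one shows they are affinely isomorphic by a change of coordinates relating the exponential Killing field $e^{x^1}\partial_1$ (Case~1) to $e^{x^2}\partial_1$ (Case~2) — roughly a coordinate swap combined with a shear — and then one checks that $c\mapsto 4(1+2c)^2/(c^2+c)$ restricted to the admissible set $\{c:c^2+c\ne0\}$ surjects onto $(-\infty,0]\cup(16,\infty)$ and that the two preimages of a given $\alpha$-value yield affinely equivalent surfaces (the two roots of the quadratic in $c$ correspond to the symmetry $c\leftrightarrow$ its partner, realized by another linear change). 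Together with the surjectivity of $\alpha$ onto $[0,16)$ via $\mathcal{M}_5^c$ with $16c^2/(1+c^2)$, this shows every admissible $(\alpha,\epsilon)$ is realized and that the realization is unique up to affine isomorphism.

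For the converse direction — that equal $(\alpha,\epsilon)$ forces $\mathcal{M}\approx\tilde{\mathcal{M}}$ — I would argue that by Lemma~\ref{L3.6} each of $\mathcal{M},\tilde{\mathcal{M}}$ is linearly (hence affinely) equivalent to one of the five models, the value of $\alpha$ (together with $\epsilon$ in the degenerate case $\alpha=0$) determines which $\alpha$-range we are in, steps (i)–(ii) collapse the possibly two competing linear families into one affine class, and within a family the parameter $c$ is recovered from $\alpha$ up to the finite ambiguity already shown to be an affine symmetry. The Lie-algebra identifications in each case are then read off directly from Lemma~\ref{L3.6}, and the Type~$\mathcal{B}$ dichotomy ($\mathcal{M}_1,\mathcal{M}_2^c,\mathcal{M}_3^c,\mathcal{M}_4^c$ are Type~$\mathcal{B}$, $\mathcal{M}_5^c$ is not) gives the final clauses; the case $\alpha=0$ splits because $\epsilon<0$ lands in the $\mathcal{M}_2^c/\mathcal{M}_3^c$ (hence $A_2\oplus A_2$, Type~$\mathcal{B}$) family while $\epsilon>0$ lands in $\mathcal{M}_5^0$ (hence $A_{4,12}$, not Type~$\mathcal{B}$). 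The main obstacle I anticipate is bookkeeping the nonlinear coordinate changes in (i) and (ii) so that they genuinely preserve the whole Christoffel array and not merely one Killing field — the cleanest way is to guess the coordinate change from how it must act on the distinguished Killing vector fields and on $\ker(\rho)$, then verify $\Phi^*\nabla=\tilde\nabla$ by a single direct computation of all six transformed Christoffel symbols.
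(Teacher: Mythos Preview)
Your proposal is correct and follows essentially the same approach as the paper: reduce via Lemma~\ref{L3.6} to the five model families, then use explicit nonlinear coordinate changes (the paper uses $u^1=e^{-x^1}$ to identify $\mathcal{M}_1$ with $\mathcal{M}_4^0$ and $\mathcal{M}_2^c$ with $\mathcal{M}_3^c$, and $u^1=x^1+\tfrac{1}{2}\Gamma_{22}{}^1(x^2)^2$ to collapse all $\mathcal{M}_4^c$ to $\mathcal{M}_4^0$), then analyze the parameter $c\mapsto\alpha$ in each surviving family and realize the residual two-valued ambiguity by a linear symmetry, reading off the Lie algebra and Type~$\mathcal{B}$ status from Lemma~\ref{L3.6}. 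Your anticipated coordinate changes and the bookkeeping of the $\alpha$-ranges match the paper's treatment.
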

\begin{proof}
We first deal with the surfaces $\mathcal{M}_1$ and $\mathcal{M}_2^c$. 

Assume
$
{}^x\Gamma_{11}{}^1=-1$, ${}^x\Gamma_{11}{}^2=0$, ${}^x\Gamma_{12}{}^2=0$,
 and ${}^x\Gamma_{22}{}^1=0$.
Set $u^1=e^{-x^1}$ and $u^2=x^2$. Then
$$\begin{array}{ll}
du^1=-e^{-x^1}dx^1,& du^2=dx^2,\\
\partial_1^u=-e^{x^1}\partial_1,&\partial_2^u=\partial_2\,.
\end{array}$$
We then have
$\operatorname{Span}_{\mathbb{R}}\{\partial_1,e^{x^1}\partial_1,\partial_2\}
=\operatorname{Span}_{\mathbb{R}}\{-u^1\partial_1^u,\partial_1^u,\partial_2^u\}$.
We compute:
\begin{eqnarray*}
&&\nabla_{\partial_1^u}\partial_1^u= e^{x^1}\nabla_{\partial_1}\{e^{x^1}\partial_1\}
=e^{2x^1}\{(1+{}^x\Gamma_{11}{}^1)\partial_1+{}^x\Gamma_{11}{}^2\partial_2\},\\
&&\nabla_{\partial_1^u}\partial_2^u=-e^{x^1}\nabla_{\partial_1}\partial_2
=-e^{x^1}\{{}^x\Gamma_{12}{}^1\partial_1+{}^x\Gamma_{12}{}^2\partial_2\},\\
&&\nabla_{\partial_2^u}\partial_2^u=\nabla_{\partial_2}\partial_2
={}^x\Gamma_{22}{}^1\partial_1+{}^x\Gamma_{22}{}^2\partial_2\,.
\end{eqnarray*}
This implies that:
$$\begin{array}{ll}
{}^u\Gamma_{11}{}^1=-(1+{}^x\Gamma_{11}{}^1)\cdot e^{x^1}=0,&
{}^u\Gamma_{12}{}^1={}^x\Gamma_{12}{}^1\in\mathbb{R},\\
{}^u\Gamma_{22}{}^1=-{}^x\Gamma_{22}{}^1\cdot e^{-x^1}=0,&
{}^u\Gamma_{11}{}^2={}^x\Gamma_{11}{}^2\cdot e^{2x^1}=0,\\
{}^u\Gamma_{12}{}^2=-{}^x\Gamma_{12}{}^2\cdot e^{x^1}=0,&
{}^u\Gamma_{22}{}^2={}^x\Gamma_{22}{}^2\in\mathbb{R}.
\end{array}
$$
Thus $\alpha$ is unchanged and $\tilde{\mathcal{M}}:=(\mathbb{R}^+\times\mathbb{R},{}^u\Gamma)$
is isomorphic to $\mathcal{M}:=(\mathbb{R}^2,{}^x\Gamma)$. (This
shows, incidentally, that $(\mathbb{R}^2,{}^x\Gamma)$ is incomplete in this instance).
\subsection*{ Case~1. The surface $\mathcal{M}_1$}

 We may identify $\mathcal{M}_1$ with $\mathcal{M}_4^0$. We will discuss the surfaces
 $\mathcal{M}_4^c$ for more general $c$ subsequently.
 
\subsection*{ Case 2. The surfaces $\mathcal{M}_2^c$}  We may identify $\mathcal{M}_2^c$ with
 $\mathcal{M}_3^c$. Let $x=\Gamma_{12}{}^1$. We have
$\alpha=\frac{4(1+2x)^2}{x^2+x}$. This is symmetric about the line $x=-\frac12$. 
We note that $\alpha=0$
precisely when $\Gamma_{12}{}^2=-\frac12$; in this setting $\rho_{22}<0$. 

This is the
setting of Theorem~\ref{T3.8}~(4a) and there is only one  surface in this class. If
we assume $\alpha\ne0$, then $\alpha$ takes values in $(-\infty,0)\cup(16,\infty)$. There
are two possible values of $x$ (and two corresponding  surfaces). We make a linear
change of coordinates $x^1\rightarrow x^1-x^2$ and $x^2\rightarrow 2 x^2$ to have
$\mathfrak{K}(\mathcal{M})=\operatorname{Span}_{\mathbb{R}}
\{e^{x^1-x^2}\partial_1,e^{x^1+x^2}\partial_1\}\oplus\mathfrak{K}_0^{\mathcal{A}}$.
We have $e^{x^1\pm x^2}\partial_1$ are affine Killing vector fields if and only if the following equations
are satisfied:
$$\begin{array}{ll}
0=\Gamma_{11}{}^1-\Gamma_{11}{}^2+1,&
0=\Gamma_{11}{}^1+\Gamma_{11}{}^2+1,\\
0=\Gamma_{11}{}^2,&
0=\Gamma_{11}{}^2,\\
0=\Gamma_{11}{}^1-\Gamma_{12}{}^2+1,&
0=\Gamma_{11}{}^1-\Gamma_{12}{}^2+1,\\
0=\Gamma_{11}{}^2+\Gamma_{12}{}^2,&
0=\Gamma_{12}{}^2-\Gamma_{11}{}^2,\\
0=2 \Gamma_{12}{}^1-\Gamma_{22}{}^1-\Gamma_{22}{}^2+1,&
0=2 \Gamma_{12}{}^1+\Gamma_{22}{}^1-\Gamma_{22}{}^2-1,\\
0=\Gamma_{12}{}^2,&
0=\Gamma_{12}{}^2,
\end{array}$$
or equivalently
$$\Gamma_{11}{}^1=-1,\quad\Gamma_{11}{}^2=0,\quad
\Gamma_{12}{}^2=0,\quad\Gamma_{22}{}^1=1,\quad\Gamma_{22}{}^2=2\Gamma_{12}{}^1\,.
$$
We now have $\alpha=16x^2/(x^2-1)$ where $x=\Gamma_{12}{}^1$. 
The symmetry can now be realized by 
$x^2\rightarrow-x^2$, i.e. $\Gamma_{12}{}^1\rightarrow-\Gamma_{12}{}^1$.
 Thus $\alpha$ and $\epsilon$ completely detect the surfaces $\mathcal{M}_2^c$.

\subsection*{Case 3. The surface $\mathcal{M}_3^c$}  These
surfaces have been identified
with the surfaces $\mathcal{M}_2^c$ and dealt with in Case 2.

\subsection*{ Case 4. The surfaces $\mathcal{M}_4^c$} 
We have the relations
$$
{}^x\Gamma_{11}{}^1=0,\quad{}^x\Gamma_{11}{}^2=0,\quad{}^x\Gamma_{12}{}^1=1,
\quad{}^x\Gamma_{12}{}^2=0,\quad{}^x\Gamma_{22}{}^2=2\,.
$$
We have $\alpha=16$ and
$\mathfrak{K}(\mathcal{M})=\operatorname{Span}_{\mathbb{R}}\{x^2\partial_1,
({}^x\Gamma_{22}{}^1(x^2)^2+2x^1)\partial_1\}\oplus\mathfrak{K}_0^{\mathcal{A}}$.
The parameter $ c:={}^x\Gamma_{22}{}^1$ is undetermined. Let
$u^1=x^1+\frac12{}^x\Gamma_{22}{}^1(x^2)^2$ and $u^2=x^2$ be a  change of coordinates. We have
$$\begin{array}{ll}
du^1=dx^1+{}^x\Gamma_{22}{}^1x^2dx^2,&du^2=dx^2,\\
\partial_1^u=\partial_1,&\partial_2^u=-{}^x\Gamma_{22}{}^1x^2\partial_1+\partial_2.
\end{array}$$
We compute:
\smallbreak\qquad
$\nabla_{\partial_1^u}\partial_1^u=\nabla_{\partial_1}\partial_1
={}^x\Gamma_{11}{}^1\partial_1+{}^x\Gamma_{11}{}^2\partial_2=0$,
\smallbreak\qquad
$\nabla_{\partial_2^u}\partial_1^u=-{}^x\Gamma_{22}{}^1x^2({}^x\Gamma_{11}{}^1\partial_1
+{}^x\Gamma_{11}{}^2\partial_2)+{}^x\Gamma_{12}{}^1\partial_1+{}^x\Gamma_{12}{}^2\partial_2$
\smallbreak\qquad\qquad\qquad$=\partial_1=\partial_1^u$,
\smallbreak\qquad
$\nabla_{\partial_2^u}\partial_2^u=({}^x\Gamma_{22}{}^1x^2)^2\nabla_{\partial_1}\partial_1
-2{}^x\Gamma_{22}{}^1x^2\nabla_{\partial_1}\partial_2
-{}^x\Gamma_{22}{}^1\partial_1+\nabla_{\partial_2}\partial_2$
\smallbreak\qquad\qquad\qquad
$=0-2{}^x\Gamma_{22}{}^1x^2\partial_1-{}^x\Gamma_{22}{}^1\partial_1+{}^x\Gamma_{22}{}^1\partial_1
+{}^x\Gamma_{22}{}^2\partial_2=2\partial_2^u$.
\smallbreak\noindent Consequently,
$$\begin{array}{lll}
{}^u\Gamma_{11}{}^1=0,&{}^u\Gamma_{11}{}^2=0,&{}^u\Gamma_{12}{}^1=1,\\
{}^u\Gamma_{12}{}^2=0,&{}^u\Gamma_{22}{}^1=0,&{}^u\Gamma_{22}{}^2=2,
\end{array}$$
 so the surfaces $\mathcal{M}_1$ and $\mathcal{M}_4^c$ are equivalent for any $c$.

\subsection*{Case 5. The surfaces $\mathcal{M}_5^c$} We have the relations
$$\Gamma_{11}{}^1=-1,\quad\Gamma_{11}{}^2=0,\quad\Gamma_{12}{}^2=0,
\quad\Gamma_{22}{}^1=-1,\quad\Gamma_{22}{}^2=2\Gamma_{12}{}^1\,.
$$
We have $\alpha=16x^2/(1+x^2)$ takes values in $[0,16)$ where $x=\Gamma_{12}{}^1$.
If $\alpha=0$, there is only one surface
given by $\Gamma_{12}{}^1=0$
and we have $\rho_{22}=1+(\Gamma_{12}{}^1)^2=1$ corresponding to Theorem~\ref{T3.8}~(4b). If $\alpha\in(0,16)$, we have two surfaces
given by $\pm\Gamma_{12}{}^1$ and the symmetry is realized by $x^2\rightarrow-x^2$. We
have $\mathcal{M}$ is not of Type~$\mathcal{B}$ and $\mathfrak{K}(\mathcal{M})\approx A_{4,12}$, thus Assertion~2 follows.

This shows that $(\alpha,\epsilon)$ completely
determines the isomorphism type of $\mathcal{M}$ and
completes the proof of Theorem~\ref{T3.8}.
\end{proof}
We summarize our conclusions as follows: 
\medbreak
{\bf Table 1.} {\small Classification of homogeneous affine surfaces of Type~$\mathcal{A}$ with 
$\operatorname{Rank}\{\rho\}=1$.} Let $\kappa(M):=\dim\{\mathfrak{K}(M)\}$.
\smallbreak
\begin{center}
\begin{tabular}{|c|c|c|c|c|c|c|}
\hline
$\alpha$&$\epsilon$ &{  $\mathcal{M}$}&$\mathfrak{K}(\mathcal{M})$&
$\kappa(\mathcal{M})$&  Type~$\mathcal{A}$ & Type~$\mathcal{B}$\\
\hline
 $\alpha<0$&$-1$&{ $\mathcal{M}_2^c$, $\mathcal{M}_3^c$, $|c+\frac12|<\frac12$}&$A_2\oplus A_2$ &$4$& \checkmark&\checkmark\\
\hline
$\alpha=0$&$-1$&{ $\mathcal{M}_2^c$, $\mathcal{M}_3^c$, $c=-\frac12$}& $A_2\oplus A_2$&$4$& \checkmark&\checkmark\\
\hline
 $\alpha=0$&$+1$ &{ $\mathcal{M}_5^0$}&$A_{4,12}$ &$4$& \checkmark&No\\
\hline
$0<\alpha<16$&$+1$&{  $\mathcal{M}_5^c$, $c\neq 0$}& $ A_{4,12}$&$4$& \checkmark&No\\
\hline
 $\alpha=16$&$+1$&{ $\mathcal{M}_1$, $\mathcal{M}_4^c$, $c\in \mathbb{R}$}& $A_{4,9}^0$ &$4$&  \checkmark& \checkmark\\
\hline
 $16<\alpha$&$+ 1$&{ $\mathcal{M}_2^c$, $\mathcal{M}_3^c$, $\frac12<|c+\frac12|$}&$A_2\oplus A_2$ &$4$& \checkmark&\checkmark\\
\hline
\end{tabular}
\end{center}

\subsection{ Affine Killing vector fields on 
Type~$\mathcal{B}$ homogeneous  surfaces}\label{S3.2}
Linear equivalence for Type~$\mathcal{A}$ surfaces is the
action of $\operatorname{GL}(2,\mathbb{R})$. Linear equivalence for Type~$\mathcal{B}$
surfaces is a bit more subtle in view of Remark~\ref{R1.3}.
\begin{lemma}\label{L3.9}
Let $T_{b,c}:(x^1,x^2)\rightarrow(x^1,bx^1+cx^2)$ for $c\ne0$.
Let $C$ and $\tilde C$ define affine manifolds $\mathcal{M}$ and $\tilde{\mathcal{M}}$ of
Type~$\mathcal{B}$. Then $\mathcal{M}$ and $\tilde{\mathcal{M}}$ are linearly equivalent
if and only if there exists $T_{b,c}$ so $T_{b,c}^\ast C=\tilde C$.
\end{lemma}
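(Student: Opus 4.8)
The plan is to leverage two structural facts about Type~$\mathcal{B}$ geometries: first, that a \emph{linear} change of coordinates transforms Christoffel symbols tensorially, since the inhomogeneous (second-derivative) term in the transformation rule drops out; and second, that the Christoffel symbols $\Gamma^\nabla=(x^1)^{-1}C$ of a Type~$\mathcal{B}$ connection are homogeneous of degree $-1$ in $x^1$, so the dilation $x\mapsto\lambda x$ --- the linear part of the $ax+b$ structure group of Remark~\ref{R1.3} --- fixes $\nabla$. Write $\Gamma^{\tilde\nabla}=(x^1)^{-1}\tilde C$; since $\mathcal{M}$ and $\tilde{\mathcal{M}}$ are not flat, $C\neq0$ and $\tilde C\neq0$. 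For $A\in\operatorname{GL}(2,\mathbb{R})$ I write $\theta(A)$ for the induced action on $(2,1)$-tensors, $\theta(A)(C)_{ij}{}^k=(A^{-1})^k{}_l A^m{}_i A^n{}_j C_{mn}{}^l$; it is invertible, satisfies $\theta(\lambda A)=\lambda\theta(A)$, and by the first fact $\Gamma^{T^\ast\nabla}_{ij}{}^k=((Tx)^1)^{-1}\,\theta(T)(C)_{ij}{}^k$ for any linear $T$.

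The converse implication is immediate. If $T_{b,c}^\ast C=\tilde C$, then $\det T_{b,c}=c\neq0$, so $T_{b,c}\in\operatorname{GL}(2,\mathbb{R})$; it maps $\mathbb{R}^+\times\mathbb{R}$ onto itself since it fixes $x^1$; and because $(T_{b,c}x)^1=x^1$ we get $\Gamma^{T_{b,c}^\ast\nabla}_{ij}{}^k=(x^1)^{-1}\theta(T_{b,c})(C)_{ij}{}^k=(x^1)^{-1}\tilde C_{ij}{}^k$, so $T_{b,c}^\ast\nabla=\tilde\nabla$ and the surfaces are linearly equivalent.

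For the forward implication, suppose $T\in\operatorname{GL}(2,\mathbb{R})$ realizes the equivalence and write $T=\bigl(\begin{smallmatrix}p&q\\ r&s\end{smallmatrix}\bigr)$, so $(Tx)^1=px^1+qx^2$. First I would show $q=0$: comparing
$$\Gamma^{T^\ast\nabla}_{ij}{}^k=\frac{\theta(T)(C)_{ij}{}^k}{px^1+qx^2}\quad\text{with}\quad \Gamma^{\tilde\nabla}_{ij}{}^k=\frac{\tilde C_{ij}{}^k}{x^1}$$
at an index triple where $\theta(T)(C)_{ij}{}^k\neq0$ (possible since $\theta(T)(C)\neq0$) forces $(px^1+qx^2)^{-1}$ to be a constant multiple of $(x^1)^{-1}$, hence $q=0$ and the constant equals $p$; geometrically, this just says the singular locus $\{(Tx)^1=0\}$ of $T^\ast\nabla$ must coincide with the singular locus $\{x^1=0\}$ of $\tilde\nabla$. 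Feeding $q=0$ back in yields $\tilde C=\tfrac1p\,\theta(T)(C)$. Then I set $B:=\tfrac1p T=\bigl(\begin{smallmatrix}1&0\\ r/p&s/p\end{smallmatrix}\bigr)$, which is precisely $T_{b,c}$ with $b=r/p$ and $c=s/p$, and $c\neq0$ because $T$ is invertible; using $\theta(\tfrac1pT)=\tfrac1p\theta(T)$ gives $T_{b,c}^\ast C=\theta(B)(C)=\tfrac1p\theta(T)(C)=\tilde C$, completing the proof.

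The argument is essentially bookkeeping once these two facts are isolated, so there is no serious obstacle; the one step that deserves care is the passage from $T$ to $\tfrac1p T$ --- this is exactly where the degree $-1$ homogeneity of $(x^1)^{-1}C$ (equivalently, the triviality of the dilation action) is used, and it is what lets the equivalence be realized inside the $2$-parameter family $\{T_{b,c}\}$ rather than in the larger group of invertible linear maps fixing the $x^2$-axis.
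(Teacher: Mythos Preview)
Your proof is correct, but it follows a different route from the paper's. The paper argues group-theoretically: it identifies the group $\mathfrak{G}=\{(x^1,x^2)\mapsto(tx^1,ux^1+vx^2+w):t>0,\,v\ne0\}$ of affine transformations preserving $\mathbb{R}^+\times\mathbb{R}$, observes that $\mathfrak{G}$ is generated by $\mathfrak{H}=\{(x^1,x^2)\mapsto(ax^1,ax^2+b):a>0\}$ (the $ax+b$ group of Remark~\ref{R1.3}) together with $\mathfrak{I}=\{T_{b,c}\}$, and then invokes Remark~\ref{R1.3} to conclude that $\mathfrak{H}$ acts trivially on $C$, so every equivalence in $\mathfrak{G}$ may be realized by an element of $\mathfrak{I}$ alone. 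Your argument is instead a direct computation: you derive $q=0$ from the requirement that $(px^1+qx^2)^{-1}$ be a constant multiple of $(x^1)^{-1}$, and then absorb the residual dilation factor $p$ via the homogeneity identity $\theta(\lambda A)=\lambda\,\theta(A)$. The paper's decomposition $\mathfrak{G}=\mathfrak{H}\cdot\mathfrak{I}$ makes the structural reason transparent---one is quotienting by the symmetry group of every Type~$\mathcal{B}$ structure---while your approach is self-contained and makes explicit where each of the two ingredients you isolate at the outset is used; in particular, your singular-locus argument for $q=0$ is a pleasant alternative to simply imposing that $T$ preserve the half-plane.
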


\begin{proof} Let $\mathfrak{G}:=\{T:(x^1,x^2)\rightarrow(tx^1,ux^1+vx^2+w)\}$
for $t>0$ and $v\ne0$ be the 4-dimensional subgroup of $\operatorname{GL}(2,\mathbb{R})$
which preserves $\mathbb{R}^+\times\mathbb{R}$. Then by definition,
$\mathcal{M}$ is linearly equivalent
to $\tilde{\mathcal{M}}$ if and only if there exists $T\in\mathfrak{G}$ so that
$T^\ast C=\tilde C$. There are two non-Abelian subgroups of $\mathfrak{G}$ which play an
important role. Set
$$
\mathfrak{H}:=\{S:(x^1,x^2)\rightarrow(ax^1,ax^2+b)\text{ for }a>0\}
\text{ and }
\mathfrak{I}:=\{T_{b,c}\}\,.
$$
The subgroups $\mathfrak{H}$ and $\mathfrak{I}$ generate $\mathfrak{G}$ as a Lie group.
By Remark~\ref{R1.3}, $\mathfrak{H}$ preserves Type~$\mathcal{B}$ structures. Thus
only the action of $\mathfrak{I}$ is relevant in studying linear equivalence for Type $\mathcal{B}$
structures and the Lemma follows.
\end{proof} 

The Lie group $\mathfrak{I}$ plays the crucial role in studying linear equivalence for Type $\mathcal{B}$ structures; the shear $(x^1,x^2)\rightarrow(x^1,\varepsilon x^1+x^2)$ and
the rescaling $(x^1,x^2)\rightarrow(x^1,cx^2)$ for $c\ne0$ generate $\mathfrak{I}$ and will play a central role
in what follows. The group $\mathfrak{H}$ also plays an important role.
 Let $\mathfrak{K}_0^{\mathcal{B}}$ be the Lie algebra of $\mathfrak{H}$. Then
 $$
\mathfrak{K}_0^{\mathcal{B}}:=\operatorname{Span}\{x^1\partial_1+x^2\partial_2,\partial_2\}
\subset\mathfrak{K}(\mathcal{M})\text{ for any }\mathcal{M}\in\mathcal{F}^{\mathcal{B}}\,.
$$
 This non-Abelian Lie sub-algebra
plays the same role in the analysis of Type~$\mathcal{B}$ surfaces that $\mathfrak{K}_0^{\mathcal{A}}$
played in the analysis of Type~$\mathcal{A}$ surfaces. 

Let $\mathfrak{su(1,1)}$ be the Lie algebra
of $SU(1,1)$ or, equivalently, of $SL(2,\mathbb{R})$. It is the Lie algebra on 3 generators (also denoted by $A_{3,8}$ in \cite{PSWZ76}) satisfying
the relations:
\begin{equation}\label{E3.d}
[e_1,e_2]=e_1,\quad[e_2,e_3]=e_3,\quad[e_1,e_3]=-2e_2\,.
\end{equation}

\begin{definition}\label{D3.10}\rm For $c\ge0$, set
\smallbreak$\mathcal{N}_1^\pm:=\mathcal{M}(
C_{11}{}^1=-\frac32$, $C_{11}{}^2=0$, $C_{12}{}^1=0$, 
$C_{12}{}^2=-\frac12$, $C_{22}{}^1=\mp\frac12$,
$C_{22}{}^2=0$).
\smallbreak $\mathcal{N}_2^c:=\mathcal{M}(C_{11}{}^1=-\frac32$, 
$C_{11}{}^2=0$, $C_{12}{}^1=1$, $C_{12}{}^2=-\frac12$, 
$C_{22}{}^1=\phantom{A.}c$, $C_{22}{}^2=2$).
\smallbreak $\mathcal{N}_3:=\mathcal{M}(C_{11}{}^1=-1$, $C_{11}{}^2=0$, $C_{12}{}^1=0$, $C_{12}{}^2=-1$, $C_{22}{}^1=\phantom{}-1$, $C_{22}{}^2=0$).
\smallbreak $\mathcal{N}_4:=\mathcal{M}(C_{11}{}^1=-1$, $C_{11}{}^2=0$, $C_{12}{}^1=0$, $C_{12}{}^2=-1$, $C_{22}{}^1=\phantom{Aa}1$,
$C_{22}{}^2=0$).
\medbreak\noindent
We show that these surfaces are not flat and thus $\mathcal{N}_\star^\star$ is Type~$\mathcal{B}$
 by computing:
\begin{equation}\label{E3.e}
\begin{array}{l}
\rho(\mathcal{N}_1^\pm)=\pm(x^1)^{-2}dx^2\otimes dx^2,\\[0.05in]
\rho(\mathcal{N}_2^c)=(x^1)^{-2}\{\textstyle\frac32(dx^1\otimes dx^2-dx^2\otimes dx^1)+(1-2c)dx^2\otimes dx^2\},\\[0.05in]
\rho(\mathcal{N}_3)=(x^1)^{-2}(-dx^1\otimes dx^1+dx^2\otimes dx^2),\\[0.05in]
\rho(\mathcal{N}_4)=(x^1)^{-2}(-dx^1\otimes dx^1-dx^2\otimes dx^2)\,.
\end{array}\end{equation}
\end{definition}

The main result of this section is the following:
\begin{theorem}\label{T3.11}
 If $\mathcal{M}\in\mathcal{F}^{\mathcal{B}}$, then 
$2\le\dim\{\mathfrak{K}(\mathcal{M})\}\le4$.
\begin{enumerate}
\item If $\dim\{\mathfrak{K}(\mathcal{M})\}=4$, then
$\rho=(x^1)^{-2}\tilde\rho_{11}dx^1\otimes dx^1$, $C_{12}{}^1=C_{22}{}^1=C_{22}{}^2=0$,
$\mathcal{M}$ is also of Type~$\mathcal{A}$,
and up to linear equivalence one of the following 3 possibilities holds:
\begin{enumerate}
\item $C_{11}{}^1-2C_{12}{}^2=0$, $C_{11}{}^2=1$, 
$\tilde\rho_{11}=(1+C_{12}{}^2)C_{12}{}^2\ne0$, and\newline
$\mathfrak{K}(\mathcal{M})=\operatorname{Span}\{x^1\partial_1-x^1\log(x^1)\partial_2,x^1\partial_2\}\oplus\mathfrak{K}_0^{\mathcal{B}}$.
\item $C_{11}{}^2=0$, 
$\tilde\rho_{11}=(1+C_{11}{}^1-C_{12}{}^2)C_{12}{}^2\ne0$, and\newline
$\mathfrak{K}(\mathcal{M})=\operatorname{Span}\{x^1\partial_1,(x^1)^a\partial_2\}
\oplus\mathfrak{K}_0^{\mathcal{B}}$ for some $a\ne0$.
\item $C_{11}{}^2=0$, $\tilde\rho_{11}=(C_{12}{}^2)^2\ne0$, and\newline
$\mathfrak{K}(\mathcal{M})=\operatorname{Span}\{x^1\partial_1,\log(x^1)\partial_2\}\oplus\mathfrak{K}_0^{\mathcal{B}}$.
\end{enumerate}
\item If $\dim\{\mathfrak{K}(\mathcal{M})\}=3$, then 
$\mathfrak{K}(\mathcal{M})=\operatorname{Span}\{X(\sigma)\}\oplus\mathfrak{K}_0^{\mathcal{B}}\approx \mathfrak{su(1,1)}$ where
$$X(\sigma):=2x^1x^2\partial_1+\{(x^2)^2+\sigma\cdot(x^1)^2\}\partial_2\,\,\,\text{ for }\,\,\,\sigma\in\{-1,0,1\},$$
$\mathcal{M}$ is not of Type~$\mathcal{A}$, and
up to linear equivalence, one of the following possibilities holds:
\begin{enumerate}
\smallbreak\item $\sigma=0$, $\mathcal{M}=\mathcal{N}_1^\pm$, and
$\mathcal{M}$ is not Type~$\mathcal{C}$.
\smallbreak\item $\sigma=0$, $\mathcal{M}=\mathcal{N}_2^c$, and
$\mathcal{M}$ is not Type~$\mathcal{C}$.
\smallbreak\item $\sigma=1$, $\mathcal{M}=\mathcal{N}_3$, and
$\mathcal{M}$ is Type~$\mathcal{C}$.
\smallbreak\item $\sigma=-1$, $\mathcal{M}=\mathcal{N}_4$, and
$\mathcal{M}$ is Type~$\mathcal{C}$.
\end{enumerate}
\smallbreak\item For each of the 3 structures listed in Assertion~1, 
$\dim\{\mathfrak{K}(\mathcal{M})\}=4$.
For each of the 4 structures listed in Assertion~2, $ \dim\{\mathfrak{K}(\mathcal{M})\}=3$.
\end{enumerate}\end{theorem}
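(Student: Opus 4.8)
The plan is to analyze the Killing equations for a general Type~$\mathcal{B}$ connection $\Gamma=(x^1)^{-1}C$ by exploiting the $\mathfrak{K}_0^{\mathcal{B}}$-module structure, exactly as $\mathfrak{K}_0^{\mathcal{A}}$ was used in Lemma~\ref{L3.3}. First I would record that $\mathfrak{K}_0^{\mathcal{B}}=\operatorname{Span}\{x^1\partial_1+x^2\partial_2,\partial_2\}\subset\mathfrak{K}(\mathcal{M})$ always, so $\dim\ge2$, and that the upper bound $\dim\le6$ from Lemma~\ref{L3.1} together with non-flatness should be sharpened to $\dim\le4$; the cleanest way is to observe that any $X\in\mathfrak{K}(\mathcal{M})$ is determined by finitely many jets and to bound the dimension of the solution space of the (linear, constant-after-scaling) Killing system directly. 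Writing $X=\xi^1\partial_1+\xi^2\partial_2$, the Killing equations become a system of PDEs in $\xi^1,\xi^2$ whose coefficients are rational in $x^1$; homogeneity under $\mathfrak{H}$ suggests substituting $x^1=e^t$ (or testing monomials/logarithms in $x^1$) to convert to constant-coefficient ODEs. Acting by $\operatorname{ad}$ of the generators of $\mathfrak{K}_0^{\mathcal{B}}$ on any supplementary Killing field produces minimal dependence relations, and factoring the characteristic polynomial yields the possible forms $x^1\partial_1$, $(x^1)^a\partial_2$, $\log(x^1)\partial_2$, $x^1\log(x^1)\partial_2$, and the $SL(2,\mathbb{R})$-type field $X(\sigma)=2x^1x^2\partial_1+((x^2)^2+\sigma(x^1)^2)\partial_2$.

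For Assertion~1, I would suppose $\dim\{\mathfrak{K}(\mathcal{M})\}=4$. Since $\mathfrak{K}_0^{\mathcal{B}}$ is non-Abelian of rank~2, $\mathcal{M}$ is Type~$\mathcal{B}$; I then check that the extra two Killing fields, together with the Killing equations, force an Abelian rank-2 subalgebra, so by Lemma~\ref{L3.2} $\mathcal{M}$ is also Type~$\mathcal{A}$. Plugging the candidate fields into the Killing equations (using Lemma~\ref{L2.7} for $\rho$) pins down the Christoffel symbols: one finds $C_{12}{}^1=C_{22}{}^1=C_{22}{}^2=0$, hence by Lemma~\ref{L2.10} $\rho=(x^1)^{-2}(1+C_{11}{}^1-C_{12}{}^2)C_{12}{}^2\,dx^1\otimes dx^1$, and $\ker\rho=\operatorname{Span}\{\partial_2\}$ is parallel. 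The three sub-cases (a), (b), (c) are separated by whether $C_{11}{}^1-2C_{12}{}^2$ vanishes and, when it does not, by applying Lemma~\ref{L2.8}(2) to normalize $C_{11}{}^2=0$: the second-order ODE governing the $\partial_2$-component of the Killing field has characteristic roots that are either a repeated root $0$ (giving $\log x^1$ and forcing $\tilde\rho_{11}=(C_{12}{}^2)^2$), distinct real roots $\{0,a\}$ (giving $(x^1)^a$), or — in the non-normalizable case $C_{11}{}^1-2C_{12}{}^2=0$, $C_{11}{}^2=1$ — a root structure producing $x^1\log x^1$. In each case I read off $\mathfrak{K}(\mathcal{M})$ and verify non-degeneracy $\tilde\rho_{11}\ne0$, which is exactly the non-flatness hypothesis.

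For Assertion~2, I suppose $\dim\{\mathfrak{K}(\mathcal{M})\}=3$ and take a single extra Killing field $Y\notin\mathfrak{K}_0^{\mathcal{B}}$. Ruling out the forms that appeared in Assertion~1 (they would generate a 4-dimensional algebra), the minimal dependence analysis forces $Y$ to be of the form $X(\sigma)$ after linear normalization; computing $[\,\cdot,\cdot\,]$ on $\{X(\sigma), x^1\partial_1+x^2\partial_2,\partial_2\}$ gives the $\mathfrak{su}(1,1)$ relations~(\ref{E3.d}). Substituting $X(\sigma)$ into the Killing equations solves for $C$ in terms of $\sigma$ and the remaining free parameters, yielding precisely the families $\mathcal{N}_1^\pm$ (with $\sigma=0$), $\mathcal{N}_2^c$ ($\sigma=0$), $\mathcal{N}_3$ ($\sigma=1$), $\mathcal{N}_4$ ($\sigma=-1$); the Ricci tensors~(\ref{E3.e}) show none is flat and, since $\rho$ fails to be symmetric for $\mathcal{N}_1^\pm$ and $\mathcal{N}_2^c$, Lemma~\ref{L2.2} shows those are not Type~$\mathcal{A}$ (and one checks $X(\sigma)$ admits no Abelian rank-2 complement in general). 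Identifying which are Type~$\mathcal{C}$ reduces to checking when $\rho^s$ is non-degenerate and, via the preceding Lemma on rank-2 $\rho^s$, has constant Gauss curvature realizable as a Levi-Civita connection: $\mathcal{N}_3,\mathcal{N}_4$ qualify, $\mathcal{N}_1^\pm,\mathcal{N}_2^c$ do not. Assertion~3 is then immediate by substituting each listed representative back into the Killing equations and counting solutions. The main obstacle is the case bookkeeping in Assertion~1(a)--(c): correctly identifying the degenerate logarithmic solutions of the governing ODE and confirming that the non-normalizable branch $C_{11}{}^2=1$ is genuinely inequivalent to the others requires care, and the normalization Lemma~\ref{L2.8} must be invoked at exactly the right moment to avoid losing or duplicating cases.
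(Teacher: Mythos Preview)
Your overall strategy matches the paper's: exploit the $\mathfrak{K}_0^{\mathcal{B}}$-module structure via $\operatorname{ad}(\partial_2)$ and $\Theta=\operatorname{ad}(x^1\partial_1+x^2\partial_2)$ to reduce any extra Killing field to one of two normal forms (the paper's Lemma~\ref{L3.14}), namely $X=X(x^1)$ or $X=X(\sigma)$, and then analyze each branch separately. The paper organizes this as a sequence of lemmas (Lemmas~\ref{L3.13}--\ref{L3.15} and the unnumbered lemma following), but the logical flow is what you describe.

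There is, however, a concrete error in your argument that the $\dim=3$ surfaces are not Type~$\mathcal{A}$. You write ``since $\rho$ fails to be symmetric for $\mathcal{N}_1^\pm$ and $\mathcal{N}_2^c$, Lemma~\ref{L2.2} shows those are not Type~$\mathcal{A}$.'' But by Equation~(\ref{E3.e}), $\rho(\mathcal{N}_1^\pm)=\pm(x^1)^{-2}dx^2\otimes dx^2$ \emph{is} symmetric, so Lemma~\ref{L2.2} gives no obstruction there. The paper handles this uniformly by a Lie-algebraic argument: one checks that $\operatorname{Span}\{X(\sigma)\}\oplus\mathfrak{K}_0^{\mathcal{B}}\cong\mathfrak{su}(1,1)$, and then observes that the image of $\operatorname{ad}$ in $\mathfrak{su}(1,1)$ is $3$-dimensional, whereas in $A_2\oplus A_2$ and in $A_{4,9}^0$ it is $2$-dimensional. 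Hence $\mathfrak{su}(1,1)$ is not a Lie subalgebra of either, and by Theorem~\ref{T3.8} (which lists the possible $4$-dimensional algebras for Type~$\mathcal{A}$ with $\operatorname{Rank}\rho=1$) the surface cannot be Type~$\mathcal{A}$. You should replace the Ricci-symmetry argument with this one.

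A smaller point: your criterion for Type~$\mathcal{C}$ (``$\rho^s$ non-degenerate of constant Gauss curvature realizable as a Levi-Civita connection'') is not quite what is needed; non-degeneracy of $\rho^s$ does not by itself force $\nabla$ to be the Levi-Civita connection of any metric. The paper instead writes down the Levi-Civita connection of $(x^1)^{-2}((dx^1)^2\pm(dx^2)^2)$ explicitly via the Koszul formula and matches it against $\mathcal{N}_3$, $\mathcal{N}_4$; for $\mathcal{N}_1^\pm$ and $\mathcal{N}_2^c$ the rank of $\rho^s$ is at most $1$, which rules out Type~$\mathcal{C}$ directly.
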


\begin{remark} If $\mathcal{M}$ is Type~$\mathcal{C}$,
then $ \dim\{\mathfrak{K}(\mathcal{M})\}=3$, $\rho=\rho^s$, and $\operatorname{Rank}\{\rho\}=2$.
Thus if $\mathcal{M}\in\mathcal{F}^{\mathcal{B}}$ then $\mathcal{M}$ is also of Type~$\mathcal{C}$
if and only if either Assertion~2c or Assertion~2d of Theorem~\ref{T3.11} holds.
Similarly, $\mathcal{M}$ is both Type~$\mathcal{B}$ and Type~$\mathcal{A}$
if and only if Assertion~1 of Theorem~\ref{T3.11} holds.
\end{remark}

The proof of this result will occupy most of this section 
 and will be a direct consequence of the following lemmas. It gives a complete
description of those homogeneous
affine surfaces of Type~$\mathcal{B}$ with $\dim\{\mathfrak{K}(\mathcal{M})\}>2$. 
If $X\in C^\infty(TM)$ is a smooth vector field on $M$, let
\begin{equation}\label{E3.f}
\Theta:=\ad(x^1\partial_1+x^2\partial_2)\text{ i.e. } \Theta(X):=[x^1\partial_1+x^2\partial_2,X]\,.
\end{equation}
\begin{lemma}\label{L3.13}
Let $X\in C^\infty(TM)$ be polynomial in $(x^1,x^2)$. If
$X$ is homogeneous of degree $\ell$, then $\Theta(X)=(\ell-1)X$.
\end{lemma}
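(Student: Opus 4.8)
The plan is to recognize $x^1\partial_1+x^2\partial_2$ as the Euler vector field and to reduce the identity to Euler's theorem on homogeneous functions. Set $E:=x^1\partial_1+x^2\partial_2$, so that $\Theta=\ad(E)$ by the definition in~(\ref{E3.f}). Write $X$ in the coordinate frame as $X=f^1\partial_1+f^2\partial_2$; the hypothesis that $X$ is polynomial and homogeneous of degree $\ell$ means precisely that each component $f^1,f^2$ is a homogeneous polynomial of degree $\ell$ in $(x^1,x^2)$.

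First I would expand the bracket in coordinates. Since $\partial_i x^j=\delta_i{}^j$, one has
\[
\Theta(X)=[E,X]=\sum_{i,j}\bigl(x^i(\partial_i f^j)-f^i\,\delta_i{}^j\bigr)\partial_j
=\Bigl(\sum_{i,j}x^i(\partial_i f^j)\,\partial_j\Bigr)-X.
\]
Next I would invoke Euler's homogeneous function theorem: because each $f^j$ is homogeneous of degree $\ell$, $\sum_i x^i\partial_i f^j=\ell\,f^j$. Substituting this into the displayed formula gives $\Theta(X)=\ell\,(f^j\partial_j)-X=\ell X-X=(\ell-1)X$, which is the claim. (Equivalently, by linearity of $\Theta$ it suffices to check the identity on the monomial vector fields $(x^1)^a(x^2)^b\partial_i$ with $a+b=\ell$, where the computation is immediate.)

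This is an elementary computation and I do not expect a genuine obstacle. The only points deserving care are the bookkeeping convention that ``homogeneous of degree $\ell$'' refers to the component functions of $X$ (so that the quadratic field $X(\sigma)$ appearing in Theorem~\ref{T3.11} has $\ell=2$ and indeed $\Theta(X(\sigma))=X(\sigma)$), and the origin of the shift $\ell\mapsto\ell-1$, which is exactly the $-X$ term produced by $[\,\cdot\,,x^i\partial_i]$ through $\partial_i x^j=\delta_i{}^j$.
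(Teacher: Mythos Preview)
Your proof is correct and takes essentially the same approach as the paper: both compute $[x^1\partial_1+x^2\partial_2,X]$ directly and use homogeneity to extract the factor $\ell$. The only cosmetic difference is that the paper writes out the monomial expansion explicitly (computing $[x^1\partial_1,\cdot]$ and $[x^2\partial_2,\cdot]$ separately on each term $(x^1)^i(x^2)^j\partial_k$) rather than invoking Euler's theorem by name, which is exactly your parenthetical alternative.
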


\begin{proof} Let $X=\displaystyle\sum_{i+j=\ell}(x^1)^i(x^2)^j(c_{i,j}^1\partial_1+c_{i,j}^2\partial_2)$ for $c_{i,j}^\nu\in\mathbb{R}$. Then:
\begin{eqnarray*}
&&[x^1\partial_1,X]=\sum_{i+j=\ell} (x^1)^i(x^2)^j\{(i-1)c_{i,j}^1\partial_1+ic_{i,j}^2\partial_2\},\\
&&[x^2\partial_2,X]=\sum_{i+j=\ell} (x^1)^i(x^2)^j\{jc_{i,j}^1\partial_1+(j-1)c_{i,j}^2\partial_2\}\,.
\end{eqnarray*}
We add these two expressions to see $\Theta(X)=(\ell-1)X$.
\end{proof}

The following result is an 
analogue of Lemma~\ref{L3.3}; Assertion~2a (resp. Assertion~2b)
will give rise to Assertion~1 (resp. Assertion~2) of Theorem~\ref{T3.11}.

\begin{lemma}\label{L3.14}
Let $\mathcal{M}\in\mathcal{F}^{\mathcal{B}}$. Suppose that $\dim\{\mathfrak{K}(\mathcal{M})\}>2$.
\begin{enumerate}
\item If $X\in\mathfrak{K}(\mathcal{M})$, then $X$ is polynomial in $x^2$, i.e.
$X=\displaystyle\sum_{k=0}^n(x^2)^kX_k(x^1)$. 
\item Choose $n=n(X)$ minimal so $X\in\mathfrak{K}(\mathcal{M})-\mathfrak{K}_0^{\mathcal{B}}$
has the form of Assertion~1. Then one of the following two possibilities holds:
\begin{enumerate}
\item $n=0$ and 
$X=a_1(x^1)\partial_1+a_2(x^1)\partial_2$.
\item $n=2$. By making a change of coordinates $(x^1,x^2)\rightarrow(x^1,\alpha x^1+\beta x^2)$, we can
ensure $X=2x^1x^2\partial_1+\{(x^2)^2+\sigma\cdot(x^1)^2\}\partial_2$ for $\sigma\in\{-1,0,1\}$.
\end{enumerate}\end{enumerate}\end{lemma}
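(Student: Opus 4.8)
The plan is to analyze the action of the operator $\Theta=\operatorname{ad}(x^1\partial_1+x^2\partial_2)$ on the finite-dimensional space $\mathfrak{K}(\mathcal{M})$, exactly as the action of $\mathfrak{K}_0^{\mathcal{A}}$ was exploited in Lemma~\ref{L3.3}. Since $x^1\partial_1+x^2\partial_2\in\mathfrak{K}_0^{\mathcal{B}}\subset\mathfrak{K}(\mathcal{M})$, the endomorphism $\Theta$ maps $\mathfrak{K}(\mathcal{M})$ to itself, and $\dim\{\mathfrak{K}(\mathcal{M})\}\le6$ by Lemma~\ref{L3.1}. First I would show that any $X\in\mathfrak{K}(\mathcal{M})$ is polynomial in $x^2$. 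Writing $X=\xi^1\partial_1+\xi^2\partial_2$, the Killing equations (Lemma~\ref{L3.1}, Assertion~1c) with $\Gamma=(x^1)^{-1}C$ become a linear system for $\xi^1,\xi^2$; taking $Y,Z\in\{\partial_1,\partial_2\}$ one obtains that the second $x^2$-derivatives $\partial_2^2\xi^1$ and $\partial_2^2\xi^2$ are expressed through first $x^2$-derivatives and through $\partial_1$-derivatives of $\xi^1,\xi^2$ (with coefficients depending only on $x^1$ through $(x^1)^{-1}$). Since $\partial_2X=[\partial_2,X]\in\mathfrak{K}(\mathcal{M})$ and $\mathfrak{K}(\mathcal{M})$ is finite-dimensional, there is a minimal dependence relation $\partial_2^sX+\tilde c_{s-1}\partial_2^{s-1}X+\dots+\tilde c_0X=0$ with constant $\tilde c_i$; a Killing equation forces the characteristic polynomial to have $0$ as its only root (the mixed equation for $\partial_1\partial_2\xi^2$ rules out exponentials in $x^2$, since the $x^1$-coefficients are not constant), hence $\partial_2^sX=0$ and $X$ is polynomial in $x^2$ of degree $\le s-1$. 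This proves Assertion~1.

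Next I would decompose $X=\sum_{k=0}^n(x^2)^kX_k(x^1)$ with $n=n(X)$ minimal among elements of $\mathfrak{K}(\mathcal{M})\setminus\mathfrak{K}_0^{\mathcal{B}}$, and bound $n$. Applying $\partial_2$ lowers the $x^2$-degree, so $\partial_2^{n-1}X$ has $x^2$-degree $1$ and, after subtracting its constant term, gives an element of $\mathfrak{K}(\mathcal{M})$ of the form $(x^2)\,b_1(x^1)\partial_1+(x^2)\,b_2(x^1)\partial_2$. Feeding this into the Killing equations constrains $b_1,b_2$; by minimality of $n$ this element must lie in $\mathfrak{K}_0^{\mathcal{B}}$, i.e. be a multiple of $x^1\partial_1+x^2\partial_2$, which forces $b_1\in x^1\cdot\mathbb{R}$, $b_2\in\mathbb{R}$. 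Then I would show $n\le 2$: the Killing equation involving $\nabla_{\partial_2}\partial_2$ (with $\Gamma_{22}{}^k=(x^1)^{-1}C_{22}{}^k$) controls $\partial_2^2$ of the coefficients, and repeatedly differentiating in $x^2$ shows the top coefficient $X_n(x^1)$ must vanish once $n\ge3$; hence $n\in\{0,1,2\}$, and the case $n=1$ collapses into $\mathfrak{K}_0^{\mathcal{B}}$ by the computation just described, leaving $n=0$ (Assertion~2a) or $n=2$ (Assertion~2b).

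For the normalization in Assertion~2b, when $n=2$ write $X=(x^2)^2A_2(x^1)+x^2A_1(x^1)+A_0(x^1)$. From the structure of $\Theta$ and Lemma~\ref{L3.13} applied componentwise — decomposing $X$ into pieces homogeneous of degrees $0,1,2$ in $(x^1,x^2)$ — together with the observation that $X\bmod\mathfrak{K}_0^{\mathcal{B}}$ can be taken homogeneous of degree $2$, one gets $X=2\lambda x^1x^2\partial_1+\{\mu(x^2)^2+\nu x^1x^2+\sigma_0(x^1)^2\}\partial_2$ up to adding elements of $\mathfrak{K}_0^{\mathcal{B}}$ and rescaling. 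The Killing equations then pin down $\lambda,\mu,\nu$ (the $\partial_1$-component's $x^1x^2$ coefficient and the $\partial_2$-component's $x^1x^2$ coefficient must match the $x^1\partial_1+x^2\partial_2$ pattern, giving $\lambda=\mu$ and $\nu=0$ after the shear); rescaling $x^2\to\beta x^2$ and shearing $x^2\to\alpha x^1+\beta x^2$ normalizes the leading coefficient to $1$ and reduces $\sigma_0$ to a sign $\sigma\in\{-1,0,1\}$, yielding $X=2x^1x^2\partial_1+\{(x^2)^2+\sigma(x^1)^2\}\partial_2$.

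The main obstacle I anticipate is the bookkeeping in bounding $n$ and extracting the precise normal form in case 2b: one must carefully track which Killing equations survive after each application of $\partial_2$ and verify that the only surviving possibilities for the degree-$2$ part are the three values of $\sigma$, and in particular that the $n=1$ case genuinely collapses rather than producing a new family. This is where the $(x^1)^{-1}$-homogeneity of the Christoffel symbols — and hence Lemma~\ref{L3.13} — does the essential work, replacing the exponential/polynomial dichotomy of the Type~$\mathcal{A}$ argument with a purely polynomial-degree argument.
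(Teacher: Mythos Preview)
Your overall architecture is right --- exploit that $\partial_2$ and $\Theta=\operatorname{ad}(x^1\partial_1+x^2\partial_2)$ act as endomorphisms on the finite-dimensional space $\mathfrak{K}(\mathcal{M})$ --- but the argument for Assertion~1 has a genuine gap. You assert that ``a Killing equation forces the characteristic polynomial to have $0$ as its only root'' because ``the $x^1$-coefficients are not constant''. This is not substantiated: you do not identify which Killing equation, and the claim is not obvious (recall that in Type~$\mathcal{A}$, where coefficients \emph{are} constant, exponentials in the coordinates genuinely occur as Killing fields; the $(x^1)^{-1}$ factor here does change things, but you have to show exactly how). The paper does \emph{not} use Killing equations at this step. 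Instead, it argues purely Lie-algebraically: if a nonzero eigenvalue $\lambda_1$ of $\operatorname{ad}(\partial_2)$ occurs, pass to $Y=e^{\lambda_1 x^2}Y_0(x^1)\in\mathfrak{L}$, then choose $Z=e^{\lambda_1 x^2}\sum_{k\le n}(x^2)^kZ_k(x^1)\in\mathfrak{L}$ with $n$ \emph{maximal}. Now apply $\Theta$: the bracket $[x^2\partial_2,\,\cdot\,]$ produces a leading term $\lambda_1(x^2)^{n+1}e^{\lambda_1 x^2}Z_n(x^1)$, so $\Theta(Z)\in\mathfrak{L}$ has $x^2$-degree $n+1$, contradicting maximality. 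This is the missing mechanism, and it is exactly where the nonconstancy of $x^2\partial_2$ (as opposed to $\partial_2$ alone) enters.

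For Assertion~2 your outline is closer to the paper's, but you again lean on Killing equations where the paper uses only the module structure. The paper's route is cleaner than what you sketch: since $(\partial_2)^nX=n!\,X_n(x^1)\in\mathfrak{K}(\mathcal{M})$ depends only on $x^1$, minimality forces $X_n\in\mathfrak{K}_0^{\mathcal{B}}$, hence $X_n=c\,\partial_2$; then if $n>2$, $\partial_2X$ has $x^2$-degree $n-1\ge2$ and so lies in $\mathfrak{K}(\mathcal{M})\setminus\mathfrak{K}_0^{\mathcal{B}}$, contradicting minimality of $n$. The $n=1$ case is killed by subtracting $c(x^1\partial_1+x^2\partial_2)$. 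For the normal form, the paper does not ``pin down $\lambda,\mu,\nu$'' via Killing equations either: it uses that $\partial_2X-2(x^1\partial_1+x^2\partial_2)$ must lie in $\mathfrak{K}_0^{\mathcal{B}}$ to get $X_1=2x^1\partial_1$, then applies $(\Theta-1)$ together with Lemma~\ref{L3.13} to force $X_0(x^1)$ to be homogeneous of degree~$2$, and only then uses the shear and rescaling to normalize $\sigma$. No Killing equations appear until the subsequent lemma, where the specific connections are determined.
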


\begin{proof} We use the structure of $\mathfrak{K}(\mathcal{M})$ as a $\mathfrak{K}_0^{\mathcal{B}}$
module. 
Let $X\in\mathfrak{K}(\mathcal{M})$. Since $\operatorname{ad}(\partial_2)=\partial_2$ is an
endomorphism of $\mathfrak{K}(\mathcal{M})$ and since $\mathfrak{K}(\mathcal{M})$ is finite dimensional,
there is a minimal dependence relation of the form:
$$
\partial_2^{s}X+c_{s-1}\partial_2^{s-1}X+\dots+c_0X=0\text{ with }s>0\text{ and }c_i\in\mathbb{R}\,.
$$
We factor this relation over $\mathbb{C}$ to construct a relation
$$
\prod_{v=1}^w(\partial_2-\lambda_v)^{\nu_v}X=0\text{ for }\lambda_v\in\mathbb{C}\text{ distinct and }\nu_v\ge1\,.
$$
We clear the previous notation and let $\mathfrak{L}$ (resp. $\mathfrak{L}_0^{\mathcal{B}}$)
be the complexification of
$\mathfrak{K}(\mathcal{M})$ (resp. $\mathfrak{K}_0^{\mathcal{B}}$).
Suppose some $\lambda_v\ne0$. By
reordering the roots, we may assume $\lambda_1\ne0$.
Since we have chosen a minimal dependence relation, we have
$$0\ne Y:=(\partial_2-\lambda_1)^{\nu_1-1}\prod_{v=2}^w(\partial_2-\lambda_v)^{\nu_v}X\in\mathfrak{L}\,.$$
Since $(\partial_2-\lambda_1)Y=0$,
$Y=e^{\lambda_1 x^2}Y_0(x^1)\in\mathfrak{L}$.
Since $\mathfrak{L}$ is finite dimensional, we may
choose $Z\in\mathfrak{L}$ for $n$ maximal of the form
$$
0\ne Z=e^{\lambda_1x^2}\sum_{k=0}^n(x^2)^kZ_k(x^1)\text{ for }Z_n(x^1)\text{ not identically zero}\,.
$$
We then have $0\ne\Theta(Z)=(x^2)^{n+1}\lambda_1e^{\lambda_1 x^2} Z_n(x^1)+O((x^2)^n)\in\mathfrak{L}$ which
contradicts the assumption that $n$ was maximal. 
Thus terms which are true exponentials in $x^2$ do not occur and the minimal relation for $X$
takes the form $(\partial_2)^nX=0$. This implies that $X$ is polynomial in $x^2$ and establishes
Assertion~1.

We now establish Assertion~2. Choose $X\in\mathfrak{K}(\mathcal{M})-\mathfrak{K}_0^{\mathcal{B}}$
so that $n=n(X)$ is minimal.
If $n=0$, then $X=X(x^1)$ and Assertion~2a holds.
We suppose therefore that $n>0$. One has
$0\ne(\partial_2)^nX= n!X_n(x^1)\in\mathfrak{K}(\mathcal{M})$. Because $n$ was minimal,
$X_n(x^1)\in\mathfrak{K}_0^{\mathcal{B}}$. Since $X_n(x^1)$ does not depend on $x^2$,
$X_n(x^1)$ is a constant multiple of $\partial_2$.
Therefore after rescaling $X$ if necessary, we may assume
$$X=(x^2)^n\partial_2+\sum_{k=0}^{n-1}(x^2)^kX_k(x^1)
\in\mathfrak{K}(\mathcal{M})-\mathfrak{K}_0^{\mathcal{B}}\,.$$
If $n>2$, then $\partial_2X\in\mathfrak{K}(\mathcal{M})$ has degree at least $2$
in $x^2$ so $\partial_2X\notin\mathfrak{K}_0^{\mathcal{B}}$. This contradicts the minimality of $n$
and shows $n=1$ or $n=2$.
If $n=1$, then 
$\tilde X(x^1):=X-x^1\partial_1-x^2\partial_2\in\mathfrak{K}(\mathcal{M})-\mathfrak{K}_0^{\mathcal{B}}$ 
is independent of $x^2$ and therefore has $n(\tilde X)=0$; this
contradicts the minimality of $n$. Thus $n=2$ so
$$
X=(x^2)^2\partial_2+x^2X_1(x^1)+X_0(x^1)\in\mathfrak{K}(\mathcal{M})-\mathfrak{K}_0^{\mathcal{B}}\,.
$$
We note that
$$
\tilde X^1(x^1):=\partial_2X-2(x^1\partial_1+x^2\partial_2)=X_1(x^1)-2x^1\partial_1\in\mathfrak{K}(\mathcal{M})\,.
$$
Thus by the minimality of $n$, $Y:=X_1(x^1)-2x^1\partial_1\in\mathfrak{K}_0^{\mathcal{B}}$. Since
$Y=Y(x^1)$,
$Y=c_2\partial_2$. By replacing $X$ by $X-c_2(x^1\partial_1+x^2\partial_2)$,
we may assume $X_1=2x^1\partial_1$ so
$$
X=(x^2)^2\partial_2+2x^1x^2\partial_1+X_0(x^1)
\in\mathfrak{K}(\mathcal{M})-\mathfrak{K}_0^{\mathcal{B}}\,.
$$
Since $(x^2)^2\partial_2+2x^1x^2\partial_1$ is homogeneous of degree 2,
 Lemma~\ref{L3.13} implies
$$
Y:=(\Theta-1)X=(x^1\partial_1-2)X_0(x^1)\in\mathfrak{K}(\mathcal{M})\,.
$$
The minimality of $n$ then shows $Y\in\mathfrak{K}_0^{\mathcal{B}}$ so
$(x^1\partial_1-2)X_0(x^1)=\epsilon\partial_2$. By subtracting an appropriate multiple
of $\partial_2$ from $X$ we can assume $\epsilon=0$ so
$$
(x^1\partial_1-2)X_0(x^1)=0\,.
$$
We can solve this ODE to see $X_0$ is homogeneous of degree $2$ in $x^1$ and, consequently,
$$X=\{2x^1x^2+(x^1)^2 c_1\}\partial_1 +\{(x^2)^2+(x^1)^2c_2\}\partial_2\,\,\,\,
\text{for some}\ \ (c_1,c_2)\in\mathbb{R}^2.$$

We consider the linear change of coordinates $(u^1,u^2)=(x^1,\epsilon x^1+x^2)$. 
Then
\begin{eqnarray*}
&&\partial_1^u=\partial_1^x-\epsilon\partial_2^x,\quad\partial_2^u=\partial_2^x,\\
&&X=\{2x^1x^2+(x^1)^2c_1\}\partial_1^x+\{(x^2)^2+(x^1)^2c_2\}\partial_2^x\\\
&&\phantom{X}=\{2u^1(u^2-\epsilon u^1)+(u^1)^2c_1\}(\partial_1^u+\epsilon\partial_2^u)+
\{(u^2-\epsilon u^1)^2+(u^1)^2c_2\}\partial_2^u\\
&&\phantom{X}=\{2u^1u^2+(c_1-2\epsilon)(u^1)^2\}\partial_1^u+\{(u^2)^2+(u^1)^2c_3(\epsilon)\}\partial_2^u\text{ for }c_3(\epsilon)\in\mathbb{R}\,.
\end{eqnarray*}
Thus by choosing $\epsilon=\frac12c_1$, we may assume $c_1=0$ to assume
$$X=2x^1x^2\partial_1+\{(x^2)^2+(x^1)^2 c_3\}\partial_2\,.$$
We may now rescale $x^1$ to ensure $ c_3\in\{-1,0,1\}$.
\end{proof}

We continue our study firstly assuming the existence of affine Killing vector fields as in Assertion 2a of Lemma \ref{L3.14}.  The condition $C_{12}{}^1=C_{22}{}^1=C_{22}{}^2=0$ will play a crucial
role in our analysis; by Lemma~\ref{L2.10}, it is an affine invariant in this setting. The surfaces
of Assertion~1 of Theorem~\ref{T3.11} will arise as follows:
\begin{lemma}\label{L3.15}
Let $\mathcal{M}\in\mathcal{F}^{\mathcal{B}}$.
\begin{enumerate}
\item $x^2\partial_1\notin\mathfrak{L}(\mathcal{M})$ and $(x^1)^a\partial_1\in\mathfrak{L}(\mathcal{M})$ iff $a=1$, $C_{22}{}^2=0$,
$C_{12}{}^1=0$, $C_{22}{}^1=0$, and
$C_{11}{}^2=0$. 
\item 
 $(x^1)^a\partial_2\in\mathfrak{L}(\mathcal{M})-\mathfrak{L}_0^{\mathcal{B}}$ 
 iff $a:=1+C_{11}{}^1-2C_{12}{}^2\ne0$, $C_{12}{}^1=0$, $C_{22}{}^1=0$, $C_{22}{}^2=0$,
\item $\log(x^1)\partial_2\in\mathfrak{L}(\mathcal{M})$ iff 
$1+C_{11}{}^1-2C_{12}{}^2=0$, $C_{12}{}^1=0$, $C_{22}{}^1=0$, $C_{22}{}^2=0$.
\item If there exists 
$X=X(x^1)\in\mathfrak{K}(\mathcal{M})-\mathfrak{K}_0^{\mathcal{B}}$,
 then $C_{12}{}^1=C_{22}{}^1=C_{22}{}^2=0$.
\item If $C_{12}{}^1=C_{22}{}^1=C_{22}{}^2=0$, then $\mathcal{M}$ is also of Type~$\mathcal{A}$, $\dim\{\mathfrak{K}(\mathcal{M})\}=4$, $\rho=(x^1)^{-2}\tilde\rho_{11}dx^1\otimes dx^1$,
and one of the following holds:
\begin{enumerate}
\item $C_{11}{}^1-2C_{12}{}^2=0$, $C_{11}{}^2=1$, 
$\tilde\rho_{11}=(1+C_{12}{}^2)C_{12}{}^2\ne0$, and\newline
$\mathfrak{K}(\mathcal{M})=\operatorname{Span}\{x^1\partial_1-x^1\log(x^1)\partial_2,x^1\partial_2\}\oplus\mathfrak{K}_0^{\mathcal{B}}$.
\item $C_{11}{}^2=0$, $a\ne0$, 
$\tilde\rho_{11}=(1+C_{11}{}^1-C_{12}{}^2)C_{12}{}^2\ne0$, and\newline
$\mathfrak{K}(\mathcal{M})=\operatorname{Span}\{x^1\partial_1,(x^1)^a\partial_2\}
\oplus\mathfrak{K}_0^{\mathcal{B}}$.
\item $C_{11}{}^2=0$, $a=0$, $\tilde\rho_{11}=(C_{12}{}^2)^2\ne0$, and\newline
$\mathfrak{K}(\mathcal{M})=\operatorname{Span}\{x^1\partial_1,\log(x^1)\partial_2\}\oplus\mathfrak{K}_0^{\mathcal{B}}$.
\end{enumerate}
\end{enumerate}
\end{lemma}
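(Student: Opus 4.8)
The entire lemma is driven by the affine Killing equations of Lemma~\ref{L3.1}(1c). For a vector field $X=\xi^1\partial_1+\xi^2\partial_2$ and a Type~$\mathcal{B}$ connection $\Gamma_{ij}{}^k=(x^1)^{-1}C_{ij}{}^k$ these become the six scalar equations
$$(\mathcal{L}_X\nabla)_{ij}{}^k=\partial_i\partial_j\xi^k+\Gamma_{il}{}^k\,\partial_j\xi^l+\Gamma_{lj}{}^k\,\partial_i\xi^l-\Gamma_{ij}{}^l\,\partial_l\xi^k+\xi^l\,\partial_l\Gamma_{ij}{}^k=0 .$$
The plan is to substitute each candidate vector field into this system and to match coefficients of the resulting powers of $x^1$ (and of $\log x^1$). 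Because the ``if'' directions of Assertions~1--3 are then routine verifications, I would concentrate on the ``only if'' directions and on Assertions~4 and~5.

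For the non-existence claim ``$x^2\partial_1\notin\mathfrak{L}(\mathcal{M})$'' no computation is needed: if $x^2\partial_1\in\mathfrak{L}(\mathcal{M})$ then $[\partial_2,x^2\partial_1]=\partial_1\in\mathfrak{L}(\mathcal{M})$, forcing $\partial_1\Gamma_{ij}{}^k=-(x^1)^{-2}C_{ij}{}^k\equiv0$, i.e. $C=0$ and $\nabla$ flat, against $\mathcal{M}\in\mathcal{F}^{\mathcal{B}}$. For $(x^1)^a\partial_1$ and $(x^1)^a\partial_2$ I would substitute: the $(1,1)^k$ and $(1,2)^k$ equations are of Cauchy--Euler type, so an admissible exponent must solve an indicial equation --- for $(x^1)^a\partial_2$ this gives $a=1+C_{11}{}^1-2C_{12}{}^2$, necessarily nonzero since $a=0$ would put the field in $\mathfrak{L}_0^{\mathcal{B}}$; for $(x^1)^a\partial_1$ it gives $a=1$, the handful of other roots each forcing the remaining $C$'s into a configuration with $\rho\equiv0$ and hence flat, a contradiction. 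The remaining equations then force $C_{12}{}^1=C_{22}{}^1=C_{22}{}^2=0$ (plus $C_{11}{}^2=0$ in the $\partial_1$ case). The logarithmic case is the borderline exponent of the same Euler equation: $\log(x^1)\partial_2$ is Killing exactly when $1+C_{11}{}^1-2C_{12}{}^2=0$ together with those vanishings.

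Assertion~4 is the one genuinely fiddly step. Take $X=\xi^1(x^1)\partial_1+\xi^2(x^1)\partial_2\in\mathfrak{K}(\mathcal{M})-\mathfrak{K}_0^{\mathcal{B}}$ (so $\xi^1\not\equiv0$ or $\xi^2$ is non-constant). Since $\xi$ does not depend on $x^2$, the $(2,2)^k$- and $(1,2)^k$-equations lose all second-order terms and reduce to
$$x^1C_{22}{}^1(\xi^k)'+\xi^1C_{22}{}^k=0,\qquad x^1C_{22}{}^k(\xi^2)'-x^1C_{12}{}^1(\xi^k)'+C_{12}{}^k(x^1(\xi^1)'-\xi^1)=0\quad(k=1,2).$$
I would argue by cases on $C_{22}{}^1$. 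If $C_{22}{}^1\ne0$: normalise $C_{12}{}^1=0$ by Lemma~\ref{L2.8}(1) (this preserves the shape $X=X(x^1)$ and preserves $\mathfrak{K}_0^{\mathcal{B}}$); the displayed equations then force $\xi^1=c/x^1$, and the $(1,1)^k$-equations force $c\ne0$, $C_{11}{}^1=1$ and $C_{12}{}^2=C_{22}{}^2=C_{11}{}^2=0$, whereupon Lemma~\ref{L2.7} gives $\rho\equiv0$ and a direct check gives $R\equiv0$, contradicting $\mathcal{M}\in\mathcal{F}^{\mathcal{B}}$. If $C_{22}{}^1=0$: the $(2,2)^2$-equation gives $\xi^1C_{22}{}^2=0$; the branch $C_{22}{}^2\ne0$ (hence $\xi^1\equiv0$, $\xi^2$ non-constant) is excluded because the $(1,1)^1$- and $(1,2)^2$-equations then force $C_{12}{}^1=0$ and $C_{22}{}^2=C_{12}{}^1$, impossible; so $C_{22}{}^2=0$, and the $(1,2)^1$-equation becomes $\xi^1C_{12}{}^1=0$, whose remaining branch $\xi^1\equiv0$ is killed by the $(1,2)^2$-equation, leaving $C_{12}{}^1=0$. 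The hard part here is the bookkeeping: tracking, in each ``$\xi^1\equiv0$ or $C_\bullet=0$'' dichotomy, which branch survives and why the surviving one forces flatness.

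For Assertion~5, assume $C_{12}{}^1=C_{22}{}^1=C_{22}{}^2=0$. Lemma~\ref{L2.10} gives $\rho=(x^1)^{-2}\tilde\rho_{11}\,dx^1\otimes dx^1$ with $\tilde\rho_{11}=(1+C_{11}{}^1-C_{12}{}^2)C_{12}{}^2$, and a direct computation shows $\tilde\rho_{11}=0$ would make $\nabla$ flat, so $\tilde\rho_{11}\ne0$. I would then split into the three stated cases: if $C_{11}{}^2\ne0$ and $C_{11}{}^1=2C_{12}{}^2$, rescale $x^2$ so that $C_{11}{}^2=1$ (case a); if $C_{11}{}^2\ne0$ but $C_{11}{}^1\ne2C_{12}{}^2$, apply Lemma~\ref{L2.8}(2) to reach $C_{11}{}^2=0$; in that situation and when $C_{11}{}^2=0$ to begin with, separate $a:=1+C_{11}{}^1-2C_{12}{}^2\ne0$ (case b) from $a=0$ (case c). In case~(a) I would verify directly that $x^1\partial_1-x^1\log(x^1)\partial_2$ is an affine Killing field, while Assertion~2 supplies $x^1\partial_2$; in cases~(b) and~(c), Assertion~2 (resp. Assertion~3) supplies $(x^1)^a\partial_2$ (resp. $\log(x^1)\partial_2$) and Assertion~1 supplies $x^1\partial_1$. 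Together with $\mathfrak{K}_0^{\mathcal{B}}$ these are four linearly independent affine Killing fields; the abelian, pointwise independent pair $\{x^1\partial_1,\partial_2\}$ (cases b, c) or $\{x^1\partial_1-x^1\log(x^1)\partial_2,\partial_2\}$ (case a) shows via Lemma~\ref{L3.2} that $\mathcal{M}$ is also of Type~$\mathcal{A}$, and then Theorem~\ref{T3.4}, applied in coordinates realising the Type~$\mathcal{A}$ structure where $\operatorname{Rank}\{\rho\}=1$, gives $\dim\{\mathfrak{K}(\mathcal{M})\}=4$; hence those four fields span $\mathfrak{K}(\mathcal{M})$. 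Substituting $C_{11}{}^1=2C_{12}{}^2$ (case a) or $a=0$ (case c) into $\tilde\rho_{11}$ yields $(1+C_{12}{}^2)C_{12}{}^2$ and $(C_{12}{}^2)^2$, completing the statement.
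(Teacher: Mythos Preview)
Your argument is correct, and for Assertions~1--3 and~5 it is essentially what the paper does (the paper just says ``direct computation'' for 1--3; your bracket shortcut $[\partial_2,x^2\partial_1]=\partial_1$ forcing $C=0$ is a nice touch the paper does not make explicit). The substantive methodological difference is in Assertion~4.

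The paper treats Assertion~4 via the module structure: since $\Theta:=\operatorname{ad}(x^1\partial_1+x^2\partial_2)$ acts on $\mathfrak{K}(\mathcal{M})$ and $\Theta X=(x^1\partial_1-1)X$ for $X=X(x^1)$, one obtains a minimal polynomial relation $\prod_v(x^1\partial_1-\lambda_v)^{\nu_v}X=0$. A nonzero root produces a Killing field of the form $(x^1)^a(c_1\partial_1+c_2\partial_2)$, to which the already-proved Assertions~1 or~2 apply; when all roots vanish one reduces to $(x^1\partial_1)^2X=0$, obtains $\log(x^1)\partial_2+X_0$, and applies Assertion~3 (with a small flatness check to exclude $X_0=\partial_1$). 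This is a clean reduction of~4 to~1--3 that avoids almost all explicit Killing-equation bookkeeping.

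Your route is instead a direct case analysis on the six scalar Killing equations, branching on $C_{22}{}^1\ne0$ versus $C_{22}{}^1=0$ and chasing each sub-branch to either the desired vanishings or to a flat configuration. This is more elementary and self-contained (no complexification, no eigenvalue argument), but at the cost of the ``fiddly bookkeeping'' you acknowledge; a reader must trust, or reproduce, each branch. One small point to tighten: in the case $C_{22}{}^1\ne0$ you write that the $(1,1)^k$-equations force $c\ne0$, but in fact $c=0$ is excluded earlier by the $(2,2)^2$-equation together with $X\notin\mathfrak{K}_0^{\mathcal{B}}$; and in the final branch the $(1,2)^2$-equation does not ``kill'' $\xi^1\equiv0$ so much as force $C_{12}{}^1=0$ there too. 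These are cosmetic.
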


\begin{proof} The first three assertions follow by direct computation.
We prove Assertion~4 as follows.
Suppose that $X=a_1(x^1)\partial_1+a_2(x^1)\partial_2\in
\mathfrak{K}(\mathcal{M})-\mathfrak{K}_0^{\mathcal{B}}$.
Let $\Theta$ be as defined in Equation~(\ref{E3.f}).
Because $\Theta X=(x^1\partial_1-1)X$, $x^1\partial_1X\in\mathfrak{K}(\mathcal{M})$. We factor the
minimal dependence relation
$$(x^1\partial_1)^nX+c_{n-1}(x^1\partial_1)^{n-1}X+\dots+c_0X=0\text{ for }n\ge1$$
over $\mathbb{C}$ to express this relation in the form
$$
\prod_{v=1}^s(x^1\partial_1-\lambda_\nu)^{\nu_v}X=0\,.
$$
Suppose some $\lambda_\nu\ne0$. By renumbering the roots, we may suppose $a:=\lambda_1\ne0$ so
$$
0\ne Y=(x^1\partial_1-a)^{\nu_1-1}\prod_{v=2}^2(x^1\partial_1-\lambda_\nu)^{\nu_v}X\in\mathfrak{L}
$$
satisfies the ODE $(x^1\partial_1-a)Y=0$ and hence
$Y=(x^1)^a(c_1\partial_1+c_2\partial_2)\in\mathfrak{L}-\mathfrak{L}_0$.
If $c_1\ne0$, by making a (possibly) complex change of coordinates which takes the form
$(x^1,x^2)\rightarrow(x^1,x^2+\epsilon x^1)$,
we may assume $Y=(x^1)^a\partial_1$.  Assertion~1 then implies $a=1$. Therefore,
we may take $Y$ to be real and the change of coordinates involved is real. The
relations of Assertion~4 then follow from Assertion~1. 
If, on the other hand, $c_1=0$, then $Y=(x^1)^a\partial_2$ and we use Assertion~2 to show Assertion~4 holds.

We may therefore assume the minimal relation takes the form $(x^1\partial_1)^nX=0$ and
we do not need to complexity. If
$n=1$, then $X$ is constant. Since $X\in\mathfrak{K}-\mathfrak{K}_0$, we may assume
$X=\partial_1$; this is ruled out by Assertion~1.  We therefore conclude that $n>1$. 
By replacing $X$ by $(x^1\partial_1)^{n-2} X$ if necessary, we may assume without
loss of generality that $n=2$. Since 
$$ 0\ne(x^1\partial_1)X\in\ker\{x^1\partial_1\}=\operatorname{Span}\{ \partial_1,\partial_2\}\cap\mathfrak{K}(\mathcal{M})\,,
$$
again by making a change of coordinates of the form $(x^1,x^2)\rightarrow(x^1,x^2+\epsilon x^1)$ if necessary, we may assume $x^1\partial_1X=\partial_2$.
 We solve this ODE to see
$$
 X=\log(x^1)\partial_2+X_0\text{ for }X_0=c_1\partial_1+c_2\partial_2\text{ a constant
vector field}\,.
$$
If $c_1\ne1$, we renormalize the coordinates so $X_0=\partial_1$ and obtain
Killing equations:
$$\begin{array}{ll}
 2C_{12}{}^1-C_{11}{}^1=0,& 2C_{12}{}^2-C_{11}{}^1-C_{11}{}^2 -1=0,\\
C_{22}{}^1-C_{12}{}^1=0,&C_{22}{}^2-C_{12}{}^1-C_{12}{}^2=0,\\
C_{22}{}^1=0,&C_{22}{}^2+C_{22}{}^1=0.
\end{array}$$
This implies $C_{11}{}^1=0$, $C_{11}{}^2=-1$, $C_{12}{}^1=0$, $C_{12}{}^2=0$, $C_{22}{}^1=0$,
$C_{22}{}^2=0$, and $\rho=0$. This is impossible. 
We therefore have $X=\log(x^1)\partial_2$ and the relations of Assertion~4 follow from Assertion~3.

We impose the relations $C_{12}{}^1=C_{22}{}^1=C_{22}{}^2=0$ for the remainder of the proof.
If $\mathcal{M}$ is also of Type~$\mathcal{A}$, then Theorem~\ref{T3.4}
implies $\dim\{\mathfrak{K}(\mathcal{M})\}=4$. Thus our task is to construct two additional
 affine Killing vector fields $\xi_1$ and $\xi_2$ so $\{\xi_1,\xi_2,x^1\partial_1+x^2\partial_2,\partial_2\}$
are linearly independent and so we can apply Lemma~\ref{L3.2}.

If $C_{11}{}^2=0$, then
$x^1\partial_1\in\mathfrak{K}(\mathcal{M})$ by Assertion~1. We apply Lemma~\ref{L3.2} to the
pair $\{x^1\partial_1,\partial_2\}$ to see $\mathcal{M}$ is also of Type~$\mathcal{A}$ and
hence by Theorem~\ref{T3.4}, $\dim\{\mathfrak{K}(\mathcal{M})\}=4$. 
We set $a=1+C_{11}{}^1-2C_{12}{}^2$. We apply  Assertion~2 of Lemma~\ref{L3.15} to obtain  Assertion~5b if $a\ne0$
and to obtain Assertion~5c if $a=0$.

If $C_{11}{}^1-2C_{12}{}^2\ne0$, Assertion~2 of Lemma \ref{L2.8} shows 
there is a linear change of coordinates, which does
not affect the normalization $C_{12}{}^1=C_{22}{}^1=C_{22}{}^2=0$, to  ensure $C_{11}{}^2=0$.
The analysis of the previous paragraph then pertains. We may therefore assume $ C_{11}{}^1-2C_{12}{}^2=0$
and $C_{11}{}^2\ne0$. By rescaling $x^2$, we may assume $C_{11}{}^2=1$. 
We apply Assertion~2 with $a=1$ to see $x^1\partial_2\in\mathfrak{K}(\mathcal{M})$. 
A direct computation shows $x^1\partial_1-x^1\log(x^1)\partial_2\in\mathcal{M}$. We apply Lemma~\ref{L3.2}
to the pair $\{x^1\partial_1-x^1\log(x^1)\partial_2,\partial_2\}$ to see $\mathcal{M}$ is of Type~$\mathcal{A}$
and hence by Theorem~\ref{T3.4}
$\dim\{\mathfrak{K}(\mathcal{M})\}=4$. We then obtain Assertion~5a.
\end{proof}

\begin{remark}
\rm\ \begin{enumerate}
\item Observe that $C_{12}{}^1=C_{22}{}^1=C_{22}{}^2=0$ in Assertion~4 of
Lemma \ref{L3.15}
 is an equivalent condition for a Type $\mathcal{B}$  surface to be also of 
 Type $\mathcal{A}$ (compare with the results in \cite{CGV10}).
\item We apply Lemma~\ref{L3.9} to the three classes in Assertion~5 of Lemma~\ref{L3.15}.
Let $C$ define such a connection. Then $C$
transforms to a new connection $\tilde C=T_{b,c}^\ast C$  
for $\tilde C_{11}{}^2=\frac{1}{c}(C_{11}{}^2+b(2 C_{12}{}^2-C_{11}{}^1))$ and 
$\tilde C_{ij}{}^k=C_{ij}{}^k$ otherwise. It now follows 
that the three classes in Assertion~5 of Lemma~\ref{L3.15} are linearly inequivalent surfaces since
it is not possible to transform one class into another using a transformation $T_{b,c}$.
\item The $\alpha$ invariant satisfies $\alpha(\mathcal{M})\in (-\infty,0]\cup (16,\infty)$ if 
$\mathcal{M}$ corresponds to the families 5a and 5b. This shows that these families
 are affine isomorphic to $\mathcal{M}_2^c$, whereas $\alpha(\mathcal{M})=16$
  for any surface in 5c, and thus they are  affine isomorphic to $M_1$. 
\end{enumerate}\end{remark}

Next we assume Assertion~2b of Lemma~\ref{L3.14} holds.
 This will give rise to Assertion~2 of Theorem~\ref{T3.11}.

\begin{lemma}
Let $\mathcal{M}\in\mathcal{F}^{\mathcal{B}}$. Assume there exists $X\in\mathfrak{K}(\mathcal{M})$ of
the form
$$X(\sigma):=2x^1x^2\partial_1+\{(x^2)^2+\sigma\cdot(x^1)^2\}\partial_2\,\,\,\text{ for }\,\,\,\sigma\in\{-1,0,1\}\,.$$
\begin{enumerate}
\item Up to linear equivalence, one of the following possibilities holds:
\begin{enumerate}
\smallbreak\item $\sigma=0$, $\mathcal{M}=\mathcal{N}_1^\pm$,
$\mathcal{M}$ is not Type~$\mathcal{C}$, and
$\rho=\pm(x^1)^{-2}dx^2\otimes dx^2$
\smallbreak\item $\sigma=0$, $\mathcal{M}=\mathcal{N}_2^c$, and
$\mathcal{M}$ is not Type~$\mathcal{C}$.
\smallbreak\item $\sigma=1$, $\mathcal{M}=\mathcal{M}_3$, and
$\mathcal{M}$ is Type~$\mathcal{C}$.
\smallbreak\item $\sigma=-1$, $\mathcal{M}=\mathcal{N}_4$, and
$\mathcal{M}$ is Type~$\mathcal{C}$.
\end{enumerate}
\smallbreak\item $\mathfrak{K}(\mathcal{M})=\operatorname{Span}\{X(\sigma)\}\oplus\mathfrak{K}_0^{\mathcal{B}}\approx \mathfrak{su(1,1)}$. 
\item $\dim\{\mathfrak{K}(\mathcal{M})\}=3$, and
$\mathcal{M}$ is not of Type~$\mathcal{A}$. 
\item Two different affine surfaces in Definition~\ref{D3.10} are not locally affine isomorphic.
In particular, linearly equivalent and affine isomorphic are equivalent notions in this setting.
\end{enumerate}
\end{lemma}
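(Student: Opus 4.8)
The plan is to extract the Lie algebra structure from the hypothesis, then solve the Killing equations for $X(\sigma)$ to identify the connection, verify the Type~$\mathcal{C}$ status of each model, and finally separate the models by affine invariants.

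\emph{Step 1 (the Lie algebra).} First I would exhibit $\mathfrak{K}(\mathcal{M})$ explicitly. Since $\mathcal{M}\in\mathcal{F}^{\mathcal{B}}$ we have $\mathfrak{K}_0^{\mathcal{B}}=\operatorname{Span}\{x^1\partial_1+x^2\partial_2,\partial_2\}\subset\mathfrak{K}(\mathcal{M})$, and by hypothesis $X(\sigma)\in\mathfrak{K}(\mathcal{M})$; as $X(\sigma)$ is quadratic, these three fields are linearly independent over $\mathbb{R}$. A direct bracket computation, together with Lemma~\ref{L3.13} applied to the homogeneous degree-$2$ field $X(\sigma)$, gives $[x^1\partial_1+x^2\partial_2,X(\sigma)]=X(\sigma)$, $[\partial_2,X(\sigma)]=2(x^1\partial_1+x^2\partial_2)$ and $[x^1\partial_1+x^2\partial_2,\partial_2]=-\partial_2$; setting $e_1:=\partial_2$, $e_2:=x^1\partial_1+x^2\partial_2$, $e_3:=-X(\sigma)$ recovers the relations~\eqref{E3.d}, so these three span a subalgebra isomorphic to $\mathfrak{su(1,1)}$ and $\dim\{\mathfrak{K}(\mathcal{M})\}\ge 3$. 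For the reverse bound: if $\dim\{\mathfrak{K}(\mathcal{M})\}=4$ then, by the analysis of Lemma~\ref{L3.14} and Lemma~\ref{L3.15} (a Type~$\mathcal{B}$ surface with a four-dimensional Killing algebra satisfies $C_{12}{}^1=C_{22}{}^1=C_{22}{}^2=0$ and is also of Type~$\mathcal{A}$), $\mathfrak{K}(\mathcal{M})$ would be one of the solvable algebras $A_{4,9}^0$, $A_2\oplus A_2$, $A_{4,12}$, contradicting the presence of the non-solvable subalgebra $\mathfrak{su(1,1)}$. Hence $\dim\{\mathfrak{K}(\mathcal{M})\}=3$ and $\mathfrak{K}(\mathcal{M})=\operatorname{Span}\{X(\sigma)\}\oplus\mathfrak{K}_0^{\mathcal{B}}\approx\mathfrak{su(1,1)}$ (Assertion~2); since $\mathfrak{su(1,1)}\cong\mathfrak{sl}(2,\mathbb{R})$ has no two-dimensional abelian subalgebra, Lemma~\ref{L3.2} shows $\mathcal{M}$ is not of Type~$\mathcal{A}$ (the remaining part of Assertion~3).

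\emph{Step 2 (classification and Type~$\mathcal{C}$).} Next I would pin down $\nabla$. For each $\sigma\in\{-1,0,1\}$, imposing $\mathcal{L}_{X(\sigma)}\nabla=0$ on $\Gamma=(x^1)^{-1}C$ and expanding the resulting identity in monomials in $x^1,x^2$ produces a linear system for the six constants $C_{ij}{}^k$ (with $\sigma$ a parameter) which one solves. The expected outcome is: for $\sigma=1$ a unique solution, which after the residual normalization of Lemma~\ref{L3.9} is $\mathcal{N}_3$; for $\sigma=-1$, $\mathcal{N}_4$; and for $\sigma=0$ a family, which I would normalize using the subgroup of $\mathfrak{I}$ together with the scaling in $\mathfrak{H}$ that preserves the form of $X(0)$, and split according to whether $\operatorname{Rank}\{\rho^s\}=1$ or $\rho^a\ne0$ into $\mathcal{N}_1^\pm$ — with $\rho=\pm(x^1)^{-2}dx^2\otimes dx^2$, the sign being an affine invariant — and the one-parameter family $\mathcal{N}_2^c$; the Ricci tensors~\eqref{E3.e} then follow from Lemma~\ref{L2.7}. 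For the Type~$\mathcal{C}$ assertions, a short computation shows that the Levi-Civita connection of the constant-Gauss-curvature metric $(x^1)^{-2}(-(dx^1)^2+(dx^2)^2)$ has the Christoffel symbols of $\mathcal{N}_3$, and that of $(x^1)^{-2}((dx^1)^2+(dx^2)^2)$ has those of $\mathcal{N}_4$, so $\mathcal{N}_3$ and $\mathcal{N}_4$ are of Type~$\mathcal{C}$; conversely, a non-flat Type~$\mathcal{C}$ surface has symmetric Ricci tensor of rank $2$, which fails for $\mathcal{N}_1^\pm$ (where $\operatorname{Rank}\{\rho\}=1$) and for $\mathcal{N}_2^c$ (where $\rho\ne\rho^s$ by~\eqref{E3.e}), so these are not. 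This gives Assertion~1.

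\emph{Step 3 (rigidity, Assertion~4).} Finally I would separate the surfaces of Definition~\ref{D3.10} by affine invariants: whether $\mathcal{M}$ is Type~$\mathcal{C}$ isolates $\{\mathcal{N}_3,\mathcal{N}_4\}$; inside that pair, $\rho(\mathcal{N}_3)$ is indefinite while $\rho(\mathcal{N}_4)$ is definite, and definiteness of $\rho$ is preserved by local diffeomorphisms; the symmetry of $\rho$ separates $\mathcal{N}_1^\pm$ from $\mathcal{N}_2^c$; the sign of the semidefinite rank-$1$ tensor $\rho$ separates $\mathcal{N}_1^+$ from $\mathcal{N}_1^-$; and $\operatorname{Rank}\{\rho^s\}$ isolates $\mathcal{N}_2^{1/2}$. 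It remains to separate $\mathcal{N}_2^c$ from $\mathcal{N}_2^{c'}$ for $c\ne c'$ with $c,c'\ne\tfrac12$: the obvious scalar contractions vanish since $\rho^s$ has rank $1$, so I would use the non-degenerate alternating part $\rho^a$ to raise indices and build a scalar from $\rho^s$, $\rho^a$ and $\nabla\rho^s$ — concretely, contracting the bivector inverse to $\rho^a$ into $\nabla\rho^s$ yields a $1$-form, and raising its index again with $\rho^a$ and then evaluating $\rho^s$ on the resulting vector field gives, up to a universal constant, $(1-2c)^3$, which determines $c$. Since by Assertion~1 every $\mathcal{M}$ satisfying the hypothesis is linearly equivalent to exactly one surface of Definition~\ref{D3.10}, and these surfaces are pairwise non-isomorphic even under the full group of affine diffeomorphisms, linear equivalence and affine isomorphism coincide on this class.

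The step I expect to be the main obstacle is the construction in Step~3 of a scalar invariant detecting the parameter $c$ of $\mathcal{N}_2^c$: because $\rho^s$ is degenerate, every first-order contraction one writes down vanishes identically, so one is forced to pass to $\nabla\rho^s$ and to exploit the index-raising supplied by the non-degenerate $2$-form $\rho^a$, and then to check that the scalar so produced is genuinely constant along $\mathcal{N}_2^c$ and depends injectively on $c$. The computations in Step~2 — especially making the $\sigma=0$ solution set split cleanly into $\mathcal{N}_1^\pm$ and $\mathcal{N}_2^c$ after the residual normalization — are lengthy but routine.
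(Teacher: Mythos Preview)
Your overall strategy matches the paper's, but there is a genuine gap in Step~1.

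\textbf{The circularity in proving $\dim\mathfrak{K}(\mathcal{M})=3$.} You write that ``a Type~$\mathcal{B}$ surface with a four-dimensional Killing algebra satisfies $C_{12}{}^1=C_{22}{}^1=C_{22}{}^2=0$ and is also of Type~$\mathcal{A}$'' by Lemmas~\ref{L3.14} and~\ref{L3.15}. This is not what those lemmas say. Lemma~\ref{L3.14} produces, under $\dim>2$, an element of form~(2a) \emph{or} of form~(2b); Lemma~\ref{L3.15} shows that \emph{if} a field $X=X(x^1)$ exists, \emph{then} $\dim=4$ and $\mathcal{M}$ is Type~$\mathcal{A}$. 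It does not prove the converse implication $\dim=4\Rightarrow$ Type~$\mathcal{A}$; that converse is exactly what the present lemma (case~(2b), showing $\dim=3$) supplies. So your argument is circular. You also never exclude $\dim=5$.

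The paper avoids this by a direct computation: assume there is some $Y\in\mathfrak{K}(\mathcal{M})$ outside $\operatorname{Span}\{X(\sigma)\}\oplus\mathfrak{K}_0^{\mathcal{B}}$. By Lemma~\ref{L3.14}(1), $Y$ is polynomial in $x^2$; since (after Step~2) the Christoffel constants fail $C_{12}{}^1=C_{22}{}^1=C_{22}{}^2=0$, Lemma~\ref{L3.15}(4) forbids $Y=Y(x^1)$. Repeating the degree-reduction argument of Lemma~\ref{L3.14} forces $Y$ to be cubic of the explicit shape
\[
Y=x^1(x^2)^2\partial_1+\bigl(\tfrac13(x^2)^3+\sigma(x^1)^2x^2\bigr)\partial_2+(x^1)^3(a^1\partial_1+a^2\partial_2),
\]
and the Killing equations for this $Y$ are then checked to be inconsistent for each $\sigma\in\{-1,0,1\}$. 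This rules out any fourth field (and hence any fifth) in one stroke. You should replace your solvability shortcut by this computation, or else run Step~2 first and verify directly on each $\mathcal{N}_i^\star$ that no fourth Killing field exists.

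\textbf{Two minor comparisons.} Your argument that $\mathcal{M}$ is not Type~$\mathcal{A}$ (no $2$-dimensional abelian subalgebra in $\mathfrak{sl}(2,\mathbb{R})$, then Lemma~\ref{L3.2}) is correct once $\dim=3$ is known, and is arguably cleaner than the paper's comparison of adjoint ranges. For the $c$-invariant of $\mathcal{N}_2^c$, the paper takes a shorter route than your $\nabla\rho^s$ construction: since $\rho^a\ne0$ one writes $\nabla\rho^a=\omega\otimes\rho^a$, computes $\omega$, rescales to get an invariant $1$-form $\tilde\omega=(x^1)^{-1}dx^2$, and observes $\rho^s=(1-2c)\,\tilde\omega\otimes\tilde\omega$, so $1-2c$ is an affine invariant. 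Your proposed contraction may also work, but you would need to verify that it actually yields a nonzero constant depending injectively on $c$.
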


\begin{proof} 
Suppose first $\sigma=0$. The Killing equations are 
$$\begin{array}{llll}
C_{11}{}^2=0,&
C_{11}{}^1-C_{12}{}^2+1=0,&
2 C_{12}{}^1-C_{22}{}^2=0,&2 C_{12}{}^2+1=0.
\end{array}$$
We solve these equations to see that
$$
\textstyle C_{11}{}^1=-\frac32,\quad C_{11}{}^2=0,\quad
C_{12}{}^2=-\frac12,\quad C_{22}{}^2=2C_{12}{}^1\,.
$$
If $C_{12}{}^1=0$, we may rescale $x^2$ to ensure that
$C_{22}{}^1=\mp\frac12$ and obtain the  surfaces $\mathcal{M}_1^\pm$
and compute that $\rho= \pm (x^1)^{-2} dx^2\otimes dx^2$ so $\rho^a=0$. 
The nature of the Ricci tensor (see Equation~(\ref{E3.e})) distinguishes these
two surfaces.

On the other hand, if $C_{12}{}^1\ne0$, we  may rescale $x^2$ 
to assume $C_{12}{}^1=1$ and obtain the surfaces $\mathcal{N}_2^c$. 
We then have $\rho^a=\frac32 (x^1)^{-2}dx^1\wedge dx^2$ is invariantly defined. In particular, none of these  surfaces is locally isomorphic to 
$\mathcal{N}_1^\pm$.
If we express $\nabla\rho^a=\omega\otimes\rho^a$, then $\omega$ is invariantly defined. 
We compute
\begin{eqnarray*}
&&\nabla_{\partial_1}\rho^a=(x^1)^{-3}\{-2-C_{11}{}^1-C_{12}{}^2\}dx^1\wedge dx^2=0,\\
&&\nabla_{\partial_2}\rho^a
= {\textstyle\frac32}(x^1)^{-3}\{-C_{21}{}^1-C_{22}{}^2\}dx^1\wedge dx^2=-{\textstyle\frac92}
(x^1)^{-3}dx^1\wedge dx^2.
\end{eqnarray*}
This shows $\tilde\omega:=(x^1)^{-1}dx^2$ is invariantly defined. Thus by expressing
 $$
\rho^s=\{1-2c\}(x^1)^{-2}dx^2\otimes dx^2=\{1-2c\}\tilde\omega\otimes\tilde\omega\,,
 $$
we conclude $1-2c$ is an affine invariant and hence all these examples are distinct as well.

Suppose next $\sigma=1$. The Killing equations are
$$
\begin{array}{ll}
2 C_{12}{}^1- C_{11}{}^2=0,&
2 C_{12}{}^2- C_{11}{}^1+1 =0,\\
C_{11}{}^1-C_{12}{}^2+C_{22}{}^1+1=0,&
C_{11}{}^2-C_{12}{}^1+C_{22}{}^2=0,\\
2 C_{12}{}^1- C_{22}{}^2=0,&2 C_{12}{}^2- C_{22}{}^1+1=0.
\end{array}$$
We solve these relations to see $\mathcal{M}=\mathcal{N}_3$. The symmetric
Ricci tensor distinguishes this surface from the  surfaces $\mathcal{N}_1^\pm$
or $\mathcal{N}_2^c$.

Suppose finally $\sigma=-1$. The Killing equations are
$$
\begin{array}{ll}
C_{11}{}^2+2 C_{12}{}^1=0,&
C_{11}{}^1-2 C_{12}{}^2-1=0,\\
C_{11}{}^1-C_{12}{}^2-C_{22}{}^1+1=0,&
C_{11}{}^2+C_{12}{}^1-C_{22}{}^2=0,\\
2 C_{12}{}^1-C_{22}{}^2=0,&2 C_{12}{}^2+C_{22}{}^1+1=0.
\end{array}$$
We solve these equations to see $\mathcal{M}=\mathcal{N}_4$. The
Ricci tensor distinguishes these surfaces from the previous examples.

We now examine the Lie algebra structure. 
Let $e_1:=X(\sigma)$, $e_2:=-x^1\partial_1-x^2\partial_2$, $e_3:=-\partial_2$. We then have
$[e_1,e_2]=e_1$, $[e_2,e_3]=e_3$, and $[e_1,e_3]=-2e_2$. These are the structure
equations for $\mathfrak{su}(1,1)$ given in Equation~\eqref{E3.d}.
The range of the adjoint map is 3-dimensional; the range of the adjoint
map in either $A_2\oplus A_2$ or $A_{4,9}^0$ is $2$-dimensional. Thus $A_{3,8} = \mathfrak{su(1,1)}$ is not
a Lie sub-algebra of either $A_2\oplus A_2$ or of $A_{4,9}^0$ and hence by Theorem~\ref{T3.8},
$\mathcal{M}$ is not of Type~$\mathcal{A}$. 

Let 
$\mathfrak{S}:=X(\sigma)\cdot\mathbb{R}\oplus\mathfrak{K}_0^{\mathcal{B}}$.
Suppose to the contrary, there is some additional  affine Killing vector field $Y\in\mathfrak{K}(\mathcal{M})-\mathfrak{S}$.
Since $\mathcal{M}$ is not of Type~$\mathcal{A}$, by Lemma~\ref{L3.15}, $Y\ne Y(x^1)$. The argument given
to prove Lemma~\ref{L3.14} shows, therefore, $\partial_2^nY=0$ for $n\ge2$. If $n=2$,
then we have that $Y=2x^1x^2\partial_1+(x^2)^2\partial_2+Z(x^1)$ and hence $X(\sigma)-Y= \sigma\cdot(x^1)^2\partial_2-Z(x^1)$ only depends of $x^1$ which
contradicts the observation made above. If $n>3$, we may replace $Y$ by $(\partial_2)^{n-3}Y$ to ensure
$n=3$. Since $(\partial_2)^2(\partial_2Y)=0$, we conclude $\partial_2Y$ must be a multiple of $X$ and hence
$$
Y=x^1(x^2)^2\partial_1+(\textstyle\frac13(x^2)^3+\sigma\cdot(x^1)^2 x^2)\partial_2+Y_0(x^1)\,.
$$
We apply $\Theta-2$ to see $(\Theta-2)Y_0\in\mathfrak{K}_0^{\mathcal{B}}$ and hence
$$
Y=x^1(x^2)^2\partial_1+(\textstyle\frac13(x^2)^3+\sigma\cdot(x^1)^2 x^2)\partial_2
+(x^1)^3(a^1\partial_1+a^2\partial_2)\,.
$$
We have Killing equations:
\medbreak\qquad $\sigma=0$: 
$a^1+2a^2C_{12}{}^1=0$, $a^2=0$, $2-4a^1C_{22}{}^1=0$.
\smallbreak\qquad $\sigma=1$:
$4a^1=0$, $3a^2=0$, $2(a^1-1)=0$. 
\smallbreak\qquad $\sigma=-1$:
$4a^1=0$, $3a^2=0$, $2(1+a^1)=0$.
\medbreak\noindent
These equations are inconsistent and thus there is no additional affine Killing vector field.

Up to linear equivalence and homothety, the only pseudo-Riemannian metrics which are
of Type~$\mathcal{C}$ have the form $ds^2=(x^1)^{-2}((dx^1)^2+\epsilon(dx^2)^2)$. We use the
Koszul formula
$\Gamma_{ijk}=\frac12\{g_{ik/j}+g_{jk/i}-g_{ij/k}\}$ to see:
$$\begin{array}{ll}
\Gamma_{111}=\frac12 g_{11/1}=-(x^1)^{-3},&\Gamma_{11}{}^1=-(x^1)^{-1},\\[.03in]
\Gamma_{122}=\frac12 g_{22/1}=-(x^1)^{-3}\epsilon,&\Gamma_{12}{}^2=-(x^1)^{-1},\\[.03in]
\Gamma_{221}=-\frac12 g_{22/1}= (x^1)^{-3}\epsilon,&\Gamma_{22}{}^1= (x^1)^{-1}\epsilon.
\end{array}$$
Taking $\epsilon=1$ (resp. $\epsilon=-1$) yields the  surfaces $\mathcal{N}_3$ or $\mathcal{N}_4$. Thus these are of Type~$\mathcal{C}$. On the other hand, the symmetric
Ricci tensor has rank at most 1 if $\mathcal{M}=\mathcal{N}_1^\pm$ or $\mathcal{M}=\mathcal{N}_2^c$ so these  surfaces are not of Type~$\mathcal{C}$.
 \end{proof}

\begin{remark}\rm
In Definition~\ref{D3.10}, we let $\mathcal{M}:=\mathcal{N}_2^c$ for $c=\frac12$ be determined by
 $$\textstyle
 C_{11}{}^1=-\frac{3}{2}\,,\, C_{11}{}^2=0\,,\, C_{12}{}^1=1\,,\, C_{12}{}^2=-\frac{1}{2}\,,\, C_{22}{}^1=\frac{1}{2}\,,\, C_{22}{}^2=2\,.
 $$ 
The Ricci tensor of this Type~$\mathcal{B}$ surface is alternating and this
affine surface corresponds to the distinguished situation in \cite[Theorem 2-(A.1)]{KVOp2}.
We shall see presently that, up to affine equivalence, this is the only affine surface of Type~$\mathcal{B}$
with $\dim\{\mathfrak{K}(\mathcal{M})\}=3$
 which admits an affine gradient Ricci  soliton.
\end{remark}

Lemma~\ref{L2.2} shows that every
 Type~$\mathcal{A}$  surface has $\rho$ and $\nabla \rho$ symmetric. If, moreover, 
 $\operatorname{Rank}\{\rho\}=1$, then $\rho$ is recurrent (see \cite{CGV10}). 
 The next result shows that these geometric conditions identify Type~$\mathcal{A}$ 
 among Type~$\mathcal{B}$ surfaces. It is an immediate consequence of Lemma~\ref{L2.10} 
 and the discussion of this section.

\begin{corollary}\label{C3.19}
Let $\mathcal{M}$ be a Type~$\mathcal{B}$ surface which is not flat. The following conditions are
equivalent:
\begin{enumerate}
\item $\mathcal{M}$ is also of Type~$\mathcal{A}$.
\item $\rho$ is symmetric, recurrent, and of rank $1$ and $\nabla\rho$ is symmetric.
\item $\dim\{\mathfrak{K}(\mathcal{M})\}=4$.
\item $C_{12}{}^1=C_{22}{}^1=C_{22}{}^2=0$.
\end{enumerate}
\end{corollary}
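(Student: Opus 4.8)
The plan is to close the cycle of implications $(1)\Rightarrow(3)\Rightarrow(4)\Rightarrow(1)$ and to establish $(2)\Leftrightarrow(4)$ separately, drawing entirely on material already assembled in this section; as the statement advertises, each arrow is essentially a citation. First I would dispose of $(2)\Leftrightarrow(4)$: this is precisely the equivalence of Assertion~1 and Assertion~3 in Lemma~\ref{L2.10}. One small point to note explicitly is that a non-flat Type~$\mathcal{B}$ surface with $C_{12}{}^1=C_{22}{}^1=C_{22}{}^2=0$ automatically has $\operatorname{Rank}\{\rho\}=1$, since by Lemma~\ref{L2.10} its Ricci tensor reduces to $(x^1)^{-2}(1+C_{11}{}^1-C_{12}{}^2)C_{12}{}^2\,dx^1\otimes dx^1$, which is nonzero exactly because $\mathcal{M}$ is assumed not flat.

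Next, $(4)\Rightarrow(1)$ together with $(4)\Rightarrow(3)$ is exactly the content of Assertion~5 of Lemma~\ref{L3.15}: imposing $C_{12}{}^1=C_{22}{}^1=C_{22}{}^2=0$ produces a two-dimensional Abelian subalgebra of affine Killing fields, so $\mathcal{M}$ is also of Type~$\mathcal{A}$ by Lemma~\ref{L3.2}, and then Theorem~\ref{T3.4} forces $\dim\{\mathfrak{K}(\mathcal{M})\}=4$. For $(3)\Rightarrow(4)$ I would simply quote Assertion~1 of Theorem~\ref{T3.11}, which states that whenever $\dim\{\mathfrak{K}(\mathcal{M})\}=4$ one has $C_{12}{}^1=C_{22}{}^1=C_{22}{}^2=0$.

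The remaining implication $(1)\Rightarrow(3)$ is the only one that needs a sentence of argument rather than a bare reference, and it is where the mild care is required. If $\mathcal{M}$ is simultaneously Type~$\mathcal{A}$ and Type~$\mathcal{B}$, then by Lemma~\ref{L3.2} the local Killing algebra $\mathfrak{K}_P(\mathcal{M})$ contains both a two-dimensional Abelian subalgebra and a two-dimensional non-Abelian subalgebra; these are necessarily distinct, so $\dim\{\mathfrak{K}(\mathcal{M})\}>2$, and Theorem~\ref{T3.11} then leaves only the possibilities $\dim\{\mathfrak{K}(\mathcal{M})\}\in\{3,4\}$. The value $3$ is ruled out because Assertion~2 of Theorem~\ref{T3.11} guarantees that any Type~$\mathcal{B}$ surface with $\dim\{\mathfrak{K}(\mathcal{M})\}=3$ is \emph{not} of Type~$\mathcal{A}$. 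Hence $\dim\{\mathfrak{K}(\mathcal{M})\}=4$, which closes the cycle and, combined with $(2)\Leftrightarrow(4)$, proves the corollary. The hard part — such as it is — is precisely this exclusion of the three-dimensional case; every other step is bookkeeping with Lemma~\ref{L2.10}, Lemma~\ref{L3.15}, and Theorem~\ref{T3.11}, all of which are already in hand.
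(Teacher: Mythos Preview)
Your proof is correct and follows essentially the same approach as the paper, which simply declares the corollary an ``immediate consequence of Lemma~\ref{L2.10} and the discussion of this section'' without spelling out the implications. You have made the cycle of references explicit, and each citation is accurate: Lemma~\ref{L2.10} gives $(2)\Leftrightarrow(4)$, Lemma~\ref{L3.15}~(5) gives $(4)\Rightarrow(1)$ and $(4)\Rightarrow(3)$, Theorem~\ref{T3.11}~(1) gives $(3)\Rightarrow(4)$, and your argument for $(1)\Rightarrow(3)$ via the remark following Lemma~\ref{L3.2} together with Theorem~\ref{T3.11}~(2) is exactly what the paper has in mind.
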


\begin{remark}
\rm
Note that the geometric conditions given in Corollary~\ref{C3.19}, i.e. $\rho$ is symmetric, recurrent, 
and of rank $1$ and $\nabla\rho$ is symmetric, characterize 
Type~$\mathcal{A}$ surfaces amongst 
Type~$\mathcal{B}$ ones, but not the converse. In Theorem~\ref{T3.8},
we have identified which surfaces
of Type~$\mathcal{A}$ are also of Type~$\mathcal{B}$ in terms of the $\alpha$ invariant given in Definition~\ref{D2.4} (see Table~1).
\end{remark}

\subsection{Change of coordinates}The
following result is closely related to the work of \cite{G-SG,Opozda} and deals with the
homogeneous affine surfaces where $\dim\{\mathfrak{K}(\mathcal{M})\}=2$.
As noted in the proof of Lemma~\ref{L3.9}, the Lie group
$$\mathfrak{G}:=\{T:(x^1,x^2)\rightarrow(tx^1,ux^1+vx^2+w)\text{ for }t>0\text{ and }v\ne0\}$$
is the 4-dimensional subgroup of $\operatorname{GL}(2,\mathbb{R})$
which preserves $\mathbb{R}^+\times\mathbb{R}$. 

\begin{theorem}\label{T3.21}
Let $\mathcal{M}$ be a simply connected locally homogeneous
affine surface with $\dim\{\mathfrak{K}(\mathcal{M})\}=2$.
\begin{enumerate}
\item If $\mathcal{M}$ is Type~$\mathcal{A}$, then the coordinate transformations of any Type $\mathcal{A}$ atlas for $\mathcal{M}$
 take the form $\vec x\rightarrow A\vec x+\vec b$ for $A\in\operatorname{GL}(2,\mathbb{R})$ and $\vec b\in\mathbb{R}^2$.
\item If $\mathcal{M}$ is Type~$\mathcal{B}$, then the coordinate transformations of any Type $\mathcal{B}$ atlas for $\mathcal{M}$ belong to $\mathfrak{G}$.
\end{enumerate}\end{theorem}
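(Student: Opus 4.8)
The plan is to exploit the rigidity forced by $\dim\{\mathfrak{K}(\mathcal{M})\}=2$: in that case the Lie algebra of affine Killing vector fields coincides with the one built into a normalizing atlas. A two-dimensional real Lie algebra is either abelian or isomorphic to $A_2$, and by Lemma~\ref{L3.2} the abelian case occurs precisely for Type~$\mathcal{A}$ surfaces and the $A_2$ case precisely for Type~$\mathcal{B}$ surfaces; since $\mathfrak{K}_0^{\mathcal{A}}\subset\mathfrak{K}(\mathcal{M})$ (resp. $\mathfrak{K}_0^{\mathcal{B}}\subset\mathfrak{K}(\mathcal{M})$) always holds by Remark~\ref{R1.3}, I would first record that $\mathfrak{K}(\mathcal{M})=\mathfrak{K}_0^{\mathcal{A}}=\operatorname{Span}\{\partial_1,\partial_2\}$ in the Type~$\mathcal{A}$ case and $\mathfrak{K}(\mathcal{M})=\mathfrak{K}_0^{\mathcal{B}}=\operatorname{Span}\{x^1\partial_1+x^2\partial_2,\partial_2\}$ in the Type~$\mathcal{B}$ case, all of these fields being globally defined because $\mathcal{M}$ is simply connected (local-to-global principle of the Introduction). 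Then, given two charts of the prescribed type with connected domains and transition map $\psi$, $\tilde x=\psi(x)$, I would observe that the coordinate fields generating the natural structure group of the second chart are genuine affine Killing vector fields on its domain by Remark~\ref{R1.3}: namely $\partial_{\tilde x^1},\partial_{\tilde x^2}$ in the Type~$\mathcal{A}$ case, and $\tilde x^1\partial_{\tilde x^1}+\tilde x^2\partial_{\tilde x^2}$ together with $\partial_{\tilde x^2}$ in the Type~$\mathcal{B}$ case. Restricting to the overlap, these must be combinations of the corresponding fields of the first chart, and the remaining work is to integrate the resulting first-order system for $\psi$.

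In the Type~$\mathcal{A}$ case this is immediate: $\partial_{\tilde x^i}=\sum_j a_i^j\,\partial_{x^j}$ with $a_i^j\in\mathbb{R}$ constant says $\partial x^j/\partial\tilde x^i=a_i^j$ is constant, hence $x^j=\sum_i a_i^j\tilde x^i+b^j$ with $b^j$ constant; invertibility of $(a_i^j)$ follows from $\psi$ being a diffeomorphism, so $\psi$ and $\psi^{-1}$ both have the form $\vec x\mapsto A\vec x+\vec b$, which is Assertion~1.

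For the Type~$\mathcal{B}$ case I would set $E:=x^1\partial_1+x^2\partial_2$, $F:=\partial_2$, so that $\mathfrak{K}(\mathcal{M})=\operatorname{Span}\{E,F\}$ with $[E,F]=-F$, and likewise $\tilde E:=\tilde x^1\partial_{\tilde x^1}+\tilde x^2\partial_{\tilde x^2}$, $\tilde F:=\partial_{\tilde x^2}$, with $[\tilde E,\tilde F]=-\tilde F$. Since $\tilde F$ lies in the one-dimensional derived subalgebra $[\mathfrak{K}(\mathcal{M}),\mathfrak{K}(\mathcal{M})]=\operatorname{Span}\{F\}$, we get $\partial_{\tilde x^2}=cF=c\,\partial_{x^2}$ for some $c\neq0$, whence $x^1=x^1(\tilde x^1)$ and $x^2=c\,\tilde x^2+\gamma(\tilde x^1)$ for some function $\gamma$. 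Writing $\tilde E=\alpha E+\beta F$ with $\alpha\neq0$ (as $\{\tilde E,\tilde F\}$ is a basis) and comparing $[\tilde E,\tilde F]=-\alpha cF$ with $-\tilde F=-cF$ forces $\alpha=1$, so $\tilde E=E+\beta F$. Expanding this identity in the frame $\{\partial_{x^1},\partial_{x^2}\}$ via $\partial_{\tilde x^1}=\tfrac{dx^1}{d\tilde x^1}\partial_{x^1}+\gamma'(\tilde x^1)\partial_{x^2}$, the $\partial_{x^1}$-component reads $\tilde x^1\,\tfrac{dx^1}{d\tilde x^1}=x^1$, with solution $x^1=t\,\tilde x^1$ for $t>0$ (both coordinates being positive), and the $\partial_{x^2}$-component reads $\tilde x^1\gamma'(\tilde x^1)-\gamma(\tilde x^1)=\beta$, with solution $\gamma(\tilde x^1)=u\,\tilde x^1-\beta$. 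Hence $x^1=t\,\tilde x^1$ and $x^2=u\,\tilde x^1+c\,\tilde x^2-\beta$ with $t>0$, $c\neq0$, so $\psi^{-1}\in\mathfrak{G}$; as $\mathfrak{G}$ is a group, $\psi\in\mathfrak{G}$, which is Assertion~2.

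The hard part is the Type~$\mathcal{B}$ step of pinning $\psi$ down from the Lie-algebra structure of $A_2$: one must see that the translation field $\partial_{\tilde x^2}$ is forced into the one-dimensional derived subalgebra — hence is a constant multiple of $\partial_{x^2}$ — and that the dilation field $\tilde E$ goes to $E$ up to a translation; once this is in hand, the two scalar ODEs for $x^1$ and $\gamma$ are routine. Everything else rests on the single structural fact that when $\dim\{\mathfrak{K}(\mathcal{M})\}=2$ there are no affine Killing vector fields beyond those supplied by the normalizing atlas, together with the straightforward observation that transition maps between charts of a fixed type carry the structure-group coordinate fields of one chart to affine Killing vector fields of $\mathcal{M}$.
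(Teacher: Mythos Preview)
Your argument is correct, and for the Type~$\mathcal{B}$ assertion it is essentially the paper's proof: both identify $\partial_{\tilde x^2}$ as a constant multiple of $\partial_{x^2}$ via the derived subalgebra (the paper phrases this as ``the range of the adjoint action''), then compare the Euler fields to obtain the two scalar ODEs for $x^1(\tilde x^1)$ and the residual function of $\tilde x^1$.

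For Assertion~1, however, you take a genuinely different and more elementary route. The paper invokes Theorem~\ref{T3.4} to conclude that $\operatorname{Rank}\{\rho\}=2$ whenever $\dim\{\mathfrak{K}(\mathcal{M})\}=2$, so that $\rho$ is a flat pseudo-Riemannian metric in each Type~$\mathcal{A}$ chart; the transition map then preserves a flat metric with constant components, forcing $d\phi_{\alpha\beta}$ to be constant. Your argument bypasses the Ricci tensor entirely: since $\mathfrak{K}(\mathcal{M})=\mathfrak{K}_0^{\mathcal{A}}$ in each chart, the translation fields $\partial_{\tilde x^i}$ of the second chart must lie in $\operatorname{Span}_{\mathbb{R}}\{\partial_{x^1},\partial_{x^2}\}$ with \emph{constant} coefficients, so the Jacobian of $\psi$ is constant. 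This is cleaner and uses nothing beyond the hypothesis $\dim\{\mathfrak{K}(\mathcal{M})\}=2$; the paper's approach, on the other hand, makes explicit the geometric mechanism (a canonical flat metric) that underlies the rigidity, which is of independent interest and is exploited elsewhere (e.g.\ in the proof of Theorem~\ref{T4.12}).
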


\begin{proof} Suppose first that $\mathcal{M}$ is of Type~$\mathcal{A}$.
Cover $M$ by Type~A coordinate charts $(\mathcal{O}_\alpha,\phi_\alpha)$
so ${}^\alpha\Gamma\in\mathbb{R}$ is constant. The transition functions $\phi_{\alpha\beta}$ 
then are local diffeomorphisms
of $\mathbb{R}^2$ so that $\phi_{\alpha\beta}^*\{{}^\beta\rho\}={}^\alpha\rho$.
Since $\dim\{\mathfrak{K}(\mathcal{M})\}=2$,
Theorem~\ref{T3.4} shows that $\operatorname{Rank}\{\rho^\alpha\}=2$
so ${}^\alpha\rho$
and ${}^\beta\rho$ define flat pseudo-Riemannian metrics with $\phi_{\alpha\beta}^*\{{}^\beta\rho\}={}^\alpha\rho$.
This implies $d\phi_{\alpha\beta}$ is constant and, consequently $\phi_{\alpha\beta}$ is
an affine transformation as given in Assertion~1.

Next suppose $\mathcal{M}$ is of Type~$\mathcal{B}$.
Cover $\mathcal{M}$ by Type~$\mathcal{B}$ coordinate charts $(\mathcal{O}_\alpha,\phi_\alpha)$
with transition functions $\phi_{\alpha\beta}$. Fix $\alpha$ and $\beta$ and let
$$
\vec x=(x^1_\beta,x^2_\beta),\quad\vec u=(u^1_\alpha,u^2_\alpha),\quad
\phi_{\alpha\beta}=(x^1(u^1,u^2),x^2(u^1,u^2))\,.
$$
We have
\begin{eqnarray*}
&&\partial_1^u=\partial_1^ux^1\cdot\partial_{1}^x+\partial_1^ux^2\cdot\partial_{2}^x\quad\text{ and }\quad
\partial_2^u=\partial_2^ux^1\cdot\partial_{1}^x+\partial_2^ux^2\cdot\partial_{2}^x\,.
\end{eqnarray*}
Since $[x^1\partial_1^x+x^2\partial_2^x,\partial_2^x]=-\partial_2^x$ and
$\dim\{\mathfrak{K}(\mathcal{M})\}=2$, $\partial_2^x$ (and similarly $\partial_2^u$) span the
range of the adjoint action. Consequently $\partial_2^u$ is a constant multiple of $\partial_2^x$.
This implies that $\partial_2^ux^1=0$ and $\partial_2^ux^2\in\mathbb{R}$ so
$$
\phi_{\alpha\beta}=(x^1(u^1),\tilde x^2(u^1)+cu^2)\text{ for }c\in\mathbb{R}\,.
$$
We now have that
$\partial_1^u=\partial_1^ux^1\cdot\partial_{1}^x+\partial_1^u\tilde x^2\cdot\partial_{2}^x$,
$\partial_2^u=c\partial_{2}^x$, and 
$$
u^1\partial_1^u+u^2\partial_2^u=
u^1\partial_1^ux^1\partial_{1}^x+\star\partial_{2}^x
=\epsilon_1(x^1\partial_{1}^x+x^2\partial_{2}^x)+\star\partial_{2}^x\,.
$$
This tells us that $x^1=au^1$ for some $a\in\mathbb{R}$ and that $\epsilon_1=1$. Consequently
$$
\phi_{\alpha\beta}=(au^1,\tilde x^2(x^1)+cu^2)\,.
$$
Therefore $\partial_1^u=a\partial_{1}^x+\partial_1^u\tilde x^2(x^1)\partial_{2}^x$
and $\partial_2^u=c\partial_{2}^x$ so
$$
u^1\partial_1^u+u^2\partial_2^u=x^1\partial_{1}^x+x^2\partial_{2}^x+\{u^1\partial_1^u\tilde x^2(u^1)-\tilde x^2(u^1)\}\partial_{2}^x\,.
$$
Since $u^1\partial_1^u+u^2\partial_2^u\in\operatorname{Span}\{x^1\partial_1^x+x^2\partial_2^x,\partial_2^x\}$,
we conclude 
$$
 u^1\partial_1^u\tilde x^2(u^1)-\tilde x^2(u^1)=d\in\mathbb{R}\,.
$$
Thus $\tilde x^2(u^1)=b u^1+d$ and the
coordinate transformation has the desired form.
\end{proof}

\begin{remark}\rm
If $\dim\{\mathfrak{K}(\mathcal{M})\}>2$, then Theorem~\ref{T3.11} shows that there are Killing
vector fields which do not belong to the Lie algebra of $\operatorname{GL}(2,\mathbb{R})$. Consequently,
there are admissible coordinate transformations which are non-linear. This shows the condition 
$\dim\{\mathfrak{K}(\mathcal{M})\}=2$ is essential in Theorem~\ref{T3.21}.
\end{remark}

Lemma~\ref{L3.6} gives representatives of all the elements in $\mathcal{F}^{\mathcal{A}}$
which are also of Type~$\mathcal{B}$. We now give an explicit identification of those
surfaces with
elements of $\mathcal{F}^{\mathcal{B}}$.
In the proof of Theorem \ref{T3.8}, we showed that every Type~$\mathcal{A}$ surface
which is also Type~$\mathcal{B}$ admits coordinates $(x^1,x^2)$ such that the 
corresponding Christoffel symbols $\Gamma_{ij}^k\in\mathbb{R}$ satisfy
$\Gamma_{11}{}^1=\Gamma_{11}{}^2=\Gamma_{12}{}^2=\Gamma_{22}{}^1=0$. 
We now give an explicit construction to show that such elements of 
$\mathcal{F}^{\mathcal{A}}$, which are affine isomorphic to 
$\mathcal{M}_3^c$ or $\mathcal{M}_4^0$,  are also of Type~$\mathcal{B}$.

\begin{lemma} Let $\mathcal{M}\in\mathcal{F}^{\mathcal{A}}$ satisfy
$\Gamma_{11}{}^1=\Gamma_{11}{}^2=\Gamma_{12}{}^2=\Gamma_{22}{}^1=0$.
We consider the change of coordinates $(u^1,u^2)=(e^{x^2},x^1)$. We then have:
\medbreak\centerline{$
{}^u\Gamma_{11}{}^1=\frac{1}{u^1}(-1+{}^x\Gamma_{22}{}^2)\,,\,
{}^u\Gamma_{12}{}^2=\frac{1}{u^1}{}^x\Gamma_{12}{}^1\,,\,
{}^u\Gamma_{ij}{}^k=0\,\text{otherwise}.
$}
\end{lemma}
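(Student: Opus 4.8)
The plan is to carry out a direct change-of-variables computation, since the statement is simply the transformation law for Christoffel symbols under the explicit diffeomorphism $(u^1,u^2)=(e^{x^2},x^1)$, and to organize it so that the hypothesis $\Gamma_{11}{}^1=\Gamma_{11}{}^2=\Gamma_{12}{}^2=\Gamma_{22}{}^1=0$ is used at the right places. First I would record the inverse relations $x^1=u^2$ and $x^2=\log u^1$, compute the Jacobian and its inverse, and express the coordinate frames:
\begin{eqnarray*}
&&\partial_{u^1}=\frac{1}{u^1}\,\partial_{x^2},\qquad \partial_{u^2}=\partial_{x^1},\\
&&\partial_{x^1}=\partial_{u^2},\qquad \partial_{x^2}=u^1\,\partial_{u^1}.
\end{eqnarray*}

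Next I would compute the six covariant derivatives $\nabla_{\partial_{u^i}}\partial_{u^j}$ by substituting the frame relations and using $\nabla_{\partial_{x^i}}\partial_{x^j}=\Gamma_{ij}{}^k\partial_{x^k}$, together with the fact that $\nabla_{X}(\phi Y)=X(\phi)Y+\phi\,\nabla_X Y$ to handle the coefficient $1/u^1$. Concretely, $\nabla_{\partial_{u^2}}\partial_{u^2}=\nabla_{\partial_{x^1}}\partial_{x^1}={}^x\Gamma_{11}{}^k\partial_{x^k}=0$ by hypothesis; $\nabla_{\partial_{u^1}}\partial_{u^2}=\frac{1}{u^1}\nabla_{\partial_{x^2}}\partial_{x^1}=\frac{1}{u^1}{}^x\Gamma_{12}{}^1\partial_{x^1}=\frac{1}{u^1}{}^x\Gamma_{12}{}^1\partial_{u^2}$, using $\Gamma_{12}{}^2=0$; and $\nabla_{\partial_{u^1}}\partial_{u^1}=\nabla_{\frac{1}{u^1}\partial_{x^2}}(\frac{1}{u^1}\partial_{x^2})$, which produces a derivative term $\partial_{u^1}(1/u^1)\cdot\frac{1}{u^1}\partial_{x^2}=-\frac{1}{(u^1)^2}\partial_{x^2}=-\frac{1}{u^1}\partial_{u^1}$ plus $\frac{1}{(u^1)^2}\nabla_{\partial_{x^2}}\partial_{x^2}=\frac{1}{(u^1)^2}{}^x\Gamma_{22}{}^2\partial_{x^2}=\frac{1}{u^1}{}^x\Gamma_{22}{}^2\partial_{u^1}$, again using $\Gamma_{22}{}^1=0$; combining gives ${}^u\Gamma_{11}{}^1=\frac{1}{u^1}(-1+{}^x\Gamma_{22}{}^2)$. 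Reading off the resulting coefficients against $\partial_{u^1},\partial_{u^2}$ yields exactly the claimed formulas, with all other ${}^u\Gamma_{ij}{}^k$ vanishing.

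I do not anticipate a genuine obstacle here; the only point requiring care is bookkeeping the non-tensorial derivative term coming from differentiating the factor $1/u^1$ in $\partial_{u^1}$, which is precisely what produces the $-1$ in ${}^u\Gamma_{11}{}^1$. One should also verify that the four vanishing hypotheses on ${}^x\Gamma$ are exactly what is needed to kill the $\partial_{u^1}$-component of $\nabla_{\partial_{u^2}}\partial_{u^2}$ (needs $\Gamma_{11}{}^2=0$ since that controls the $\partial_{x^2}=u^1\partial_{u^1}$ direction) and similarly the off-type components in the other brackets. Since the computation is short and the result is stated, I would present it compactly, displaying the three nonzero covariant derivatives and reading off the Christoffel symbols, without belaboring the routine algebra.
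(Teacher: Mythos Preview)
Your approach is correct and essentially identical to the paper's: both compute the frame relations $\partial_{u^1}=e^{-x^2}\partial_{x^2}=(u^1)^{-1}\partial_{x^2}$, $\partial_{u^2}=\partial_{x^1}$ and then evaluate the three covariant derivatives $\nabla_{\partial_{u^i}}\partial_{u^j}$ directly, reading off the new Christoffel symbols. There is a minor typo in your derivative term (the extra factor $\tfrac{1}{u^1}$ in ``$\partial_{u^1}(1/u^1)\cdot\tfrac{1}{u^1}\partial_{x^2}$'' should not be there), but your stated value $-\tfrac{1}{(u^1)^2}\partial_{x^2}$ and the final answer are correct.
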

\begin{proof} We compute:
\medbreak\qquad
$du^1=e^{x^2}dx^2,\quad du^2=dx^1,\quad\partial_1^u=e^{-x^2}\partial_2^x,
\quad\partial_2^u=\partial_1^x$,
\smallbreak\qquad
$\nabla_{\partial_1^u}\partial_1^u=e^{-2x^2}\{-\partial_2^x
+{}^x\Gamma_{22}{}^1\partial_1^x+{}^x\Gamma_{22}{}^2\partial_2^x\} =\frac1{u^1}(-1+{}^x\Gamma_{22}{}^2)\partial_1^u$,
\smallbreak\qquad
$\nabla_{\partial_1^u}\partial_2^u=e^{-x^2}\{{}^x\Gamma_{12}{}^1\partial_1^x
+{}^x\Gamma_{12}{}^2\partial_2^x\} =\frac{1}{u^1}{}^x\Gamma_{12}{}^1 \partial_2^u$,
\smallbreak\qquad
$\nabla_{\partial_2^u}\partial_2^u={}^x\Gamma_{11}{}^1\partial_1^x+{}^x\Gamma_{11}{}^2\partial_2^x =0$,
\smallbreak\qquad
$\begin{array}{lll}
{}^u\Gamma_{11}{}^1=\frac1{u^1}(-1+{}^x\Gamma_{22}{}^2),&
{}^u\Gamma_{11}{}^2=0,&
{}^u\Gamma_{12}{}^1=0,\\[.03in]
{}^u\Gamma_{12}{}^2=\frac1{u^1}\Gamma_{12}{}^1,&
{}^u\Gamma_{22}{}^1=0,&{}^u\Gamma_{22}{}^2=0.
\end{array}$
\end{proof}

\section{Affine Gradient Ricci Solitons}\label{S4}

In this section we study affine gradient Ricci solitons and affine gradient Yamabe solitons. Recall from Definition~\ref{D1.5} that $(M,\nabla,f)$ is an affine gradient Ricci (resp. Yamabe) soliton if
 $H_f^\nabla+\rho_s=0$ (resp. $H_f^\nabla =0$). 
 $\mathfrak{A}(\mathcal{M})$ (resp. $\mathcal{Y}(\mathcal{M})$) is the space of functions on $M$ so that $(M,\nabla, f)$ is an affine gradient Ricci (resp. Yamabe) soliton. 
 The following result relates these two notions.

\begin{lemma}\label{L4.1} Let $\mathcal{M}=(M,\nabla)$ be an affine surface. 
\begin{enumerate}
\item If $f\in\mathfrak{A}(\mathcal{M})$ and if $X\in\mathfrak{K}(\mathcal{M})$,
then $X(f)\in\ker(H^\nabla)$, i.e. $X(f)\in\mathcal{Y}(M)$.
\item If $h\in\ker(H^\nabla)$, then $R_{ij}(dh)=0$ for $1\le i<j\le m$.
\end{enumerate}\end{lemma}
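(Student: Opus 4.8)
The plan is to verify both assertions by unwinding the definitions and using the structural equations established earlier. For Assertion~1, the key observation is that an affine Killing vector field commutes with the Hessian operator in an appropriate sense. Since $X\in\mathfrak{K}(\mathcal{M})$, Lemma~\ref{L3.1} gives $\mathcal{L}_X(\nabla)=0$, and of course $\mathcal{L}_X$ commutes with exterior differentiation, so $\mathcal{L}_X(H_g^\nabla)=\mathcal{L}_X(\nabla df)=\nabla(\mathcal{L}_X df)=\nabla d(X g)=H_{Xg}^\nabla$ for any $g\in C^\infty(M)$; here I use that $\mathcal{L}_X df = d(\iota_X df)= d(Xg)$. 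First I would make this computation precise in local coordinates, writing $H_f^\nabla = (f_{;ij})dx^i\circ dx^j$ and checking directly that $\mathcal{L}_X$ applied to a symmetric $2$-tensor of the form $\nabla\omega$ equals $\nabla(\mathcal{L}_X\omega)$ precisely because $\mathcal{L}_X\nabla=0$. Then, since $f\in\mathfrak{A}(\mathcal{M})$ means $H_f^\nabla=-\rho_s$, and since $X\in\mathfrak{K}(\mathcal{M})$ also preserves the curvature and hence $\mathcal{L}_X\rho_s=0$, we get $H_{X(f)}^\nabla=\mathcal{L}_X(H_f^\nabla)=-\mathcal{L}_X(\rho_s)=0$, which is exactly the statement that $X(f)\in\ker(H^\nabla)=\mathcal{Y}(\mathcal{M})$.

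For Assertion~2, the plan is a direct tensorial computation. If $h\in\ker(H^\nabla)$ then $h_{;ij}=0$ for all $i,j$, i.e. the $1$-form $dh$ is parallel: $\nabla(dh)=0$. A parallel $1$-form is annihilated by the curvature operator acting on $1$-forms, which is the Ricci identity for covariant derivatives of $dh$. Concretely, $0 = h_{;ijk}-h_{;ikj}$ and expanding the commutator of second covariant derivatives of the $1$-form $dh$ yields the curvature term contracted against $dh$; taking the appropriate components gives $R_{ij}(dh)=0$ for $i<j$. I would phrase this as: $(\nabla_{\xi_1}\nabla_{\xi_2}-\nabla_{\xi_2}\nabla_{\xi_1}-\nabla_{[\xi_1,\xi_2]})(dh)=0$ since $dh$ is parallel, and this is precisely $R(\xi_1,\xi_2)$ acting on the cotangent bundle applied to $dh$, which vanishes; spelling out indices gives the claim.

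I do not anticipate a serious obstacle here; both parts are formal consequences of the definitions. The only point requiring a little care is the identity $\mathcal{L}_X(\nabla\omega)=\nabla(\mathcal{L}_X\omega)$ when $\mathcal{L}_X\nabla=0$ — one must check that the Lie derivative of the connection, which a priori appears as a correction term of the form $(\mathcal{L}_X\nabla)(\cdot,\omega)$, indeed drops out. This is standard (it is essentially the statement that affine Killing fields act as infinitesimal automorphisms of all natural bundles and natural differential operators built from $\nabla$), but since the paper has set up everything in explicit Christoffel-symbol coordinates, the cleanest route is probably to verify it componentwise: write out $\mathcal{L}_X(f_{;ij})$ using $f_{;ij}=\partial_i\partial_j f-\Gamma_{ij}{}^k\partial_k f$ and the fact that $\mathcal{L}_X\Gamma=0$ forces the Killing equations of Lemma~\ref{L3.1}(1c), then observe the result equals $(X(f))_{;ij}$. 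This makes both Assertion~1 and Assertion~2 elementary once the bookkeeping is done.
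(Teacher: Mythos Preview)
Your proposal is correct and follows essentially the same reasoning as the paper. For Assertion~1 the paper phrases the naturality argument via the flow---observing that $(\Phi_t^X)^*f\in\mathfrak{A}(\mathcal{M})$ so $(\Phi_t^X)^*f-f\in\ker(H^\nabla)$, then differentiating at $t=0$---which is the integrated form of your Lie-derivative identity $\mathcal{L}_X(H_f^\nabla)=H_{Xf}^\nabla$; for Assertion~2 both arguments are the Ricci identity $h_{;ijk}-h_{;ikj}=\{R_{kj}(dh)\}_i$.
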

\begin{proof} 
Let $f$ be an affine gradient Ricci soliton and 
let $X$ be an affine Killing vector field. We have by naturality
that $(\Phi_t^X)^*f$ is again an affine gradient Ricci soliton. 
Since the difference of two affine gradient Ricci solitons
belongs to $\ker(H^\nabla)$, $(\Phi_t^X)^*f-f\in\ker(H^\nabla)$. 
Differentiating this relation with respect to $t$
and setting $t=0$ yields Assertion~1. Assertion~2 follows
from the identity
\medbreak\hfill$h_{;ijk}-h_{;ikj}=\{R_{kj}(dh)\}_i$.\hfill\vphantom{.}
\end{proof}

\subsection{Type~$\mathcal{A}$ affine gradient Ricci solitons}\label{S4.1}
 Let $\mathcal{M}$ be a Type~$\mathcal{A}$ affine surface. The
associated Ricci tensor is symmetric and 
$\mathfrak{K}_0^{\mathcal{A}}:=\operatorname{Span}_{\mathbb{R}}\{\partial_1,\partial_2\}\subset\mathfrak{K}(\mathcal{M})$. The components
of the Hessian are given by:
\medbreak\qquad
$H_{11}^\nabla(f)= -\Gamma_{11}{}^2 f^{(0,1)}-\Gamma_{11}{}^1 f^{(1,0)}+f^{(2,0)}$,
\smallbreak\qquad
$H_{12}^\nabla(f)=H_{21}^\nabla(f)= -\Gamma_{12}{}^2 f^{(0,1)}-\Gamma_{12}{}^1 f^{(1,0)}+f^{(1,1)}$,
\smallbreak\qquad
$H_{22}^\nabla(f)=-\Gamma_{22}{}^2 f^{(0,1)}+f^{(0,2)}-\Gamma_{22}{}^1 f^{(1,0)}$.
\medbreak\noindent
If $\operatorname{Rank}\{\rho\}=1$, we normalize the coordinate
system so $\rho=\rho_{22}dx^2\otimes dx^2\ne0$. We examine this situation
in the following result.
\begin{lemma}\label{L4.2}
Let $\mathcal{M}=(\mathbb{R}^2,\nabla)$ where $\Gamma_{ij}{}^k\in\mathbb{R}$
and $\rho=\rho_{22}dx^2\otimes dx^2\ne0$. Then
$f\in\mathfrak{A}(\mathcal{M})$
 if and only if $f(x^1,x^2)=\xi(x^2)$ where 
$\xi^{\prime\prime}-\Gamma_{22}{}^2\xi^\prime+\rho_{22}=0$.
\end{lemma}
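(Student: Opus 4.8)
The plan is to expand the affine gradient Ricci soliton equation $H^\nabla_f+\rho_s=0$ componentwise, using the Hessian formulas recorded just above the statement, and to exploit the strong normalization that $\rho=\rho_{22}\,dx^2\otimes dx^2$ imposes on the Christoffel symbols. Since $\Gamma$ is constant and $\rho\neq0$, we have $\mathcal{M}\in\mathcal{F}^{\mathcal{A}}$, so Lemma~\ref{L2.3} applies: the hypothesis $\rho=\rho_{22}\,dx^2\otimes dx^2$ forces $\Gamma_{11}{}^2=\Gamma_{12}{}^2=0$ and $\rho_{22}=\Gamma_{12}{}^1(\Gamma_{22}{}^2-\Gamma_{12}{}^1)+\Gamma_{11}{}^1\Gamma_{22}{}^1$. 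After these substitutions, and using $\rho_{11}=\rho_{12}=0$, the three soliton equations become
\begin{eqnarray*}
&&f^{(2,0)}-\Gamma_{11}{}^1\,f^{(1,0)}=0,\\
&&f^{(1,1)}-\Gamma_{12}{}^1\,f^{(1,0)}=0,\\
&&f^{(0,2)}-\Gamma_{22}{}^2\,f^{(0,1)}-\Gamma_{22}{}^1\,f^{(1,0)}=-\rho_{22}.
\end{eqnarray*}

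The crux is to show that $\partial_1 f\equiv0$. Write $g:=f^{(1,0)}$. The first two equations read $\partial_1 g=\Gamma_{11}{}^1g$ and $\partial_2 g=\Gamma_{12}{}^1g$, so $g=c\,e^{\Gamma_{11}{}^1x^1+\Gamma_{12}{}^1x^2}$ for a constant $c$; in particular $\partial_2^2 g=(\Gamma_{12}{}^1)^2g$. Now differentiate the third equation with respect to $x^1$ (the right-hand side is constant): using $\partial_1 f^{(0,1)}=\partial_2 g$, $\partial_1 f^{(0,2)}=\partial_2^2 g$, and $\partial_1 f^{(1,0)}=\partial_1 g=\Gamma_{11}{}^1 g$, one gets $\partial_2^2 g-\Gamma_{22}{}^2\partial_2 g-\Gamma_{11}{}^1\Gamma_{22}{}^1 g=0$. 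Substituting $\partial_2 g=\Gamma_{12}{}^1 g$ and $\partial_2^2 g=(\Gamma_{12}{}^1)^2 g$ collapses this to $\big((\Gamma_{12}{}^1)^2-\Gamma_{22}{}^2\Gamma_{12}{}^1-\Gamma_{11}{}^1\Gamma_{22}{}^1\big)g=0$. By Lemma~\ref{L2.3} the parenthetical coefficient is exactly $-\rho_{22}$, so $\rho_{22}\,g\equiv0$; since $\rho_{22}\neq0$ this forces $g\equiv0$, i.e. $f=\xi(x^2)$.

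With $\partial_1 f\equiv0$ the first two equations hold identically, and the third reduces to $\xi''-\Gamma_{22}{}^2\xi'+\rho_{22}=0$, the claimed ODE. The converse is an immediate verification: if $f=\xi(x^2)$ with $\xi$ satisfying this ODE, then $f^{(1,0)}=f^{(2,0)}=f^{(1,1)}=0$, so $H^\nabla_{11}(f)=H^\nabla_{12}(f)=0=\rho_{11}=\rho_{12}$, while $H^\nabla_{22}(f)=\xi''-\Gamma_{22}{}^2\xi'=-\rho_{22}$; hence $H^\nabla_f+\rho_s=0$ and $f\in\mathfrak{A}(\mathcal{M})$. I expect the only subtle point to be the bookkeeping when differentiating the third equation and recognizing that the resulting coefficient is precisely $-\rho_{22}$ via Lemma~\ref{L2.3}; everything else is routine linear manipulation.
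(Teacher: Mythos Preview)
Your proof is correct and in fact a bit cleaner than the paper's. Both arguments begin identically: apply Lemma~\ref{L2.3} to get $\Gamma_{11}{}^2=\Gamma_{12}{}^2=0$, and write out the three soliton equations. The difference is in how you eliminate the $x^1$-dependence. The paper integrates the first equation directly, which forces a case split on whether $\Gamma_{11}{}^1$ vanishes (yielding $f=u_0(x^2)+u_1(x^2)e^{\Gamma_{11}{}^1x^1}$ in one case and $f=u_0(x^2)+u_1(x^2)x^1$ in the other), then substitutes into the third equation and uses the non-constant $x^1$-dependence to kill the extra term. You instead set $g=f^{(1,0)}$, observe that the first two equations make $g$ a pure exponential $c\,e^{\Gamma_{11}{}^1x^1+\Gamma_{12}{}^1x^2}$ regardless of whether $\Gamma_{11}{}^1=0$, and then differentiate the third equation in $x^1$ to obtain $-\rho_{22}\,g=0$ directly. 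This avoids the case split entirely and makes the appearance of $\rho_{22}$ as the obstruction completely transparent via Lemma~\ref{L2.3}. The bookkeeping you flagged as the only subtle point is indeed correct: the coefficient $(\Gamma_{12}{}^1)^2-\Gamma_{22}{}^2\Gamma_{12}{}^1-\Gamma_{11}{}^1\Gamma_{22}{}^1$ is exactly $-\rho_{22}$ by Lemma~\ref{L2.3}(3).
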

\begin{proof}
Let $\rho=\rho_{22}dx^2\otimes dx^2\ne0$. We impose the relations of Lemma~\ref{L2.3} and
set $\Gamma_{11}{}^2=0$ and $\Gamma_{12}{}^2=0$. 
Let $f\in\mathfrak{A}(\mathcal{M})$.
 We have soliton equations
\[
\begin{array}{l}
f^{(2,0)}-\Gamma_{11}{}^1f^{(1,0)}=0,\\
f^{(1,1)}-\Gamma_{12}{}^1 f^{(1,0)}=0,\\
\rho_{22}-\Gamma_{22}{}^1 f^{(1,0)}-\Gamma_{22}{}^2
 f^{(0,1)}+f^{(0,2)}=0.
\end{array}
\]
We use the first equation to break the analysis into two cases.
\subsection*{ Case 1.
$\Gamma_{11}{}^1\ne0$} Then $f=u_0(x^2)+u_1(x^2)e^{\Gamma_{11}{}^1x^1}$.
The second soliton equation yields $u_1^\prime(x^2)-\Gamma_{12}{}^1u_1(x^2)=0$. 
Thus
 $f=u_0(x^2)+c_1e^{\Gamma_{11}{}^1x^1+\Gamma_{12}{}^1x^2}$. The final soliton equation is
$u_0^{\prime\prime}(x^2)-\Gamma_{22}{}^2 u_0^\prime(x^2)-\rho_{22}(c_1 e^{\Gamma_{11}{}^1 x^1+\Gamma_{12}{}^1 x^2}-1)
=0$.
This implies that $c_1=0$ and that $u_0$ satisfies the ODE given above.
\subsection*{Case 2. $\Gamma_{11}{}^1=0$.} Then $f=u_0(x^2)+u_1(x^2)x^1$. We consider the second soliton equation $u_1'(x^2)-\Gamma_{12}{}^1 u_1(x^2)=0$ to see that $f=u_0(x^2)+c_1e^{\Gamma_{12}{}^1x^2}x^1$. Hence the final soliton equation becomes $0=-c_1e^{x^2\Gamma_{12}{}^1}x^1\rho_{22}+...$,
where we have omitted terms not involving $x^1$. Since $\rho_{22}\ne0$,
we see that $c_1=0$ so $f(x^1,x^2)=\xi(x^2)$
and $f$ is a gradient Ricci soliton if and only if $\xi$ satisfies the ODE given.
This completes the proof.
\end{proof}

The next result shows that gradient Ricci solitons given in Lemma~\ref{L4.2} are the only ones in Type~$\mathcal{A}$ surfaces.

\begin{theorem}\label{T4.3}
Let $\mathcal{M}=(\mathbb{R}^2,\nabla)$ where $\Gamma_{ij}{}^k\in\mathbb{R}$ and $\rho\ne0$.
The following assertions are equivalent:
\begin{enumerate}
\item $\operatorname{Rank}\{\rho\}=1$.
\item $\mathfrak{A}(\mathcal{M})$ is non-empty.
\end{enumerate}
\end{theorem}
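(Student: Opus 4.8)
The plan is to establish the two implications separately: $(1)\Rightarrow(2)$ will be essentially a citation of Lemma~\ref{L4.2}, while $(2)\Rightarrow(1)$ will be proved in contrapositive form by extracting a curvature obstruction from the soliton equation.

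For $(1)\Rightarrow(2)$: since $\mathcal{M}\in\mathcal{F}^{\mathcal{A}}$ and $\operatorname{Rank}\{\rho\}=1$, a linear change of coordinates normalizes $\rho$ to the form $\rho=\rho_{22}\,dx^2\otimes dx^2$ with $\rho_{22}\neq0$ (Lemma~\ref{L2.3}). Lemma~\ref{L4.2} then identifies $\mathfrak{A}(\mathcal{M})$ with the solution set of the ordinary differential equation $\xi''-\Gamma_{22}{}^2\,\xi'+\rho_{22}=0$ in the variable $x^2$. This is a linear second order equation with constant coefficients and constant inhomogeneity, hence always solvable (take $\xi=\rho_{22}x^2/\Gamma_{22}{}^2$ if $\Gamma_{22}{}^2\neq0$ and $\xi=-\tfrac12\rho_{22}(x^2)^2$ otherwise), so $\mathfrak{A}(\mathcal{M})\neq\emptyset$.

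For $(2)\Rightarrow(1)$ I would argue by contradiction. Assume $\operatorname{Rank}\{\rho\}=2$ and pick $f\in\mathfrak{A}(\mathcal{M})$. Since $\mathcal{M}$ is Type~$\mathcal{A}$ we have $\rho=\rho^s$, so the soliton equation is the $2$-tensor identity $\nabla(df)=-\rho$. Differentiating once more gives $\nabla^2(df)=-\nabla\rho$; alternating in the two derivative indices and applying the Ricci identity to the $1$-form $df$, the right-hand side becomes the curvature term and the left-hand side becomes an alternation of $\nabla\rho$. Because $\nabla\rho$ is totally symmetric (Lemma~\ref{L2.2}) that alternation vanishes, and one concludes $df$ annihilates the image of $R(\partial_1,\partial_2)$. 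Now in dimension two the Ricci tensor is recovered from $A:=R(\partial_1,\partial_2)$ by $\rho_{1j}=-(A\partial_j)^2$ and $\rho_{2j}=(A\partial_j)^1$, whence $\det\{\rho\}=\det\{A\}\neq0$; thus $A$ is invertible, its image is all of $TM$, and so $df\equiv0$. But then $\rho=-\nabla(df)=0$, contradicting $\rho\neq0$, which proves the contrapositive. (One can also route this through Lemma~\ref{L4.1}: since $\partial_1,\partial_2\in\mathfrak{K}_0^{\mathcal{A}}\subset\mathfrak{K}(\mathcal{M})$, the functions $\partial_1 f$ and $\partial_2 f$ lie in $\ker H^\nabla$, so their differentials are parallel $1$-forms; a nonzero parallel $1$-form is annihilated by $R(\partial_1,\partial_2)$, which is impossible once that operator is invertible, so $f$ must be affine-linear and the same second-derivative computation applies to the constant $1$-form $df$.)

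The step I expect to be the main obstacle is the curvature bookkeeping in $(2)\Rightarrow(1)$: keeping the Ricci identity consistent with the paper's sign conventions, checking that the total symmetry of $\nabla\rho$ is exactly what annihilates the relevant alternation, and verifying the dimension-two identity $\det\{R(\partial_1,\partial_2)\}=\det\{\rho\}$ that promotes $\operatorname{Rank}\{\rho\}=2$ to invertibility of the curvature operator. Everything else is routine.
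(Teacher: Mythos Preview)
Your proof is correct. The forward implication $(1)\Rightarrow(2)$ matches the paper's exactly: both simply invoke Lemma~\ref{L4.2}.

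For $(2)\Rightarrow(1)$ your argument is genuinely different from the paper's and in fact cleaner. The paper proceeds as in your parenthetical alternative: it uses Lemma~\ref{L4.1} (with $\partial_1,\partial_2\in\mathfrak{K}(\mathcal{M})$ and the observation that $\operatorname{Rank}\{R_{12}\}=2$ forces $\ker(H^\nabla)=\mathbb{R}$) to reduce to an affine potential, normalizes to $f=x^2$, and then carries out an explicit case analysis on the Christoffel symbols (splitting on $\Gamma_{12}{}^2\ne0$ versus $\Gamma_{12}{}^2=0$) until the soliton equations become inconsistent. Your main route bypasses that computation entirely: you differentiate $H_f=-\rho$, invoke the total symmetry of $\nabla\rho$ from Lemma~\ref{L2.2} to kill the alternation, and let the Ricci identity force $R_{12}(df)=0$. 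The dimension-two identity $\det\{R(\partial_1,\partial_2)\}=\det\{\rho\}$ you flag is exactly right (it follows from $\rho_{1j}=-R_{12j}{}^2$, $\rho_{2j}=R_{12j}{}^1$), so invertibility of $R_{12}$ gives $df=0$ and hence $\rho=0$ immediately. What your approach buys is a coordinate-free argument with no case split; what the paper's approach buys is an explicit picture of the soliton equations in normalized coordinates, which feeds into the later classification results. The curvature bookkeeping you were worried about is fine and consistent with the paper's conventions (compare the identity in the proof of Lemma~\ref{L4.1}).
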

\begin{proof}
It is clear from Lemma~\ref{L4.2} that 
if $\operatorname{Rank}\{\rho\}=1$, then there exists $f\in \mathfrak{A}(\mathcal{M})$.
We now show that $\operatorname{Rank}\{\rho\}=1$
if $\mathfrak{A}(\mathcal{M})\ne\{0\}$.
Suppose to the contrary that $\operatorname{Rank}\{\rho\}=2$; we argue
for a contradiction. We necessarily have
 $\operatorname{Rank}\{R_{12}\}=2$. We apply Lemma~\ref{L4.1}.
 If $h\in\ker(H^\nabla)$, then $R_{12}(dh)=0$ and
 $dh=0$. Thus $\ker(H^\nabla)$ consists
of the constants. Suppose $f$ is a non-trivial gradient Ricci soliton. Since $\partial_1$ and
$\partial_2$ are affine Killing vector fields, $\partial_1f$ and $\partial_2f$ are constant.
This implies $f(x^1,x^2)=ax^1+bx^2+c$ is affine. We may make an affine change of coordinates
to assume $f(x^1,x^2)=x^2$.
We shall establish the desired contradiction by showing $\Gamma_{11}{}^2=0$ and $\Gamma_{12}{}^2=0$
and then applying Lemma~\ref{L2.3} to see $\operatorname{Rank}\{\rho\}=1$.
The soliton equations for $f(x^1,x^2)=x^2$ are given by:
\begin{eqnarray*}
0&=&\Gamma_{12}{}^2 (\Gamma_{11}{}^1-\Gamma_{12}{}^2)+\Gamma_{11}{}^2 (-\Gamma_{12}{}^1+\Gamma_{22}{}^2-1),\\
0&=&(\Gamma_{12}{}^1-1) \Gamma_{12}{}^2-\Gamma_{11}{}^2 \Gamma_{22}{}^1,\\
0&=&\Gamma_{11}{}^1 \Gamma_{22}{}^1+\Gamma_{12}{}^1 (\Gamma_{22}{}^2-\Gamma_{12}{}^1)
-\Gamma_{12}{}^2 \Gamma_{22}{}^1-\Gamma_{22}{}^2\,.
\end{eqnarray*}
Again, we examine possibilities:
\subsection*{Case 1. Suppose $\Gamma_{12}{}^2\ne0$} We normalize and set
$\Gamma_{12}{}^2=1$. A soliton equation implies
$\Gamma_{11}{}^2 \Gamma_{22}{}^1-\Gamma_{12}{}^1+1=0$. Thus we set $\Gamma_{12}{}^1=1+\Gamma_{11}{}^2\Gamma_{22}{}^1$.
This yields a soliton equation 
$$
\Gamma_{11}{}^1-(\Gamma_{11}{}^2)^2 \Gamma_{22}{}^1+\Gamma_{11}{}^2 (\Gamma_{22}{}^2-2)-1=0\,.
$$
We set 
$\Gamma_{11}{}^1=(\Gamma_{11}{}^2)^2 \Gamma_{22}{}^1-\Gamma_{11}{}^2 (\Gamma_{22}{}^2-2)+1$.
This yields an inconsistent equation. Thus this is impossible.
\subsection*{Case 2. Suppose $\Gamma_{12}{}^2=0$}
 If $\Gamma_{11}{}^2=0$, we have the desired contradiction.
Thus we suppose $\Gamma_{11}{}^2\ne0$. 
By renormalizing $x^2$, we may assume $\Gamma_{11}{}^2=1$.
We have soliton equations $1+\Gamma_{12}{}^1-\Gamma_{22}{}^2=0$ and $\Gamma_{22}{}^1=0$.
We set $\Gamma_{22}{}^2=1+\Gamma_{12}{}^1$ and $\Gamma_{22}{}^1=0$. The final soliton
equation then becomes $0=-1$. This provides the desired contradiction and
completes the proof of the theorem.
\end{proof}

\subsection{Type~$\mathcal{B}$ affine Gradient Ricci Solitons}
Our analysis is similar to that of Section~\ref{S4.1} which
 dealt with Type~$\mathcal{A}$ surfaces. We compute the components of the Hessian
 in this setting:
 \medbreak\qquad
$H_{11}^\nabla(f)=-(x^1)^{-1}\{C_{11}{}^1 f^{(1,0)}+C_{11}{}^2 f^{(0,1)}-x^1
 f^{(2,0)}\} $,
\smallbreak\qquad
$H_{12}^\nabla(f)=H_{21}^\nabla(f)= -(x^1)^{-1}\{C_{12}{}^1
 f^{(1,0)}+C_{12}{}^2
 f^{(0,1)}-x^1 f^{(1,1)}\}$,
\smallbreak\qquad
$H_{22}^\nabla(f)=-(x^1)^{-1}\{C_{22}{}^1
 f^{(1,0)}+C_{22}{}^2 f^{(0,1)}-x^1
 f^{(0,2)}\}$.
\begin{definition}\label{D4.4}\rm
Let $(a,c)\ne(0,0)$ and $c\ge0$. Let $\tilde c\in\mathbb{R}$. Let
$\mathcal{P}_{a,c}^\pm$ and $\mathcal{Q}_{\tilde c}$ be the affine surfaces defined by:$$
\begin{array}{llll}\mathcal{P}_{a,c}^\pm:&C_{11}{}^1=\frac{1}{2} \left(a^2+4 a\mp2c^2+2\right),
&C_{11}{}^2=c,&C_{12}{}^1=0,\\[0.05in]
&C_{12}{}^2=\frac{1}{2} \left(a^2+2 a\mp2c^2\right),&
C_{22}{}^1=\pm1,&C_{22}{}^2=\pm2c,\\[0.05in]
\mathcal{Q}_{\tilde c}:&C_{11}{}^1=0,&C_{11}{}^2=\tilde c,&C_{12}{}^1=1,\\
& C_{12}{}^2=0,& C_{22}{}^1=0,& C_{22}{}^2=1\,.
\end{array}$$
\end{definition}

\begin{remark}\rm
We show that $\mathcal{Q}_{\tilde c}$ is not flat by computing:
$$
\rho(\mathcal{Q}_{\tilde c})=(x^1)^{-2}\left(\begin{array}{rr}0&1\\-1&0\end{array}\right)\,.
$$
Similarly, since $(a,c)\ne(0,0)$, we show $\mathcal{P}_{a,c}^\pm$ is not flat by computing:
$$
\rho(\mathcal{P}_{a,c}^\pm)=(x^1)^{-2}\left(\begin{array}{rr}
a (\frac12(a+2)^2 \mp c^2)&
\pm c\\
\mp c&\pm a\end{array}\right)\,.
$$
A direct computation shows $a\log(x^1)\in\mathfrak{A}(\mathcal{P}_{a,c}^\pm)$
and $0\in\mathfrak{A}(\mathcal{Q}_c)$. 
We will show
in Theorem~\ref{T4.12} that none of these surfaces is isomorphic to a different surface and that
$\mathcal{P}_{0,\frac3{\sqrt2}}^+\approx\mathcal{N}_2^{\frac12}$
where
$$
\begin{array}{llll}
\mathcal{N}_2^c:&C_{11}{}^1=-\frac32,& 
C_{11}{}^2=0,&C_{12}{}^1=1,\\[0.05in]
&C_{12}{}^2=-\frac12,&C_{22}{}^1=c,&C_{22}{}^2=2
\end{array}$$
is as defined in Definition~\ref{D3.10}.
This is the only surface with $\dim\{\mathfrak{K}{ (\mathcal{M})}\}=3$ and $\mathfrak{A}$ non-empty.
\end{remark}

As opposed to Type~$\mathcal{A}$ surfaces, Type~$\mathcal{B}$ 
surfaces do not have symmetric Ricci tensor in the generic situation. 
We recall that as well as there is a one to one relation between affine gradient 
Ricci soliton on an affine surface $(\Sigma, D)$ and gradient Ricci soliton on 
the associated Riemannian extension $(T^*\Sigma, g_D)$ \cite{BG14}, 
there is also a one to one relation between Einstein (indeed Ricci flat) Riemannian extensions 
$(T^*\Sigma, g_D)$ and affine surfaces $(\Sigma, D)$ with alternating 
Ricci tensor \cite{GKVV99}.
 The following result
gives a complete characterization
of the elements of $\mathcal{F}^{\mathcal{B}}$ where
$\rho$ is alternating and is a slightly different treatment than that in
\cite{KVOp2}. 

\begin{lemma}\label{L4.6}
Let $\mathcal{M}\in\mathcal{F}^{\mathcal B}$.
The following assertions are equivalent.
\begin{enumerate}
\item The Ricci tensor is alternating, i.e., $\rho_{ij}=-\rho_{ji}$ for all $1\le i,j\le 2$.
\item $0\in\mathfrak{A}(\mathcal{M})$.
\item $\mathcal{M}$ is isomorphic to $\mathcal{P}_{0,c}^\pm$ for $c>0$ or to $\mathcal{Q}_c$
for arbitrary $c$.
\item $\mathfrak{A}(\mathcal{M})=\mathbb{R}$ consists of the constant functions.
\end{enumerate}\end{lemma}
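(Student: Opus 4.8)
The plan is to establish the cycle of implications $(1)\Leftrightarrow(2)$, $(1)\Rightarrow(4)\Rightarrow(2)$ and $(1)\Rightarrow(3)\Rightarrow(1)$. The equivalence $(1)\Leftrightarrow(2)$ is immediate: since $H_0^\nabla=\nabla(d0)=0$, the soliton equation $H_0^\nabla+\rho^s=0$ that defines the condition $0\in\mathfrak{A}(\mathcal{M})$ reduces to $\rho^s=0$, which says exactly that $\rho=\rho^a$ is alternating. The implication $(4)\Rightarrow(2)$ is trivial, since the constant function $0$ belongs to $\mathbb{R}$. For $(3)\Rightarrow(1)$ I would simply substitute the Christoffel symbols of $\mathcal{P}^{\pm}_{0,c}$ and $\mathcal{Q}_{\tilde c}$ from Definition~\ref{D4.4} into Lemma~\ref{L2.7} (equivalently, quote the Ricci tensors recorded just after Definition~\ref{D4.4}), observe that they are alternating, and note that alternating-ness of the Ricci tensor is a diffeomorphism invariant, so it passes to any $\mathcal{M}$ affine isomorphic to one of these surfaces.

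For $(1)\Rightarrow(4)$, I would argue as follows. Since $\mathcal{M}\in\mathcal{F}^{\mathcal{B}}$ is not flat and $\rho$ is alternating, we must have $\rho_{12}\ne0$: otherwise $\rho\equiv0$, and in dimension two $R$, being antisymmetric in its first two arguments, is determined by $R(\partial_1,\partial_2)$, whose contractions are the $\rho_{ij}$, so $R\equiv0$, contradicting non-flatness. A short computation from $\rho(\xi_i,\xi_j)=\operatorname{Tr}\{\xi_3\mapsto R(\xi_3,\xi_i)\xi_j\}$ shows that the endomorphism $R(\partial_1,\partial_2)$ sends $\partial_1\mapsto\rho_{21}\partial_1-\rho_{11}\partial_2$ and $\partial_2\mapsto\rho_{22}\partial_1-\rho_{12}\partial_2$; when $\rho$ is alternating this is just $-\rho_{12}\,\mathrm{Id}$, which is invertible. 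By Lemma~\ref{L4.1}(2), any $h\in\ker(H^\nabla)$ satisfies $R_{12}(dh)=0$, hence $dh=0$, so $\ker(H^\nabla)=\mathbb{R}$. Since $0\in\mathfrak{A}(\mathcal{M})$ by $(2)$, Definition~\ref{D1.5} gives $\mathfrak{A}(\mathcal{M})=0+\mathcal{Y}(\mathcal{M})=\ker(H^\nabla)=\mathbb{R}$, which is $(4)$.

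The core of the argument is $(1)\Rightarrow(3)$. Assuming $\rho^s=0$, I would read the three scalar equations $\rho_{11}=0$, $\rho_{22}=0$, $\rho_{12}+\rho_{21}=0$ off Lemma~\ref{L2.7} and solve them, working up to the linear equivalences $T_{b,c}$ of Lemma~\ref{L3.9}. I split according to whether $C_{22}{}^1=0$. If $C_{22}{}^1\ne0$, Lemma~\ref{L2.8}(1) allows us to assume $C_{12}{}^1=0$, and after rescaling $x^2$ we may take $C_{22}{}^1=\epsilon_1\in\{\pm1\}$ (the sign of $C_{22}{}^1$ being a linear invariant). Feeding the simplified Ricci components of Remark~\ref{R2.9}(1) into the three equations forces, in turn, $C_{22}{}^2=2\epsilon_1C_{11}{}^2$, then $C_{11}{}^1=C_{12}{}^2+1$, then $C_{12}{}^2=-\epsilon_1(C_{11}{}^2)^2$; so with $c:=C_{11}{}^2$ the connection is exactly $\mathcal{P}^{+}_{0,c}$ (if $\epsilon_1=1$) or $\mathcal{P}^{-}_{0,c}$ (if $\epsilon_1=-1$), and a reflection $x^2\mapsto-x^2$ reduces to $c\ge0$ while non-flatness forces $c>0$. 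If $C_{22}{}^1=0$, then $\rho_{22}=0$ reads $C_{12}{}^1(C_{22}{}^2-C_{12}{}^1)=0$: in the subcase $C_{12}{}^1=0$, the equation $\rho_{12}+\rho_{21}=0$ then forces $C_{22}{}^2=0$, hence $\rho\equiv0$, which is excluded; in the subcase $C_{12}{}^1\ne0$ and $C_{22}{}^2=C_{12}{}^1$, a rescaling gives $C_{12}{}^1=C_{22}{}^2=1$, then $\rho_{12}+\rho_{21}=0$ gives $C_{12}{}^2=0$, and the shear $x^2\mapsto x^2+\tfrac12C_{11}{}^1x^1$ (admissible by Lemma~\ref{L3.9}) normalizes $C_{11}{}^1$ to $0$ without altering the other entries, producing $\mathcal{Q}_{\tilde c}$, with $\tilde c$ the resulting value of $C_{11}{}^2$.

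The step I expect to be most delicate is $(1)\Rightarrow(3)$: one must check at each stage that the transformations used (the shear killing $C_{12}{}^1$, the rescaling of $x^2$ fixing $C_{22}{}^1$, and the final shear killing $C_{11}{}^1$) do not disturb the normalizations already achieved, and one must confirm that no further free parameter survives, so that the answer is precisely the families $\mathcal{P}^{\pm}_{0,c}$ $(c>0)$ and $\mathcal{Q}_{\tilde c}$ $(\tilde c\in\mathbb{R})$ of $(3)$. The finer question of which of these surfaces are mutually non-isomorphic plays no role here and is deferred to Theorem~\ref{T4.12}.
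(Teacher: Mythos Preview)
Your proof is correct, and in one place it improves on the paper's argument. The implications $(1)\Leftrightarrow(2)$, $(4)\Rightarrow(2)$, and $(3)\Rightarrow(1)$ are handled identically to the paper. For $(1)\Rightarrow(3)$ your case split on $C_{22}{}^1$ matches the paper's; in the case $C_{22}{}^1\ne0$ the two arguments coincide, while in the case $C_{22}{}^1=0$ you organize the algebra a bit differently (working directly from $\rho_{22}=C_{12}{}^1(C_{22}{}^2-C_{12}{}^1)=0$ and normalizing $C_{11}{}^1$ last, whereas the paper first uses the $1$-form $\rho_0=\Gamma_{ij}{}^j\,dx^i$ to kill $C_{11}{}^1+C_{12}{}^2$), but the two routes are equivalent. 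One small imprecision: your shear $x^2\mapsto x^2+\tfrac12C_{11}{}^1x^1$ does alter $C_{11}{}^2$, so ``without altering the other entries'' should read ``without altering $C_{12}{}^1,C_{12}{}^2,C_{22}{}^1,C_{22}{}^2$''; you already account for this by letting $\tilde c$ be the \emph{resulting} value of $C_{11}{}^2$.

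The genuine difference is your treatment of $(1)\Rightarrow(4)$. The paper verifies $(3)\Rightarrow(4)$ by a direct computation of the soliton equations on $\mathcal{P}_{0,c}^\pm$ (and in fact never writes out the analogous computation for $\mathcal{Q}_c$). Your argument is cleaner and uniform: from $\rho$ alternating and non-zero you observe that $R(\partial_1,\partial_2)=-\rho_{12}\,\mathrm{Id}$ is invertible, so Lemma~\ref{L4.1}(2) forces $\ker(H^\nabla)=\mathbb{R}$, whence $\mathfrak{A}(\mathcal{M})=0+\ker(H^\nabla)=\mathbb{R}$. This avoids any case analysis and covers both families at once.
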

\begin{proof}
The equivalence of Assertion~1 and Assertion~2
is immediate.  A direct computation shows $\rho$ is alternating and non-trivial
 if $\mathcal{M}$ is isomorphic
to $\mathcal{P}_{0,c}^\pm$ for $c\ne0$ or if $\mathcal{M}$ is isomorphic to $\mathcal{Q}_c$.
Conversely, suppose $\rho$ is alternating. We distinguish two cases:
\subsection*{Case 1. Suppose $C_{22}{}^1\ne0$} We apply Lemma~\ref{L2.8} to normalize
the coordinate system so $C_{12}{}^1=0$; we then rescale to assume $C_{22}{}^1=\pm1$.
We set $C_{11}{}^2=c$; by changing the sign of $x^2$, we may assume $c\ge0$.
We have $\rho^s_{12}=C_{22}{}^2\mp2c$
and $\rho_{22}=\pm(-1+C_{11}{}^1-C_{12}{}^2)$. We set $C_{22}{}^2=\pm2c$
and $C_{11}{}^1=1+C_{12}{}^2$. We have $\rho_{11}=C_{12}{}^2\pm c^2$. To ensure $\rho\ne0$,
we require $c\ne0$ and hence $c>0$. Thus we obtain
the relations of $\mathcal{P}_{0,c}^\pm$:
$$
C_{11}{}^1=1\mp c^2,\ C_{11}{}^2=c,\ C_{12}{}^1=0,\ C_{12}{}^2=\mp c^2,\ 
C_{22}{}^1=\pm1,\ C_{22}{}^2=\pm2c\,.
$$
\subsection*{Case 2. Suppose $C_{22}{}^1=0$} We set $\rho_0:=\Gamma_{ij}{}^jdx^i$;
this is invariant under the action of $\operatorname{GL}(2,\mathbb{R})$.  We compute
$$
0\ne\rho_{12}-\rho_{21}=(x^1)^{-2}(C_{12}{}^1+C_{22}{}^2)=(x^1)^{-1}\rho_0(\partial_2)\,.
$$
We can rescale $x^2$ to ensure $C_{12}{}^1+C_{22}{}^2=2$. By replacing
$\partial_1$ by $\partial_1-\varepsilon\partial_2$ for suitably chosen $\varepsilon$,
we may assume $\rho_0(\partial_1)=(x^1)^{-1}\{C_{11}{}^1+C_{12}{}^2\}=0$. We 
set $C_{22}{}^1=0$, $C_{11}{}^1=-C_{12}{}^2$, and $C_{12}{}^1=2-C_{22}{}^2$.
We obtain 
$$
\rho_{22}=-2{ (x^1)^{-2}}\{2-3C_{22}{}^2+(C_{22}{}^2)^2\}\,.
$$
 If $C_{22}{}^2=2$,
we obtain $\rho_{12}^s=(x^1)^{-2}$ which is false. Thus $C_{22}{}^2=1$. We then obtain
$\rho_{12}^s={ (x^1)^{-2}}C_{12}{}^2$ so we set $C_{12}{}^2=0$. Let $C_{11}{}^2=c$; this
is a free parameter. We obtain the structure $\mathcal{Q}_c$.

We have shown the equivalence of Assertion~1, Assertion~2, and of Assertion~3.
If $\mathfrak{A}(\mathcal{M})=\mathbb{R}$, then $0\in\mathfrak{A}(\mathcal{M})$ and
consequently Assertion~2 holds. Conversely, suppose Assertion~3 holds. Again, we distinguish
cases:
\subsection*{Case 1. Suppose $\mathcal{M}=\mathcal{P}_{0,c}^\pm$}
Let $f$ be a gradient Ricci soliton. A soliton equation $0=\pm c^2f^{(0,1)}+x^1f^{(1,1)}$
implies $f^{(0,1)}=(x^1)^{\mp c^2}f_0(x^2)$. We integrate to see that
$f=(x^1)^{\mp c^2}f_1(x^2)+f_2(x^1)$ and obtain a soliton equation 
$$0=c^2f_1(x^2)+x^1\{\mp2cf_1^\prime(x^2)\mp(x^1)^{\pm c^2}f_2^\prime(x^1)
+x_1f_1^{\prime\prime}(x^2)\}\,.$$
We differentiate with respect to $x^2$ to see
$0=c^2f_1^\prime(x^2)+x^1\{\mp2cf_1^{\prime\prime}(x^2)+x_1f^{\prime\prime\prime}(x^2)\}\,.$
Set $x^1=0$ to see $f_1^\prime(x^2)=0$ since $c\ne0$. Consequently $f_1$ is constant and
$f=f(x^1)$. We now obtain a soliton equation $0=\mp x^1f^\prime(x^1)$ so $f$ is constant
as desired.
\end{proof}

The intersection between Type~$\mathcal{B}$ and Type~$\mathcal{A}$ surfaces was previously studied in Section~\ref{S3.2} (cf. Corollary \ref{C3.19}). Now we consider the existence of affine gradient Ricci solitons in that particular setting.

\begin{lemma}\label{L4.7}
Let $\mathcal{M}\in\mathcal{F}^{\mathcal B}$. Assume $\mathcal{F}$ also is of Type~$\mathcal{A}$.
Then $f(x^1,x^2)\in\mathfrak{A}(\mathcal{M})$ if and only if $f(x^1,x^2)=\xi(x^1)$ where $\xi$
satisfies the ODE
\smallbreak\centerline{$(1+C_{11}{}^1-C_{12}{}^2)C_{12}{}^2-x^1C_{11}{}^1\xi^\prime
+(x^1)^2\xi^{\prime\prime}=0$.}

\end{lemma}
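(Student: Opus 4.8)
The plan is to carry out the analysis in the distinguished coordinates provided by the hypothesis that $\mathcal{M}$ is simultaneously of Type~$\mathcal{B}$ and of Type~$\mathcal{A}$. By Corollary~\ref{C3.19}, being of both types forces $C_{12}{}^1=C_{22}{}^1=C_{22}{}^2=0$; substituting this into Lemma~\ref{L2.7} (equivalently, invoking Lemma~\ref{L2.10}) gives $\rho=\kappa\,(x^1)^{-2}\,dx^1\otimes dx^1$ where $\kappa:=(1+C_{11}{}^1-C_{12}{}^2)C_{12}{}^2$. In particular $\rho$ is symmetric, so $\rho_s=\rho$, and since $\mathcal{M}\in\mathcal{F}^{\mathcal{B}}$ is by definition not flat we have $\kappa\neq0$; this forces both $C_{12}{}^2\neq0$ and $1+C_{11}{}^1-C_{12}{}^2\neq0$. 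These non-degeneracy facts will be the crux of the argument.

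Next I would write out the soliton equations $H_f^\nabla+\rho_s=0$ using the formulas for the Hessian in the Type~$\mathcal{B}$ setting displayed just before Definition~\ref{D4.4}, specialized to $C_{12}{}^1=C_{22}{}^1=C_{22}{}^2=0$. After clearing denominators the three equations read
$$f^{(0,2)}=0,\qquad x^1f^{(1,1)}-C_{12}{}^2f^{(0,1)}=0,\qquad (x^1)^2f^{(2,0)}-x^1C_{11}{}^1f^{(1,0)}-x^1C_{11}{}^2f^{(0,1)}+\kappa=0.$$
From the first equation $f(x^1,x^2)=A(x^1)x^2+B(x^1)$. Feeding this into the second equation yields the linear ODE $x^1A'=C_{12}{}^2A$, hence $A(x^1)=c\,(x^1)^{C_{12}{}^2}$ for a constant $c$. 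Substituting $f=A(x^1)x^2+B(x^1)$ into the third equation, the left-hand side is affine in $x^2$; its coefficient of $x^2$ is $(x^1)^2A''-x^1C_{11}{}^1A'$, and with $A=c\,(x^1)^{C_{12}{}^2}$ this equals $c\,C_{12}{}^2(C_{12}{}^2-1-C_{11}{}^1)(x^1)^{C_{12}{}^2}$. Because $C_{12}{}^2\neq0$ and $1+C_{11}{}^1-C_{12}{}^2\neq0$, this vanishes only if $c=0$; thus $A\equiv0$ and $f=f(x^1)=:\xi(x^1)$.

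It then remains to note that, for $f=\xi(x^1)$, the first two soliton equations hold identically and the third reduces to $(x^1)^2\xi''-x^1C_{11}{}^1\xi'+\kappa=0$, which upon writing out $\kappa$ is exactly the ODE in the statement. Conversely, any $\xi=\xi(x^1)$ solving this ODE produces $f=\xi(x^1)\in\mathfrak{A}(\mathcal{M})$ because the $(1,1)$ and $(2,2)$ components of $H_f^\nabla+\rho_s$ vanish automatically for such an $f$. The step I expect to be the main obstacle is the reduction $A\equiv0$: a priori the potential function may carry a term linear in $x^2$, and ruling this out depends essentially on the non-flatness of $\mathcal{M}$, i.e. on $\kappa\neq0$; without it the coefficient of $x^2$ in the $(1,1)$ equation could vanish identically and a genuine $x^2$-linear contribution would survive. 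The remainder of the proof is a routine integration of decoupled ordinary differential equations.
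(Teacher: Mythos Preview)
Your proof is correct and follows essentially the same route as the paper's: both invoke Corollary~\ref{C3.19} to impose $C_{12}{}^1=C_{22}{}^1=C_{22}{}^2=0$, write out the three soliton equations, use the $(2,2)$ and $(1,2)$ equations to obtain $f=c\,(x^1)^{C_{12}{}^2}x^2+B(x^1)$, and then kill $c$ via the $x^2$-coefficient of the $(1,1)$ equation using $\kappa\neq0$. The only cosmetic difference is that the paper recognizes that coefficient directly as $-c\,\kappa\,(x^1)^{C_{12}{}^2}$, whereas you compute it as $c\,C_{12}{}^2(C_{12}{}^2-1-C_{11}{}^1)(x^1)^{C_{12}{}^2}$ and then observe each factor is nonzero; these are of course the same thing.
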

\begin{proof}
By Lemma \ref{L2.10} and Corollary \ref{C3.19} we have that $C_{12}{}^1=0$, $C_{22}{}^1=0$, and $C_{22}{}^2=0$. Then setting $\varepsilon:=(1+C_{11}{}^1-C_{12}{}^2)C_{12}{}^2$ the Ricci tensor takes the form:
$$
\rho=(x^1)^{-2}\left(
\begin{array}{cc}
\varepsilon& 0 \\
 0 & 0 \\
\end{array}
\right)\,.
$$
A soliton equation yields
$f^{(0,2)}(x^1,x^2)=0$ so $f=a(x^1)+b(x^1)x^2$. We obtain a soliton equation
$C_{12}{}^2b(x^1)=x^1b'(x^1)$. This implies $f(x^1,x^2)=a(x^1)+cx^2(x^1)^{C_{12}{}^2}$.
We obtain a single soliton equation:
$$
0=(x^1)^2 a^{\prime\prime}(x^1)-C_{11}{}^1 x^1 a^\prime(x^1)
+\varepsilon-c\, C_{11}{}^2 (x^1)^{C_{12}{}^2+1}
-c x^2 (x^1)^{C_{12}{}^2}\varepsilon\,.
$$
Since $\varepsilon\ne0$, we conclude $c=0$ and
$f=f(x^1)$ satisfies the given ODE.
\end{proof}

\medbreak

The following is a useful technical result. We adopt the notation of Definition~\ref{D3.10}.

\begin{lemma}\label{L4.8}
Assume that $\mathcal{M}\in\mathcal{F}^{\mathcal B}$
admits a gradient Ricci soliton.
At least one
of the following possibilities holds:
\begin{enumerate}
\item $C_{12}{}^1=C_{22}{}^1=C_{22}{}^2=0$, i.e. $\dim\{\mathfrak{K}(\mathcal{M})\}=4$
and $\mathcal{M}$ also is of Type~$\mathcal{A}$.
\item $f=a\log(x^1)\in\mathfrak{A}(\mathcal{M})$.
\end{enumerate}
\end{lemma}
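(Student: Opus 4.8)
The plan is to analyze the soliton equation $H^\nabla_f+\rho^s=0$ directly. Using the Hessian components displayed above together with Lemma~\ref{L2.7} for $\rho^s$, this equation is the system
$$
C_{ij}{}^1 f^{(1,0)}+C_{ij}{}^2 f^{(0,1)}-x^1\,\partial_i\partial_j f=(x^1)^{-1}\varrho_{ij},\qquad (ij)\in\{11,12,22\},
$$
where $\varrho_{ij}:=(x^1)^2\rho^s_{ij}\in\mathbb R$ is the constant read off from Lemma~\ref{L2.7}; in particular $\varrho_{12}=-C_{11}{}^2C_{22}{}^1+C_{12}{}^1C_{12}{}^2+\tfrac12(C_{22}{}^2-C_{12}{}^1)$. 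The same Hessian formulas give
$$
H^\nabla_{\log x^1}=-(x^1)^{-2}\begin{pmatrix}C_{11}{}^1+1& C_{12}{}^1\\ C_{12}{}^1& C_{22}{}^1\end{pmatrix},
$$
so, since $\mathfrak A(\mathcal M)=f+\mathcal Y(\mathcal M)$, Assertion~2 is equivalent to the constant symmetric matrix $(\varrho_{ij})$ being a scalar multiple of $\left(\begin{smallmatrix}C_{11}{}^1+1& C_{12}{}^1\\ C_{12}{}^1& C_{22}{}^1\end{smallmatrix}\right)$. Throughout I may apply coordinate changes in $\mathfrak G$, since these preserve both Type~$\mathcal A$ (Assertion~1) and the family of functions $\{a\log x^1+\mathrm{const}\}$.

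The argument splits on whether $\partial_2 f$ vanishes. Suppose first $\partial_2 f\equiv 0$, so $f=f(x^1)$; then the $(12)$- and $(22)$-equations read $C_{12}{}^1 f'(x^1)=(x^1)^{-1}\varrho_{12}$ and $C_{22}{}^1 f'(x^1)=(x^1)^{-1}\varrho_{22}$. If $C_{12}{}^1\neq 0$ or $C_{22}{}^1\neq 0$ then $f'(x^1)=a/x^1$ for the appropriate constant $a$, so $f=a\log x^1+\mathrm{const}$ and Assertion~2 holds; otherwise $C_{12}{}^1=C_{22}{}^1=0$ and $\varrho_{12}=\varrho_{22}=0$, but then Lemma~\ref{L2.7} gives $\varrho_{12}=\tfrac12 C_{22}{}^2$, forcing $C_{22}{}^2=0$ and Assertion~1.

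Now suppose $\partial_2 f\not\equiv 0$. By Lemma~\ref{L4.1} applied to the affine Killing field $\partial_2\in\mathfrak K_0^{\mathcal B}$, the function $h:=\partial_2 f$ lies in $\mathcal Y(\mathcal M)=\ker H^\nabla$, i.e.\ $dh$ is a $\nabla$-parallel $1$-form. Since $\mathcal M$ is not flat, differentiating $\nabla(dh)=0$ shows $dh$ annihilates a nonzero subspace of each tangent space, so the parallel $1$-forms span a space of dimension at most $1$ and hence $\dim\mathcal Y(\mathcal M)\le 2$; as $\partial_2$ kills constants it acts on $\mathcal Y(\mathcal M)$ with very restricted Jordan form. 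Tracking this action, together with the $\partial_1$-part of the equation $\nabla(dh)=0$, forces $h$---after an additive constant and a shear and rescaling of $x^2$, none of which alters $x^1$---into a short list of normal forms, essentially $h\in\{c,\ cx^2,\ c(x^1)^\lambda,\ c(x^1)^\lambda x^2,\ c\log x^1\}$. Integrating $\partial_2 f=h$ then determines the $x^2$-dependence of $f$ up to a function of $x^1$; substituting the resulting ansatz into the three soliton equations, using Lemma~\ref{L2.7} and, when $C_{22}{}^1\neq 0$, the normalization $C_{12}{}^1=0$ from Lemma~\ref{L2.8}, one finds in each branch that either $C_{12}{}^1=C_{22}{}^1=C_{22}{}^2=0$ (Assertion~1) or $(\varrho_{ij})$ is proportional to $\left(\begin{smallmatrix}C_{11}{}^1+1& C_{12}{}^1\\ C_{12}{}^1& C_{22}{}^1\end{smallmatrix}\right)$ (Assertion~2).

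The main obstacle is this final step: the case where $f$ genuinely depends on $x^2$ splits into a finite but sizable list of algebraic sub-cases, through each of which one must carry the Ricci computations of Lemma~\ref{L2.7}. By contrast, the case $\partial_2 f\equiv 0$ and the reformulation of Assertion~2 via the matrix $\left(\begin{smallmatrix}C_{11}{}^1+1& C_{12}{}^1\\ C_{12}{}^1& C_{22}{}^1\end{smallmatrix}\right)$ are both short.
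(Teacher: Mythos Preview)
Your treatment of the case $\partial_2 f\equiv 0$ is correct and efficient, and the matrix reformulation of Assertion~2 is a nice observation. The problem is the case $\partial_2 f\not\equiv 0$: you yourself flag that the final step is undone, and indeed the ``normal forms'' you list for $h=\partial_2 f$ are neither derived nor obviously exhaustive (the candidate $c(x^1)^\lambda x^2$, for instance, has $dh$ with nontrivial $dx^1$ and $dx^2$ components and sits uneasily with a one-dimensional space of parallel $1$-forms). As written, this half is a sketch rather than a proof, and the promised sub-case analysis would be substantial.

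The idea you are missing is to exploit the \emph{second} Killing field $\xi_2:=x^1\partial_1+x^2\partial_2$ in tandem with $\xi_1:=\partial_2$, and to organise the case split not by the Jordan form of $\partial_2$ on $\mathcal Y$ but by $\operatorname{rank}\{R_{12}\}$. If $\operatorname{rank}\{R_{12}\}=2$ then your own argument gives $\mathcal Y=\mathbb R$; the bracket relation $[\xi_1,\xi_2]=\xi_1$ then yields $\xi_1 f=\xi_1(\xi_2 f)-\xi_2(\xi_1 f)=0$, so $f=f(x^1)$, and $\xi_2 f=x^1f'(x^1)\in\mathbb R$ forces $f=a\log x^1$ immediately. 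If $\operatorname{rank}\{R_{12}\}=1$, a shear puts $\ker R_{12}=\operatorname{Span}\{dx^i\}$ for $i=1$ or $i=2$, so every $h\in\mathcal Y$ depends on that single variable; applying Lemma~\ref{L4.1} to \emph{both} $\xi_1$ and $\xi_2$ then pins down $f$ in a couple of lines per case (e.g.\ when $i=2$ one gets $f=u(x^1)+v(x^2)$ with $x^1u'(x^1)$ constant, hence $u=a\log x^1$; when $i=1$ one gets $f=u(x^1)+b(x^1)^{-1}x^2$), and a single soliton equation finishes each branch, yielding either $a\log x^1\in\mathfrak A(\mathcal M)$ or the relations $C_{12}{}^1=C_{22}{}^1=C_{22}{}^2=0$ directly. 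This bypasses your ``sizable list of algebraic sub-cases'' entirely.
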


\begin{proof} We will use Lemma~\ref{L4.1}.
Suppose that
$\operatorname{Rank}\{R_{12}\}=2$. 
If $h\in\ker(H^\nabla)$, then
$R_{12}(dh)=0$ so $dh=0$ and $h\in\mathbb{R}$ is a constant.
Let $f$ be an affine gradient Ricci soliton. Let $\xi_1=\partial_{2}$ and $\xi_2=x^1\partial_1+x^2\partial_{2}$
be affine Killing vector fields. Since $\ker(H^\nabla)=\mathbb{R}$,
$$\xi_1f=[\xi_1,\xi_2]f=\xi_1(\xi_2f)-\xi_2(\xi_1f)=0$$ so $f=f(x^1)$. Furthermore
$\xi_2(f)=x^1\partial_1f=a\in\mathbb{R}$ so, if $f\in\mathfrak{A}(\mathcal{M})$, then $f(x^1)=a\log(x^1)$
as desired.

We may therefore assume that $\operatorname{Rank}\{R_{12}\}=1$. 
We have $(x^1)^2R_{12}$ is constant. Choose $(\alpha,\beta)\in\mathbb{R}^2-\{0\}$ so that
$$
\operatorname{\ker(R_{12})}=\operatorname{Span}\{\alpha dx^1+\beta dx^2\}\,.
$$
If $\beta=0$, then $dx^1$ spans $\ker(R_{12})$. If $\beta\ne0$, let $(\tilde x^1,\tilde x^2):=(x^1,\alpha x^1+\beta x^2)$
to ensure that $d\tilde x^2$ spans $\ker(R_{12})$. Thus we may suppose
that $\ker(H^\nabla)=\operatorname{Span}\{dx^i\}$ for $i=1,2$.

\subsection*{Case 1. Suppose that
$\ker(R_{12})=\operatorname{Span}\{dx^2\}$.} Then
$$\ker(H^\nabla)=\{h:h( x^1, x^2)=h( x^2)\}\,.$$
Let $f\in\mathfrak{A}(\mathcal{M})$. Since $\partial_{2}f\in\ker(H^\nabla)$,
$\partial_2f$ is a function of $x^2$. This shows that
$f( x^1, x^2)=u( x^1)+v( x^2)$ so the problem decouples. Furthermore, since
$$
( x^1\partial_{1}+ x^2\partial_{2})f\in\ker(H^\nabla)\,,
$$
we have that
$ x^1\partial_{1}u(x^1)$ is a function only of $x^2$ and hence
$x^1\partial_{1}u(x^1)\in\mathbb{R}$. Thus we conclude
$$f( x^1, x^2)=a\log(x^1)+v( x^2)\,.$$
A Ricci soliton equation is:
$$
0=\star+x^1C_{22}{}^2v^\prime(x^2)-(x^1)^2v^{\prime\prime}(x^2)
$$
where $\star$ indicates a coefficient which is independent of $x^1$ and of $x^2$.
From this we see that $v^{\prime\prime}(x^2)=0$ so $v$ is linear in $x^2$
and $f=a\log(x^1)+bx^2$. We have
$$
a+bx^2=(x^1\partial_1+x^2\partial_2)f\in\ker(H^\nabla)\,.
$$
Subtracting this from $f$ yields $a\log(x^1)-a\in\mathfrak{A}(\mathcal{M})$
and hence $a\log(x^1)\in\mathfrak{A}(\mathcal{M})$ as desired.

\subsection*{Case 2. Suppose $\ker(R_{12})=\operatorname{Span}\{dx^1\}$.}
Thus if $f\in\mathfrak{A}(\mathcal{M})$,
then $\partial_{2}f$ must be a function only of $x^1$.
Thus $f(x^1,x^2)=u(x^1)+v(x^1) x^2$. Since $(x^1\partial_1+x^2\partial_2)f$
also is only a function of $x^1$, we obtain the equation
$(x^1v^\prime(x^1)+v(x^1))=0$ so $v(x^1)=b\cdot(x^1)^{-1}$ and
$f=u(x^1)+b\cdot(x^1)^{-1}x^2$ for $b\in\mathbb{R}$. There are two 
subcases to be considered. 
\subsection*{Case 2a. Suppose $f=u(x^1)+b\cdot(x^1)^{-1}x^2$ for $b\ne0$.}
We normalize $x^2$ to assume $b=1$ so 
$f=u(x^1)+(x^1)^{-1}x^2$. We obtain soliton equations:
\begin{eqnarray*}
0&=&(2+C_{11}{}^1)x^2+\star(x^1),\\
0&=&2C_{12}{}^1x^2+\star(x^1),\\
0&=&C_{22}{}^1x^2+\star(x^1)\,.
\end{eqnarray*}
This implies that $C_{11}{}^1=-2$, $C_{12}{}^1=0$ and $C_{22}{}^1=0$.
A soliton equation then also yields $C_{22}{}^2=0$. This is covered by Assertion~1.
\subsection*{Case 2b. Suppose $f(x^1,x^2)=f(x^1)$.}
Assume also
$f(x^1)\ne a\log(x^1)+b$ so $x^1f^\prime(x^1)\notin\mathbb{R}$.
We obtain soliton equations:
$$
0=\star+2x^1C_{12}{}^1u^\prime(x^1),\quad\text{and}\quad
0=\star+x^1C_{22}{}^1u^\prime(x^1)\,.
$$
We may then conclude that $C_{12}{}^1=0$ and $C_{22}{}^1=0$. 
A remaining soliton equation then yields $C_{22}{}^2=0$ which is the
case treated in Assertion~1
\end{proof}

We can now establish the following classification result.

\begin{theorem}\label{T4.9}
Let $\mathcal{M}\in\mathcal{F}^{\mathcal B}$.
The space $\mathfrak{A}(\mathcal{M})$ is non-empty if and only if at least one of the following possibilities holds up to linear equivalence:
\begin{enumerate}
\item $\mathcal{M}$ is also Type~$\mathcal{A}$, i.e.,
$C_{12}{}^1=0$, $C_{22}{}^1=0$, and $C_{22}{}^2=0$.
\item $\mathcal{M}$ is isomorphic to $\mathcal{P}_{a,c}^\pm$ for $(a,c)\ne(0,0)$ or to $\mathcal{Q}_c$
for arbitrary $c$.
\end{enumerate}
\end{theorem}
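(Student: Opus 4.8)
\textbf{Proof proposal for Theorem~\ref{T4.9}.}
The plan is to prove the two implications separately, relying on the structural results already established. For the ``if'' direction, Assertion~1 (the Type~$\mathcal{A}$ case, $C_{12}{}^1=C_{22}{}^1=C_{22}{}^2=0$) is immediate from Lemma~\ref{L4.7}: the displayed second-order linear ODE for $\xi(x^1)$ always has solutions, so $\mathfrak{A}(\mathcal{M})\ne\emptyset$. For Assertion~2, one checks directly that $a\log(x^1)\in\mathfrak{A}(\mathcal{P}_{a,c}^\pm)$ and $0\in\mathfrak{A}(\mathcal{Q}_c)$ — these computations were already flagged in the Remark following Definition~\ref{D4.4}, so one only needs to record them. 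Since linear equivalence preserves the affine gradient Ricci soliton condition (the Hessian and $\rho_s$ are natural), the ``if'' direction follows.

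The substantive content is the ``only if'' direction. The plan is to invoke Lemma~\ref{L4.8}: if $\mathfrak{A}(\mathcal{M})\ne\emptyset$, then either $C_{12}{}^1=C_{22}{}^1=C_{22}{}^2=0$ (which is Assertion~1, via Corollary~\ref{C3.19}), or $f=a\log(x^1)\in\mathfrak{A}(\mathcal{M})$ for some $a$. So it suffices to treat the second alternative and show it forces $\mathcal{M}\cong\mathcal{P}_{a,c}^\pm$ or $\mathcal{M}\cong\mathcal{Q}_c$ up to linear equivalence. First I would write out the soliton equation $H_{f}^\nabla+\rho_s=0$ for $f=a\log(x^1)$ using the explicit Hessian components displayed just before Definition~\ref{D4.4}: since $f^{(1,0)}=a(x^1)^{-1}$, $f^{(0,1)}=0$, $f^{(2,0)}=-a(x^1)^{-2}$, and all mixed/$x^2$ derivatives vanish, the three soliton equations become polynomial relations in the $C_{ij}{}^k$ together with the entries of $\rho_s$ from Lemma~\ref{L2.7}. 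This yields a manageable algebraic system in the six Christoffel constants.

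I would then use the group $\mathfrak{I}$ of Lemma~\ref{L3.9} — the shear $(x^1,x^2)\mapsto(x^1,\varepsilon x^1+x^2)$ and rescaling $(x^1,x^2)\mapsto(x^1,cx^2)$ — to normalize, noting that these transformations fix $f=a\log(x^1)$ so they are legitimate moves within the problem. The natural case split is on whether $C_{22}{}^1\ne0$ (where Lemma~\ref{L2.8}(1) lets us set $C_{12}{}^1=0$ and then rescale $C_{22}{}^1=\pm1$, steering toward the $\mathcal{P}_{a,c}^\pm$ family) or $C_{22}{}^1=0$ (where one expects to land on the $\mathcal{Q}_c$ family, or else on the Type~$\mathcal{A}$ case of Assertion~1). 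In each branch the soliton equations should pin down the remaining Christoffel symbols as the explicit functions of $(a,c)$ (resp. $\tilde c$) appearing in Definition~\ref{D4.4}; when $a=0$ one must separately verify non-flatness forces $c>0$, recovering the overlap with Lemma~\ref{L4.6}. The main obstacle I anticipate is purely bookkeeping: organizing the case analysis so that every solution of the polynomial system is accounted for and matched, after an explicit $\mathfrak{I}$-normalization, to exactly one of the listed normal forms — in particular making sure the $\mathcal{P}_{a,c}^\pm$ parametrization with the stated sign conventions and the constraint $c\ge0$ (via $x^2\mapsto-x^2$) captures every case with $C_{22}{}^1\ne0$, and that nothing is lost when $a\log(x^1)$ is also allowed to degenerate to a constant.
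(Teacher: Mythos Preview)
Your proposal is correct and follows essentially the same route as the paper: invoke Lemma~\ref{L4.8} to reduce to $f=a\log(x^1)$, then split on $C_{22}{}^1\ne0$ versus $C_{22}{}^1=0$, using Lemma~\ref{L2.8} and rescaling of $x^2$ to normalize. The only organizational difference is that the paper separates out $a=0$ at the outset (invoking Lemma~\ref{L4.6} directly to obtain $\mathcal{P}_{0,c}^\pm$ and $\mathcal{Q}_c$) and then assumes $a\ne0$ for the case split, whereas you propose carrying $a$ through and treating $a=0$ as a degeneration; in particular, in the paper's analysis the $C_{22}{}^1=0$ branch with $a\ne0$ yields \emph{only} the Type~$\mathcal{A}$ case (Assertion~1), never $\mathcal{Q}_c$---the $\mathcal{Q}_c$ family arises exclusively from $a=0$ via Lemma~\ref{L4.6}.
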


\begin{proof}
We examine cases. 
We apply Lemma~\ref{L4.8}.
The case $C_{12}{}^1=0$, $C_{22}{}^1=0$, and $C_{22}{}^2=0$ corresponding to Assertion 1 was examined
in Lemma~\ref{L4.7}. We complete the proof of Theorem~\ref{T4.9}, 
by assuming that $a\log(x^1)\in\mathfrak{A}(\mathcal{M})$. If $a=0$,
then $0\in\mathfrak{A}(\mathcal{M})$. This is the setting of Lemma~\ref{L4.6};
$\rho$ is alternating and we obtain the examples $\mathcal{P}_{0,c}$ for $c\ne0$
or $\mathcal{Q}_c$ for arbitrary $c$. We therefore assume $a\ne0$. We decompose the analysis into two cases depending on  whether
$C_{22}{}^1\neq 0$ or $C_{22}{}^1=0$.

Suppose first that $C_{22}{}^1\ne0$.
We apply Lemma~\ref{L2.8} to assume in addition that $C_{12}{}^1=0$.
We then obtain three soliton equations:
\begin{eqnarray*}
0&=&-a (1 + C_{11}{}^1) + C_{12}{}^2 + C_{11}{}^1 C_{12}{}^2 - (C_{12}{}^2)^2 + C_{11}{}^2 C_{22}{}^2,\\
0&=&-2C_{11}{}^2C_{22}{}^1+C_{22}{}^2,\\
0&=&C_{22}{}^1 (a-C_{11}{}^1+C_{12}{}^2+1)\,.
\end{eqnarray*}
The second equation implies $C_{22}{}^2=2C_{11}{}^2C_{22}{}^1$ and, since $C_{22}{}^1\ne0$, the third equation shows that
$C_{11}{}^1=a+C_{12}{}^2+1$. We rescale $x^2$ to assume $C_{22}{}^1=\pm1$ and
obtain the surface $\mathcal{P}_{a,c}^\pm$ where $c:=C_{11}{}^2$.

Next suppose that $C_{22}{}^1=0$. We obtain soliton equations
\begin{eqnarray*}
0&=&C_{12}{}^1 (-2 a+2 C_{12}{}^2-1)+C_{22}{}^2,\\
0&=&C_{12}{}^1(C_{22}{}^2-C_{12}{}^1)\,.
\end{eqnarray*}
If $C_{12}{}^1=0$, then we also obtain $C_{22}{}^2=0$. 
This is the case of Assertion~1 of Theorem~\ref{T4.9}.
We therefore assume $C_{12}{}^1\ne0$ and obtain
$C_{22}{}^2=C_{12}{}^1$. The soliton equation then implies $a=C_{12}{}^2$. A final
soliton equation then yields $C_{12}{}^2=0$. This implies $a=0$
contrary to our assumption. 
\end{proof}

Theorem~\ref{T4.9} classifies the geometries  of Type~$\mathcal{B}$ surfaces which
can admit a function resulting in an affine gradient Ricci soliton. Generically, either 
$\mathfrak{A}(\mathcal{M})$ is empty or it is an affine line. For example, Lemma~\ref{L4.6} 
shows that surfaces that are isomorphic to $\mathcal{P}_{0,c}^\pm$ for $c>0$ or to $\mathcal{Q}_c$
for arbitrary $c$ only admit constant functions as solutions of the Ricci soliton equation (see Statement~2 of
Definition~\ref{D1.5}). The next theorem shows that $\mathfrak{A}(\mathcal{M})$ is also 
an affine line for all $\mathcal{P}_{a,c}^\pm$ with $a\ne 0$ except in two cases: $a=-2$ and $a=-\frac12$.

\begin{theorem}\label{T4.10}
Let $\mathcal{M}\in\mathcal{F}^{\mathcal B}$ such that the
space $\mathfrak{A}(\mathcal{M})$ is neither empty nor an affine line. Then, up to linear
equivalence, one of
the following alternatives holds:
\begin{enumerate}
\item $\mathcal{M}$ is also Type~$\mathcal{A}$, i.e.,
$C_{12}{}^1=0$, $C_{22}{}^1=0$, and $C_{22}{}^2=0$.
\smallbreak\item $\mathcal{M}=\mathcal{P}_{-2,0}^\pm$ and
$\mathfrak{A}(M)=\{-2\log(x^1)+c_1x^2+c_0\}$
for $c_i\in\mathbb{R}$.
\item $\mathcal{M}=\mathcal{P}_{-\frac12,c}^-$, and
$\mathfrak{A}(\mathcal{M})=\{-\frac12\log(x^1)+c_1(x^2-2cx^1)+c_0\}$ 
for $c_1,c_0\in\mathbb{R}$ and $c^2=\frac38$.\end{enumerate}
\end{theorem}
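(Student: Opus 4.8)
The plan is to follow the same strategy used in Lemma~\ref{L4.8} and Theorem~\ref{T4.9}, pushing the case analysis one step further to find exactly when the homogeneous solution space $\mathcal{Y}(\mathcal{M})=\ker(H^\nabla)$ is larger than $\mathbb{R}$. Since $\mathfrak{A}(\mathcal{M})=f_0+\mathcal{Y}(\mathcal{M})$ whenever it is non-empty (Definition~\ref{D1.5}), ``$\mathfrak{A}(\mathcal{M})$ is neither empty nor an affine line'' means $\mathfrak{A}(\mathcal{M})\ne\emptyset$ and $\dim\mathcal{Y}(\mathcal{M})\ge2$. By Theorem~\ref{T4.9}, if $\mathcal{M}\in\mathcal{F}^{\mathcal{B}}$ admits an affine gradient Ricci soliton and is not also of Type~$\mathcal{A}$ (Assertion~1 of the statement, which we set aside), then up to linear equivalence $\mathcal{M}$ is $\mathcal{P}_{a,c}^\pm$ for $(a,c)\ne(0,0)$ or $\mathcal{Q}_c$. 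So first I would eliminate the $\mathcal{Q}_c$ family and the subfamily $\mathcal{P}_{0,c}^\pm$: by Lemma~\ref{L4.6}, the Ricci tensor of these is alternating, so $\mathfrak{A}(\mathcal{M})=\mathbb{R}$, which is not an affine line of positive dimension, hence these do not occur. This reduces matters to $\mathcal{P}_{a,c}^\pm$ with $a\ne0$.

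Next I would compute $\mathcal{Y}(\mathcal{M})=\ker(H^\nabla)$ for $\mathcal{P}_{a,c}^\pm$ with $a\ne0$ directly from the Hessian formulas displayed just before Definition~\ref{D4.4}. Writing $h=h(x^1,x^2)$ and using the Christoffel symbols of $\mathcal{P}_{a,c}^\pm$, the three equations $H_{11}^\nabla(h)=H_{12}^\nabla(h)=H_{22}^\nabla(h)=0$ form an overdetermined linear PDE system. Since $a\log(x^1)\in\mathfrak{A}(\mathcal{P}_{a,c}^\pm)$ is already known, the question is purely whether $\ker(H^\nabla)\supsetneq\mathbb{R}$. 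I expect $H_{22}^\nabla(h)=0$ together with $C_{22}{}^1=\pm1\ne0$ to force $h$ to be (at most) of the form $h=u(x^1)+v(x^1)x^2$ after solving the $x^2$-dependence, and then the remaining two equations to pin down $v$ and to give a second-order ODE for $u$ whose indicial/characteristic exponents depend on $a$ (and on the sign $\pm$ and on $c$ through the relations $C_{11}{}^1=a+C_{12}{}^2+1$, $C_{12}{}^2=\tfrac12(a^2+2a\mp2c^2)$). A non-constant homogeneous solution beyond the expected linear-in-$x^2$ freedom appears precisely when one of these exponents is $0$ or when a resonance occurs; working out the exponents I expect to see that the ``extra'' solution $x^2-2cx^1$ (respectively $x^2$) survives exactly for $\mathcal{P}_{-\frac12,c}^-$ with $c^2=\tfrac38$ (respectively $\mathcal{P}_{-2,0}^\pm$), matching Assertions~2 and~3. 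I would then verify by substitution that in those two cases the listed functions genuinely lie in $\mathfrak{A}(\mathcal{M})$ and span the claimed affine space, and that for all other values of $(a,c)$ with $a\ne0$ one gets $\ker(H^\nabla)=\mathbb{R}$, so $\mathfrak{A}(\mathcal{M})$ is an affine line and is excluded.

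The main obstacle I anticipate is the bookkeeping in the $\mathcal{P}_{a,c}^\pm$ case: the Christoffel symbols are quadratic in $a$ and $c$ with the two sign choices $\pm$ entangled, so the characteristic exponents of the $u$-ODE and the consistency conditions for the $x^2$-linear term come out as polynomial conditions in $a,c$ that must be solved carefully to isolate exactly the two exceptional surfaces (and to confirm, e.g., that $\mathcal{P}_{-\frac12,c}^+$ does \emph{not} give an extra solution, only the $-$ sign with $c^2=\tfrac38$ does, and likewise that $\mathcal{P}_{-2,c}^\pm$ forces $c=0$). A secondary subtlety is ensuring the constraint $(a,c)\ne(0,0)$ from Definition~\ref{D4.4} and the normalizations made via Lemma~\ref{L2.8} and rescaling of $x^2$ are respected, so that the parameters $c_0,c_1$ in the conclusion are genuinely free and the two families in Assertions~2 and~3 are the complete answer up to linear equivalence. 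Everything else is routine linear-ODE analysis once the system is written down.
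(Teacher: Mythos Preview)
Your proposal is correct and follows essentially the same route as the paper: reduce via Theorem~\ref{T4.9} and Lemma~\ref{L4.6} to $\mathcal{P}_{a,c}^\pm$ with $a\ne0$, then compute $\ker(H^\nabla)$ directly. One minor adjustment: the paper starts from $H_{12}^\nabla(h)=0$ (using $C_{12}{}^1=0$), which gives the clean separation $h=(x^1)^{C_{12}{}^2}u(x^2)+v(x^1)$, and the remaining equations then force $C_{12}{}^2=0$, $u$ linear, and finally the algebraic system $c(2a+1)=0$, $a^2+2a\mp2c^2=0$, yielding exactly the two exceptional surfaces without any indicial-exponent analysis.
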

\begin{proof}We examine the Hessian. 
The setting of Assertion~1 in Theorem~\ref{T4.10}  was examined previously in Lemma~\ref{L4.7}. We assume the setting of Assertion~2 of Theorem~\ref{T4.9}, i.e.
 that for $a\ne0$ and $c\ge0$, we have:
\begin{eqnarray*}
 &&\textstyle C_{11}{}^1=\frac{1}{2} \left(a^2+4 a\mp2c^2+2\right),\ C_{11}{}^2=c,\phantom{...\ } C_{12}{}^1=0,\\
&&\textstyle C_{12}{}^2=\frac{1}{2} \left(a^2+2 a\mp2c^2\right),\phantom{+2c}\ C_{22}{}^1=\pm1,\ C_{22}{}^2=\pm2c,,\\
&& \rho=(x^1)^{-2}\left(\begin{array}{rr}
 \frac a2(4+4a+a^2\mp2c^2)&
 \pm c\\\mp c&\pm a\end{array}\right).
\end{eqnarray*}
 We examine the kernel of the Hessian to determine
the most general solution. Let $h\in\ker(H^\nabla)$ with $dh\ne0$. If $h=h(x^1)$, then
$$
H_h^\nabla=(x^1)^{-1}\left(\begin{array}{rr}
\star(x^1)&0\\
0&\mp h^\prime(x^1)\end{array}\right)\,.
$$
This is not possible since $h^\prime\ne0$. Thus $h$ exhibits non-trivial $x^2$
dependence. We return to the general setting to obtain a relation. 
To simplify the
notation, we leave $C_{12}{}^2$ as a parameter and obtain:
$$
0=x^1 h^{(1,1)}-C_{12}{}^2 h^{(0,1)}\,.
$$
This implies $h(x^1,x^2)=(x^1)^{C_{12}{}^2}u(x^2)+v(x^1)$. We obtain:
\begin{eqnarray*}
0&=&\pm x^1 v'(x^1)-(x^1)^{C_{12}{}^2+2} u^{\prime\prime}(x^2)\pm2 C_{11}{}^2 (x^1)^{C_{12}{}^2+1} u^{\prime}(x^2)\\
&&\qquad\pm C_{12}{}^2 u(x^2) (x^1)^{C_{12}{}^2}\,.
\end{eqnarray*}
The powers of $x^1$ decouple. Because $h(x^1,x^2)$ exhibits non-trivial $x^2$
dependence, we may conclude that $C_{12}{}^2=0$ and hence $C_{11}{}^1=a+1$.
We also conclude $u^{\prime\prime}(x^2)$ must be constant. 
Let $h(x^1,x^2)=c_2\cdot(x^2)^2+c_1x^2+v(x^1)$. We obtain:
$$
0=\mp 2c_1c+2 c_2 (x^1\mp 2c x^2)\mp v'(x^1)\,.
$$
This ODE implies $v$ is quadratic in $x^1$ so
$h(x^1,x^2)=b_2\cdot(x^1)^2+b_1x^1+c_2\cdot(x^2)^2+c_1x^2$. 
We obtain an equation $b_1+ab_1+cc_1+2ab_2x_1+2cc_2x_2=0$.
Since $a\ne0$, $b_2=0$ so $h=c_2(x^2)^2+c_1x^2+b_1x^1$.
We obtain $2c_2x^1\mp b_1\mp 2cc_1\mp 4cc_2x^2=0$. This implies $c_2=0$
so $h=c_1x^2+b_1x^1$. The remaining equations become
$$b_1(1+a)+cc_1=0\text{ and }b_1+2cc_1=0\,.$$
Thus $b_1=-2c c_1$. We set $c_1=1$ to take $h=x^2-2cx^1$. 
This yields the final equation $c(2a+1)=0$. We require $C_{12}{}^2=\frac12(a^2+2a\mp2c^2)=0$.
We consider cases:
\subsubsection*{\bf Case 1. Suppose $c=0$.} Since $C_{12}{}^2=0$, $a^2+2a=0$ so since $a\ne0$,
we obtain $a=-2$. This yields the possibility of Assertion~2.
\subsubsection*{\bf Case 2. Suppose $a=-\frac12$} Since $C_{12}{}^2=0$, $-\frac34\mp2c^2=0$. Thus $C_{22}{}^1=-1$ and $c^2=\frac38$. This yields the possibility of Assertion~3.This completes the proof of Theorem~\ref{T4.10}.
\end{proof}

\begin{remark}\rm
The proof of Theorem~\ref{T4.10} is based on the study of the kernel of the Hessian on those surfaces that admit an affine gradient Ricci soliton. 
Thus, the given families result in examples of non-trivial Yamabe solitons of Type~$\mathcal{B}$, i.e. with nonconstant potential function:
\begin{enumerate}
\smallbreak\item If $C_{12}{}^1=0$, $C_{22}{}^1=0$, and $C_{22}{}^2=0$, then
$\mathcal{Y}(N)$ consists of the solutions $f=f(x^1,x^2)=\xi(x^1)$ to the ODE
$-x^1C_{11}{}^1\xi^\prime
+(x^1)^2\xi^{\prime\prime}=0$.
\smallbreak\item 
If $\mathcal{M}=\mathcal{P}_{-2,c}^\pm$ for $c=0$ or $\mathcal{M}=\mathcal{P}_{-\frac12,c}^-$
for $c^2=\frac38$, then $\mathcal{Y}(M)$ consists of the functions
$c_1(x^2-2cx^1)+c_2$ for $c_1,c_2\in\mathbb{R}\}$.
\end{enumerate}
\end{remark}

\subsection{The moduli space of homogeneous affine gradient Ricci solitons}

This section is devoted to the proof of the following result, which describes the moduli space of homogeneous affine gradient Ricci solitons.

\begin{theorem}\label{T4.12}
	Let $(\mathcal{M},\nabla, f)$ be a non-flat homogeneous affine gradient Ricci soliton. Then one of the following possibilities holds:
\begin{enumerate}
\item  $(\mathcal{M},\nabla)$ is isomorphic to $\mathcal{M}_4^0$ 
($\cong\mathcal{M}_1\cong\mathcal{M}_4^c$ for all $c\in \mathbb{R}$), and 
$f\in \mathfrak{A}(\mathcal{M})$ if and only if $f(x^1,x^2)\equiv f(x^2)$ with $f''-f+2=0$.
\smallbreak
\item $(\mathcal{M},\nabla)$ is isomorphic to $\mathcal{M}_3^c$ ($\cong\mathcal{M}_2^c$) 
with $c(1+c)\neq 0$, and $f\in \mathfrak{A}(\mathcal{M})$ if and only if 
$f(x^1,x^2)\equiv f(x^2)$ with $f''-(1+2c)f+c(1+c)=0$.
\smallbreak
\item $(\mathcal{M},\nabla)$ is isomorphic to $\mathcal{M}_5^c$ for all $c\in [0,\infty)$, and 
$f\in \mathfrak{A}(\mathcal{M})$ if and only if $f(x^1,x^2)\equiv f(x^2)$ with $f''-2c f+(1+c^2)=0$.
\smallbreak
\item $(\mathcal{M},\nabla)$ is isomorphic to $\mathcal{N}_2^{\frac12}$ ($\cong\mathcal{P}_{0,c}^-$
for $c=\frac3{\sqrt 2}$), and 
$f\in \mathfrak{A}(\mathcal{M})$ if and only if $f$ is constant.
\smallbreak
\item $(\mathcal{M},\nabla)$ is isomorphic to $\mathcal{Q}_{c}$ for all $c\in \mathbb{R}$, and 
$f\in \mathfrak{A}(\mathcal{M})$ if and only if $f$ is constant.
\smallbreak
\item $(\mathcal{M},\nabla)$ is isomorphic to $\mathcal{P}_{a,c}^\varepsilon$, where 
$\varepsilon=\pm1$, $(a,c)\neq (0,0)$, and 
\begin{enumerate}
\item if $\mathcal{P}_{a,c}^\varepsilon= \mathcal{P}_{0,c}^\varepsilon$, then 
$f\in \mathfrak{A}(\mathcal{M})$ if and only if $f$ is constant;
\smallbreak
\item if $\mathcal{P}_{a,c}^\varepsilon= \mathcal{P}_{-2,0}^\varepsilon$, then 
$f\in \mathfrak{A}(\mathcal{M})$ if and only if
\newline $f(x^1,x^2)=-2\log(x^1)+c_1 x^2+c_0$, for $c_0, c_1\in\mathbb{R}$; 
\smallbreak
\item if $\mathcal{P}_{a,c}^\varepsilon=\mathcal{P}_{-\frac{1}{2},c}^-$ with $c^2=\frac{3}{8}$, then
$f\in \mathfrak{A}(\mathcal{M})$ if and only if
\newline $f(x^1,x^2)=-\frac{1}{2}\log(x^1)+c_1(x^2-2c x^1)+c_0$, for $c_0, c_1\in\mathbb{R}$;
\smallbreak
\item if $\mathcal{P}_{a,c}^\varepsilon\neq\mathcal{P}_{0,c}^\varepsilon$, 
$\mathcal{P}_{a,c}^\varepsilon\neq\mathcal{P}_{-2,0}^\varepsilon$ and 
$\mathcal{P}_{a,c}^\varepsilon(a,c)\neq\mathcal{P}_{-\frac{1}{2},c}^-$ with $c^2=\frac{3}{8}$,  then 
$f\in \mathfrak{A}(\mathcal{M})$ if and only if 
$f(x^1,x^2)\equiv f(x^1)=a\log(x^1)+c_0$, for $c_0\in\mathbb{R}$. 
\end{enumerate}\end{enumerate}
The classes listed above represent distinct affine equivalence classes.
\end{theorem}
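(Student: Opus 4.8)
The plan is to assemble the classification from the structural results already established, proceeding by the value of $\dim\{\mathfrak{K}(\mathcal{M})\}$ and the rank/type of $\rho$. Since $(\mathcal{M},\nabla)$ is homogeneous and non-flat, Lemma~\ref{L3.1} gives $2\le\dim\{\mathfrak{K}(\mathcal{M})\}\le 5$, and Theorem~\ref{T1.1} tells us $\mathcal{M}$ is Type~$\mathcal{A}$, Type~$\mathcal{B}$, or Type~$\mathcal{C}$. A Type~$\mathcal{C}$ surface has $\operatorname{Rank}\{\rho\}=2$ with $\rho$ symmetric and non-degenerate; the Hessian argument of Lemma~\ref{L4.1}~(2) shows that a non-trivial affine gradient Ricci soliton forces $\operatorname{Rank}\{R_{12}\}<2$, hence $\operatorname{Rank}\{\rho\}<2$, so no Type~$\mathcal{C}$ surface admits a soliton unless it is also of Type~$\mathcal{A}$ or $\mathcal{B}$ — and the Type~$\mathcal{B}$–Type~$\mathcal{C}$ overlap ($\mathcal{N}_3$, $\mathcal{N}_4$) is ruled out below. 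So it suffices to treat the Type~$\mathcal{A}$ and Type~$\mathcal{B}$ cases.

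First I would handle Type~$\mathcal{A}$. By Theorem~\ref{T4.3}, $\mathfrak{A}(\mathcal{M})\ne\emptyset$ iff $\operatorname{Rank}\{\rho\}=1$, which by Theorem~\ref{T3.4} is exactly $\dim\{\mathfrak{K}(\mathcal{M})\}=4$. Theorem~\ref{T3.8} then partitions these surfaces by the invariant $\alpha$ (and $\epsilon$) into the families $\mathcal{M}_1\cong\mathcal{M}_4^c$ ($\alpha=16$), $\mathcal{M}_5^c$ ($\alpha\in[0,16)$), and $\mathcal{M}_2^c\cong\mathcal{M}_3^c$ ($\alpha\notin[0,16]$ or $\alpha=0,\epsilon=-1$). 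In each normalized model I read off the soliton ODE from Lemma~\ref{L4.2}, where $\xi''-\Gamma_{22}{}^2\xi'+\rho_{22}=0$: for $\mathcal{M}_4^0$ one has $\Gamma_{22}{}^2=2$, $\rho_{22}=1$, giving — after the rescaling implicit in the normalization — $f''-f+2=0$; for $\mathcal{M}_3^c$, $\Gamma_{22}{}^2=1+2c$ and $\rho_{22}=c(1+c)$, giving $f''-(1+2c)f+c(1+c)=0$; for $\mathcal{M}_5^c$, $\Gamma_{22}{}^2=2c$ and $\rho_{22}=1+c^2$, giving $f''-2cf+(1+c^2)=0$. These yield parts (1)–(3). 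The one subtlety here is matching the normalized coordinates of Definition~\ref{D3.5} with the ODE coefficients claimed in the theorem; this is a routine check of the constant rescalings used in the proof of Theorem~\ref{T3.8}.

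Next I would handle Type~$\mathcal{B}$ surfaces that are \emph{not} also Type~$\mathcal{A}$, i.e.\ with $C_{12}{}^1=C_{22}{}^1=C_{22}{}^2=0$ failing. By Theorem~\ref{T4.9}, the only such surfaces admitting a soliton are $\mathcal{P}_{a,c}^\pm$ with $(a,c)\ne(0,0)$ and $\mathcal{Q}_c$. For $\mathcal{Q}_c$ and $\mathcal{P}_{0,c}^\pm$ the Ricci tensor is alternating, so by Lemma~\ref{L4.6} $\mathfrak{A}(\mathcal{M})=\mathbb{R}$ (part (5) and part (6a)); here I must also invoke the identification $\mathcal{P}_{0,3/\sqrt2}^-\cong\mathcal{N}_2^{1/2}$ announced in Definition~\ref{D4.4}, which gives part (4) and shows that the unique Type~$\mathcal{B}$ surface with $\dim\{\mathfrak{K}\}=3$ admitting a soliton is $\mathcal{N}_2^{1/2}$ — the remaining $\mathcal{N}_1^\pm$, $\mathcal{N}_2^c$ ($c\ne\frac12$), $\mathcal{N}_3$, $\mathcal{N}_4$ all have alternating-or-rank-$2$-symmetric $\rho$ incompatible with a soliton, which closes the Type~$\mathcal{C}$ gap noted above. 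For $\mathcal{P}_{a,c}^\pm$ with $a\ne0$, Theorem~\ref{T4.10} isolates the two exceptional cases $\mathcal{P}_{-2,0}^\pm$ and $\mathcal{P}_{-1/2,c}^-$ with $c^2=\frac38$ where $\mathfrak{A}$ is a plane (parts (6b), (6c)); in all other cases the kernel of the Hessian is trivial modulo constants (again Lemma~\ref{L4.1}~(2), since $\operatorname{Rank}\{R_{12}\}=2$ generically here), so $\mathfrak{A}(\mathcal{M})=a\log(x^1)+\mathbb{R}$, which is part (6d) and matches the observation $a\log(x^1)\in\mathfrak{A}(\mathcal{P}_{a,c}^\pm)$ from Definition~\ref{D4.4}.

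Finally I would verify that the listed classes are pairwise affinely inequivalent. For the Type~$\mathcal{A}$ families this is Theorem~\ref{T3.8}: $\alpha$ and $\epsilon$ are affine invariants separating $\mathcal{M}_1$, $\mathcal{M}_3^c$ (as $c$ varies through $c(1+c)\ne0$), and $\mathcal{M}_5^c$ (as $c\ge0$ varies). For the Type~$\mathcal{B}$ families not of Type~$\mathcal{A}$, the separation uses the invariants built in Lemma~\ref{L4.6} and its remark — the invariant $1$-form $\tilde\omega=(x^1)^{-1}dx^2$ extracted from $\nabla\rho^a$, the scalar $1-2c$ for the $\mathcal{N}_2^c$ series, and the Ricci data displayed for $\mathcal{P}_{a,c}^\pm$ (its symmetric part has signature and norm distinguishing the sign $\pm$ and the parameters $(a,c)$, while $\rho_{22}=\pm a$ and the off-diagonal entry $\pm c$ pin down $a$ and $c$). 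Cross-type coincidences are excluded because a Type~$\mathcal{A}$ soliton surface has $\dim\{\mathfrak{K}\}=4$ while $\mathcal{N}_2^{1/2}$ has $\dim\{\mathfrak{K}\}=3$, and the $\mathcal{P}_{a,c}^\pm$, $\mathcal{Q}_c$ that are genuinely not of Type~$\mathcal{A}$ have non-symmetric $\rho$. The main obstacle I anticipate is purely bookkeeping: keeping the normalizations consistent so that the ODEs in (1)–(3) and the explicit potential functions in (6) come out in exactly the stated form, and making sure the identification $\mathcal{P}_{0,3/\sqrt2}^-\cong\mathcal{N}_2^{1/2}$ (a non-linear change of coordinates) is carried out correctly; none of this is conceptually deep, but it is where errors would creep in.
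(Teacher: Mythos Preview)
Your overall architecture is correct and matches the paper's: assemble parts (1)--(3) from Theorem~\ref{T4.3}, Lemma~\ref{L4.2}, and Theorem~\ref{T3.8}; assemble parts (4)--(6) from Lemma~\ref{L4.6}, Theorem~\ref{T4.9}, and Theorem~\ref{T4.10}; and separate cross-type coincidences by $\dim\{\mathfrak{K}(\mathcal{M})\}$. The Type~$\mathcal{C}$ discussion is unnecessary --- every non-flat Type~$\mathcal{C}$ surface is already Type~$\mathcal{B}$ (the constant-curvature models are $\mathcal{N}_3$, $\mathcal{N}_4$), so it is subsumed in the Type~$\mathcal{B}$ analysis --- but this is harmless. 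Also, the ODEs in (1)--(3) come \emph{directly} from Lemma~\ref{L4.2} as $\xi''-\Gamma_{22}{}^2\xi'+\rho_{22}=0$ with the Christoffel symbols of Definition~\ref{D3.5}; there is no rescaling to perform (the statement of the theorem has $f$ where $f'$ is meant).

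The genuine gap is in your inequivalence argument for the $\dim\{\mathfrak{K}\}=2$ families $\mathcal{P}_{a,c}^\varepsilon$ and $\mathcal{Q}_c$. You write that ``$\rho_{22}=\pm a$ and the off-diagonal entry $\pm c$ pin down $a$ and $c$'' and appeal to ``signature and norm'' of $\rho^s$. But components of $\rho$ are not affine invariants, and there is no metric to take a norm. The invariants you cite from the $\mathcal{N}_2^c$ analysis ($\tilde\omega$ and $1-2c$) were built from $\rho^a\ne0$ via recurrence; they do not apply to $\mathcal{Q}_c$ (whose $\rho$ is purely alternating but whose parameter $c$ you never separate) nor to the generic $\mathcal{P}_{a,c}^\varepsilon$. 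The paper closes this gap with two ingredients you are missing. First, Theorem~\ref{T3.21}: when $\dim\{\mathfrak{K}(\mathcal{M})\}=2$, the admissible Type~$\mathcal{B}$ coordinate changes lie in the affine group $\mathfrak{G}$. This is what lets you promote coordinate-dependent quantities to invariants. Second, the paper introduces the contraction tensors $\rho_0=\Gamma_{ij}{}^jdx^i$, $\rho_2=\Gamma_{ij}{}^k\Gamma_{kl}{}^ldx^i\otimes dx^j$, $\rho_3=\Gamma_{ik}{}^l\Gamma_{jl}{}^kdx^i\otimes dx^j$, which are $\operatorname{GL}(2,\mathbb{R})$-invariant, and $\rho_1:=\rho-\rho_2+\rho_3$. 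On $\mathcal{Q}_c$ the tensor $\rho_1$ is skew while on $\mathcal{P}_{a,c}^\varepsilon$ one has $\rho_1(\partial_2,\partial_2)=-\varepsilon(x^1)^{-2}\ne0$, separating the two families; $\rho_2(\mathcal{Q}_c)$ is a constant-curvature metric with curvature $-c^{-1}$, separating the $\mathcal{Q}_c$; and evaluating $\rho_0,\rho_1,\rho_2,\rho_3$ on $\partial_2$ (which is canonical up to scale by Theorem~\ref{T3.21}) recovers $\varepsilon$, $c$, and $a$ for $\mathcal{P}_{a,c}^\varepsilon$. Without Theorem~\ref{T3.21} and these contraction invariants, your separation of the $\dim\{\mathfrak{K}\}=2$ classes does not go through.
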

\begin{proof}
	Type $\mathcal{A}$ affine gradient Ricci solitons are characterized by Theorem \ref{T4.3} and Lemma \ref{L4.2}. Thus Assertions~1--3 follow from Lemma~\ref{L3.6} and Theorem~\ref{T3.8}.
	
	Type $\mathcal{B}$ affine gradient Ricci solitons are characterized in Theorem \ref{T4.9}
	and Theorem \ref{T4.10}, thus leading to Assertions~4--6.  The only Type~$\mathcal{B}$
	surface with skew-symmetric Ricci tensor and $\dim\{\mathfrak{K}( \mathcal{M})\}=3$  is
	$\mathcal{N}_2^\frac{1}{2}$, which is isomorphic to $\mathcal{P}_{0,c}^-$ for
	$c=\frac{3}{\sqrt{2}}$.
	
We complete the proof by showing that the affine structures given Assertions 1--6 are inequivalent.
By Theorem \ref{T3.8},  classes (1), (2) and (3) have 4-dimensional
Killing algebra $A_{4,9}^0$, $A_2\oplus A_2$ and $A_{4,12}$, respectively.
Hence these three classes are inequivalent. Adopt the notation of Definition~\ref{D2.4} 
to define the $\alpha$ invariant. We have $\alpha(\mathcal{M}_4^0)=16$, 
$\alpha(\mathcal{M}_3^c)=(c^2+c)^{-1}{4(1+2c)^2}$, and 
$\alpha(\mathcal{M}_5^c)=(1+c^2)^{-1}{16c^2}$. This shows that if $i=3,5$
 then $\mathcal{M}_i^c\cong \mathcal{M}_i^{\tilde c}$ if and only if  $c=\tilde c$.
By Theorem~\ref{T3.11}, $\mathfrak{K}(\mathcal{N}_2^{\frac12})=\mathfrak{su}(1,1)$
and hence $\mathcal{N}_2^{\frac12}$ is not affine isomorphic to  (1),  (2) or (3). 
Classes (5) and (6) are of Type $\mathcal{B}$ and have Killing algebra of dimension 2, which shows that they
are not isomorphic to any of the other classes. 

We now show the surfaces in Assertions~5 and 6 are inequivalent as well. 
Set:
\begin{eqnarray*}
		&&\rho_1:=\textstyle\frac1{x^1}
		\{\Gamma_{12}{}^2dx^1\otimes dx^1+\Gamma_{22}{}^2dx^1\otimes dx^2
		-\Gamma_{12}{}^1dx^2\otimes dx^1-\Gamma_{22}{}^1dx^2\otimes dx^2\},\nonumber\\
		&&\rho_2:=\Gamma_{ij}{}^k\Gamma_{kl}{}^ldx^i\otimes dx^j,\quad
		\rho_3:=\Gamma_{ik}{}^l\Gamma_{jl}{}^kdx^i\otimes dx^j,\quad
		\rho_0:=\Gamma_{ij}{}^jdx^i.
\end{eqnarray*}
By Theorem~\ref{T3.21}~1, the coordinate transformations of any Type~$\mathcal{B}$ surface
$\mathcal{M}$ with $\dim\{\mathfrak{K}(\mathcal{M})\}=2$ belong to the Lie group $\mathfrak{G}$
which is a subgroup of $\operatorname{GL}(2,\mathbb{R})$. Since contracting an upper against a lower
index is a $\operatorname{GL}(2,\mathbb{R})$ invariant, the tensors $\{\rho_0,\rho_2,\rho_3\}$
are invariantly defined on any such surface. Since we may express $\rho=\rho_1+\rho_2-\rho_3$, 
we conclude that $\rho_1$ is invariantly defined as well; $\rho_1$ is a $\mathfrak{G}$ invariant
but not a $\operatorname{GL}(2,\mathbb{R})$ invariant.
We note that $\rho_1$ is skew-symmetric for any surface $\mathcal{Q}_{\tilde{c}}$ and that
 $\rho_1(\partial_2,\partial_2)\neq 0$ for any surface $\mathcal{P}_{a,c}^\varepsilon$. Hence no surface in Assertion~5 may be equivalent to any surface in Assertion~6.
	
The invariant $\rho_2$ is a symmetric $(0,2)$-tensor field for $\mathcal{Q}_c$ which is given by
$$
\rho_2(\mathcal{Q}_c)=2(x^1)^{-2}( c\, dx^1\otimes dx^1+ dx^2\otimes dx^2)\,.
$$
It defines a pseudo-Riemannian metric of constant curvature $-c^{-1}$ if $c\neq 0$. 
This shows that $\mathcal{Q}_c\cong \mathcal{Q}_{\tilde{c}}$ if and only if $c=\tilde{c}$.
We apply Lemma~\ref{L3.9}. The pull-back action of $T_{b,c}$ rescales $\partial_2$:
$(T_{b,c})_*\partial_1=\partial_1+b\partial_2$ and
$(T_{b,c})_*\partial_2=c\partial_2$.	The surfaces $\mathcal{P}_{a,c}^{\varepsilon}$ satisfy 
$\rho_1(\partial_2,\partial_2)=-\epsilon(x^1)^{-2}$ and $\rho_0(\partial_2,\partial_2)=2c\epsilon(x^1)^{-1}$. 
Consequently, if $\mathcal{P}_{a,c}^\varepsilon$ is affine isomorphic to 
	$\mathcal{P}_{\tilde a,\tilde c}^{\tilde\varepsilon}$ then $\epsilon=\tilde \epsilon$ and $c=\tilde c$. 
We compute that
\begin{eqnarray*}
&&\rho_2(\partial_2,\partial_2)=(x^1)^{-2}{\epsilon (1 + 3 a + a^2 + 2 c^2\epsilon)},\\
&&\rho_3(\partial_2,\partial_2)=(x^1)^{-2}{\epsilon ( 2 a + a^2 + 2 c^2\epsilon)}\,.
\end{eqnarray*}
This implies that $a=\tilde a$ which completes the proof.
\end{proof}

\subsection{Geodesic completeness} We have the following application of our analysis.
\begin{lemma} Let $\mathcal{M}$ be a locally homogeneous surface of Type~$\mathcal{A}$ which
is not symmetric and with $\operatorname{Rank}\{\rho\}=1$. Then $\mathcal{M}$ is not geodesically
complete.
\end{lemma}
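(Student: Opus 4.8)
The plan is to exhibit a single geodesic with bounded maximal interval of definition, read off directly from the normalized Christoffel symbols. By Theorem~\ref{T1.1} one may work in local coordinates in which the Christoffel symbols are constant, and by Lemma~\ref{L2.3} (after a linear change of coordinates) one may further arrange that $\rho=\rho_{22}\,dx^2\otimes dx^2$ with $\Gamma_{11}{}^2=\Gamma_{12}{}^2=0$; here $\rho_{22}\in\mathbb{R}\setminus\{0\}$ since $\operatorname{Rank}\{\rho\}=1$. First I would record that in these coordinates the only possibly nonzero component of $\nabla\rho$ is $\rho_{22;2}=-2\,\Gamma_{22}{}^2\,\rho_{22}$, a direct computation from Lemma~\ref{L2.2} together with Assertion~3 of Lemma~\ref{L2.3}. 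Since $\mathcal{M}$ is not symmetric we have $\nabla\rho\neq0$ (equivalently $\alpha(\mathcal{M})\neq0$, by the remark following Lemma~\ref{L2.5}), and hence $\Gamma_{22}{}^2\neq0$.

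Next I would use that the condition $\Gamma_{11}{}^2=\Gamma_{12}{}^2=0$ makes the geodesic equation for the second coordinate decouple: a curve $t\mapsto(x^1(t),x^2(t))$ is a geodesic precisely when
\[
\ddot x^2+\Gamma_{22}{}^2(\dot x^2)^2=0
\quad\text{and}\quad
\ddot x^1+\Gamma_{11}{}^1(\dot x^1)^2+2\Gamma_{12}{}^1\dot x^1\dot x^2+\Gamma_{22}{}^1(\dot x^2)^2=0 .
\]
Set $b:=\Gamma_{22}{}^2\neq0$. For the geodesic $\gamma$ with $\dot x^2(0)=1$ the function $u:=\dot x^2$ solves the Riccati equation $\dot u=-b\,u^2$ with $u(0)=1$, so $u(t)=(1+bt)^{-1}$, which blows up as $t$ approaches $-1/b$. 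Hence the curve $t\mapsto(\gamma(t),\dot\gamma(t))$ in the tangent bundle leaves every compact set as $t\to-1/b$, so $\gamma$ cannot be continued across $t=-1/b$: its maximal interval of definition is not all of $\mathbb{R}$, and $\mathcal{M}$ is not geodesically complete.

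I do not anticipate a serious obstacle. The one point deserving a sentence is why unboundedness of $\dot x^2$ really prevents extension of the geodesic, which is just the standard fact that a maximal integral curve of the (smooth) geodesic spray on $TM$ that is not defined for all time must escape every compact subset of $TM$. If one wishes to avoid computing $\nabla\rho$, one can instead invoke Theorem~\ref{T3.8}: every normal form $\mathcal{M}_1$, $\mathcal{M}_2^c$, $\mathcal{M}_3^c$, $\mathcal{M}_4^c$, $\mathcal{M}_5^c$ arising from a non-symmetric rank-one Type~$\mathcal{A}$ surface has $\Gamma_{22}{}^2$ equal to $2$, $1+2c$ or $2c$, which is nonzero for the relevant values of $c$, and then the same Riccati argument applies; the argument above simply avoids this case enumeration.
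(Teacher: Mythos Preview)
Your Riccati computation is correct and the identification $\Gamma_{22}{}^2\neq0$ is exactly right, but there is a genuine gap at the passage from the local chart to the global conclusion. You assert that the curve $t\mapsto(\gamma(t),\dot\gamma(t))$ leaves every compact subset of $TM$ as $t\to-1/b$, yet this is established only \emph{in the coordinate chart}: you know $\dot x^2(t)=(1+bt)^{-1}$ and $x^2(t)=b^{-1}\log|1+bt|+x^2(0)$ while the geodesic remains in that chart. The statement, however, concerns an abstract locally homogeneous surface $\mathcal{M}$, not the model $(\mathbb{R}^2,\Gamma)$; the Type~$\mathcal{A}$ chart is only a local patch $\mathcal{O}\subset\mathcal{M}$, and nothing prevents $\gamma$ from exiting $\mathcal{O}$ at some time strictly before $-1/b$ (indeed $x^2(t)\to\pm\infty$ forces the curve to leave any bounded region of the chart's image). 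Once $\gamma$ has left $\mathcal{O}$ the coordinate expression no longer describes $\dot\gamma$, so the escape-from-compacta criterion cannot be invoked from your data. The alternative via the normal forms $\mathcal{M}_1,\dots,\mathcal{M}_5^c$ has the same defect.

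The paper closes exactly this gap with a real-analyticity argument. It shows that the affine Killing vector fields, and hence the transition maps between Type~$\mathcal{A}$ charts, are real analytic, so $\mathcal{M}$ carries a real-analytic structure in which $\nabla$ is analytic. Passing to the simply connected cover, the local affine gradient Ricci soliton potential $\xi(x^2)=\rho_{22}\,x^2\in\mathfrak{A}(\mathcal{M})$ extends to a \emph{global} real-analytic function. Assuming geodesic completeness, $\xi\circ\sigma$ is then real analytic on all of $\mathbb{R}$ yet agrees with $\rho_{22}\log t$ on an open interval, which cannot be continued analytically across $t=0$. Your idea can be rescued along the same lines by using the coordinate-free scalar $\rho(\dot\gamma,\dot\gamma)$, which equals $\rho_{22}(1+bt)^{-2}$ on the initial interval and has a pole at $t=-1/b$; but the real-analytic structure is the essential missing ingredient, not a one-sentence add-on.
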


\begin{proof} The analysis of Section~\ref{S3} shows that in any Type~$\mathcal{A}$ chart $(x^1,x^2)$,
the affine Killing vector fields are real analytic. 
 If $(u^1,u^2)$ is another Type~$\mathcal{A}$ chart which
intersects the given one, then $\partial_1^u$ and $\partial_2^u$ are
  affine Killing vector fields and hence real
analytic. This implies that $\mathcal{M}$ is a real analytic surface with respect to an
atlas of Type~$\mathcal{A}$ charts and our analysis shows $\mathfrak{A}(\mathcal{M})$ consists
of real analytic functions on $\mathcal{M}$. We suppose $\operatorname{Rank}\{\rho\}=1$ and apply
Lemma~\ref{L2.3} to see $\Gamma_{11}{}^2=0$ and $\Gamma_{12}{}^2=0$. We have
$\nabla\rho=-2\Gamma_{22}{}^2dx^2\otimes \rho$. Since $\mathcal{M}$
is not symmetric, $\Gamma_{22}{}^2\ne0$, and we can further normalize the coordinates so $\Gamma_{22}{}^2=1$.
Let $\sigma(t):=(x^1(t),x^2(t))$ be a local geodesic. The geodesic equations become
$\ddot x^2(t)+\dot x^2(t)\dot x^2(t)=0$ which may be solved by setting $x^2(t)=\log(t)$ for $t\in(t_0,t_1)$
some appropriate positive interval.
By Lemma~\ref{L4.2}, $\xi(x^2)=\rho_{22}x^2\in\mathfrak{A}(\mathcal{M})$. Since $\mathcal{M}$
is simply connected, we can extend $\xi$ to a global element of $\mathfrak{A}(\mathcal{M})$ which
is real analytic. Furthermore, since $\mathcal{M}$ is geodesically complete, we can extend
$\sigma$ to a global real analytic geodesic. Since $\xi(\sigma(t))=\rho_{22} \log(t)$ for $t\in(t_0,t_1)$,
$\xi(\sigma(t))=\rho_{22} \log(t)$ for all $t\in\mathbb{R}$; this is not possible.
\end{proof}

\bigskip

\noindent
\textbf{In memory of the attacks in  Beirut and Paris in November 2015. \\Solidarit\'e.}

\end{document}